		\renewcommand{\headrulewidth}{0.4pt}
		\renewcommand{\footrulewidth}{0.4pt}
		\setlist{labelindent=\parindent} 
		\setlist[enumerate]{label=\textit{(\roman*)},ref=(\roman*),align=left, leftmargin=*}
	\newtheoremstyle{thm} 
  	{11pt}
  	{11pt}
 	{\itshape}
  	{ }
  	{\sc}
 	{\newline}
  	{ }
  	{ }
	\newtheoremstyle{remark} 
 	{6pt}
  	{6pt}
  	{\small}
  	{}
  	{\scshape}
  	{\newline}
  	{ }
  	{ }
	\theoremstyle{thm}
	\newtheorem{thm}{Theorem}[chapter]
	\newtheorem{prop}[thm]{Proposition}
	\newtheorem{cor}[thm]{Corollary}
	\newtheorem{lemma}[thm]{Lemma}
	\newtheorem{dfn}[thm]{Definition}
	\theoremstyle{remark}
	\newtheorem{rmk}[thm]{Remark}
	\newtheorem{exa}[thm]{Example}
	\numberwithin{equation}{chapter}
		\newcommand{\call}[1]{\mathscr{#1}}
		\newcommand{\E}{\call{E}}
		\newcommand{\F}{\call{F}}
		\newcommand{\M}{\call{M}}
		\newcommand{\eps}{\varepsilon}
		\renewcommand{\epsilon}{\eps}
		\renewcommand{\phi}{\varphi}
		\renewcommand{\theta}{\vartheta}
		\renewcommand{\set}[1]{\left\{\hspace{-0.1cm}~#1~\hspace{-0.1cm}\right\}}
		\newcommand{\N}{\mathbb{N}}
		\newcommand{\R}{\mathbb{R}}
		\newcommand{\Rd}{\R^d}
		\newcommand{\Rdmz}{\R^d\setminus\set{0}}
		\newcommand{\co}[1]{#1^\mathsf{c}}
		\newcommand{\symdif}{\!\bigtriangleup\!}
		\newcommand{\id}{\mathrm{id}}
		\newcommand{\va}[1]{\left\vert{#1}\right\vert}
		\DeclareMathOperator{\sign}{sign}
		\newcommand{\norm}[1]{\left\Vert#1\right\Vert}
		\newcommand{\dist}{\mathrm{dist}}
		\newcommand{\trasp}{\mathsf{t}} 
		\newcommand{\tr}[1]{\mathrm{tr}\left(#1\right)} 
		\newcommand{\Sdmu}{\mathbb{S}^{d-1}}
		\newcommand{\limseq}[1]{\lim_{#1 \to +\infty}}
		\newcommand{\D}{\mathrm{D}}
		\newcommand{\de}{\mathrm{d}}
		\newcommand{\der}[2]{\frac{\de #1}{\de #2}}
		\DeclareMathOperator{\dvg}{div}
		\renewcommand{\div}{\dvg}
		\newcommand{\Ld}{\call{L}^d}
		\newcommand{\Hdmu}{\call{H}^{d-1}}
		\newcommand{\supp}{\mathrm{supp}}
		\newcommand{\weakstar}{\stackrel{\ast}{\rightharpoonup}}
		\newcommand{\Cc}{C_\mathrm{c}}
		\newcommand{\Cuc}{C^1_\mathrm{c}}
		\newcommand{\loc}{\mathrm{loc}}
		\newcommand{\BV}{\mathrm{BV}}
		\DeclareMathOperator{\Per}{Per}
		\newcommand{\PerK}{\Per_K}
\begin{document}

\frontmatter
		\fancypagestyle{empty}{
		\fancyhf{}
		\fancyfoot[C]{\vspace{2mm} A.Y. 2018-2019 \\ Department of Mathematics, University of Pisa}
		\renewcommand{\headrulewidth}{0pt}
		\renewcommand{\footrulewidth}{0.1mm}
	}

	\begin{titlepage}
	\thispagestyle{empty}
	\begin{center}
		\begin{figure}[h!]
			\begin{center}
			\includegraphics[scale=0.4]{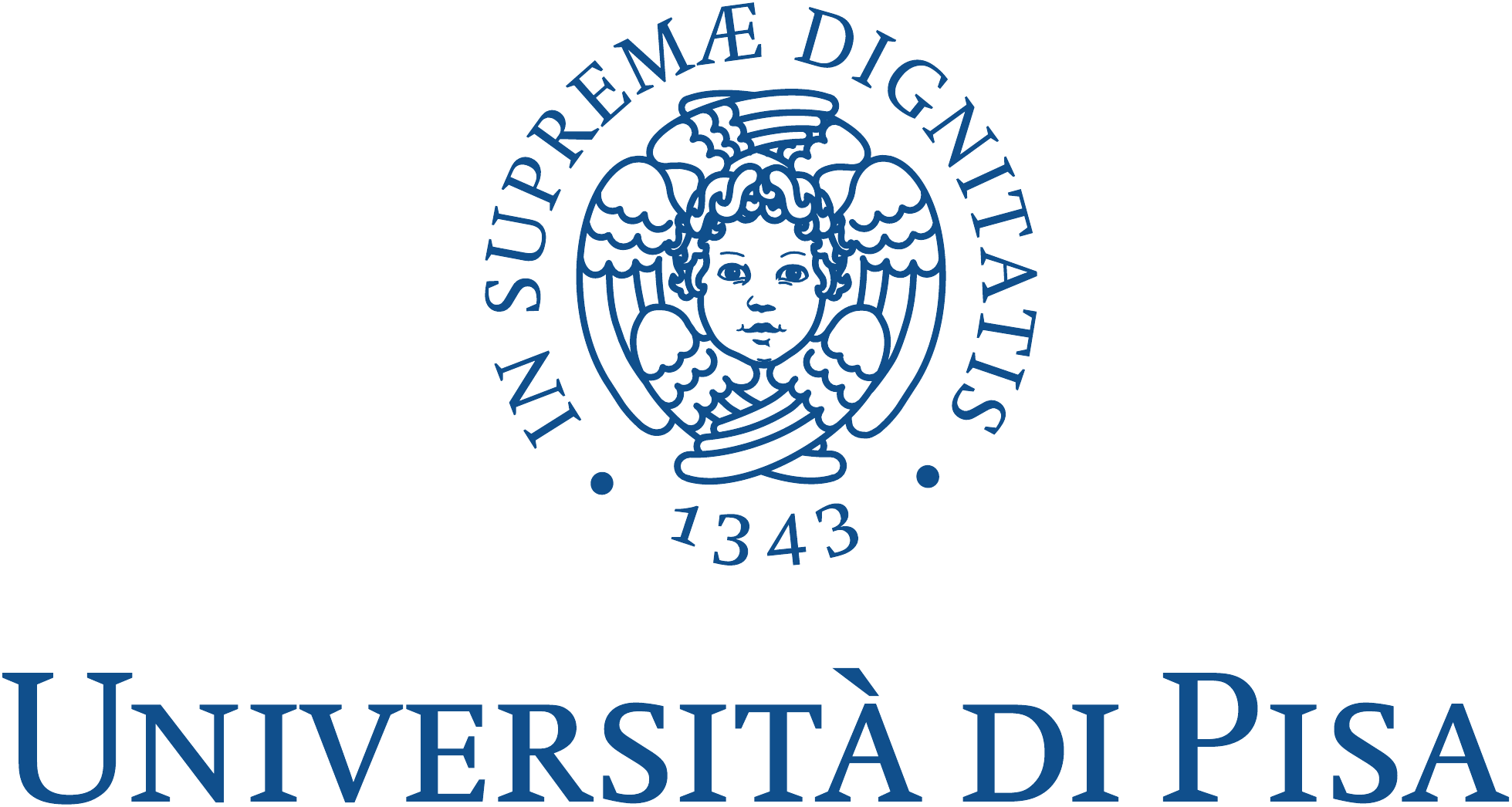}
			\end{center}
		\end{figure}
		\rule[4mm]{13.9cm}{0.1mm}
		\textbf{Ph.D. Course  in Mathematics}\\
		\vspace{1.5mm}
		\textsc{xxxii} cycle
	\end{center}

	\vspace{3cm}

	\begin{center}
	{\huge\textbf{Asymptotic behaviour}}
	
	\vspace{2mm}
	{\huge\textbf{of rescaled nonlocal functionals}}
	
	\vspace{2mm}
	{\huge\textbf{and evolutions}}
	\end{center}

	\vspace{4cm}
	\par\noindent

	\begin{minipage}[t]{0.47\textwidth}
	{\large Candidate:\\ \textbf {Valerio Pagliari}}
	\end{minipage}
	\hfill
	\begin{minipage}[t]{0.47\textwidth}\raggedleft
	{\large Supervisor:\\ \textbf {Matteo Novaga}}
	\end{minipage}
	\end{titlepage}

\fancypagestyle{empty}{
		\fancyhf{}
		\renewcommand{\headrulewidth}{0pt}
		\renewcommand{\footrulewidth}{0pt}
	}
\newpage\shipout\null
		\thispagestyle{empty}
\null\vspace{\stretch{1}}
\begin{flushright}
	<<Quello dell'articolo da citare \`e un grande classico>>
\end{flushright}
\vspace{\stretch{2}}
\newpage\shipout\null

\frontmatter
\pagestyle{plain}
		\section*{Abstract}
Taking up a variational viewpoint,
we present some nonlocal-to-local asymptotic results
for various kinds of integral functionals.
The content of the thesis comprises the contributions first appeared
in the research papers \cites{BP,CP,Pa,CNP}
in collaboration with J. Berendsen, A. Cesaroni, A. Chambolle, and M. Novaga.

After an initial summary of the basic technical tools,
the first original result is discussed in Chapter \ref{ch:nle}.
It is motivated by the work \cite{BBM} by J. Bourgain, H. Brezis, and P. Mironescu,
who proved that the $L^p$-norm of the gradient of a Sobolev function can be recovered
as a suitable limit of iterated integrals involving the difference quotients of the function.
A. Ponce later showed that the relation also holds in the sense of $\Gamma$-convergence \cite{P}.
Loosely speaking, we take into account the rate of this convergence
and we establish the $\Gamma$-converge of the rate functionals to a second order limit
w.r.t. the $H^1_\loc(\Rd)$-convergence.

Next, from Chapter \ref{ch:nlpnlc} on,
we move to a geometric context
and we consider the nonlocal perimeters
associated with a positive kernel $K$, which we allow to be singular in the origin.
Qualitatively, these functionals express a weighted interaction between a given set and its complement.
More broadly, we study a total-variation-type nonlocal functional $J_K(\,\cdot\,;\Omega)$,
where $\Omega\subset \Rd$ is a measurable set.
We establish existence of minimisers of such energy under prescribed boundary conditions,
and we prove a criterion for minimality based on calibrations.
Due to the nonlocal nature of the problem at stake,
the definition of calibration has to be properly chosen.
As an application of the criterion, we prove that
halfspaces are the unique minimisers of $J_K$ in a ball
subject to their own boundary conditions.

A second nonlocal-to-local $\Gamma$-convergence result is discussed in Chapter \ref{ch:judith}.
We rescale the kernel $K$ so that, when the scaling parameter approaches $0$,
the family of rescaled functions tends to the Dirac delta in $0$.
If $K$ has small tails at infinity,
we manage to show that the nonlocal total variations associated with the rescaled kernels
$\Gamma$-converge w.r.t. the $L^1_\loc(\Rd)$-convergence
to a local, anisotropic total variation.

Lastly, we consider the nonlocal curvature functional associated with $K$,
which is the geometric $L^2$-first variation of the nonlocal perimeter.
In the same asymptotic regime as above,
we retrieve a local, anisotropic mean curvature functional as the limit of rescaled nonlocal curvatures.
In particular, the limit is uniform for sets whose boundary is compact and smooth.
As a consequence, we  establish the locally uniform convergence
of the viscosity solutions of the rescaled  nonlocal  geometric flows  
to the viscosity solution of the anisotropic mean curvature motion.
This is obtained by combining
a compactness argument and a set-theoretic approach
that relies on the theory of De Giorgi's barriers for evolution equations.

\paragraph{Acknowledgements.}
I owe so much to the commitment of the people
with whom I have collaborated during the last three years.
I would like to thank Annalisa Cesaroni
for showing me how to do this job
and sharing her opinions with me.

Part of the work presented in this thesis was carried out
during a stay at the {\it Centre de Math\'ematiques Appliqu\'ees}
of the {\'Ecole Polytechnique} in Palaiseau.
I thank the Unione Matematica Italiana,
which partly funded the stay, and
Antonin Chambolle for the time he devoted to me.

Finally, I am deeply grateful to my friend Marco Pozzetta
for being so helpful, and patient, and enthusiastic, and\dots
		\tableofcontents
		\chapter*{Introduction}
		\addcontentsline{toc}{chapter}{Introduction}
		We present some results that describe localisation effects
for certain classes of nonlocal functionals
which are defined by means of integral kernels.
The underlying intuition is that,
if we rescale the latter in such a way that
they approximate the Dirac delta in the origin
as the scaling parameter tends to $0$,
then in the limit we retrieve some \emph{local} functionals. 
The precise sense in which the nonlocal-to-local convergence holds
will be specified case by case.

We devote these introductory pages to the overview of our main results,
for which we provide some motivation and background.
At the same time, we outline the structure of the thesis.
Our main concern here is conveying general ideas,
rather than stating results with full precision.
To give a wider perspective to the possibly interested readers,
we hint also at some of the literature which, for the sake of brevity,
though being related to the subjects we treat,
is not mentioned in the main body of the work.

The first chapter includes the basics
about functions of bounded variations, finite perimeter sets, and $\Gamma$-convergence
needed in the sequel.
Then, in Chapter \ref{ch:nle}, we report the results of \cite{CNP}.

We start from the analysis carried out 
by J. Bourgain, H. Brezis, and P. Mironescu in \cite{BBM},
which has attracted much attention in the years:
as an extremely brief selection of follow-ups,
we mention \cites{MS,Da,P,AK,LS}.
The main result of \cite{BBM} consists in the approximation
of the $L^p$-norm of the gradient of a Sobolev function
by means of iterated integrals involving difference quotients.
In \cite{P}, A. Ponce extended the result to the functionals
	\[
		\F_\epsilon(u) \coloneqq \int_{\Omega}\int_{\Omega}
			\rho_\epsilon(y-x) f\left(\frac{\va{u(y)-u(x)}}{\va{y-x}}\right)\de y \de x,
	\]
where $u\in L^p(\Omega)$ for some $p\geq 1$,
$\Set{\rho_\eps}$ is a suitable family of functions in $L^1(\Rd)$,
and $f\colon [0,+\infty) \to [0,+\infty)$ is convex.
As $\epsilon\to 0^+$,
he proved both pointwise and $\Gamma$-convergence of $\Set{\F_\eps}$
to a first order limit functional $\F_0$.
When $\rho_\epsilon(x) \coloneqq \epsilon^{-d}\rho(\epsilon^{-1}x)$
for some $\rho\in L^1(\Rd)$
and $u\in W^{1,p}(\Omega)$ is a Sobolev function, we find
	\[
		\F_0(u) = \int_{\Omega}\int_{\Rd} \rho(z) f(\va{\nabla u(x)\cdot \hat{z}}) \de z \de x,
	\]
where, for $z\in\Rdmz$, $\hat{z} \coloneqq z / \va{z}$.
We address the asymptotic behaviour of the differences $\F_0 - \F_\eps$,
which express the rate of convergence of the $\epsilon$-functionals to their limit.
We prove that, if $f$ is strongly convex, the rescaled rate functionals
	\[
		\E _\epsilon(u) \coloneqq \frac{\F_0(u) - \F_\eps(u)}{\epsilon^2}
	\]
$\Gamma$-converge w.r.t. to the $H^1_\loc(\Rd)$-convergence to the second order functional
	\[
		\E_0(u) \coloneqq \frac{1}{24}\int_{\Rd}\int_{\Rd}
				\rho(z)\va{z}^2 f''(\va{\nabla u(x)\cdot \hat{z}})\va{\nabla^2 u(x) \hat{z} \cdot \hat{z}}^2 \de z \de x.
	\]
The assumption of strong convexity on $f$ rules out several interesting scenarios,
notably the one of linear growth at infinity.
However, the analysis of the case at stake is already nontrivial,
and it leads to a conclusion which is analogous
to the one of M. Peletier, R. Planqu\'{e}, and M. R\"{o}ger in \cite{PPR}
for convolution-type functionals.

Similarly to the works of M. Gobbino and M. Mora \cites{G,GM},
we perform a proof based on a slicing procedure,
which reduces the problem to the study of some energies defined on functions of one variable,
see Section \ref{sec:rateBBM}.
Then, in Section \ref{sec:Gconv-BBM}, 
we establish the desired result relying on the $\Gamma$-convergence of the $1$-dimensional functionals.
This is not immediate, though, and is achieved
by resorting to a compactness result,
whose proof, in turn, requires some effort because of the structure of $\E_\epsilon$.

From Chapter \ref{ch:nlpnlc} on,
we leave Sobolev spaces to move closer to geometric measure theory,
and we introduce generalised notions of
perimeter, total variation, and mean curvature.

Let $\Omega \subset \Rd$ be measurable
and consider an even function $K\colon \Rd \to [0,+\infty)$.
Heuristically, $K$ plays the role of a weight
that encodes long range interactions between the points in the space.
For a measurable function $u\colon \Omega \to \R$
we define its \emph{nonlocal total variation} in $\Omega$ as
	\begin{equation}\label{eq:nltv}
	\begin{split}
	J_K(u;\Omega) & = \frac{1}{2}\int_\Omega\int_\Omega K(y-x)\va{u(y)-u(x)}\de y\de x \\
						&\quad + \int_{\Omega}\int_{\co{\Omega}}K(y-x)\va{u(y)-u(x)}\de y \de x.
	\end{split}
	\end{equation}
We recall that, for a set $E\subset \Rd$,
the perimeter in the sense of E. De Giorgi \cites{D,AFP,Ma} is defined as
the total variation of the distributional gradient of its characteristic function.
Accordingly, we define the \emph{nonlocal perimeter} in $\Omega$ of $E$ as
	\[
		\PerK(E;\Omega) \coloneqq J_K(\chi_E;\Omega),
	\]
$\chi_E$ being the characteristic function.

The first instance of nonlocal perimeter appeared in the celebrated paper \cite{CRS}
by L. Caffarelli, J. Roquejoffre, and O. Savin,
who focused on the case $K(x) = \va{x}^{-d-s}$ for some $s\in(0,1)$,
i.e. on \emph{fractional kernels}.
Over the years, their work has influenced vastly the research,
and in the sequel we suggest a brief list of developments.

Before going into the details of Chapter \ref{ch:nlpnlc},
we present some examples by E. Cinti, J. Serra, and E. Valdinoci \cite{CSV}
with the purpose of motivating the interest in the functionals under consideration,

The first example concerns digital imaging.
In order to be displayed on the screen of an electronic device,
images are sampled by means of $2$-dimensional grids
whose elementary cells correspond to the smallest units that the device can handle.
These cells are called {\it pixels}, a short for picture element,
and their whole provides the screen representation of the given image.
As a general principle, the more the pixels,
the more accurate is the digital reproduction;
in other words, pixels whose side lengths are small result in high resolution and fidelity.
The following discussion shows
how nonlocal perimeters are sensitive to changes of resolution,
differently from De Giorgi's one.

Suppose that we want to create a pixel representation
of a unit square whose sides are rotated by $45^\circ$ w.r.t. to the reference grid
(see Figure \ref{fig:csv}) .
\begin{figure}
	\begin{center}
		\includegraphics[scale=0.6]{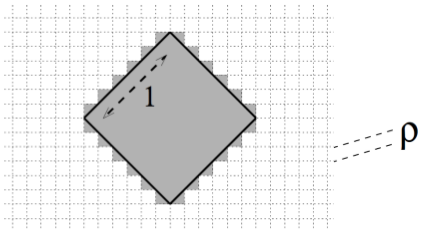}
		\caption{
			Representation of a unit square by pixels of side length equal to $\rho$.
			Source: \cite{CSV}.
			}\label{fig:csv}
	\end{center}
\end{figure}
An elementary calculation gets that the standard perimeter 
of the reproduction, which is displayed in grey in the figure,
equals $4\sqrt{2}$, no matter how we choose the side length of the pixels.
Hence, we see that
the discrepancy between the perimeter of the source image and
the one of its pixel approximation is left unchanged 
under increase of resolution.

In this respect, nonlocal perimeters are far more accurate.
For instance, let $s\in(0,1)$ and consider the fractional perimeter
	\[
		\Per_s(E)\coloneqq \Per_s(E;\Rd) = \int_{\co{E}}\int_E \frac{\de x \de y}{\va{y-x}^{2+s}}.
	\]
Denoting by $Q$ and $\tilde Q$ respectively the given square and its pixel representation,
we have that
	\[\begin{split}
	\Per_s(\tilde Q) & =  \int_{\co{\tilde Q}}\int_{\tilde Q\cap \co{Q}} \frac{\de x \de y}{\va{y-x}^{2+s}}
								+ \int_{\co{\tilde Q}}\int_{Q} \frac{\de x \de y}{\va{y-x}^{2+s}} \\
							& \quad = \int_{\co{\tilde Q}}\int_{\tilde Q\cap \co{Q}} \frac{\de x \de y}{\va{y-x}^{2+s}}
											+ \Per_s(Q) - \int_{\tilde Q\cap \co{Q}}\int_{Q} \frac{\de x \de y}{\va{y-x}^{2+s}} \\
							&\quad \leq \int_{\co{\tilde Q}}\int_{\tilde Q\cap \co{Q}} \frac{\de x \de y}{\va{y-x}^{2+s}}
											+ \Per_s(Q).
	\end{split}\]
The set $\tilde Q\cap \co{Q}$ is the ``frame'' of pixels that surrounds $Q$,
whence
	\[
		\Per_s(\tilde Q)\leq \Per_s(Q) + \sum_{i} \Per_s(P_i),
	\]
where $P_i$ is a boundary pixel.
The number of such pixels is of the order $\rho^{-1}$
and, by a change of variables, it is easy to see that
$\Per_s(P_i) = \rho^{2-s} \Per_s(Q)$.
On the whole, we find
	\[
		\Per_s(\tilde Q) \lesssim (1+\rho^{1-s})\Per_s(Q),
	\]
so that the discrepancy is bounded above by a quantity that vanishes as $\rho$ tends to $0$,
that is, it vanishes in the high resolution limit.

It is worth mentioning that
nonlocal functionals akin to \eqref{eq:nltv} have also been studied
in connection with image denoising.
In a nutshell, this amounts to recover a image of higher quality
starting from a distorted datum $u_0\colon \Omega \to \R$.
The result of the process should be a function
that is more regular than $u_0$ and
that is, at the same time, not too different from it.
To achieve the goal, the idea is to look for solutions to the problem
	\[
		\min_u \Set{J(u)+ \alpha \int_{\Omega} \va{u - u_0}^2},
	\]
where $J$ acts as a {\it filter} and $\alpha$ is the so-called {\it fidelity parameter},
to be suitably tuned. 
The differences between the models proposed in the literature lie in the choice of the filter $J$.
The classical one is $J(u)=\int_{\Omega}\va{\D u}$ \cite{ROF},
but, more recently, several nonlocal filters have been  introduced
\cites{GO2,AK,LS,BN2,BN1}.

The second motivational example comes from physics.
In \cite{CH}, J. Cahn and J. Hilliard introduced a model for phase transitions
in certain binary metallic alloys,
and they derived the following expression for the free energy:
	\[
		\F_\epsilon^{\mathrm{CH}} (u;\Omega) \coloneqq
			\int_{\Omega} \left[ \frac{\epsilon^2}{2} \va{\nabla u(x)}^2 + W(u(x))\right]\de x.
	\]
Here, $\Omega$ is the container where the system sits,
$u\colon \Omega \to [-1,1]$ represents the concentration of a reference phase,
$\epsilon>0$ is a (small) scale parameter, and
$W\colon \R \to [0,+\infty)$ is a double-well potential which vanishes only in $-1$ and $1$.

When the phases are mixed, that is, $u$ takes also values in $(-1,1)$, and
the system is in suitable physical conditions,
experiments show the evolution is steered
by the minimisation of $J_\epsilon^{\mathrm{CH}}$:
thus, owing to the pontential $W$, the two phases separate,
and the transition occurs in a thin interface,
because the gradient term penalises wide interfaces.

In the description of some systems,
it is reasonable to assume
that particles interact also if they are far apart;
for instance, this is observed 
in continuum limits of discrete Ising's spins.
Such circumstances suggest to modify the free energy of Cahn and Hilliard
as to include nonlocal functionals.
The option considered by G. Alberti and G. Bellettini in \cites{AB1,AB2} is
	\[
		\F_\epsilon^{\mathrm{AB}}(u;\Omega) \coloneqq
			\frac{1}{4} \int_{\Omega} \int_{\Omega} K_\epsilon(y-x)\va{u(y)-u(x)}^2 \de y \de x
					+ \int_{\Omega}W(u(x)) \de x,
	\]
where
	\begin{equation}\label{eq:intro-rescK}
	K_\epsilon(x) \coloneqq \frac{1}{\eps^d} K \left(\frac{x}{\eps}\right)
	\end{equation}
and $K$ is a positive, long range interaction kernel;
see also the paper \cite{BBP} by G. Bellettini, P. Butt\`a, and E. Presutti.
Analogous energies of fractional type have been investigated as well;
the reader interested in this topic may consult the survey \cite{V} by E. Valdinoci and references therein.

Observe that, when the phases are separated,
the functional $\F_\epsilon^{\mathrm{AB}}$
equals the first summand in \eqref{eq:nltv}:
indeed, if $u$ takes only the values $-1$ and $1$,
then $\va{u(y)-u(x)}^2 = 2\va{u(y)-u(x)}$.
Therefore, we see that $J_K$ is related to the free energy of the system at rest.

Let us now overview our analysis of nonlocal perimeters and total variations.
Relying on \cite{BP}, we explore some basic properties of the nonlocal energies 
in Sections \ref{sec:elem-nlp} and \ref{sec:ca-pl}.
We allow for a wide family of kernels,
which encompasses, for instance, the fractional ones,
and, as a first result, we show that the nonlocal total variation of a function $u$
is finite if $u$ is of bounded variation.

Secondly, we show that there exist local minimisers of $J_K$
under prescribed boundary conditions.
This corresponds to the existence of solutions to the well-studied Plateau's problem,
which amounts to find a hypersurface of least area
among the ones spanned by a given boundary.

A simple application of Fatou's Lemma yields $L^1_\loc(\Rd)$-lower semicontinuity of $J_K$,
but, in striking contrast with the local case,
sequences that have equibounded nonlocal variations are in general not precompact in $L^1$
(see the lines preceding Remark \ref{stm:coercivity} for a counterexample).
This hinders a straightforward transliteration
of the geometric measure theory approach to the classical Plateau's problem.
Nonetheless, it is possible to deduce existence of minimisers
{\it via} the direct method of calculus of variations.
The key ingredients in this regard are the convexity of $J_K$
and the validity of a \emph{generalised Coarea Formula}:
	\[
	J_K(u;\Omega) = \int_0^1 \PerK(\Set{u>t};\Omega) \de t.
	\]
The importance of the latter was firstly pointed out by A. Visintin \cites{Vi,Vi2},
who was concerned with energies for multiphase systems
as the one we mentioned above.

The existence of solutions to the nonlocal Plateau's problem raises
 at least a couple of questions:
\begin{itemize}
	\item Can we give a \emph{necessary} condition for optimality 
		in terms of the first variation of the nonlocal perimeter?
	\item Can we devise some \emph{sufficient} condition for optimality?
\end{itemize}

As in the local case,
tackling the first issue leads to the notion of curvature.
The geometric first variation of $\PerK$ was computed
as soon as nonlocal perimeters were introduced.
Indeed, in \cite{CRS},
Caffarelli, Roquejoffre, and Savin identified
	\begin{equation}\label{eq:intro-HK}
	H_K(E,x) \coloneqq - \mathrm{p.v.} \int_{\Rd} K(y-x) \big(\chi_{E}(y) - \chi_{\co{E}}(y)\big) \de y
	\end{equation}
as the first variation of $\PerK$, $K$ being a fractional kernel.
The relationship holds true in wider generality, and,
due to the possible singularity of $K$,
it is not evident in which sense a solution $E$ to Plateau's problem satisfies
the Euler-Lagrange equation $H_K(E,x) = 0$ for $x\in \partial E$.
It is not even clear, at first sight,
for which sets the integral defining the nonlocal curvature is convergent.

In Section \ref{sec:nlc}, we give sufficient conditions on $K$
so that $H_K(E,x)$ is finite for all sets $E$
with $C^{1,1}$ boundary and for all $x\in \partial E$. 
This is not a novelty \cites{CMP,MRT},
but we also establish an upper bound on $H_K$
that is to come in handy in Chapter \ref{ch:nlc}.

As for the Euler-Lagrange equation $H_K(E,x) = 0$,
J. Maz\'on, J. Rossi, and J. Toledo proved that
when $K\in L^1(\Rd)$ and $x$ belongs to the reduced boundary of $E$,
the equality holds pointwise.
More broadly, it has to be interpreted in a viscosity sense, see \cite{CRS}.
In a very recent paper \cite{C}, X. Cabr\'e has re-proved this fact
by utilising an argument that is completely different from the original one (see later on).

On the side of sufficient conditions for optimality,
fewer literature concerning our problem is available.
{\it Vice versa}, for De Giorgi's perimeter
the well developed theory of \emph{calibrations} is at our disposal \cites{Mo1,HL,Mo2,LM,CCP}.
In Section \ref{sec:calib}, which grounds on \cite{Pa},
we propose a nonlocal notion of calibration
that allows for a minimality criterion that parallels the classical one.
A similar task was indepentently accomplished by Cabr\'e \cite{C},
who also exploited calibrations to retrieve that
nonlocal minimal surface satisfy the equation $H_K = 0$ is the viscosity sense.
Another possible application will be discussed in Chapter \ref{ch:judith}.

We mould nonlocal calibrations on the classical ones.
Yet, while the latter are vector fields,
following the approach to nonlocal Cheeger's energies in \cite{MRT},
our calibrations are scalar functions on the product space $\Rd \times \Rd$.
Then, we prescribe that they satisfy
a bound on the norm, a vanishing divergence condition,
and agreement with the normal of the calibrated set.
Of course, these requirements need to be formulated appropriately
as to fit in the nonlocal framework.
In particular, inspired again by \cite{MRT},
we resort to the notion of nonlocal divergence introduced by G. Gilboa and S. Osher in \cite{GO}.

While, in general, it is not effortless to construct a calibration,
it is simple to exhibit one for graphs \cite{C}.
In the very special case of halfspaces,
alongside with optimality, we are also able to prove uniqueness \cite{Pa}.
This property had been established previously
only for fractional perimeters \cites{CRS,ADM}.

A common feature of the phase field models 
that we sketched above is that, under suitable rescalings,
the energies approach asymptotically a surface functional.
In the local setting, 
a classical result by L. Modica and S. Mortola \cites{MM} states that,
when $\epsilon\to 0^+$,
the family $\Set{\epsilon^{-1}J_\epsilon^{\mathrm{CH}}}$ $\Gamma$-converges
to De Giorgi's perimeter, up to a constant encoding surface tension.
In \cite{AB2} a local, anisotropic surface functional was retrieved
as the $\Gamma$-limit of $\Set{\epsilon^{-1}J_\epsilon^{\mathrm{AB}}}$,
and O. Savin and E. Valdinoci drew similar conclusions
for fractional free energies in \cite{SV}.
In a more abstract context,
also the limiting behaviour of the functional $J_K$ in \eqref{eq:nltv} was studied:
for $K(x) = \va{x}^{-d-s}$ with $s\in(0,1)$,
L. Ambrosio, G. De Philippis, and L. Martinazzi proved in \cite{ADM} that,
when $s \to 1^-$, De Giorgi's perimeter is obtained in the sense of $\Gamma$-convergence.

In Chapter \ref{ch:judith}, which is based on \cites{BP,Pa},
we discuss a nonlocal-to-local convergence result which is reminiscent of the previous ones.
Indeed, provided that
	\[
		\int_{\Rd} K(x)\va{x} \de x <+\infty,
	\]
we recover a local, anisotropic total variation $J_0$
as the $\Gamma$-limit of the family $\Set{\epsilon^{-1}J_\epsilon}$,
where $J_\epsilon(u;\Omega) \coloneqq J_{K_\eps}(u;\Omega)$
and $K_\epsilon$ is as in \eqref{eq:intro-rescK}.
Under the current assumption on the kernel,
it is non difficult to show that, when $u$ is a function of bounded variation,
$J_\epsilon(u;\Omega)$ is of order $\epsilon$.
Consequently, to get a nontrivial limit,
we put the factor $\epsilon^{-1}$ in front of the rescaled energies.

Our convergence result is closely related to the one in \cite{P},
and we rely on that work for what concerns the pointwise limit.
Nevertheless, the interactions between the reference set $\Omega$ and its complement
are not taken into consideration there,
so our statement does not follow immediately from the ones of Ponce.
In any case, aside from pointwise convergence, our proof is self-contained.

We remark that, paralleling the analysis of Chapter \ref{ch:nle},
it would be interesting to investigate whether the rate functionals
$\eps^{-\alpha}(J_0 - J_\eps)$ $\Gamma$-converge for some $\alpha > 0$
to a geometric, second order limit functional.

The first step
to prove the $\Gamma$-convergence of the rescaled functionals $J_\epsilon$
is reducing to nonlocal perimeters.
This is made possible by a result of A. Chambolle, A. Giacomini, and L. Lussardi \cite{CGL},
which, in turn, grounds on the validity of Coarea Formulas in the sense of Visintin.
Then, in Section \ref{sec:PerK-Glimsup},
we can easily establish the upper limit inequality,
exploiting the density of polyhedra in the family of finite perimeter sets.

The proof of the $\Gamma$-lower limit inequality,
which is addressed in Section \ref{sec:PerK-Gliminf}, is more elaborated.
As \cites{AB2,ADM}, we adopt the blow-up technique introduced by I. Fonseca and S. M\"uller \cite{FM},
and we deduce a lower bound
where a functional $\tilde J_0$, {\it a priori} distinct from $J_0$, appears.
By construction, the anisotropy of $\tilde J_0$ is given only implicitly:
indeed, it is defined as the value of a certain cell problem
which involves sequences of sets converging in $L^1$ to a given halfspace.
To achieve the conclusion, in Section \ref{sec:sigmaK}
we identify the anisotropy occurring in $\tilde J_0$ as the one of $J_0$.
Much in the spirit of \cite{ADM},
the minimality of halfspaces proves to be crucial at this stage,
and this provides an interesting application of the calibration criterion in Chapter \ref{ch:nlpnlc}.

We recalled that nonlocal curvatures can be derived as first variations of nonlocal perimeters.
With the result of Chapter \ref{ch:judith} on the table,
it is spontaneous to wonder if an analogue asymptotic results holds for curvatures.
In the affirmative case, it is also natural to ask
whether the convergence is inherited by the associated geometric motions,
that is, by the $L^2$-gradient flows of the perimeters.
We deal with these questions in the concluding chapter,
which includes the contributions in \cite{CP},
and we provide a positive answer to both.

When $K$ is a fractional kernel,
N. Abatangelo and E. Valdinoci \cite{AV} proved
that suitable rescalings of the nonlocal curvatures pointwise converge
to a multiple of the standard mean curvature when the fractional exponent approaches $1$.
For radial, $L^1$ kernels, the same conclusion was achieved in \cite{MRT}.
We establish a congruent theorem in Section \ref{sec:convcurv}
for a larger class of kernels.
We show that, when $E$ is a set  with $C^2$ boundary,
in the limit $\epsilon\to 0^+$,
the functionals $\epsilon^{-1}H_\epsilon(E,x) \coloneqq \epsilon^{-1}H_{K_\eps}(E,x)$,
with $K_\epsilon$ as above, tend
for all $x\in \partial E$ to an anisotropic mean curvature functional $H_0(\partial E,x)$.
If the boundary of $E$ is also compact,
we are able to conclude that the convergence is uniform.

The next thing we do is checking that
the limit curvature agrees with the limit perimeter found in Chapter \ref{ch:judith},
namely we notice that $H_0$ is the first variation of the restriction of $J_0$ to sets.
So, in a sense, the first variation commutes with the limiting procedures.

The last result that we present in this thesis concerns the motions by curvature
	\begin{gather*}
		\partial_t x(t)\cdot \hat{n}(t) = -\frac{1}{\eps}H_\eps(E(t), x(t))
		\quad\text{and}\quad
		\partial_t x(t)\cdot \hat{n}(t) = -H_0(\Sigma(t), x(t)),
	\end{gather*} 
where $t\mapsto E(t)\subset \Rd$ is an evolution of sets,
$\Sigma(t)\coloneqq\partial E(t)$, $x(t)\in\Sigma(t)$,
and $\hat{n}(t)$ is the outer unit normal to $\Sigma(t)$ at $x(t)$.
We study them utilizing the level-set method,
which is to say that we suppose that
the evolving set $E(t)$ and its boundary $\Sigma(t)$
are respectively the $0$ superlevel set and the $0$ level set
of some function $\phi(t,\,\cdot\,)$.
Rewriting the previous equations in terms of $\phi$ gets formally
the following nonlocal and local parabolic partial differential equations:
	\begin{gather}
		\partial_t  \phi(t,x) + 
		\frac{\va{\nabla \phi(t,x)}}{\eps} H_\epsilon(\{y:\phi(t,y)\geq \phi (t,x)\},x) = 0, \label{eq:intro-epspb}
		\\
		\partial_t  \phi(t,x) + 
		\va{\nabla \phi(t,x)}H_0(\{y:\phi(t,y)= \phi (t,x)\},x) = 0. \label{eq:intro-0pb}
	\end{gather}
Already in the local case,
it is well know that a weak notion of solution is needed
to cope with the onset of singularities observed in this sort of evolutions.
We employ the viscosity theory by L. Evans and J. Spruck \cite{ES}
and by Y. Chen, Y. Giga, and S. Goto \cite{CGG},
who, in turn, relied on results for Hamilton-Jacobi equations (see for instance \cite{CL}).
In this context, existence and uniqueness of a global-in-time solution are ensured.
On the nonlocal side,
well posedness for the Cauchy's problem for the motion
by fractional curvature was settled by C. Imbert in \cite{I}
(see also the paper \cite{LV} by D. La Manna and J. Vesa
for short time existence of the classical solutions).
Lately, existence and uniqueness for the level-set formulation
of a broad range of local and nonlocal curvature flows
have been established in a unified framework
by A. Chambolle, M. Morini, and M. Ponsiglione \cite{CMP}.

Several aspects
of motions by nonlocal curvatures
have recently been subjects of investigation,
mostly in the fractional case: for instance,
conservation of convexity \cite{CNR}, formation of neckpinch singularities \cite{CSV2},
fattening phenomena \cite{CDNV}, self-shrinkers \cite{CN2}.
Instead, going back in time, we see that
nonlocal geometric evolutions emerged
from physical models that strive to explain plastic behaviour of metallic crystals.
According to these theories,
the macroscopic phenomenon of plasticity arises as a consequence
of microscopic irregularities in the crystalline lattice, known as defects.
Among them, we find dislocations,
which correspond to linear misalignments in the microscopic structure.
Each dislocation produces an elastic field,
which in turn exerts the so called Peach-Koehler force on all other defects in the alloy.
As a consequence, dislocation lines evolve,
and each point on them moves
with normal velocity determined by the Peach-Koehler force.
In \cite{AHLM},
O. Alvarez, P. Hoch, Y. Le Bouar, and R. Monneau proposed
a mathematical description of dislocation dynamics
in terms of a nonlocal eikonal equation, for which they
were able to prove short time existence and uniqueness.
In this model, the Peach-Koelher force is encoded by a convolution kernel $c_0$, typically singular,
whence the nonlocal nature of the problem.
Imbert highlights in \cite{I} that 
the mathematical expression of the Peach-Koehler force may be understood
as a nonlocal curvature of the dislocation line,
thus, dislocation dynamics turns out to be a motion by nonlocal curvature.

The explicit expression of the kernel $c_0$ might be complicated,
because it has to capture the physical features of the system.
Further, $c_0$ might change sign
and this poses serious technical difficulties,
because it opposes the validity of comparison principles.
The original model has by then been simplified by various authors,
who obtained well-posedness for the problem
\cites{ACM,BL,IMR,FIM,DFM}.

After the excursus on the connection between nonlocal curvatures and dislocation dynamics,
we discuss now the main result of Chapter \ref{ch:nlc},
which shows that the convergence of the curvatures is sufficiently robust
to entail convergence of the solutions to the corresponding geometric flows.
Precisely, we fix $u_0\colon \Rd \to \R$
and we let $u_\epsilon$ and $u$ be the viscosity solutions with initial datum $u_0$
to \eqref{eq:intro-epspb} and \eqref{eq:intro-0pb}, respectively.
We prove that the family $\Set{u_\epsilon}$ locally uniformly converges to $u$ as $\eps\to 0^+$.

A similar result already appeared in the paper \cite{DFM}
by F. Da Lio, N. Forcadel, and R. Monneau,
who also proved convergence of the nonlocal curvature functionals.
However, they made assumptions on the kernel that are different from ours,
because they focused on a precise dislocation dynamics model.
Accordingly, their scaling does not match ours.
We also stress that our proof and theirs are based
on substantially distinct arguments.  
Indeed, in \cite{DFM}, the authors exploit viscosity semilimits and the perturbed test function method,
whilst we propose an approach by \emph{geometric barriers}.
These were introduced by De Giorgi in \cite{D2}
as weak solutions to a wide class of evolution problems.
G. Bellettini, M. Novaga, and M. Paolini considered solution in this class
for geometric parabolic PDEs such as \eqref{eq:intro-0pb},
and they found out that viscosity theory and barriers can be compared \cites{BP2,Be,BNo,BNo2}.
In the same spirit, in the recent paper \cite{CDNV},
A. Cesaroni, S. Dipierro, M. Novaga, and E. Valdinoci put in relation
barriers and viscosity solutions for nonlocal curvature motions.
These comparison results are the cornerstone of the analysis in Section \ref{sec:convflows}. 
  
As a concluding bibliographical note,
we point out that approximation results for mean curvature motions,
either by local or nonlocal operators, have been intensively studied.
One of the most renowned is due to J. Bence, B. Merriman, and S. Osher.
In \cite{BMO}, they devised a formal strategy to construct fronts evolving by mean curvature,
the so called threshold dynamics algorithm.
It amounts to a time-discretization, which, at each
step, takes the characteristic functions of a suitable set as datum,
and yields its evolution according to the heat equation.
The convergence of this scheme was rigorously settled in \cites{BG,E}
and subsequently more general diffusion operators were considered
\cites{Is,IPS,ChN}. 
In \cite{CaS} L. Caffarelli and P. Souganidis applied a threshold dynamics scheme
to motions by fractional mean curvature.
A generalisation encompassing anisotropies and driving forces was proved in \cite{CNR}. 
		\cleardoublepage
		\section*{Notation}
		\phantomsection
		\addcontentsline{toc}{section}{Notation}
			\begin{center}
		\begin{tabular}{cl}
			$a\wedge b$, $a \vee b$	& resp., minimum and maximum between $a,b\in\R$ \\
			$d$						& dimension of the ambient space $\Rd$ \\
			$\id$					& identity map in $\Rd$ \\
			$\co{E}$			& complement of the set $E$ in $\Rd$, i.e. $\Rd\setminus E$ \\
			$E\symdif F$		& symmetric difference between the sets $E$ and $F$ \\
			$E=F$				& the sets $E$ and $F$ coincide up to a Lebesgue negligible set \\
			$E \subset F$	& the sets $E$ is contained in the set $F$, up to a Lebesgue negligible set \\
			$\chi_E$			& characteristic function of the set $E$ \\
			$\tilde \chi_E$		& signed characteristic function of the set $E$,
											i.e. $\tilde \chi_E=\chi_E - \chi_{\co{E}}$ \\
			$x\cdot y$			& Euclidean inner product between $x,y\in\Rd$ \\
			$\va{x}$			& Euclidean norm of $x\in\Rd$ \\
			$\hat{x}$			& unit vector parallel to $x\in\Rdmz$, i.e. $x/\va{x}$ \\
			$x^\perp$			& subspace of the vectors that are orthogonal to $x\in\Rdmz$ \\
			$\pi_{\hat{x}^\perp}$ & the matrix that represents the orthogonal projection on $\hat{x}^\perp$,
										i.e $\id - \hat{x}\otimes \hat x$ \\
			$e_i$					& $i$-th element of the canonical orthonormal basis of $\Rd$ \\
			$B(x,r)$			& open ball of centre $x$ and radius $r>0$ in $\Rd$ \\
			$\Sdmu$				& topological boundary of $B(0,1)$ \\
			$\dist(x,E)$		& Euclidean distance between the point $x$ and the set $E$ \\
			$\dist(E,F)$		& Euclidean distance between the sets $E$ and $F$ \\
			$\va{\mu}$			& total variation measure of the measure $\mu$ \\
			$\norm{\mu}$ 	& total variation norm of the measure $\mu$ \\
			$\mu\llcorner\Omega$ & restriction of the measure $\mu$ to the set $\Omega$ \\
			$\Ld$					& $d$-dimensional Lebesgue measure \\
			$\Hdmu$				& $(d-1)$-dimensional Hausdorff measure \\
			$\omega_{d-1}$ & $(d-1)$-dimensional Lebesgue measure of the unit ball in $\R^{d-1}$ \\
			a.e.					& $\Ld$-almost everywhere in $\Rd$ or
											$\Ld\otimes \Ld$-almost everywhere in $\Rd\times\Rd$ \\
			$u=v$				& the functions $u,v\colon \Rd \to \R$ coincide $\Ld$-a.e. \\
			$\D u$					& distributional gradient of the function $u$ \\
			$\BV(\Omega)$,
			$\BV_{\loc}(\Omega)$ & functions of (locally) bounded variation in $\Omega$ \\
			$M^\trasp$				& transpose of the matrix $M$ \\
			$\tr{M}$			& trace of the matrix $M$
			
		\end{tabular}
	\end{center}

\mainmatter
\pagestyle{fancy}

\chapter{Basic ingredients}\label{ch:pre}
	In this preliminary chapter we collect some standard notions and results
that make up the background of the sequel.
We include here just the topics that are of general relevance for the thesis,
while we shall provide reminders that pertain to specific aspects
when they are needed.
Notably, the theories of viscosity solutions and of barriers
for geometric evolution equations are presented in Chapter \ref{ch:nlc}.
Moreover, we omit proofs, which may be found in the suggested references.

We start with the essentials of geometric measure theory.
We recall some facts about measures on the Euclidean space,
then we introduce the space of functions of bounded variation
and the class of finite perimeter sets.

One of the leitmotifs of our analysis is
recovering local functionals as limits of their rescaled nonlocal counterparts.
In Chapters \ref{ch:nle} and \ref{ch:judith},
the asymptotics relationships are expressed in terms of $\Gamma$-convergence,
whose definition we recall in Section \ref{sec:def-Gconv}.
Typically, the limit functionals that we obtain are not isotropic.
For this reason, we sum up in Section \ref{sec:aniso} some properties of anisotropic surface energies.

Before getting to the heart of the matter,
let us premise some notations concerning the ambient space of our analysis.

For us, $d$ is always a natural number greater than $1$.
We work in the $d$-dimensional Euclidean space,
that is, the vector space $\Rd$ endowed with the Euclidean inner product $\cdot$
and the associated norm $\va{\,\cdot\,}$. 
We let $e_1,\dots,e_d$ be the elements of the canonical orthonormal basis,
and, when $x\in \Rdmz$, we set
	\[
		\hat{x} \coloneqq \frac{x}{\va{x}}
			\quad\text{and}\quad
		x^\perp \coloneqq\Set{ y\in\Rd : x\cdot y = 0}.
	\]
We denote the open ball of centre $x$ and radius $r$ in $\Rd$ by $B(x,r)$
and we let $\Sdmu$ be the topological boundary of $B(0,1)$.
If $E,F\subset \Rd$ and $x\in\Rd$, we put
	\begin{gather*}
		\dist(x,E)\coloneqq \inf \Set{ \va{y-x} : y\in E}, \\
		\dist(E,F)\coloneqq \inf \Set{ \va{y-x} : x\in E, y\in F}.
	\end{gather*}

When $E\subset \Rd$ is a set,
we write $\co{E}$ for its complement in $\Rd$ and
$\chi_E$ for its \emph{characteristic function}, i.e. 
	\[
		\chi_E(x) \coloneqq
			\begin{cases}
				1 & \text{if } x\in E,\\
				0 & \text{if } x\in \co{E}.
			\end{cases}
	\]
If $F$ is another subset of $\Rd$, 
we let $E \symdif F$ be the symmetric difference between $E$ and $F$,
that is $E \symdif F \coloneqq (E \cap \co{F}) \cup (\co{E} \cap F)$.

\section{Measures and $\BV$ functions}
\label{sec:measures}
Following the monographs
\cite{AFP} by L. Ambrosio, N. Fusco, and D. Pallara and
\cite{Ma} by F. Maggi,
we introduce the by now classical tools
to tackle geometric variational problems.

Let $X$ be a set and let $\call{A}$ be a $\sigma$-algebra on $X$.
A map $\mu\colon \call{A} \to [-\infty,+\infty]$ is a (real) \emph{measure} on $(X,\call{A})$
if it is $\sigma$-additive and $\mu(\emptyset) = 0$.
We say that a measure $\mu$ is \emph{positive}
when $\mu(E)\in[0,+\infty]$ for all $E\in\call{A}$,
and that it is \emph{finite}
if it is positive and $\mu(E)< +\infty$ for all $E\in\call{A}$.
Similarly, we may consider \emph{vector measures},
that is, $\sigma$-additive maps $\mu\colon X \to \R^n$ such that $\mu(\emptyset) = 0$
with $n\in\N$ larger than $1$.

When $\mu$ is either a real- or vector-valued measure on $(X,\call{A})$,
we define its \emph{total variation} as
	\[
		\va{\mu}\!(E) \coloneqq \sup\Set{ \sum_{\ell=1}^{+\infty}  \va{\mu(E_\ell)}
															: E_\ell \in \call{A},
															E_\ell \cap E_m = \emptyset \text{ if } \ell\neq m, \text{ and }
															E = \bigcup_{\ell=1}^{+\infty} E_\ell
														},
	\]
which is in turn a finite measure.
We dub \emph{norm} of the measure $\mu$ the quantity
$\norm{\mu}\coloneqq \va{\mu}(X)$.

In this work, we only consider measures on the Euclidean space $\Rd$.
Let $\call{B}$ the Borel $\sigma$-algebra,
i.e. the $\sigma$-algebra generated by open sets.
Measures that are defined on each set in $\call{B}$ are called Borel measures.
The \emph{support} of a positive measure is the set
	\[
	\supp\mu \coloneqq \Set{ \bigcap C : C \text { is closed and } \mu(\co{C}) = 0 }. 
	\]
	
We say that a positive Borel measure $\mu$ on $\Rd$ is a \emph{Radon measure}
if it is
	\begin{itemize}
		\item locally finite: $\mu(C)<+\infty$ for all compact sets $C\subset \Rd$;
		\item regular: for all $F\subset \Rd$ there exists $E\in\call{B}$
			such that $F\subset E$ and $\mu(F) = \mu(E)$.
	\end{itemize}
This concept may also be extended to the vectorial case:
a Borel measure $\mu$ on $\Rd$ with values in $\R^n$ is a vector-valued Radon measure
if there exist a positive measure $\nu$ on $(\Rd,\call{B})$ which is Radon
and a $L^1(\nu)$ function $f\colon \Rd \to \R^n$ such that
$\va{f} = 1$ $\nu$-a.e. and $\mu=f\nu$.
\textit{A posteriori}, one finds $\nu = \va{\mu}$.

Radon measures satistisfy a bunch of useful properties.
Firstly, when equipped with the total variation norm,
they form a normed space which coincides
with the dual of compactly supported continuous functions:
	\begin{thm}[Riesz]\label{stm:riesz}
		Let $\Cc(\Rd;\R^n)$ be the space
		of $\R^n$-valued, compactly supported, continuous functions on $\Rd$.
		Then, for all bounded linear functionals $\Lambda\colon \Cc(\Rd;\R^n) \to \R$,
		there exists a unique $\R^n$-valued Radon measure $\mu$ on $\Rd$ such that
			\[
				\Lambda(\phi) = \int_{\Rd} \phi \cdot \de\mu				
				\quad\text{for all } \phi\in\Cc(\Rd;\R^n).
			\]
		Moreover, for such measure and for all open $V\subset\Rd$,
		it holds
			\[
			\va{\mu}(U) = \sup\Set{ \Lambda(\phi) : \phi\in \Cc(V;\R^n), \va{\phi}\leq 1}
			\]
	\end{thm}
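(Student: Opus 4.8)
The plan is to reduce the vector-valued statement to the scalar Riesz representation theorem for positive Radon measures (the classical Riesz--Markov--Kakutani result, obtained from a positive linear functional by the Carath\'eodory construction, and recorded in \cite{AFP} and \cite{Ma}). I first attach to $\Lambda$ a nonnegative functional on $\Cc(\Rd)$: for $f\in\Cc(\Rd;[0,+\infty))$ put
\[
L(f)\coloneqq \sup\Set{ \va{\Lambda(\phi)} : \phi\in\Cc(\Rd;\R^n),\ \va{\phi}\leq f }.
\]
The boundedness of $\Lambda$ guarantees that $L(f)$ is finite; monotonicity and positive $1$-homogeneity are immediate. The substantive point is \emph{additivity}, $L(f_1+f_2)=L(f_1)+L(f_2)$. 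For ``$\leq$'' one splits an admissible $\phi$ for $f_1+f_2$ as $\phi_i\coloneqq \phi\, f_i/(f_1+f_2)$ on $\Set{f_1+f_2>0}$ and $\phi_i\coloneqq 0$ otherwise, checks that $\phi_i\in\Cc(\Rd;\R^n)$ with $\va{\phi_i}\leq f_i$, and uses linearity of $\Lambda$; for ``$\geq$'', given admissible $\phi_i$ for $f_i$, choosing signs $\sigma_i\in\Set{+1,-1}$ with $\sigma_i\Lambda(\phi_i)=\va{\Lambda(\phi_i)}$ gives $\va{\Lambda(\phi_1)}+\va{\Lambda(\phi_2)}=\Lambda(\sigma_1\phi_1+\sigma_2\phi_2)\leq L(f_1+f_2)$, since $\va{\sigma_1\phi_1+\sigma_2\phi_2}\leq f_1+f_2$. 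Extending $L$ to $\Cc(\Rd;\R)$ by $L(f)\coloneqq L(f^+)-L(f^-)$ yields a positive linear functional, and the scalar theorem furnishes a unique positive Radon measure $\nu$ with $L(f)=\int_{\Rd}f\,\de\nu$.

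Next I extract the polar density. By construction $\va{\Lambda(\phi)}\leq L(\va{\phi})=\int_{\Rd}\va{\phi}\,\de\nu$, so $\Lambda$ is bounded of norm $\leq 1$ on the dense subspace $\Cc(\Rd;\R^n)$ of $L^1(\nu;\R^n)$ and extends to all of it; the identification $L^1(\nu;\R^n)^\ast=L^\infty(\nu;\R^n)$ (legitimate because a Radon measure on $\Rd$ is $\sigma$-finite) produces $g\in L^\infty(\nu;\R^n)$ with $\va{g}\leq 1$ $\nu$-a.e.\ and $\Lambda(\phi)=\int_{\Rd}\phi\cdot g\,\de\nu$ for all $\phi$. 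Setting $\mu\coloneqq g\nu$, i.e.\ $\mu(E)\coloneqq\int_E g\,\de\nu$, we get an $\R^n$-valued Borel measure with $\va{\mu}=\va{g}\,\nu\leq\nu$; in particular $\va{\mu}$ is Radon, and $\mu=(g/\va{g})\,\va{\mu}$ on $\Set{g\neq 0}$ (with $\Set{g=0}$ being $\va{\mu}$-null) exhibits $\mu$ as a vector-valued Radon measure in the sense defined above, with $\Lambda(\phi)=\int_{\Rd}\phi\cdot\de\mu$.

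Finally, for an open set $V$: every $\phi\in\Cc(V;\R^n)$ with $\va{\phi}\leq 1$ has $\Lambda(\phi)=\int_V\phi\cdot g\,\de\nu\leq\int_V\va{g}\,\de\nu=\va{\mu}(V)$, while the reverse bound comes from approximating the measurable direction field $g/\va{g}$ by continuous fields supported in $V$ of modulus $\leq 1$ (Lusin's theorem together with inner regularity of $\nu$), making $\int_V\phi\cdot g\,\de\nu$ as close to $\va{\mu}(V)$ as desired; this is the displayed formula. Uniqueness follows because the difference of two representing Radon measures annihilates $\Cc(\Rd;\R^n)$, so the formula just proved — applied to that difference — forces its total variation to vanish. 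I expect the additivity of $L$ to be the only genuinely delicate step: the renormalised splitting $\phi_i=\phi\,f_i/(f_1+f_2)$ must be shown continuous across $\Set{f_1+f_2=0}$, which works precisely because there $\va{\phi}\leq f_1+f_2=0$; everything else is a routine assembly of the scalar theorem, $L^1$--$L^\infty$ duality, and Lusin's theorem.
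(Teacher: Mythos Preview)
The paper does not supply its own proof of this theorem: it is stated in the preliminary chapter, where the author explicitly writes ``we omit proofs, which may be found in the suggested references'' (namely \cite{AFP} and \cite{Ma}). So there is nothing to compare against on the paper's side.

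Your argument is correct and follows the standard route found in those references: build a positive functional $L$ from $\Lambda$ by taking suprema over $\va{\phi}\leq f$, invoke the scalar Riesz--Markov theorem to get $\nu$, extend $\Lambda$ to $L^1(\nu;\R^n)$ and extract the polar density $g$ by duality, then set $\mu=g\nu$. The additivity of $L$ is indeed the only genuinely nonroutine step, and you handle it properly; in particular your observation that $\va{\phi_i}\leq f_i\to 0$ as one approaches $\Set{f_1+f_2=0}$ is exactly what guarantees continuity of the split pieces. The total-variation formula and uniqueness are argued correctly. One small remark: you implicitly use that a locally finite positive Borel measure on $\Rd$ is automatically Radon (so that $\va{\mu}\leq\nu$ being locally finite suffices), which is true because $\Rd$ is second countable and locally compact; it may be worth saying this explicitly.
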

In view of the previous representation result,
we say that the sequence of Radon measures
$\Set{\mu_\ell}$ \emph{weakly-$\ast$} converges to the Radon measure $\mu$,
and we write $\mu_\ell \weakstar \mu$, if
	\[
		\limseq{\ell} \int_{\Rd} \phi\cdot\de \mu_\ell = \int_{\Rd} \phi\cdot\de \mu
		\quad\text{for all } \phi\in\Cc(\Rd;\R^n).
	\]

We stated above that if $\mu$ is a vector valued Radon measure,
then $\mu = f\va{\mu}$ for some $L^1(\va{\mu})$.
Relations of this kind hold in wider generality:
	\begin{thm}[Besicovitch's Differentiation]
		Let $\mu$ and $\nu$ be respectively a positive and an $\R^n$-valued Radon measure
		on some open $\Omega\subset \Rd$.
		If for all Borel sets $E\subset \Omega$ $\mu(E) = 0$ implies $\va{\nu}(E)  = 0$,
		then for $\mu$-a.e. $x\in\supp \mu$ the following limit exists:
			\[
				f(x) \coloneqq \lim_{r\to 0^+} \frac{\nu(B(x,r))}{\mu(B(x,r))}.
			\]
		Moreover, $f\in L^1(\Omega,\mu;\R^n)$ and $\nu = f\mu$.
		\end{thm}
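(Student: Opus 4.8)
The plan is to reduce the statement to the case of two positive measures, to prove two one-sided density estimates by means of the Besicovitch Covering Theorem, and finally to identify the limit and its integrability. As a first move I would write $\nu=(\nu_1,\dots,\nu_n)$, decompose each component into its positive and negative parts $\nu_j=\nu_j^+-\nu_j^-$, and observe that, since $\nu_j^\pm\le\va\nu$, the hypothesis ``$\mu(E)=0\Rightarrow\va\nu(E)=0$'' is inherited by every $\nu_j^\pm$. Once the theorem is proved for a positive $\nu$, the vectorial case follows by linearity: the componentwise limits add up to $\lim_{r\to0^+}\nu(B(x,r))/\mu(B(x,r))$, and $\nu=f\mu$ likewise. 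So I may assume that $\nu$ is a positive Radon measure and that $\mu$-negligible Borel sets are $\nu$-negligible.

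For $x\in\supp\mu$ I set $\overline D(x)\coloneqq\limsup_{r\to0^+}\nu(B(x,r))/\mu(B(x,r))$ and $\underline D(x)\coloneqq\liminf_{r\to0^+}\nu(B(x,r))/\mu(B(x,r))$, and the crux of the proof is the pair of estimates
\[
	A\subset\Set{\underline D<t},\ \mu(A)<+\infty \ \Longrightarrow\ \nu(A)\le t\,\mu(A);
	\qquad
	A\subset\Set{\overline D>t} \ \Longrightarrow\ t\,\mu(A)\le\nu(U),
\]
valid for every Borel $A\subset\Omega$, every $t>0$ and every open $U\supset A$ with $\nu(U)<+\infty$. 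To prove the first one I would fix $\eps>0$, choose an open $U\supset A$ with $\mu(U)\le\mu(A)+\eps$, and note that the closed balls $B(x,r)\subset U$ with $\nu(B(x,r))<t\,\mu(B(x,r))$ form a fine cover of $A$ (here $\underline D<t$ enters). The Besicovitch Covering Theorem, in the Vitali-type version available for an arbitrary Radon measure, then provides a countable \emph{pairwise disjoint} subfamily $\Set{B_i}$ with $\mu(A\setminus\bigcup_iB_i)=0$, hence also $\nu(A\setminus\bigcup_iB_i)=0$ by hypothesis; therefore $\nu(A)\le\sum_i\nu(B_i)\le t\sum_i\mu(B_i)=t\,\mu(\bigcup_iB_i)\le t\,\mu(U)\le t(\mu(A)+\eps)$, and $\eps\to0^+$ closes it. The second estimate is obtained symmetrically, trapping $A$ in an open set of controlled $\nu$-measure. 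Applying the second one to $A=\Set{\overline D>t}\cap B(0,R)$ with $U=B(0,R+1)$ and letting $t\to+\infty$ yields $\overline D<+\infty$ $\mu$-a.e.; applying both to $A=\Set{\underline D<a}\cap\Set{\overline D>b}\cap B(0,R)$ for rationals $a<b$ forces $b\,\mu(A)\le\nu(A)\le a\,\mu(A)$, so $\mu(A)=0$. Taking unions over $a<b$ and $R\in\N$ shows $\overline D=\underline D<+\infty$ $\mu$-a.e., so that the limit $f(x)$ exists for $\mu$-a.e. $x$; clearly $f$ is $\mu$-measurable and nonnegative.

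To identify the limit I would first reduce, by exhausting $\Omega$ with balls, to proving $\nu(A)=\int_Af\,\de\mu$ for Borel sets $A$ with $\mu(A),\nu(A)<+\infty$. Fixing $t>1$, I would split $A$ into the pieces $A_k\coloneqq A\cap\Set{t^k\le f<t^{k+1}}$ ($k\in\mathbb Z$), plus the set $\Set{f=0}$, on which $\nu$ and $\int f\,\de\mu$ both vanish by the first estimate with $t\to0^+$, plus the $\mu$-negligible set $\Set{f=+\infty}$. On $A_k$ one has $\underline D=f<t^{k+1}$ and $\overline D=f>t^{k-1}$ off a $\mu$-null set, so the estimates give $t^{k-1}\mu(A_k)\le\nu(A_k)\le t^{k+1}\mu(A_k)$, while $t^k\mu(A_k)\le\int_{A_k}f\,\de\mu\le t^{k+1}\mu(A_k)$ by construction; hence $t^{-1}\nu(A_k)\le\int_{A_k}f\,\de\mu\le t^2\nu(A_k)$. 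Summing over $k$ and letting $t\to1^+$ gives $\nu(A)=\int_Af\,\de\mu$, i.e.\ $\nu=f\mu$. Back in the vectorial case one gets $\va\nu=\va f\,\mu$, so $\int_\Omega\va f\,\de\mu=\va\nu(\Omega)<+\infty$ and $f\in L^1(\Omega,\mu;\R^n)$.

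The genuine obstacle is the covering step: since $\mu$ is an arbitrary Radon measure, the classical Vitali lemma (which is tied to Lebesgue measure) is not at hand, and one must invoke the Besicovitch Covering Theorem — the passage from a bounded-overlap covering to a pairwise disjoint one covering $\mu$-almost all of the set being the truly non-elementary ingredient. Everything else reduces to bookkeeping with outer regularity and with the absolute-continuity hypothesis relating $\mu$-null and $\nu$-null sets.
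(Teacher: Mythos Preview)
The paper does not prove this theorem: at the opening of Chapter~\ref{ch:pre} it declares ``we omit proofs, which may be found in the suggested references'', and Besicovitch's Differentiation is one of the standard preliminaries quoted from \cite{AFP} and \cite{Ma}. So there is no proof in the paper to compare against.

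Your argument is the classical one found in those references and is correct: reduce to positive $\nu$ by Jordan decomposition of the components, prove the two one-sided density estimates via the Vitali-type corollary of the Besicovitch Covering Theorem, deduce $\mu$-a.e.\ existence and finiteness of the limit by the rational-gap argument, and identify $\nu=f\mu$ through a geometric partition $\{t^k\le f<t^{k+1}\}$. Two small points worth tightening: the passage from $b\,\mu(A)\le\nu(U)$ for all open $U\supset A$ to $b\,\mu(A)\le\nu(A)$ uses outer regularity of the Radon measure $\nu$ on Borel sets, which you invoke implicitly; and the final integrability $f\in L^1(\Omega,\mu;\R^n)$ relies on $\va\nu(\Omega)<+\infty$, which is indeed part of the paper's definition of a vector-valued Radon measure (the density has unit modulus and lies in $L^1$ of the reference measure). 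You also correctly flag that the Besicovitch Covering Theorem is the genuinely nontrivial input, replacing the Vitali lemma that would only work for doubling measures.
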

We refer to the function $f$ above as the \emph{Radon-Nikodym derivative} of $\nu$ w.r.t. to $\mu$
and we denote it by ${\de\nu}/{\de\mu}$.
 
Another useful property of Radon measures concerns the maximum numbers
of leaves in a foliation which can have positive mass for a Radon measure:
\begin{prop}\label{stm:foliations}
	Let $\mu$ be a Radon measure on $\Rd$
	and let $\set{E_\alpha}_{\alpha\in A}$ be a family of disjoint Borel sets.
	Then, there exist at most countable $\alpha\in A$ such that $\mu(E_\alpha)>0$.
\end{prop}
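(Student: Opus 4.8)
The plan is to exploit the fact that every Radon measure on $\Rd$ is $\sigma$-finite — being locally finite — together with a pigeonhole argument. First I would fix the exhaustion $\Rd = \bigcup_{n\in\N} B(0,n)$ and recall that $\mu\big(B(0,n)\big) < +\infty$ for each $n$, since $B(0,n)$ has compact closure and $\mu$ is locally finite.

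Next, for fixed $n,k\in\N$, I would introduce the index set
\[
	A_{n,k} \coloneqq \Set{\alpha\in A : \mu\big(E_\alpha\cap B(0,n)\big) > \tfrac{1}{k}}.
\]
Since the $E_\alpha$ are pairwise disjoint Borel sets, for any finite collection $\alpha_1,\dots,\alpha_m\in A_{n,k}$ additivity of $\mu$ gives
\[
	\frac{m}{k} < \sum_{j=1}^{m} \mu\big(E_{\alpha_j}\cap B(0,n)\big)
	= \mu\Big(\bigcup_{j=1}^{m} E_{\alpha_j}\cap B(0,n)\Big) \leq \mu\big(B(0,n)\big),
\]
so $m \leq k\,\mu\big(B(0,n)\big)$; hence $A_{n,k}$ is finite.

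Finally, I would observe that $E_\alpha\cap B(0,n)\uparrow E_\alpha$ as $n\to+\infty$, so by continuity of $\mu$ from below, $\mu(E_\alpha) > 0$ forces $\mu\big(E_\alpha\cap B(0,n)\big) > 1/k$ for some $n,k\in\N$. Therefore
\[
	\Set{\alpha\in A : \mu(E_\alpha) > 0} = \bigcup_{n,k\in\N} A_{n,k},
\]
which is a countable union of finite sets, hence at most countable. I do not expect any genuine obstacle: the argument uses only $\sigma$-finiteness (a consequence of local finiteness of Radon measures) and finite additivity, and the one point to get right is purely bookkeeping — the pigeonhole count must be carried out on a set of finite measure, which is exactly why the truncation to the balls $B(0,n)$ is inserted before taking the countable union.
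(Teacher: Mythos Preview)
Your argument is correct and is the standard proof of this fact. The paper does not actually supply a proof of this proposition: it appears in the preliminary chapter, where the author explicitly states that proofs are omitted and can be found in the references (e.g., \cite{AFP}). So there is nothing to compare against, and your pigeonhole-on-finite-measure-pieces approach is exactly what one finds in those references.
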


Finally, we mention a simple criterion to prove that
the restriction of a Borel measure is a Radon measure.
We recall that the \emph{restriction} of the measure $\mu$ to the set $\Omega$
is the measure given by $\mu \llcorner \Omega(E) \coloneqq \mu(E\cap\Omega)$.

	\begin{prop}\label{stm:restriction}
		If $\mu$ is a Borel regular measure on $\Rd$
		and $E$ is a $\mu$-measurable set such that $\mu \llcorner E$ is locally finite,
		then $\mu\llcorner E$ is a Radon measure.
	\end{prop}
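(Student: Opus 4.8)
The plan is to check directly the two requirements in the definition of Radon measure for $\nu \coloneqq \mu\llcorner E$: local finiteness, which is exactly the hypothesis, and regularity. As a preliminary remark, $\nu$ is indeed a positive Borel measure: $\nu \geq 0$ and $\nu(\emptyset)=0$ trivially, while every Borel set is $\mu$-measurable (since $\mu$ is Borel) and $E$ is $\mu$-measurable, so each Borel set satisfies the Carathéodory splitting criterion for $\nu$ as well. Thus the whole point is to prove that for every $F\subset\Rd$ there is a Borel set $A\supseteq F$ with $\nu(A)=\nu(F)$, i.e. $\mu(A\cap E)=\mu(F\cap E)$.

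First I would reduce to the case $\mu(E)<+\infty$. Decompose $\Rd$ into the countably many bounded Borel shells $Q_k\coloneqq\Set{x\in\Rd : k-1\le\va{x}<k}$, $k\ge 1$, and set $E_k\coloneqq E\cap Q_k$; local finiteness of $\nu$ gives $\mu(E_k)\le\mu\llcorner E(\overline{B(0,k)})<+\infty$. Since the $E_k$ are disjoint and $\mu$-measurable, countable additivity of the outer measure $\mu$ over this family yields $\nu=\sum_k\mu\llcorner E_k$ as measures. If each $\nu_k\coloneqq\mu\llcorner E_k$ is regular, then so is $\nu$: given $F$, pick Borel $A_k\supseteq F$ with $\nu_k(A_k)=\nu_k(F)$ and take $A\coloneqq\bigcap_kA_k$, which is Borel, contains $F$, and satisfies $\nu_k(A)=\nu_k(F)$ for every $k$, hence $\nu(A)=\sum_k\nu_k(A)=\sum_k\nu_k(F)=\nu(F)$. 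So it suffices to treat a $\mu$-measurable set $E$ with $\mu(E)<+\infty$.

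For the finite case I would use the Borel regularity of $\mu$ twice to sandwich $E$ between Borel sets: choose Borel $D\supseteq E$ with $\mu(D)=\mu(E)<+\infty$, so that $\mu(D\setminus E)=0$ because $E$ is $\mu$-measurable; then choose Borel $N\supseteq D\setminus E$ with $\mu(N)=0$. Now fix $F\subset\Rd$. If $\mu(F\cap E)=+\infty$ take $A=\Rd$. Otherwise pick Borel $C\supseteq F\cap E$ with $\mu(C)=\mu(F\cap E)$ and set $A\coloneqq C\cup\co{D}\cup N$. This is Borel, and $F\subseteq A$ because $F\setminus E\subseteq\co{E}=\co{D}\cup(D\setminus E)\subseteq\co{D}\cup N$. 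Since $\co{D}\cap E=\emptyset$ and $\mu(N\cap E)\le\mu(N)=0$, we get $\nu(A)=\mu(A\cap E)=\mu(C\cap E)$; and from $F\cap E\subseteq C\cap E\subseteq C$ we deduce $\mu(F\cap E)\le\mu(C\cap E)\le\mu(C)=\mu(F\cap E)$, whence $\nu(A)=\mu(F\cap E)=\nu(F)$, as required.

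The step I expect to be the real obstacle is precisely the regularity of $\nu$, and more specifically the fact that $E$ (and hence $\co{E}$) need not be Borel, so one cannot enclose a test set $F$ in a Borel superset without risking an increase of the $\mu$-mass inside $E$. The device that overcomes this is the null-set enclosure $D\setminus E\subseteq N$ with $\mu(N)=0$, which is available only when $\mu(E)<+\infty$; this is the reason the preliminary $\sigma$-finite decomposition of $E$ into bounded pieces — legitimate exactly because $\nu$ is locally finite — is indispensable.
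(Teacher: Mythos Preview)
The paper does not actually prove this proposition: it sits in the preliminary chapter, where the author explicitly writes ``we omit proofs, which may be found in the suggested references'' (the result is standard, see e.g.\ \cite{AFP} or \cite{Ma}). So there is no paper proof to compare against.

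That said, your argument is correct and is essentially the standard one. The reduction to $\mu(E)<+\infty$ via the annular decomposition $E=\bigcup_k E_k$ is the right move, and it uses local finiteness exactly where it is needed. In the finite case your Borel sandwich $E\subseteq D$ with $\mu(D\setminus E)=0$ and the null Borel cover $N\supseteq D\setminus E$ is precisely the device that handles the possible non-Borelness of $E$; the construction $A=C\cup\co{D}\cup N$ then works as you check. One cosmetic remark: once $\mu(E)<+\infty$ the case $\mu(F\cap E)=+\infty$ cannot occur, so that clause is superfluous (though harmless).
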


The measures that occur most frequently in the sequel
are the the $d$-dimensional Lebesgue and the $(d-1)$-dimensional Hausdorff measure on $\Rd$,
which we denote respectively by $\Ld$ and $\Hdmu$.
The Lebesgue measure is an example of Radon measure,
while $\Hdmu$ is Borel regular, but not locally finite;
however, by Proposition \ref{stm:restriction}, the restriction $\Hdmu\llcorner E$
is Radon whenever $\Hdmu(E)$ is finite.

We shall henceforth neglect to specify the measure
w.r.t. which a set or a function is measurable,
when the measure is $\Ld$ or the product $\Ld \times \Ld$ on $\Rd \times \Rd$;
similarly, we shall adopt the expression ``a.e.'' in place of
``$\Ld$-a.e.'' and of ``$\Ld \times \Ld$-a.e.''.
If $u$ and $v$ are measurable functions,
we shall use ``$u=v$'' as a shorthand for ``$u(x)=v(x)$ for a.e. $x\in\Rd$'';
in the same spirit, if $E$ and $F$ are measurable sets,
``$E=F$'' means that $E$ and $F$ coincide up to a negligible set
and ``$E\subset F$'' that $\Ld(E\cap\co{F}) = 0$.

We now introduce the class of functions whose distributional derivative is a Radon measure.
If $u$ is a function,
we  denote by $\D u$ its distributional gradient,
while we use $\nabla u$ for the pointwise one.

Let $\Omega \subset \Rd$ be an open set.
A function $u\in L^1(\Omega)$ is
\emph{of bounded variation} in $\Omega$
if $\D u$ is a $\Rd$-valued Radon measure on $\Omega$ such that $\norm{\D u} <+\infty$.
In this case, we write $u\in\BV(\Omega)$.
Since
	\begin{equation}\label{eq:TV}
		\norm{\D u} =
			\sup\Set{ \int_{\Omega} u \div\zeta : \zeta\in C^1_{\mathrm{c}}(\Omega;\Rd), \va{\zeta} \leq 1 },
	\end{equation}
$\BV(\Omega)$ may be equivalently defined as the subspace of all $u\in L^1(\Omega)$
such that the right-hand side in \eqref{eq:TV} is finite.
Observe that, in the light of \eqref{eq:TV},
the map $u\mapsto \norm{\D u}$ is lower semicontinuous w.r.t. the $L^1_\loc(\Omega)$ convergence.

The total variation of a $\BV$ functions bounds the $L^1$-norm of its difference quotients.
Actually, this property provides a characterisation
that we shall exploit several times in our analysis:

\begin{prop}[Characterisation \emph{via} difference quotients]\label{stm:char-BV}
	Let $\Omega\subset \Rd$ be open.
	Then, $u\colon\Omega \to \R$ is a function of bounded variation in $\Omega$
	if and only if 	there exists a constant $c\geq 0$ such that
	for any open set $V$ that is compactly contained in $\Omega$ and
	for any $z\in\Rd$ with $\va{z}<\dist(V,\co{\Omega})$ it holds
	\[
	\norm{\tau_z u -u}_{L^1(V)}\leq c\va{z},
	\]
	where $\tau_z u(x)\coloneqq u(x+z)$.
	In particular, it is possible to choose $c=\va{\D u}(\Omega)$.
\end{prop}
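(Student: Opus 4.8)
The plan is to establish the two implications separately: the ``only if'' part by a mollification argument, the ``if'' part through the variational characterisation \eqref{eq:TV} of the total variation.

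Assume first that $u\in\BV(\Omega)$, fix an open set $V$ compactly contained in $\Omega$ and a vector $z\in\Rd$ with $\va{z}<\dist(V,\co{\Omega})$, and choose $\delta\in\bigl(0,\dist(V,\co{\Omega})-\va{z}\bigr)$. Let $\eta_\eps$ be a standard mollifier and put $u_\eps\coloneqq u\ast\eta_\eps$. The first step is the elementary bookkeeping remark that, for $\eps<\delta$, every $x\in V$, $t\in[0,1]$ and $w\in B(0,\eps)$ satisfy $\dist(x+tz+w,\co{\Omega})\geq\dist(V,\co{\Omega})-\va{z}-\eps>0$; in particular $u_\eps$ is smooth on a neighbourhood of each segment $\set{x+tz:t\in[0,1]}$ with $x\in V$, and $(V+tz)+B(0,\eps)\subset\Omega$ for every $t\in[0,1]$. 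Then I would write, for $x\in V$,
\[
u_\eps(x+z)-u_\eps(x)=\int_0^1\nabla u_\eps(x+tz)\cdot z\,\de t,
\]
take the $L^1(V)$ norm, apply Fubini's theorem and the change of variables $y=x+tz$, and use the standard fact that mollification does not increase the total variation, namely $\int_A\va{\nabla u_\eps}\leq\va{\D u}\bigl(A+B(0,\eps)\bigr)$ whenever $A$ is open with $A+B(0,\eps)\subset\Omega$. This yields
\[
\norm{\tau_z u_\eps-u_\eps}_{L^1(V)}\leq\va{z}\int_0^1\int_{V+tz}\va{\nabla u_\eps}\,\de y\,\de t\leq\va{z}\,\va{\D u}(\Omega).
\]
Since $V$ and $V+z$ are compactly contained in $\Omega$, one has $u_\eps\to u$ and $\tau_z u_\eps\to\tau_z u$ in $L^1(V)$, so passing to the limit $\eps\to0^+$ gives the asserted inequality with $c=\va{\D u}(\Omega)$, which also settles the ``in particular'' clause.

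For the converse, suppose the difference-quotient estimate holds with some $c\geq0$. Given $\zeta\in\Cuc(\Omega;\Rd)$ with $\va{\zeta}\leq1$, I would pick an open set $V$ compactly contained in $\Omega$ with $\supp\zeta\subset V$. For every $i\in\set{1,\dots,d}$ and every $h$ small enough that $\va{h}<\dist(V,\co{\Omega})$ and all the supports involved stay inside $V$, the uniform convergence $h^{-1}\bigl(\zeta_i(\,\cdot\,+he_i)-\zeta_i(\,\cdot\,)\bigr)\to\partial_i\zeta_i$ together with the change of variables $x\mapsto x+he_i$ gives
\[
\int_\Omega u\,\partial_i\zeta_i=\lim_{h\to0}\frac1h\int_V\bigl(\tau_{-he_i}u-u\bigr)\zeta_i.
\]
Bounding $\va{\int_V(\tau_{-he_i}u-u)\zeta_i}\leq\norm{\tau_{-he_i}u-u}_{L^1(V)}\leq c\va{h}$ and dividing by $\va{h}$, I obtain $\va{\int_\Omega u\,\partial_i\zeta_i}\leq c$, whence $\va{\int_\Omega u\div\zeta}\leq cd$; by \eqref{eq:TV} this proves $u\in\BV(\Omega)$.

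As this is a classical statement, I do not anticipate a genuine obstacle. The only points demanding some care are the domain bookkeeping in the mollification argument — arranging that the translates $V+tz$ and their $\eps$-enlargements remain inside $\Omega$ — and the use of the estimate $\int\va{\nabla u_\eps}\leq\va{\D u}(\,\cdot\,)$ on a slightly enlarged set, which is precisely where the hypothesis $u\in\BV(\Omega)$ enters; in the converse direction no such control is available, and none is needed.
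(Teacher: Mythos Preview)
The paper does not prove this proposition: it sits in the preliminary chapter, where the author explicitly states that proofs are omitted and may be found in the cited monographs \cites{AFP,Ma}. Your argument is the standard textbook one and is correct; the only minor caveat is that the paper's definition of $\BV(\Omega)$ includes $u\in L^1(\Omega)$, which the difference-quotient hypothesis alone does not give for unbounded $\Omega$ --- but that is a looseness in the statement rather than a flaw in your proof.
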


Finally, we remind a compactness criterion for functions of bounded variation.

	\begin{thm}\label{stm:BVcpt}
		Let $\Omega\subset \Rd$ be an open set and
		let $\Set{u_\ell}$ be a sequence in $\BV_\loc(\Omega)$.
		If for all open sets $V$ that are compactly contained in $\Omega$ it holds
		$\norm{u_\ell}_{L^1(V)} + \va{\D u_\ell}(V) \leq c$ for some $c\geq 0$,			
		then there exist a subsequence $\Set{u_{\ell_m}}$ and a  function $u\in\BV_\loc(\Omega)$
		such that $\Set{u_{\ell_m}}$ converges to $u$ in $L^1_\loc(\Omega)$.
	\end{thm}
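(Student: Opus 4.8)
The statement to prove is Theorem \ref{stm:BVcpt}, the local compactness criterion for $\mathrm{BV}_\loc$.

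\medskip

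The plan is to reduce the local statement to the classical global $\mathrm{BV}$ compactness theorem (Rellich--Kondrachov for $\mathrm{BV}$) on bounded Lipschitz domains, via an exhaustion-plus-diagonal argument. First I would fix an exhaustion of $\Omega$ by open sets $V_1 \Subset V_2 \Subset \cdots$ with $\bigcup_k V_k = \Omega$ and each $\overline{V_k}$ compact in $\Omega$; one may take, e.g., $V_k = \Set{ x\in\Omega : \dist(x,\co{\Omega}) > 1/k, \ \va{x} < k }$, and if desired regularise these to have smooth boundary, though bounded Lipschitz suffices. By hypothesis there is $c\geq 0$ with $\norm{u_\ell}_{L^1(V_k)} + \va{\D u_\ell}(V_k)\leq c$ for every $\ell$ and every $k$. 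On the fixed bounded domain $V_1$, the sequence $\Set{u_\ell}$ is bounded in $\mathrm{BV}(V_1)$, so by the standard $\mathrm{BV}$ compact embedding into $L^1$ there is a subsequence converging in $L^1(V_1)$ to some limit. Repeating on $V_2$, extracting a further subsequence converging in $L^1(V_2)$, and iterating, then taking the diagonal subsequence $\Set{u_{\ell_m}}$, I obtain a sequence that converges in $L^1(V_k)$ for every $k$; call the limit $u$, which is well-defined on all of $\Omega$ by consistency of the limits on overlapping $V_k$.

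\medskip

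It remains to check $u\in\mathrm{BV}_\loc(\Omega)$ and that the convergence is in $L^1_\loc(\Omega)$. The latter is immediate: any open $V\Subset\Omega$ is contained in some $V_k$ (since the $V_k$ exhaust $\Omega$ and $\overline V$ is compact), and $L^1(V_k)$-convergence gives $L^1(V)$-convergence. For the former, on each $V_k$ we have $u_{\ell_m}\to u$ in $L^1(V_k)$ with $\va{\D u_{\ell_m}}(V_k)\leq c$; by the $L^1_\loc$-lower semicontinuity of the total variation (noted after \eqref{eq:TV}), $\va{\D u}(V_k)\leq c<+\infty$, and of course $u\in L^1(V_k)$. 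Hence $u\in\mathrm{BV}(V_k)$ for every $k$, i.e. $u\in\mathrm{BV}_\loc(\Omega)$, which completes the proof.

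\medskip

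The only genuine input beyond bookkeeping is the global $\mathrm{BV}$ compact embedding $\mathrm{BV}(V)\hookrightarrow\hookrightarrow L^1(V)$ on a bounded Lipschitz domain $V$; this is classical and available in \cite{AFP} or \cite{Ma}. The main point requiring a little care is ensuring the exhausting sets $V_k$ are regular enough (bounded with, say, Lipschitz boundary) for that embedding to apply, and that every $V\Subset\Omega$ is eventually captured by the exhaustion so that diagonal $L^1(V_k)$-convergence upgrades to $L^1_\loc(\Omega)$-convergence. Neither is a real obstacle.
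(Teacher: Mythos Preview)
Your argument is correct and is the standard route to this local compactness statement: exhaust $\Omega$ by bounded Lipschitz domains, apply the classical $\BV$ compact embedding on each, extract by a diagonal procedure, and recover the $\BV_\loc$ regularity of the limit by lower semicontinuity of the total variation.

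There is no comparison to make with the paper's proof, however, because the paper does not supply one: Theorem~\ref{stm:BVcpt} appears in the preliminary chapter, where the author explicitly says that proofs of the background material are omitted and may be found in the monographs \cite{AFP} and \cite{Ma}. Your write-up is exactly the kind of argument one would expect to find there.
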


\section{Finite perimeter sets and anisotropic surface energies}\label{sec:aniso}
The material in this section lays the groundwork for Chapters \ref{ch:nlpnlc} and \ref{ch:judith}.
There, we deal with a class of functionals that we name nonlocal perimeters;
here, instead, we introduce the concept of distributional perimeter,
which we may consider as the classical one.

The definition of perimeter dates back to the seminal works of R. Caccioppoli and E. De Giorgi.
In a nutshell, we identify a set with its characteristic function,
we consider the distributional gradient of the latter,
and we define its total variation as perimeter of the given set.
So, if $E\subset \Rd$ is measurable and $\Omega\subset\Rd$ is an open set,
we define the \emph{perimeter} of $E$ in $\Omega$ the quantity
	\begin{equation}\label{eq:dfnPer}
		\Per(E;\Omega)\coloneqq \sup\Set{ \int_{E} \div\zeta : \zeta\in \Cuc(\Omega;\Rd), \va{\zeta}\leq 1 }.
	\end{equation}
We say that $E$ is a \emph{set of finite perimeter} in $\Omega$
when $\Per(E;\Omega)<+\infty$.
Sometimes, we shall use the expression \emph{Caccioppoli set} as a synonym,
and we shall refer to $\Per$ as De Giorgi's perimeter,
when we want to distinguish it from the nonlocal counterparts
that occur in the next chapters.

In what follows, we shall frequently identify sets with their characteristic functions.
In particular, by saying that
a sequence $\Set{E_\ell}$ converges in $L^1_\loc(\Omega)$ to $E$
we mean that 
	\[
		\limseq{\ell} \Ld\big( (E_\ell \symdif E) \cap C \big) = 0
		\quad\text{for all compact sets $C$ such that $C\subset \Omega$,}
	\]
As a consequence of its definition,
the perimeter is $L^1_\loc(\Omega)$-lower semicontinuous.

Using the language of the previous section,
we may say that $E$ has finite perimeter in $\Omega$
if its characteristic function $\chi_E$ belongs to $\BV_\loc(\Omega)$.
Accordingly, if $E$ has finite perimeter in $\Omega$,
then $\D \chi_E$ is an $\Rd$-valued Radon measure in $\Omega$ and
	\[
	 \Per(E;\Omega) \coloneqq \va{\D \chi_E}(\Omega).
	\]
Moreover, if we denote by $\hat{n}$ the Radon-Nikodym derivative
of $\D \chi_E$ w.r.t. $\va{\D \chi_E}$, it holds
	\begin{equation}\label{eq:divthm}
		\int_{E} \div\zeta \de x = - \int_{\Omega} \zeta\cdot \hat{n}\de \va{\D \chi_E}
		\quad\text{for all } \zeta\in \Cuc(\Omega;\Rd).
	\end{equation}
The unit vector field
	\begin{equation}\label{eq:in-norm}
	\hat n(x)\coloneqq  \lim_{r\to 0^+}\frac{\D\chi_E(B(x,r))}{\va{\D\chi_E}(B(x,r))},
	\end{equation}
which is well defined $\va{\D \chi_E}$-a.e. $x\in\supp \va{\D \chi_E}$, may be understood as
the \emph{inner normal} of $E$.

Further, let
	\[
	\partial^\ast E \coloneqq \Set{x\in \Rd : \hat n(x) \text{ exists and } \va{\hat n(x)}=1}.
	\]
Fundamental results by De Giorgi and H. Federer \cites{D,F} show that
$\partial^\ast E$ is $(d-1)$-rectifiable for all measurable $E$
and that $\D \chi_E = \hat{n} \Hdmu \llcorner \partial^\ast E$,
whence
	\begin{equation*}\label{eq:per-Haus}
		\Per(E;\Omega)=\Hdmu(\partial^\ast E\cap\Omega),
	\end{equation*}
We call the set $\partial^\ast E$ \emph{reduced boundary} of $E$.
For any $x\in\partial^\ast E$, it holds
	\begin{equation}\label{eq:blowup}
		\frac{E-x}{r} \to \Set{y\in\Rd : y\cdot \hat n(x) > 0} \quad\mbox{as } r\to 0^+ \mbox{ in } L^1_\mathrm{loc}(\Rd) .
	\end{equation}

Sets of finite perimeter may be exploited to describe various physical models.
We are interested in anisotropic surface energies,
but we also introduce some more general concepts.

Let $\sigma\colon \R^n \to [0,+\infty]$ be a positevely $1$-homogeneous measurable function
and let $\Omega \subset \Rd$ be measurable.
For any $\R^n$-valued Radon measure on $\Rd$,
we define the \emph{anisotropic total variation}\index{total variation!anisotropic --} of $\mu$ as
	\begin{equation}\label{eq:aTV}
		J_\sigma(\mu;\Omega) \coloneqq \int_{\Omega} \sigma\left(\der{\mu}{\va{\mu}}\right) \de\va{\mu}.
	\end{equation}
The anisotropy is encoded by the function $\sigma$;
when it is constant, the usual total variation is recovered.

The following results sums up the continuity properties of $J_\sigma$:

	\begin{thm}[Reshetnyak]\label{stm:reshetnyak}
		Let $J_\sigma$ be the functional in \eqref{eq:aTV}.
		\begin{enumerate}
			\item If $\sigma\colon \R^n \to [0,+\infty]$ is $1$-homogeneous, lower semicontinuous, and convex,
				then for all open $\Omega\subset \Rd$ it holds
					\[
						J_\sigma(\mu;\Omega) \leq \liminf_{k\to+\infty} J_\sigma(\mu_k;\Omega)
					\]
				whenever $\mu$ and $\mu_k$, for all $k\in\N$, are $\R^n$-valued Radon measures on $\Rd$
				such that $\mu_k \weakstar \mu$.
			\item If $\sigma\colon \mathbb{S}^{n-1} \to [0,+\infty)$ is bounded and continuous,
				then for all open $\Omega\subset \Rd$ it holds
					\[
						J_\sigma(\mu;\Omega) = \limseq{k} J_\sigma(\mu_k;\Omega)
					\]
				whenever $\mu$ and $\mu_k$, for all $k\in\N$, are $\R^n$-valued Radon measures on $\Omega$
				such that $\mu_k \weakstar \mu$, $\norm{\mu_k} \to \norm{\mu}$,
				and $\norm{\mu}<+\infty$.
		\end{enumerate}
	\end{thm}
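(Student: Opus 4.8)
The plan is to treat the two assertions separately, since they are of different nature: part (i) is a lower semicontinuity statement under weak-$\ast$ convergence alone, while part (ii) upgrades this to full continuity under the extra hypothesis of mass convergence. For part (i), the key observation is that a $1$-homogeneous, lower semicontinuous, convex function $\sigma\colon\R^n\to[0,+\infty]$ admits a representation as a supremum of linear functionals: there is a set $C\subset\R^n$ (essentially the polar/subdifferential set $\{p : p\cdot\xi\le\sigma(\xi)\ \forall\xi\}$) such that $\sigma(\xi)=\sup_{p\in C}p\cdot\xi$. First I would record this duality and note that $C$ may be taken to be a countable set of continuous (indeed linear) functions by separability. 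Then, writing $\der{\mu}{\va{\mu}}$ for the Radon--Nikodym density, one has for each fixed $p\in C$ and each $\phi\in\Cc(\Omega)$ with $0\le\phi\le 1$ the identity $\int_\Omega \phi\, p\cdot\der{\mu}{\va\mu}\,\de\va\mu = \int_\Omega \phi\, p\cdot\de\mu$, and the right-hand side passes to the limit along $\mu_k\weakstar\mu$ by definition of weak-$\ast$ convergence. Taking a supremum over the $p\in C$ and over such $\phi$, and using that $\sigma\ge 0$ together with an inner-regularity/monotone-convergence argument to localise to $\Omega$, yields $J_\sigma(\mu;\Omega)\le\liminf_k J_\sigma(\mu_k;\Omega)$. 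This is the standard ``supremum of weak-$\ast$ continuous functionals is weak-$\ast$ lower semicontinuous'' mechanism, the only care being the bookkeeping to restrict everything to the open set $\Omega$.

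For part (ii), the hypothesis is that $\sigma$ is bounded and continuous merely on the sphere $\mathbb S^{n-1}$ (with no convexity), that $\mu_k\weakstar\mu$, that $\norm{\mu_k}\to\norm{\mu}<+\infty$, and we want full convergence of $J_\sigma$. The natural route is to lift everything to the sphere: associate with each $\R^n$-valued Radon measure $\mu$ the pushforward measure $\Sigma\mu$ on $\Omega\times\mathbb S^{n-1}$ defined by $\int g\,\de(\Sigma\mu)\coloneqq\int_\Omega g\big(x,\der{\mu}{\va\mu}(x)\big)\,\de\va\mu(x)$ for $g\in C_0(\Omega\times\mathbb S^{n-1})$. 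The crucial lemma — and this is the technical heart of the argument — is that $\mu_k\weakstar\mu$ together with $\norm{\mu_k}\to\norm{\mu}$ implies $\Sigma\mu_k\weakstar\Sigma\mu$ as measures on $\Omega\times\mathbb S^{n-1}$. Once that lemma is in hand, part (ii) follows immediately by testing against $g(x,\theta)=\phi(x)\sigma(\theta)$ with $\phi\to\chi_\Omega$, because $\sigma$ is bounded and continuous on the sphere, so $g$ (after localising) is an admissible test function and $J_\sigma(\mu_k;\Omega)=\int g\,\de(\Sigma\mu_k)\to\int g\,\de(\Sigma\mu)=J_\sigma(\mu;\Omega)$.

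The proof of that lifting lemma is where the real work sits. One shows it by density: test functions of the form $g(x,\theta)=\phi(x)\,a\cdot\theta$ with $\phi\in C_0(\Omega)$ and $a\in\R^n$ are handled directly — $\int\phi(x)\,a\cdot\der{\mu_k}{\va{\mu_k}}\,\de\va{\mu_k}=\int\phi\,a\cdot\de\mu_k\to\int\phi\,a\cdot\de\mu$ by weak-$\ast$ convergence — and by Stone--Weierstrass the linear span of products $\phi(x)$ times a polynomial in $\theta$ (restricted to the sphere) is dense in $C_0(\Omega\times\mathbb S^{n-1})$. Extending from linear to polynomial dependence on $\theta$, and then to arbitrary continuous $g$, requires an equiboundedness control on the total masses $\norm{\Sigma\mu_k}=\norm{\mu_k}$, which is exactly where $\norm{\mu_k}\to\norm{\mu}$ enters: it gives both the uniform bound needed for the density argument and, combined with weak-$\ast$ convergence, prevents any loss of mass in the limit (no ``escape to the boundary'' or concentration that a mere weak-$\ast$ bound would allow). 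I expect the main obstacle to be precisely this no-loss-of-mass bookkeeping near $\partial\Omega$ — handling it cleanly by exhausting $\Omega$ with compactly contained open sets and using $\norm{\mu}<+\infty$ to make the tails uniformly small. For a fully detailed treatment one may of course refer to \cite{AFP}, where this appears as the classical Reshetnyak continuity theorem; here I would present the above lifting argument as the conceptual skeleton and relegate the $\epsilon$-level estimates to that reference.
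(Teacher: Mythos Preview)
The paper does not actually prove this theorem: it appears in the preliminary Chapter~\ref{ch:pre}, where the author explicitly states that proofs are omitted and may be found in the suggested references (here \cite{AFP} and \cite{Ma}). Your proposal is a correct and standard outline of the Reshetnyak arguments---the dual representation of a $1$-homogeneous convex function for part~(i), and the lifting to $\Omega\times\mathbb{S}^{n-1}$ combined with mass convergence for part~(ii)---and indeed matches the treatment in \cite{AFP} that you yourself cite, so there is nothing to compare against in the paper itself.
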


For $E \subset \Rd$, we may introduce an \emph{anisotropic perimeter}\index{perimeter!anisotropic --} functional
by setting
	\begin{equation}\label{eq:aPer}
		\Per_\sigma(E;\Omega) \coloneqq J_\sigma(\D \chi_E;\Omega)
												= \int_{\partial^\ast E \cap \Omega} \sigma(\hat{n}(x)) \de\Hdmu(x).
	\end{equation}
It is well-known that the total variation of a $\BV$ function
equals the integral of the perimeter of its level set.
We recall an anisotropic variant of such identity:
 
	\begin{prop}[Anisotropic Coarea Formula,\cites{CCCNP,Gr}]\label{stm:a-coarea}\index{Coarea Formula!anisotropic --}
		Suppose that $\sigma\colon \Rd\to [0,+\infty]$ is a norm
		and that $\Omega$ is an open, bounded set with Lipschitz boundary.
		Then, for all $u\in\BV(\Omega)$,
		\begin{equation*}
			J_\sigma (\D u;\Omega) = \int_{-\infty}^{+\infty} \Per_\sigma (\Set{u>t};\Omega) \de t
		\end{equation*}
	\end{prop}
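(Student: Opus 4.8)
The plan is to derive the identity from the \emph{vectorial} form of the classical Coarea Formula for $\BV$ functions, and then to transfer the anisotropy by a measure-theoretic argument. First I would recall that, for $u\in\BV(\Omega)$ and $t\in\R$, the superlevel set $E_t\coloneqq\Set{u>t}$ has finite perimeter in $\Omega$ for a.e.\ $t$, that $t\mapsto\Per(E_t;\Omega)$ is measurable, and that the slicing identities
\[
\D u=\int_{-\infty}^{+\infty}\D\chi_{E_t}\,\de t,\qquad\va{\D u}=\int_{-\infty}^{+\infty}\va{\D\chi_{E_t}}\,\de t
\]
hold in the sense of ($\Rd$-valued, resp.\ positive) Radon measures on $\Omega$; this is standard, see \cite{AFP}. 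The genuine input I need is the accompanying refinement: writing $\hat\nu\coloneqq\der{\D u}{\va{\D u}}$ and $\hat n_t\coloneqq\der{\D\chi_{E_t}}{\va{\D\chi_{E_t}}}$ for the respective Radon--Nikodym derivatives, one has, for a.e.\ $t$, $\hat n_t(x)=\hat\nu(x)$ for $\va{\D\chi_{E_t}}$-a.e.\ $x\in\Omega$. Heuristically, the level sets of $u$ foliate $\Omega$ by hypersurfaces whose measure-theoretic normals point $\va{\D u}$-a.e.\ in the direction $\hat\nu$.

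Granting this, the conclusion follows quickly. Since $\sigma$ is a norm it is nonnegative, positively $1$-homogeneous and continuous, so $\sigma\circ\hat\nu$ is a bounded Borel function; combining \eqref{eq:aPer}, \eqref{eq:aTV} and the alignment just recalled, for a.e.\ $t$ I get
\[
\Per_\sigma(E_t;\Omega)=\int_\Omega\sigma(\hat n_t)\,\de\va{\D\chi_{E_t}}=\int_\Omega\sigma(\hat\nu)\,\de\va{\D\chi_{E_t}}.
\]
As $\sigma\ge0$, Tonelli's theorem applied to the disintegration $\va{\D u}=\int_{-\infty}^{+\infty}\va{\D\chi_{E_t}}\,\de t$ then yields
\[
\int_{-\infty}^{+\infty}\Per_\sigma(E_t;\Omega)\,\de t=\int_{-\infty}^{+\infty}\!\left(\int_\Omega\sigma(\hat\nu)\,\de\va{\D\chi_{E_t}}\right)\!\de t=\int_\Omega\sigma(\hat\nu)\,\de\va{\D u}=J_\sigma(\D u;\Omega),
\]
which is exactly the asserted formula. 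The assumption that $\Omega$ is bounded with Lipschitz boundary enters only through $u\in\BV(\Omega)$, guaranteeing $\va{\D u}(\Omega)<+\infty$ so that the interchange above is harmless.

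The hard part is the alignment $\hat n_t=\hat\nu$ for a.e.\ $t$; the rest is bookkeeping. This rests on Besicovitch's Differentiation Theorem applied to the slices of $\D u$ and on the structure of the reduced boundary of $E_t$, and I would either quote it from \cite{AFP} or recover it directly by differentiating $\D u=\int_{-\infty}^{+\infty}\D\chi_{E_t}\,\de t$ against shrinking balls. An alternative, slightly longer route would be to prove the formula first for $u\in C^\infty(\Omega)\cap\BV(\Omega)$, where it follows from Sard's theorem and the smooth coarea formula once one writes $\sigma(\nabla u)=\sigma\bigl(\nabla u/\va{\nabla u}\bigr)\va{\nabla u}$ by $1$-homogeneity, and then to pass to the limit along smooth functions converging to $u$ strictly in $\BV$, invoking the continuity statement in Theorem~\ref{stm:reshetnyak} for the left-hand side; in that approach the delicate point becomes the a.e.-$t$ convergence of $\Per_\sigma(\Set{u_k>t};\Omega)$, which one handles by pairing lower semicontinuity with the isotropic Coarea Formula to upgrade the liminf to a genuine limit.
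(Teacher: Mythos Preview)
The paper does not prove this proposition: it is stated in Chapter~\ref{ch:pre} among the preliminaries, with the explicit disclaimer that proofs are omitted and may be found in the cited references \cites{CCCNP,Gr}. So there is no ``paper's own proof'' to compare against.

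Your argument is correct and is essentially the standard one. The alignment $\hat n_t=\hat\nu$ for a.e.\ $t$ (and $\va{\D\chi_{E_t}}$-a.e.\ $x$) is precisely the content of the refined coarea formula for $\BV$ functions as in \cite{AFP}, and once that is in hand the rest is indeed bookkeeping: $\sigma$ being a norm makes $\sigma\circ\hat\nu$ a bounded Borel density against $\va{\D u}$, and Tonelli applied to the disintegration $\va{\D u}=\int\va{\D\chi_{E_t}}\,\de t$ closes the argument. Your remark that the hypotheses on $\Omega$ are only needed to ensure $u\in\BV(\Omega)$ makes sense is accurate. The alternative approximation route you sketch also works and matches the style of the cited references, though you correctly identify the delicate step (passing the limit inside the $t$-integral).
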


Minimising a surface energy under prescribed boundary conditions is a classical problem.
In precise terms, given the reference set $\Omega$ and the boundary datum $E_0$,
we look for a set $E$ such that $\Per_\sigma(E;\Omega)$ attains
\begin{equation*}
\inf\Set{J_\sigma(\D u;\Omega) :
	u\colon \Rd \to [0,1] \text{ is measurable and }
	u = \chi_{E_0} \text{ in } \co{\Omega}.}
\end{equation*}
When $\sigma$ is a norm,
we can show that the infimum is achieved:
indeed, in this case, $J_\sigma(\D u;\Omega)\geq c \va{\D u}(\Omega)$ for some constant $c>0$,
and standard arguments yield the conclusion.

\section{$\Gamma$-convergence}\label{sec:def-Gconv}
We include in this section the definition
of a celebrated notion of convergence proposed by De Giorgi in the '70s \cite{DF}.
For a thorough treatment of the subject,
we refer to the monographs \cite{dM} by G. Dal Maso and \cite{B} by A. Braides.

\begin{dfn}\label{stm:defGammac}
	Let $(X,\de)$ be a metric space and,
	for all $\epsilon>0$, let $f_\epsilon\colon X\to[-\infty,+\infty]$ be a function.
	We say that the family $\Set{f_\eps}_{\epsilon>0}$
	\emph{$\Gamma$-converges}
	as $\epsilon\to0^+$
	w.r.t. the metric $\de$ to the function $f_0\colon X\to [-\infty,+\infty]$
	if for any $x\in X$ and for any sequence $\Set{\epsilon_\ell}_{\ell\in\N}$
	such that $\epsilon_\ell\to 0^+$
	the following hold:
	\begin{enumerate}
		\item for any sequence $\Set{x_{\epsilon_\ell}}_{\ell\in\N}\subset X$
			such that $x_{\epsilon_\ell}\to x$, we have
				\begin{equation}\label{eq:dfnGliminf}
					f_0(x)\leq\liminf_{\ell\to +\infty}f_{\epsilon_\ell}(x_{\epsilon_\ell});
				\end{equation}
		\item there exists a sequence $\Set{x_{\epsilon_\ell}}_{\ell\in\N}\subset X$
			such that $x_{\epsilon_\ell}\to x$ and
				\begin{equation}\label{eq:dfnGlimsup}
					\limsup_{\ell\to +\infty}f_{\epsilon_\ell}(x_{\epsilon_\ell}) \leq f_0(x).
				\end{equation}
	\end{enumerate}

	If $\Set{f_\eps}$ $\Gamma$-converges to $f_0$,
	the we say that $f_0$ is the \emph{$\Gamma$-limit} of $\Set{f_\eps}$.
\end{dfn}
Formula \eqref{eq:dfnGliminf} is referred to as
\emph{$\Gamma$-lower limit inequality},
while \eqref{eq:dfnGlimsup} is known as
\emph{$\Gamma$-upper limit inequality}.
A sequence such that the latter holds is called \emph{recovery sequence}.

We highlight in a separate statement a couple of properties
that we shall use in the sequel.
	
	\begin{lemma}\label{stm:prop-Gconv}
		Let $(X,\de)$ and $\Set{f_\epsilon}$ be as in the previous definition.
		Let also $f_0\colon X\to [-\infty,+\infty]$ be a function.
			\begin{enumerate}
				\item If $f_0$ is the $\Gamma$-limit of $\Set{f_\epsilon}$,
					then it is lower semicontinuous w.r.t. the metric $\de$.
				\item\label{stm:density-Gconv} Let $Y\subset X$ be a subset such that
					for any $x \in X$ there exists a sequence $\Set{y_\ell} \subset Y$
					such that $y_\ell \to x$ and $f_0(y_\ell) \to f_0(x)$.
					If for all $y\in Y$ there exists a recovery sequence contained in $Y$,
					then \eqref{eq:dfnGlimsup} holds for all $x\in X$.
			\end{enumerate}
	\end{lemma}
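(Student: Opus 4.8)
The statement is Lemma \ref{stm:prop-Gconv}, which has two parts. For part (i), the plan is the classical argument: fix $x\in X$ and a sequence $x_\ell \to x$ in $(X,\de)$; I want to show $f_0(x)\leq \liminf_\ell f_0(x_\ell)$. Passing to a subsequence, I may assume the $\liminf$ is a genuine limit $L$ and that $f_0(x_\ell)\to L$. For each $\ell$, apply the $\Gamma$-upper limit inequality \eqref{eq:dfnGlimsup} at the point $x_\ell$ along some infinitesimal sequence $\{\eps_m\}$: there is a recovery sequence $\{y^\ell_{\eps_m}\}_m$ with $y^\ell_{\eps_m}\to x_\ell$ and $\limsup_m f_{\eps_m}(y^\ell_{\eps_m})\leq f_0(x_\ell)$. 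A diagonal choice produces, for a suitable sequence $\eps_m\to 0^+$, points $z_m \coloneqq y^{\ell(m)}_{\eps_m}$ with $z_m\to x$ and $\limsup_m f_{\eps_m}(z_m)\leq L$; then the $\Gamma$-lower limit inequality \eqref{eq:dfnGliminf} at $x$ gives $f_0(x)\leq \liminf_m f_{\eps_m}(z_m)\leq L$, as desired. The only slightly delicate point is arranging the diagonal extraction so that $z_m\to x$ and the energy bound survives simultaneously; one fixes for each $\ell$ an index $m(\ell)$ large enough that $\de(y^\ell_{\eps_{m(\ell)}},x_\ell)<1/\ell$ and $f_{\eps_{m(\ell)}}(y^\ell_{\eps_{m(\ell)}})\leq f_0(x_\ell)+1/\ell$, with $\eps_{m(\ell)}\to 0^+$, and relabels.

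For part (ii), I want to upgrade the recovery-sequence property from $Y$ to all of $X$. Fix $x\in X$ and an infinitesimal $\{\eps_\ell\}$. By hypothesis on $Y$ there is $\{y_k\}\subset Y$ with $y_k\to x$ and $f_0(y_k)\to f_0(x)$. For each $k$, since $y_k\in Y$, there is a recovery sequence $\{y_k^{\eps_\ell}\}_\ell\subset Y$ with $y_k^{\eps_\ell}\to y_k$ and $\limsup_\ell f_{\eps_\ell}(y_k^{\eps_\ell})\leq f_0(y_k)$. Now perform a diagonal argument: choose an increasing map $\ell\mapsto k(\ell)$, slowly enough, so that the sequence $x_{\eps_\ell}\coloneqq y_{k(\ell)}^{\eps_\ell}$ satisfies $x_{\eps_\ell}\to x$ (using $\de(y_{k(\ell)}^{\eps_\ell},y_{k(\ell)})\to 0$ for fixed $k$ together with $y_k\to x$) and $\limsup_\ell f_{\eps_\ell}(x_{\eps_\ell})\leq \limsup_k f_0(y_k) = f_0(x)$. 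Concretely, for each $k$ pick $L_k$ with $\de(y_k^{\eps_\ell},y_k)<1/k$ and $f_{\eps_\ell}(y_k^{\eps_\ell})\leq f_0(y_k)+1/k$ for all $\ell\geq L_k$, arrange $L_k$ strictly increasing, and set $k(\ell)\coloneqq \max\{k: L_k\leq \ell\}$ (so $k(\ell)\to\infty$); then $x_{\eps_\ell}$ works. This proves \eqref{eq:dfnGlimsup} at the arbitrary point $x$.

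The main obstacle in both parts is purely bookkeeping: making the double-indexed diagonal extraction precise so that convergence of the points and the limsup energy bound hold along one and the same sequence of $\eps$'s. Since $\Gamma$-convergence here is phrased via arbitrary infinitesimal sequences rather than the full family, one must be careful that the recovery sequences extracted for different base points are indexed by the \emph{same} $\{\eps_\ell\}$ (which they are, by applying the definition with that fixed sequence), so no further reconciliation of the $\eps$-parameters is needed — only the slow-diagonal choice of the auxiliary index. Both arguments are standard (cf. \cite{dM}, \cite{B}), and I expect each to take only a few lines once the diagonal lemma is set up.
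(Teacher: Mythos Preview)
Your argument is correct and is precisely the standard diagonal-extraction proof found in the references the paper cites (\cite{dM}, \cite{B}). Note that the paper itself does not prove this lemma: Chapter~\ref{ch:pre} explicitly states that proofs of the preliminary results are omitted and refers the reader to the literature, so there is no in-paper proof to compare against. One minor remark: in part~\ref{stm:density-Gconv} you never actually use that the recovery sequences for points of $Y$ are \emph{contained in $Y$}; your diagonal argument only needs their existence, which is consistent with how the lemma is applied later in the thesis (e.g.\ in the proof of Proposition~\ref{stm:1D-pointlim}).
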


A subset $Y$
that fulfils the hypotheses of Lemma \ref{stm:prop-Gconv}\ref{stm:density-Gconv}
is said to be \emph{dense in energy} for $f_0$.
	
\chapter{Rate of convergence of the Bourgain-Brezis-Mironescu~formula}\label{ch:nle}
	Let $\Omega\subset \Rd$ be a smooth, bounded domain.
In the renown paper \cite{BBM},
J. Bourgain, H. Brezis, and P. Mironescu
considered iterated integrals of the form
	\begin{equation}\label{eq:BBM0}
		\int_{\Omega}\int_{\Omega} \rho(y-x)\frac{\va{u(y)-u(x)}^p}{\va{y-x}^{p}} \de y \de x,
	\end{equation}
where $\rho\in L^1(\Rd)$ is a radial mollifier and $u\in L^p(\Omega)$,
with $p \geq 1$.
They proved that, if $u$ belongs to the Sobolev space $W^{1,p}(\Omega)$,
then the $L^p$-norm of the weak gradient is achieved
as the limit of the quantity above
when $\rho$ approaches the Dirac delta in $0$.
The archetypical case is the study as $s\to 1^-$
of the \emph{Gagliardo seminorm},
which, for $s\in(0,1)$ and $u\in L^p(\Omega)$, is defined as
	\[
		\va{u}_{W^{s,p}(\Omega)}
				\coloneqq \left[
					\int_{\Omega}\int_{\Omega} \frac{\va{u(y)-u(x)}^p}{\va{y-x}^{d+sp}} \de y \de x
				\right]^{\frac{1}{p}}.
	\]
In this respect, the analysis in \cite{BBM} yields
	\[
	\lim_{s\to 1^-} (1-s)^{\frac{1}{p}} \va{u}_{W^{s,p}(\Omega)} = c\norm{\nabla u}_{L^p(\Omega)}
	\qquad\text{if } u\in W^{1,p}(\Omega),
	\]
with $c\coloneqq c(d,p)>0$.

In the same paper, when $p=1$, the authors also managed
to characterise the space of functions of bounded variations $\BV(\Omega)$
w.r.t. the asymptotic behaviour of \eqref{eq:BBM0}.
However, whether the total variation of the distributional gradient is attained in the limit
was an issue that they left unsolved.
Positive answers were soon provided independently
by J. D\'avila in \cite{Da} and by A. Ponce in \cite{P}.
Notably, the latter extended the analysis to more general integral functionals and
studied their limits also in sense of $\Gamma$-convergence.
This is the starting point of the current chapter,
which is based on \cite{CNP}
and is devoted to the study of the rate of convergence
of a class of functionals appeared in the paper by Ponce.
In Section \ref{sec:rateBBM}, we recall a result of his 
(see also Theorem \ref{stm:ponce} in the next chapter),
we state the main result of this chapter, Theorem \ref{stm:Gconv-BBM} below,
and we compare it to the existing literature.
The proof is presented in the last section.
It grounds on a slicing argument,
which reduces the problem to the study of suitable functionals
defined on functions of one real variable.
The analysis of these auxiliary $1$-dimensional functionals is developed in Section \ref{sec:1D}.
	
	\section{Main result and slicing}\label{sec:rateBBM}
		Let $\Omega$ be an open set with compact Lipschitz boundary,
let $G\colon \Rd \to [0,+\infty)$ be an $L^1(\Rd)$-function,
and let $f\colon [0,+\infty) \to [0,+\infty)$ be a convex function such that $f(0)=0$.
For $\epsilon>0$, we consider the $L^1$-norm preserving rescalings of $G$
	\[
		G_\eps(x)\coloneqq \frac{1}{\epsilon^d}G\left( \frac{x}{\eps}\right)
	\]
and, for $u\in L^1(\Omega)$, the nonlocal functionals
	\[
		\F_\eps(u)\coloneqq \int_{\Rd}\int_{\Rd}
			G_\eps(z) f \left(\frac{\left| u(x+z)-u(x) \right|}{\left| z\right|}\right) \de z \de x.
	\]
By \cite{P}, we know that, as $\eps\to 0^+$,
the family $\Set{\F_\eps}$ converges to the limit functional 
	\[
		\F_0(u)\coloneqq \int_{\Rd}\int_{\Rd} G(z) f( \va{\nabla u(x)\cdot\hat{z}} ) \de z \de x,
	\]
where $\hat z=z/\va{z}$ for all $z\in\Rdmz$.
Approximations by finite difference functionals have also been studied 
in relation to free discontinuity problems by M. Gobbino and M. Mora \cites{G,GM}.
We shall see that their method, the so called slicing,
can be adapted to our problem as well.

In this chapter,
we study the asymptotics of the functionals
that express the rate of convergence
of $\F_\epsilon$ to the limit $\F_0$, that is,
we let
	\begin{equation}\label{rate}
		\begin{split}
		\E_\eps (u) & \coloneqq \frac{\F_0(u)-\F_\eps(u)}{\eps^2} \\
					& =	\frac{1}{\eps^2} \int_{\Rd}\int_{\Rd}
								\left[
									G(z) f( \left|\nabla u(x)\cdot\hat{z}\right| )
										- G_\eps(z) f \left(\frac{\left| u(x+z)-u(x) \right|}{\left| z\right|}\right)
								\right] \de z \de x
		\end{split}
	\end{equation}
and we consider the limit of $\Set{\E_\eps}$ as $\epsilon$ tends to $0$.
We prove that the limit is a {\it second order} nonlocal functional (see \eqref{eq:E0-d}),
under the following set of assumptions:
	\begin{description}
		\item[A1:] $G(x) = G(-x)$ for a.e. $x\in \Rd$ and
			\begin{equation}\label{eq:intG}
				\int_{\Rd} G(x)\left( 1 + \left| x \right|^2 \right) \de x < +\infty.
			\end{equation}
		\item[A2:] there exist $r_0\geq 0$ and $r_1>0$ such that
			$r_0 < \beta_d\, r_1$, where
				\begin{equation}\label{eq:sigmad}
					\beta_d \coloneqq
						\begin{cases}
						1												& \text{when } d=2, \\
						\displaystyle{\frac{d-2}{d-1}}	& \text{when } d>2,
						\end{cases}
				\end{equation}
			and
				\begin{equation}\label{eq:Kpos}
					\mathrm{ess} \inf \Set{ G(x) : x \in B(0,r_1)\setminus B(0,r_0) } > 0.
				\end{equation}
		\item[A3:] $f$ is of class $C^2$, $f(0) = f'(0) = 0$, and 
				\begin{equation*}
				\text{there exists $\alpha>0$
					such that $2f(t)- \alpha t^2$ is convex.}
				\end{equation*}		
		\end{description}
We establish the following:
	\begin{thm}[Compactness and $\Gamma$-convergence of the rate functionals]
		\label{stm:Gconv-BBM}
		Let $\Omega$ be a bounded open set with Lipschitz boundary
		and let $G$ and $f$ fulfil {\bf A1}, {\bf A2}, and {\bf A3}.
		Let also
			\begin{equation*}\label{eq:X}
			X \coloneqq \Set{ u\in H^1_	\loc(\Rd) : \nabla u=0 \text{ a.e. in } \co{\Omega} }
			\end{equation*}
		and
			\begin{equation}\label{eq:E0-d}
				\E_0(u) \coloneqq\left\{
					\begin{aligned}
						\displaystyle{\frac{1}{24}\int_{\Rd} \int_{\Rd}
							G(z) \left| z \right|^2	f''( \left| \nabla u(x) \cdot \hat z \right| ) 
								\left| \nabla^2 u(x)\hat{z}\cdot \hat{z} \right|^2
							\de z \de x} \hspace{6Em} \\
						 \text{if } u \in X\cap H^2_\loc(\Rd), \\
					{+\infty \hspace{28.3Em} \text{otherwise}}.
					\end{aligned}\right.
			\end{equation}
		Then, there hold:
		\begin{enumerate}
			\item\label{stm:cptgen} \index{compactness!for the rate functionals for the BBM formula}
				For any family $\Set{u_\eps}\subset X$
				such that $\E_\eps(u_\eps)\le M$ for some $M>0$,
				there exists a sequence  $\Set{\eps_\ell}$ and
				a function $u\in H^2(\Rd)$ vanishing outside $\Omega$
				such that $\nabla u_{\eps_\ell}\to \nabla u$ in $L^2(\Rd)$ as $\ell\to +\infty$.
			\item\label{stm:Gliminf}\index{$\Gamma$-convergence!of the rate functionals for the BBM formula}
				For any family $\Set{u_\eps}\subset X$
				that converges to $u\in X$ in $H^1_\loc(\Rd)$,
					\[
						\E_0(u) \leq \liminf_{\eps \to 0^+} \E_\eps(u_\eps).
					\]
		\end{enumerate}
		\begin{enumerate}[label=(\it{iii}.\alph*),ref={\rm(iii.\alph*)}]
			\item\label{stm:Glimsup-a} For any $u\in X$ such that $\nabla u \in L^\infty(\Omega)$
			there exists a family $\Set{u_\eps}\subset X$
			that approaches $u$ in $H^1_\loc(\Rd)$ and satisfies
			\[	\limsup_{\epsilon\to 0^+} \E_\eps(u_\eps) \leq \E_0(u). \]
			\item\label{stm:Glimsup-b} If $f''$ is bounded,
			for any $u\in X$ there exists a family $\Set{u_\eps}\subset X$
			that converges to $u$ in $H^1_\loc(\Rd)$ with the property that
			\[	\limsup_{\epsilon\to 0^+} \E_\eps(u_\eps) \leq \E_0(u). \]
		\end{enumerate}
	\end{thm}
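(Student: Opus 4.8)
The plan is to follow the slicing strategy of Gobbino and Mora, reducing the $d$-dimensional problem to a family of one-dimensional problems indexed by lines in $\Rd$. Concretely, for a direction $\xi\in\Sdmu$ and a point $y\in\xi^\perp$, one introduces the one-dimensional slices $u^{\xi,y}(t) \coloneqq u(y+t\xi)$ and rewrites both $\F_\eps$ and $\F_0$ (and hence $\E_\eps$) as integrals over $\xi^\perp\times\R$, using Fubini and the change of variables $z = r\xi$ in the inner integral. This decomposes $\E_\eps(u)$ into a superposition $\int_{\Sdmu}\int_{\xi^\perp} E^{\xi}_\eps(u^{\xi,y})\,\de y\,\de\Hdmu(\xi)$ of one-dimensional rate functionals $E^\xi_\eps$, whose $\Gamma$-limit and compactness properties are to be established in Section~\ref{sec:1D}. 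The $d$-dimensional $\E_0$ likewise slices: the term $G(z)|z|^2 f''(|\nabla u\cdot\hat z|)|\nabla^2 u\,\hat z\cdot\hat z|^2$ is exactly $r^2$ times the integrand of the one-dimensional second-order limit applied to $u^{\xi,y}$, since $\nabla u(x)\cdot\xi = (u^{\xi,y})'(t)$ and $\nabla^2 u(x)\xi\cdot\xi = (u^{\xi,y})''(t)$ along the line.

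For item \ref{stm:Gliminf}, the plan is: given $u_\eps \to u$ in $H^1_\loc(\Rd)$, pass to a subsequence so that (via Fubini) the slices $u_\eps^{\xi,y}$ converge in $H^1_\loc(\R)$ for $\Hdmu\otimes\Ld$-a.e. $(\xi,y)$; apply the one-dimensional $\Gamma$-liminf inequality slicewise and integrate back using Fatou's Lemma. For the compactness statement \ref{stm:cptgen}, the strong convexity hypothesis {\bf A3} is essential: writing $2f(t) = \alpha t^2 + (\text{convex})$, the quadratic part of $\E_\eps$ controls a second-order finite-difference quotient of $u_\eps$, and assumption {\bf A2} guarantees enough directions $\xi$ with $G$ bounded below on an annulus so that these controlled second differences span all of $\Rd$ — this is where the constant $\beta_d$ enters, ensuring the annulus $B(0,r_1)\setminus B(0,r_0)$ is ``fat enough'' that the corresponding directions are not confined to a hyperplane. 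One then derives a uniform bound of the form $\|\D(\nabla u_\eps)\|$-type or, more precisely, a uniform $H^1$-bound on $\nabla u_\eps$ locally, plus the constraint $\nabla u_\eps = 0$ outside $\Omega$ and boundedness of $\Omega$, to invoke Rellich and extract a subsequence with $\nabla u_{\eps_\ell}\to\nabla u$ in $L^2(\Rd)$; lower semicontinuity then places $u$ in $H^2$.

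For the upper bound, in case \ref{stm:Glimsup-a} with $\nabla u\in L^\infty(\Omega)$ one takes the constant recovery sequence $u_\eps \equiv u$ when $u\in X\cap H^2_\loc$ (and there is nothing to prove when $\E_0(u)=+\infty$): a second-order Taylor expansion of $f(|u(x+z)-u(x)|/|z|)$ around $f(|\nabla u(x)\cdot\hat z|)$, with the first-order term killed by the evenness of $G$ (assumption {\bf A1}), yields $G(z)f(|\nabla u\cdot\hat z|) - G_\eps(z)f(|u(x+z)-u(x)|/|z|) = \tfrac{\eps^2}{2}\cdot\tfrac{1}{12}G(z)|z|^2 f''(\cdots)|\nabla^2 u\,\hat z\cdot\hat z|^2 + o(\eps^2)$ after the change of variables $z\mapsto\eps z$ and integration (the $\tfrac{1}{24}$ comes from $\tfrac12\cdot\tfrac{1}{12}$ via the integral of $t^2$ over the segment); dividing by $\eps^2$ and passing to the limit with dominated convergence — justified by the $L^\infty$-bound on $\nabla u$ and the moment condition \eqref{eq:intG} — gives $\limsup_\eps\E_\eps(u)\le\E_0(u)$. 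For the general case \ref{stm:Glimsup-b}, one uses Lemma~\ref{stm:prop-Gconv}\ref{stm:density-Gconv}: the set of $u\in X$ with $\nabla u\in L^\infty$ (suitably mollified, so that $u\in X\cap H^2_\loc$) is dense in energy for $\E_0$, provided $f''$ is bounded so that $\E_0$ is continuous along such approximations; then \ref{stm:Glimsup-a} supplies recovery sequences on the dense class and the lemma propagates the inequality to all of $X$.

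The main obstacle I expect is the compactness proof \ref{stm:cptgen}: extracting a genuine $H^1$-bound on $\nabla u_\eps$ from control of finitely many directional second-difference quotients requires a careful covering/algebraic argument showing that the directions $\xi$ available in the annulus from {\bf A2} generate, through their symmetric tensor products $\xi\otimes\xi$, the whole space of symmetric matrices — equivalently, that they are not contained in any quadric hypersurface of $\Sdmu$ — which is precisely the role of the dimensional threshold $\beta_d$ and the constraint $r_0 < \beta_d r_1$. Converting the resulting discrete-derivative bounds into a bound on the distributional Hessian, uniformly in $\eps$, while respecting the vanishing of $\nabla u_\eps$ on $\co\Omega$, is the technically delicate heart of the argument and is why the excerpt flags that ``the proof, in turn, requires some effort because of the structure of $\E_\eps$.''
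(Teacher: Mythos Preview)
Your slicing framework for items (ii) and (iii) matches the paper's approach, and the upper-bound strategy via pointwise limit plus density is correct in outline --- though the paper first establishes the pointwise limit only for $u\in X\cap C^3(\Rd)$ via slicing and the 1D result, and then approximates a general $u\in X\cap H^2_\loc$ by mollification; your direct Taylor expansion for $u\in H^2$ would need care since pointwise second-order remainders are not available at that regularity.

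The genuine gap is in your compactness sketch, where you misread assumption \textbf{A2}. The annulus $B(0,r_1)\setminus B(0,r_0)$ is radially symmetric, so it already contains \emph{every} direction $\hat z\in\Sdmu$; there is no tensor-span or quadric-avoidance issue, and the constraint $r_0<\beta_d r_1$ concerns the \emph{radial thickness} of the annulus, not its angular coverage. Moreover, the slicing lower bound only controls the \emph{directional} component
\[
\E_\eps(u)\ \ge\ \frac{\alpha}{4}\int_{\Rd}\int_{\Rd}\tilde G(z)\,\Bigl|\frac{(\nabla u(x+\eps z)-\nabla u(x))\cdot\hat z}{\eps}\Bigr|^{2}\,\de x\,\de z,
\]
where the effective kernel $\tilde G$ is a radial average of $G$ against the hat function $H(r)=(1-|r|)^+$ arising from the 1D analysis. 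The role of \textbf{A2} and $\beta_d$ is that this averaging fills in the hole: one proves $\mathrm{ess\,inf}_{B(0,\beta_d r_1)}\tilde G>0$. With $\tilde G$ bounded below on a \emph{full ball}, one chooses a radial mollifier $\rho$ with $\rho,|\rho'|\le\tilde G$, sets $v_\eps=\rho_\eps\ast u_\eps$, and bounds $\lVert\Delta v_\eps\rVert_{L^2}^2$ by the right-hand side above --- this works because $\nabla\rho_\eps(y)$ is parallel to $\hat y$, matching the projection $(\cdot)\cdot\hat z$ the lower bound provides. The identity $\int|\nabla^2 v_\eps|^2=\int|\Delta v_\eps|^2$ then converts the Laplacian bound into a uniform Hessian bound, and Rellich yields compactness for $\{v_\eps\}$. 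A separate, fairly intricate argument (decomposing $|p|^2=|p\cdot\hat y|^2+|\pi_{\hat y^\perp}p|^2$ and rearranging the perpendicular part back into directional differences) shows $\lVert\nabla v_\eps-\nabla u_\eps\rVert_{L^2}\to 0$. Your tensor-product mechanism plays no role, and without the effective-kernel / mollification / Laplacian route you would not be able to pass from control of the scalar quantities $(\nabla u(x+\eps z)-\nabla u(x))\cdot\hat z$ to compactness of the full gradients.
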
	
	
	\begin{rmk}
		If we replace $X$ with $H^1_\loc(\Rd)$,
		it can be seen that for $u\in H^2_\loc(\Rd)$
		statements \ref{stm:Gliminf}, \ref{stm:Glimsup-a}, and \ref{stm:Glimsup-b}
		in Theorem \ref{stm:Gconv-BBM} still hold true,
		the proof remaining essentially unchanged.
		Conversely, the passage to the unbounded setting invalidates statement \ref{stm:cptgen}.
	\end{rmk}

The problem we tackle is not a higher order $\Gamma$-limit of $\F_\eps$,
which corresponds instead to studying the asymptotics of
	\[
		\frac{\F_\eps-\min\F_0}{\eps^\beta} \qquad \text{for some }\beta>0.
	\]
We refer to~\cite{AnB} for more details about developments by $\Gamma$-convergence.
On the other hand, our conclusion is reminiscent of the one obtained
in \cite{PPR} by M. Peletier, P. Planqu\'e and M. R\"oger,
who dealt with a model for bilayer membranes.
There, the authors considered the convolution functionals
	\[
		\tilde \F_\eps (u) \coloneqq \int_{\Rd} f \left( G_\eps \ast u\right) \de x,
	\]
which converge, as $\eps\to 0^+$, to
	\[
	\tilde \F_0(u) = c \int_{\Rd} f (u) \de x, \quad \text{with } c\coloneqq c(d,G)>0.
	\]
They showed that the rate functionals $\tilde E_\eps \coloneqq \eps^{-2}(\tilde \F_0-\tilde \F_\eps)$ 
converge pointwise for $u\in H^1(\R^d)$ to the functional
	\[
		\frac{1}{2} \int_{\Rd} \int_{\Rd} K(z) |z|^2 f''(u(x)) |\nabla u(x)\cdot \hat z|^2 \de z \de x,
	\]
and they proved as well that
the family $\Set{\tilde E_\eps(u)}$ is uniformly bounded if and only if $u \in H^1(\R^d)$.
We note that our analysis yields a similar regularity criterion:
indeed, if $\F_\eps(u)$ tends to $\F_0(u)$ sufficiently fast,
i.e. if $\Set{\E_\eps (u)}$ is uniformly bounded for all small $\epsilon$,
then $u\in H^2(\R^d)$,
see Remark \ref{stm:carattH2} below.

When the kernel $G$ is radially symmetric and $f$ is the modulus, 
our study is related to a geometric problem considered in \cite{MuS} 
to treat a physical model for liquid drops with dipolar repulsion.
However, the uniform convexity assumption on $f$,
which we utilise to prove the $\Gamma$-inferior limit inequality,
excludes this scenario from our analysis,
as well as the choice $f(t)=\va{t}^p$ with $p\ge 1$, $p\ne 2$.
Extensions of our result aimed at including these cases are a possible subject of investigation.

As for the assumptions on the kernel,
by {\bf A2} we require that the support of $G$
contains a sufficiently large annulus centred at the origin.
This might be regarded as a very weak isotropy hypothesis on $G$,
in a sense that will become clearer in the proof of Lemma \ref{stm:cpt-BBM}
The simplest case for which \eqref{eq:Kpos} holds is  
when there exists $\gamma >0$ such that $G(x) \geq \gamma$ for all $x\in B(0,r_1)$.

	\begin{rmk}[Radial case]
		When $G$ is radial, that is,
		$G(z) = \bar G( \left| z \right| )$ for some $\bar G\colon [0,+\infty) \to [0,+\infty)$,
		we find
		\[\begin{gathered}
		\F_0 (u) = \lVert G \rVert_{L^1(\Rd)} \int_{\Rd} \fint_{\Sdmu}
							f( \left| \nabla u(x)\cdot e \right| ) \de\Hdmu(e) \de x, \\
		\E_0 (u) = \frac{1}{24} \left( \int_{\Rd} G(z) \left| z \right|^2 \de z \right)
							\int_{\Rd} \fint_{\Sdmu}
							f''( \left| \nabla u(x) \cdot e \right| ) 
								\left| \nabla^2 u(x)e\cdot e \right|^2
							\de\Hdmu(e) \de x.
		\end{gathered}\]
	\end{rmk}

We prove Theorem \ref{stm:Gconv-BBM}
by a \emph{slicing} procedure,
which amounts to express the $d$-di\-men\-sional  energies $\E_\eps$ 
as superpositions of suitable $1$-dimensional energies $E_\eps$,
regarded as functionals on each line of $\Rd$.
Then, we establish the convergence of the functionals $E_\eps$
(see Section \ref{sec:1D}),
and we recover from it the result concerning $\E_\epsilon$
(see Section \ref{sec:Gconv-BBM}).

The slicing approach was introduced in \cites{G,GM}.
In our case, the next lemma shows that,
for $u\in L^2(\R)$ vanishing outside a given bounded interval and $\eps>0$,
the $1$-dimensional functionals retrieved by slicing are
	\begin{gather*}
		E_\eps(u) \coloneqq 
			\frac{1}{\eps^2} \int_\R \left[ f(u(x)) - f\left( \fint_{x}^{x+\eps}u(y)dy\right) \right] \de x, \\
		E_0(u)\coloneqq 
			\begin{cases}
				\begin{displaystyle}
					\frac{1}{24} \int_\R f''(u(x)) \left| u'(x) \right|^2 \de x
				\end{displaystyle} 		&\text{if } 
														u\in H^1(\R) \cap \Cc(\R),  \\
				+\infty 						&\text{otherwise}.
			\end{cases}
	\end{gather*}

We recall that, for $z\in \Rd\setminus\Set{0}$,
we set $\hat{z} \coloneqq z / \va{z}$ and
	\[
		\hat{z}^\perp \coloneqq \Set{ \xi \in \Rd : \xi \cdot z = 0}.
	\]

\begin{lemma}[Slicing] \label{stm:slicing}\index{slicing}
	For $u\in X$, $z\in \Rd\setminus\Set{0}$, and $\xi \in \hat{z}^\perp$,
	let
		\[\begin{array}{lccc}
			w_{\hat z,\xi}\colon 	& \R & \longrightarrow & \R \\
												& t & \longmapsto & u(\xi + t\hat{z}).
		\end{array}\]
	Then, $w'_{\hat z,\xi}(t) = \nabla u(\xi + t\hat{z}) \cdot \hat{z}$ and
		\begin{gather}
			\E_\eps(u) = \int_{\Rd}\int_{z^\perp} G(z) \left| z \right|^2 
					E_{ \eps\left| z \right| } (w'_{\hat z,\xi}) \de \Hdmu(\xi) \de z, \label{eq:slicing} \\
			\E_0(u) = \int_{\Rd}\int_{z^\perp} G(z) \left| z \right|^2 E_0(w'_{\hat z,\xi}) \de \Hdmu(\xi)\de z. \nonumber
		\end{gather}
	\end{lemma}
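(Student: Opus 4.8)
The plan is to realise the $d$-dimensional energies as superpositions, over the family of lines of $\Rd$ parallel to $\hat z$, of the one-dimensional energies $E_{\eps|z|}$ and $E_0$; once this is done, both identities follow from Fubini's theorem together with the classical fact that the restriction of a Sobolev function to almost every line of a prescribed direction is Sobolev, with the expected derivative. One bookkeeping convention is needed throughout: I replace $f$ by its even extension to $\R$, which remains of class $C^2$ because $f(0)=f'(0)=0$ by {\bf A3}; with this convention $f(|\,\cdot\,|)=f(\,\cdot\,)$, and the functionals $E_\eps$, $E_0$ make sense on sign-changing functions.

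First I would fix $z\in\Rdmz$, note that $z^\perp=\hat z^\perp$, and decompose $\Rd=z^\perp\oplus\R\hat z$. Since $u\in X\subset H^1_\loc(\Rd)$, the absolute-continuity-on-lines characterisation of Sobolev functions (applied after an orthogonal change of coordinates taking $\hat z$ to a coordinate axis) gives, for $\Hdmu$-a.e.\ $\xi\in z^\perp$, a representative of $w_{\hat z,\xi}$ in $H^1_\loc(\R)$ with $w'_{\hat z,\xi}(t)=\nabla u(\xi+t\hat z)\cdot\hat z$ for a.e.\ $t$; this already yields the first assertion. Moreover $\nabla u$ vanishes a.e.\ in $\co\Omega$ with $\Omega$ bounded, hence $\nabla u\in L^2(\Rd)$ (it is supported in $\overline\Omega$), so Fubini shows that for a.e.\ $\xi$ the slice $w'_{\hat z,\xi}$ lies in $L^2(\R)$ and vanishes outside the bounded interval $\Set{t\in\R:\xi+t\hat z\in\Omega}$; thus $w'_{\hat z,\xi}$ belongs to the class on which $E_\delta$ acts. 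When in addition $u\in H^2_\loc(\Rd)$, applying the same facts to $\nabla u\in H^1_\loc(\Rd;\Rd)$ gives $w''_{\hat z,\xi}(t)=\nabla^2 u(\xi+t\hat z)\hat z\cdot\hat z$ and $w'_{\hat z,\xi}\in H^1(\R)\cap\Cc(\R)$ for a.e.\ $\xi$.

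For \eqref{eq:slicing} I would first substitute $z\mapsto\eps z$ in the difference-quotient term of $\F_\eps$ (using $G_\eps(z)=\eps^{-d}G(z/\eps)$ and $\widehat{\eps z}=\hat z$), which turns $\F_0$ and $\F_\eps$ into integrals with \emph{nonnegative} integrands and the same kernel $G(z)$, so Tonelli's theorem legitimises slicing the $x$-integral on each. On a line one has $\xi+t\hat z+\eps z=\xi+(t+\eps|z|)\hat z$, so the fundamental theorem of calculus for the absolutely continuous $w_{\hat z,\xi}$ rewrites the difference quotient as $\fint_t^{t+\eps|z|}w'_{\hat z,\xi}$; comparing with the definitions one recognises that
\[
\int_\R\left[f\bigl(w'_{\hat z,\xi}(t)\bigr)-f\Bigl(\fint_t^{t+\eps|z|}w'_{\hat z,\xi}\Bigr)\right]\de t=(\eps|z|)^2\,E_{\eps|z|}\bigl(w'_{\hat z,\xi}\bigr),
\]
which is nonnegative by Jensen's inequality (whence also $\F_\eps\le\F_0$). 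Subtracting the sliced forms of $\F_0$ and $\F_\eps$, dividing by $\eps^2$ and using $\eps^{-2}(\eps|z|)^2=|z|^2$ yields \eqref{eq:slicing}; if $\F_0(u)=+\infty$ then both sides are $+\infty$ by the same Tonelli identity. For the identity involving $\E_0$ and $u\in X\cap H^2_\loc(\Rd)$ the argument is even shorter: the integrand of $\E_0(u)$ is nonnegative (here $f''\ge0$), so Tonelli together with the formula $w''_{\hat z,\xi}(t)=\nabla^2 u(\xi+t\hat z)\hat z\cdot\hat z$ and the evenness of $f''$ directly produces $\int_{z^\perp}E_0(w'_{\hat z,\xi})\,\de\Hdmu(\xi)$ inside the outer $z$-integral.

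The one step that needs genuine care is the remaining case of the $\E_0$-identity, when $u\in X\setminus H^2_\loc(\Rd)$: I must verify that the right-hand side is then $+\infty$. By {\bf A3} one has $f''\ge\alpha>0$, so $E_0(w'_{\hat z,\xi})\ge\frac{\alpha}{24}\norm{w''_{\hat z,\xi}}_{L^2(\R)}^2$; hence, were the right-hand side finite, for $G$-a.e.\ direction the slices $w'_{\hat z,\xi}$ would lie in $H^1(\R)$ with $\int_{z^\perp}\norm{w''_{\hat z,\xi}}_{L^2(\R)}^2\,\de\Hdmu(\xi)<+\infty$, and the slicing characterisation of $H^2$ would force $\partial^2_{\hat z\hat z}u\in L^2(\Rd)$ along those directions. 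Since {\bf A2} guarantees that the essential support of $G$ meets a finite spanning family of directions, a polarisation argument would then give $\nabla^2 u\in L^2(\Rd)$, contradicting $u\notin H^2_\loc(\Rd)$. This is precisely where the weak isotropy of $G$ enters; everything else is one-variable Sobolev calculus and Fubini.
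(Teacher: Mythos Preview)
Your proof is correct and follows the same route as the paper: decompose $x=\xi+t\hat z$ with $\xi\in\hat z^\perp$, apply Fubini/Tonelli, and recognise the inner integral as $(\eps|z|)^2 E_{\eps|z|}(w'_{\hat z,\xi})$. You are more careful than the paper about two points it glosses over---the even extension of $f$ needed to match $f(|\cdot|)$ in $\E_\eps$ with $f(\cdot)$ in $E_\eps$, and the case $u\in X\setminus H^2_\loc(\Rd)$ for the $\E_0$ identity---but since the paper only ever invokes the $\E_0$ slicing for $u\in H^2_\loc$, your extra diligence there, while correct, is not strictly required.
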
	
	\begin{proof}
	We prove only \eqref{eq:slicing}, the case of $\E_0$ being identical.
	
	The formula is a simple application of Fubini's Theorem.
	Indeed, given the direction $\hat{z}\in\Sdmu$,
	for all $x\in\Rd$, we have $x = \xi + t\hat{z}$
	for some $\xi \in \Rd$ such that $\xi\cdot z = 0$ and $t\in\R$.
	Using this decomposition, we write
		\[\begin{split}
			\F_\eps(u)
				& = \int_{\Rd}\int_{\Rd} G(z) f\left(  \frac{ \left| u(x+\eps z)-u(x) \right| }{ \eps \left| z \right|}\right) \de z \de x \\
				& = \int_{\Rd}\int_{z^\perp} \int_{\R}
							G(z) f\left( 
									\frac{ \left| w_{\hat z,\xi}(t + \eps \left| z \right| ) - w_{\hat z,\xi}(t) \right| }{ \eps \left| z \right|}
									\right) dt  \de\Hdmu(\xi) \de z,
		\end{split}\]
	whence $\E_\eps(u)$ equals
		\[
			 \frac{1}{\eps^2}\int_{\Rd}\int_{z^\perp}\int_{\R} G(z) \left[
								f \big( \lvert w'_{\hat z,\xi}(t) \rvert \big)
								- f \left(
										\frac{ \left| w_{\hat z,\xi}(t+\epsilon\lvert z \rvert) - w_{\hat z,\xi}(t) \right| }{\eps \left| z \right|}
									\right)
							\right] dt \de\Hdmu(\xi) \de z.			
		\]
	We obtain \eqref{eq:slicing}
	by multiplying and dividing the integrands by $\left| z \right|^2$.
\end{proof}
	\section{$\Gamma$-convergence of the auxiliary functionals}\label{sec:1D}
		In the current section we analyse the limiting properties
of the family of auxiliary functionals $E_\epsilon$
obtained by slicing the $d$-dimensional energy $\E_\epsilon$.
So, for  $u\in L^2(\R)$ and $\eps>0$, we let
	\begin{equation}\label{eq:U}
		U(x) \coloneqq \int_{0}^x u(y) \de y,
		\qquad		
		\D_\eps U(x) \coloneqq \frac{U(x+\eps)-U(x)}{\eps} = \fint_{x}^{x+\eps}u(y) \de y,
	\end{equation}
and, as above, we define
	\begin{equation}\label{eq:Eh}
		E_\eps(u) \coloneqq  \frac{1}{\eps^2} \int_\R \left[ f(u(x)) - f\left(\D_\eps U(x)\right) \right] \de x.
	\end{equation}
We fix an open interval $I\coloneqq(a,b)\subset \R$ and
we consider the closed subspace of $L^2(\R)$
	\begin{equation}\label{eq:constr}
		Y \coloneqq \Set{ u \in L^2(\R) : u = 0 \text{ a.e. in } \co{I}}.
	\end{equation}
By positivity and convexity of $f$,
one can check that $E_\epsilon(u)$ belongs to $[0,+\infty]$
when $u\in Y$.
We want to compute the  $\Gamma$-limit of $\Set{E_\eps}$
thought as a collection of functionals on $Y$ endowed with the $L^2$-topology.
The candidate limit is
	\begin{equation}\label{eq:E0}
	E_0(u)\coloneqq 
	\begin{cases}
	\begin{displaystyle}
	\frac{1}{24} \int_\R f''(u(x)) \left| u'(x) \right|^2 \de x
	\end{displaystyle} 		&\text{if } 
	u\in Y\cap H^1(\R),  \\
	+\infty 						&\text{otherwise}.
	\end{cases} 
	\end{equation}	
Our result reads as follows:
	\begin{thm}[$\Gamma$-convergence of the auxiliary functionals]\label{stm:1D-Gconv} 
	Assume that $f$ is as in {\bf A3}.
	Then, as $\eps \to 0^+$,
	the restriction to $Y$ of the family $\Set{E_\eps}$ $\Gamma$-converges w.r.t. the $L^2(\R)$-topology
	to $E_0$, that is, for any $u\in Y$ the following hold:
	\begin{enumerate}
		\item \label{stm:1D-liminf}
		Whenever $\Set{u_\eps}\subset Y$ is a family that converges to $u$ in $L^2(\R)$,
		we have
			\[
				E_0(u) \leq \liminf_{\eps \to 0^+} E_\eps(u_\eps).
			\]
		\item \label{stm:1D-limsup}
		There exists $\Set{u_\eps} \subset Y$ converging to $u$ in $L^2(\R)$ such that
			\begin{equation}\label{eq:Glimsup-Eeps}
				\limsup_{\eps \to 0^+} E_\eps(u_\eps) \leq E_0(u).
			\end{equation}
	\end{enumerate}	
\end{thm}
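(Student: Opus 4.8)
The plan is to prove the two inequalities of Definition~\ref{stm:defGammac} separately, in fact obtaining that on a suitable dense set the \emph{constant} sequence is already a recovery sequence. For the \emph{$\Gamma$‐upper limit}, if $u\in Y$ with $E_0(u)=+\infty$ the constant sequence $u_\eps\equiv u$ realises \eqref{eq:Glimsup-Eeps} trivially. If $u\in Y\cap H^1(\R)$ I would invoke Lemma~\ref{stm:prop-Gconv}\ref{stm:density-Gconv} with the class $Y\cap\Cc^\infty(I)$: the continuous representative of $u$ lies in $H^1_0(I)$, so a mollification keeping the support inside $I$ gives $\phi_\ell\in\Cc^\infty(I)$ with $\phi_\ell\to u$ in $H^1(\R)$, hence uniformly, and $E_0(\phi_\ell)\to E_0(u)$ because $f''\in C^0$ and all functions involved take values in a fixed compact set; when $u\notin H^1(\R)$ the class is still dense in energy, since any $\Cc^\infty(I)$‐sequence converging to $u$ in $L^2$ must have $\norm{\phi_\ell'}_{L^2}\to+\infty$, whence $E_0(\phi_\ell)\ge\tfrac\alpha{24}\norm{\phi_\ell'}_{L^2}^2\to+\infty=E_0(u)$ by {\bf A3}. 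Finally, for $\phi\in\Cc^\infty(I)$ I check directly that $u_\eps\equiv\phi$ works: from $\D_\eps\Phi(x)-\phi(x)=\tfrac\eps2\phi'(x)+\tfrac{\eps^2}6\phi''(x)+O(\eps^3)$ and a second–order Taylor expansion of $f$, the $O(\eps)$ contribution is a total derivative and integrates to zero, while the two $O(\eps^2)$ contributions, after one integration by parts, add up to $\eps^2\bigl(\tfrac16-\tfrac18\bigr)\!\int_\R f''(\phi)\va{\phi'}^2=\tfrac{\eps^2}{24}\!\int_\R f''(\phi)\va{\phi'}^2$, so that $E_\eps(\phi)\to E_0(\phi)$.

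For the \emph{$\Gamma$‐lower limit}, let $u_\eps\to u$ in $L^2(\R)$; passing to a subsequence I may assume $E_\eps(u_\eps)\to\ell<+\infty$ and $\widehat{u_\eps}\to\widehat u$ a.e. The first step is compactness. Using the Fubini identity $\int_\R f(u_\eps(x))\,dx=\int_\R\fint_x^{x+\eps}f(u_\eps(y))\,dy\,dx$ one rewrites $\eps^2E_\eps(u_\eps)=\int_\R\bigl[\fint_x^{x+\eps}f(u_\eps(y))\,dy-f(\D_\eps U_\eps(x))\bigr]dx$; since $t\mapsto f(t)-\tfrac\alpha2t^2$ is convex ({\bf A3}), Jensen's inequality on each window $[x,x+\eps]$ gives $\fint_x^{x+\eps}f(u_\eps)\,dy-f(\D_\eps U_\eps(x))\ge\tfrac\alpha2\bigl(\fint_x^{x+\eps}u_\eps^2\,dy-\D_\eps U_\eps(x)^2\bigr)$, hence $E_\eps(u_\eps)\ge\tfrac{\alpha}{2\eps^2}\bigl(\norm{u_\eps}_{L^2}^2-\norm{\D_\eps U_\eps}_{L^2}^2\bigr)$. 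By Plancherel the right–hand side is $\tfrac\alpha2\int_\R\tfrac1{\eps^2}\bigl(1-\tfrac{\sin^2(\eps\xi/2)}{(\eps\xi/2)^2}\bigr)\va{\widehat{u_\eps}(\xi)}^2\,d\xi$, whose symbol is nonnegative and tends to $\xi^2/12$, so Fatou yields $\tfrac{\alpha}{24}\int_\R\xi^2\va{\widehat u(\xi)}^2\,d\xi\le\ell$; thus $u\in H^1(\R)$, and $u\in Y$ since $Y$ is $L^2$‐closed, so in particular $u$ is bounded and continuous.

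Next, the sharp lower bound. Taylor's formula with integral remainder, combined with the same Fubini identity, gives the exact expression $\eps^2E_\eps(u_\eps)=\tfrac12\int_\R\fint_x^{x+\eps}f''(\theta_{\eps,x,y})\,\bigl(u_\eps(y)-\D_\eps U_\eps(x)\bigr)^2\,dy\,dx$, with $\theta_{\eps,x,y}$ between $u_\eps(y)$ and $\D_\eps U_\eps(x)$. Fixing $\delta>0$ and keeping only the part $G^\delta_\eps$ of $\{0\le y-x\le\eps\}$ on which both $u_\eps(y)$ and $\D_\eps U_\eps(x)$ lie within $\delta$ of $u(x)$ (licit, the integrand being nonnegative), there $f''(\theta_{\eps,x,y})\ge f''(u(x))-\omega(\delta)$ with $\omega(\delta)\to0$, $f''$ being uniformly continuous on the range of $u$ enlarged by $\delta$. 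The surviving main term, essentially $\tfrac1{2\eps^2}\int_\R(f''(u(x))-\omega(\delta))\,\mathrm{Var}_{[x,x+\eps]}(u_\eps)\,dx$ restricted to $G^\delta_\eps$, is then handled by the scaling $\mathrm{Var}_{[x,x+\eps]}(v)=\tfrac{\eps^2}{12}\va{v'(x)}^2+o(\eps^2)$ together with a localised Bourgain--Brezis--Mironescu–type lower limit inequality in the spirit of \cites{BBM,Da,P,GM}, yielding $\liminf_\eps E_\eps(u_\eps)\ge\tfrac1{24}\int_\R(f''(u)-\omega(\delta))\va{u'}^2$; letting $\delta\to0^+$ concludes.

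The hard part is precisely the discarded region: showing that the energy carried where $u_\eps$ or its running mean $\D_\eps U_\eps$ stay far from $u$ is negligible as $\eps\to0^+$. This is where the hypotheses are used in full: the bound $\sup_\eps E_\eps(u_\eps)<+\infty$ is, by the computation above, equivalent to a bound on $\eps^{-3}\iint_{0\le y-x\le\eps}(u_\eps(y)-\D_\eps U_\eps(x))^2\,dy\,dx$, a Gagliardo seminorm at scale $\eps$, which together with $u_\eps\to u$ in $L^2$ and the strong convexity of $f$ forces convergence strong enough to exclude concentration on the bad region. It is convenient to carry this out first under the additional assumption that $f''$ is bounded — to which one reduces by the Jensen‐based comparison $E_\eps(u_\eps)\ge E^{f_n}_\eps(u_\eps)$ with $f_n'':=f''\wedge n$ (so that $f-f_n$ is convex) — and then to pass to the general case by monotone convergence, using $E^{f_n}_0\nearrow E_0$.
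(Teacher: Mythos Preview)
Your upper-limit argument is essentially the paper's (Proposition~\ref{stm:1D-pointlim}): pointwise convergence on smooth compactly supported functions, plus density in energy via mollification. Your Fourier/Plancherel route to $u\in H^1(\R)$ is a clean alternative to the paper's mollification-based compactness (Lemma~\ref{stm:1D-cpt}): from $u_\eps\to u$ in $L^2$ one has $\widehat{u_\eps}\to\widehat u$ a.e.\ along a subsequence, and Fatou on the symbol $\eps^{-2}\bigl(1-\mathrm{sinc}^2(\eps\xi/2)\bigr)\to\xi^2/12$ does the job.

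The gap is in the sharp lower bound. After the exact identity
\[
\eps^2 E_\eps(u_\eps)=\int_\R\fint_x^{x+\eps}\lambda_\eps(x,y)\,\bigl(u_\eps(y)-\D_\eps U_\eps(x)\bigr)^2\,\de y\,\de x,
\]
your good-set/bad-set decomposition plus a ``localised BBM-type'' inequality is not a working argument. Once you restrict the $y$-integration to $G^\delta_\eps$, the inner integral is no longer the full variance $\mathrm{Var}_{[x,x+\eps]}(u_\eps)$, so the heuristic $\mathrm{Var}\sim\tfrac{\eps^2}{12}\va{u'}^2$ does not apply as written. Even without the restriction, the weighted functional $\eps^{-2}\!\int g(x)\,\mathrm{Var}_{[x,x+\eps]}(u_\eps)\,\de x$ with $g=f''\circ u$ is not covered by the references you cite; those treat iterated integrals $\iint\rho_\eps(y-x)\va{u_\eps(y)-u_\eps(x)}^p$, not the variance of $u_\eps$ against its own running mean, and producing the sharp constant $1/12$ together with the variable weight needs a genuine proof. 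Your discussion of the bad region (``forces convergence strong enough to exclude concentration'') is a statement of what must happen, not an argument. The truncation $f''_n=f''\wedge n$ is formally fine but only reduces to the bounded-$f''$ case, so it does not bypass the gap.

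The paper sidesteps all of this with a duality trick (Lemma~\ref{stm:1D-enbound} and Proposition~\ref{stm:1D-Gliminf}): from $\lambda_\eps\,(u_\eps-\D_\eps U_\eps)^2\ge(u_\eps-\D_\eps U_\eps)\,\phi-\phi^2/(4\lambda_\eps)$ for arbitrary $\phi\in\Cc^\infty(\R^2)$, one chooses $\phi(x,y)=\eta(x)\bigl(\tfrac{y-x}{\eps}-\tfrac12\bigr)$ and passes to the limit in each term by direct computation (Fubini, change of variables, and the pointwise convergence $\lambda_\eps\to\tfrac12 f''(u)$, which holds because $\lambda_\eps\ge\alpha/2$). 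Optimising over $\eta\in\Cc^\infty(\R)$ then yields exactly $\tfrac{1}{24}\int f''(u)\va{u'}^2$, with the constant and the weight appearing simultaneously and no bad set to control. This test-function argument is the missing idea in your proposal.
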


	\begin{rmk}[Lower order limits] \label{remscala1}
		Taking into account Proposition \ref{stm:1D-pointlim} below, {\it a posteriori}
		$\Set{\epsilon^\beta E_\epsilon}$ $\Gamma$-converges to $0$ for all $\beta>0$.
		However, when $\beta = 1,2$, this can be proved straightforwardly.
		Indeed, if for $\eps>0$ we let
			\[
			F_0(u) \coloneqq \int_\R f(u(x))\de x
			\quad\text{and}\quad
			F_\eps(u) \coloneqq \int_\R f(\D_\eps U(x)) \de x
			\]
		(recall position \eqref{eq:U}),
		then it is easy to see that $\lim_{\eps \to 0^+}F_\eps(u)= F_0(u)$,
		and, when $u\in Y \cap C^2(\R)$, we also find
			\begin{equation*}
			\lim_{\eps \to 0^+}\frac{F_0(u)-F_\eps(u)}{\eps}
					=\frac{1}{2}\left[f(u(a))+f(u(b))\right]
					= 0.
			\end{equation*}		
	\end{rmk}

We separate the proofs of the two statements in Theorem \ref{stm:1D-Gconv}.
We focus on the lower limit in Proposition \ref{stm:1D-Gliminf},
which is based on a estimate from below of the energy and on a compactness criterion,
established respectively in Lemma \ref{stm:1D-enbound} and in Lemma \ref{stm:1D-cpt}.
We deal with the $\Gamma$-upper limit in the next proposition,
where we compute the pointwise limit, as $\eps \to 0^+$, of $E_\eps(u)$ for $u$ smooth.

\begin{prop}\label{stm:1D-pointlim}
	For all $u\in Y\cap C^2(\R)$,
		\begin{equation*}
		\lim_{\eps \to 0^+} E_\eps(u) = E_0(u).
		\end{equation*}
	More precisely,
	there exists a continuous, bounded, and increasing function
	$m\colon [0,+\infty) \to [0,+\infty)$ such that $m(0)=0$ and
		\begin{equation}\label{eq:1D-point-est}
			\left| E_\eps(u) - E_0(u) \right| \leq c\, m(\eps),
		\end{equation}
	where
	$c \coloneqq
	c( b-a,\lVert u \rVert_{C^2(\R)},\lVert f \rVert_{C^2([-\lVert u \rVert_{C^2(\R)},\lVert u \rVert_{C^2(\R)}])} ) >0$
	is a constant.
	
	Furthermore, for every $u\in Y$, there exists a family $\Set{u_\eps} \subset Y$
	that converges to $u$ in $L^2(\R)$ and that satisfies \eqref{eq:Glimsup-Eeps}.
	\end{prop}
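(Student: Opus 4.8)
The plan is to first establish the quantitative pointwise estimate \eqref{eq:1D-point-est} for smooth $u$ by a second order Taylor analysis, and then to deduce the recovery sequence for an arbitrary $u\in Y$ by a density-in-energy argument.

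\textbf{Pointwise limit.} Fix $u\in Y\cap C^2(\R)$. I would begin by expanding the averaged difference quotient in \eqref{eq:U}: substituting $y=x+t$ and applying Taylor's formula with integral remainder yields
\[
\D_\eps U(x) = u(x) + \tfrac{\eps}{2}\,u'(x) + \tfrac{\eps^2}{6}\,u''(x) + \rho_\eps(x), \qquad \va{\rho_\eps(x)} \le \tfrac{\eps^2}{6}\,\omega(\eps),
\]
where $\omega$ is the modulus of continuity of $u''$ (bounded, since $u''$ is continuous with support in $\bar I$); in particular $\delta_\eps\coloneqq\D_\eps U - u$ is $O(\eps)$ uniformly, with constant depending only on $\norm{u}_{C^2(\R)}$. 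Next I would expand $f$ around $u(x)$, again with integral remainder using only $f\in C^2$, and substitute the expansion of $\delta_\eps$, obtaining the pointwise identity
\[
f(u) - f(\D_\eps U) = -\tfrac{\eps}{2}\,f'(u)\,u' - \eps^2\big[\tfrac{1}{6}\,f'(u)\,u'' + \tfrac{1}{8}\,f''(u)\,\va{u'}^2\big] + \sigma_\eps,
\]
with $\va{\sigma_\eps(x)}\le c\,\eps^2 m(\eps)$, where $m$ is a bounded, continuous, increasing majorant, vanishing at $0$, of a suitable combination of $\omega$ and of the modulus of continuity of $f''$ on $[-\norm{u}_{C^2(\R)},\norm{u}_{C^2(\R)}]$, and $c=c(b-a,\norm{u}_{C^2(\R)},\norm{f}_{C^2([-\norm{u}_{C^2},\norm{u}_{C^2}])})$; the dependence on $b-a$ enters because, since $f(0)=0$ and $u$ has compact support, the integrand of \eqref{eq:Eh} is supported in an interval of length at most $b-a+1$ for $\eps\le 1$. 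Integrating over $\R$, the first order term drops out since $\int_\R f'(u)u' = \int_\R (f\circ u)' = 0$, while an integration by parts turns $\int_\R f'(u)u''$ into $-\int_\R f''(u)\va{u'}^2$ (boundary terms vanishing by compact support), so the second order term collapses to $\eps^2(\tfrac{1}{6}-\tfrac{1}{8})\int_\R f''(u)\va{u'}^2 = \tfrac{\eps^2}{24}\int_\R f''(u)\va{u'}^2$. Dividing by $\eps^2$ gives \eqref{eq:1D-point-est}, hence $E_\eps(u)\to E_0(u)$.

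I expect this to be the only genuinely delicate part: one must arrange the two expansions so that the $O(\eps)$ contribution integrates to zero, the $O(\eps^2)$ contribution reduces to exactly $\tfrac{1}{6}-\tfrac{1}{8}=\tfrac{1}{24}$ after the integration by parts, and every remainder is controlled \emph{uniformly in $x$} by a single term of order $\eps^2 m(\eps)$ (rather than a merely pointwise $o(\eps^2)$), which is what produces the quantitative bound.

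\textbf{Recovery sequence.} For a general $u\in Y$: if $E_0(u)=+\infty$ the constant family $u_\eps\equiv u$ trivially satisfies \eqref{eq:Glimsup-Eeps}, so I may assume $u\in Y\cap H^1(\R)$. Then $u$ has an absolutely continuous representative with $u(a)=u(b)=0$, so $u|_I\in H^1_0(I)$, and I would choose $\Set{v_\delta}\subset\Cc^\infty(I)\subset Y\cap C^2(\R)$ with $v_\delta\to u$ in $H^1(\R)$. By the one-dimensional embedding $H^1(\R)\hookrightarrow L^\infty(\R)$ the family $\Set{v_\delta}$ is equibounded in $L^\infty$ and $v_\delta\to u$ uniformly, hence $f''(v_\delta)\to f''(u)$ uniformly on the compact range of values involved; since moreover $v_\delta'\to u'$ in $L^2(\R)$, whence $\va{v_\delta'}^2\to\va{u'}^2$ in $L^1(\R)$, it follows that $E_0(v_\delta)\to E_0(u)$. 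By the first part the constant sequence is a recovery sequence for each $v_\delta\in Y\cap C^2(\R)$, so $Y\cap C^2(\R)$ is dense in energy for $E_0$; Lemma~\ref{stm:prop-Gconv}\ref{stm:density-Gconv} — concretely, a diagonal choice $u_\eps\coloneqq v_{\delta(\eps)}$ with $\delta(\eps)\to 0^+$ slowly enough — then yields a family $\Set{u_\eps}\subset Y$ with $u_\eps\to u$ in $L^2(\R)$ and $\limsup_{\eps\to 0^+}E_\eps(u_\eps)\le E_0(u)$, that is \eqref{eq:Glimsup-Eeps}.
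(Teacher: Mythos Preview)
Your proof is correct and follows essentially the same approach as the paper: a second-order Taylor expansion of $\D_\eps U$ and of $f$, the cancellation $\int f'(u)u'=0$, the integration by parts giving $\tfrac{1}{6}-\tfrac{1}{8}=\tfrac{1}{24}$, and error control via the moduli of continuity of $u''$ and $f''$; for the recovery sequence, density of smooth compactly supported functions in $Y\cap H^1(\R)$ together with continuity of $E_0$ along such approximations and Lemma~\ref{stm:prop-Gconv}\ref{stm:density-Gconv}. The only cosmetic differences are that the paper uses Lagrange-form remainders (keeping $u''(x_\eps)$ and $f''(w_\eps)$) rather than integral remainders, and treats the boundary strip $(a-\eps,a)$ and the range $\eps>1$ separately rather than absorbing them into the compact-support and boundedness remarks as you do.
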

	\begin{proof}
	Since $u\in Y\cap C^2(\R)$ and $f\in C^2(\R)$,
	$\epsilon^2 E_\epsilon(u)$ and $E_0(u)$ are uniformly bounded in $\eps$.
	Consequently, 
		\begin{equation}\label{eq:h>1}
		\left| E_\eps(u) - E_0(u) \right| \leq c_\infty \qquad \text{for } \eps>1
		\end{equation}
	for some constant $c_\infty>0$.
	
	Next, we assume $\eps\in(0,1]$.
	If $x\notin (a-\eps,b)$, then $\D_\eps U(x) = 0$, and hence
		\[
			\epsilon^2 E_\epsilon(u) =
				\int_{a}^{b} \left[ f(u(x)) - f\left(\D_\eps U(x)\right) \right]\de x - \int_{a-\eps}^a f(\D_\eps U(x)) \de x.
		\]
	By the regularity of $u$, for any $x\in(a-\eps,b)$
	we have the Taylor's expansion 
		\begin{equation*}
			\D_\eps U(x)=u(x)+\frac{\eps}{2}u'(x)+\frac{\eps^2}{6}u''(x_\eps), \qquad\text{with $x_\eps\in(x,x+\eps)$},
		\end{equation*}
	which we rewrite as
		\begin{equation}\label{eq:Taylor-DhU}
			\D_\eps U(x)=u(x)+\eps v_\eps(x), \qquad\text{with } v_\eps(x)\coloneqq \frac{u'(x)}{2}+\frac{\eps}{6}u''(x_\eps);
		\end{equation}
	note that $v_\eps$ converges uniformly to $u'/2$ as $\eps \to 0^+$.
	
	Plugging \eqref{eq:Taylor-DhU} into the definition of $E_\eps$, we find
	\[\begin{split}
		\eps^2 E_\epsilon(u)
			& = - \int_a^b \left[f\big( u(x) + \eps v_\eps(x) \big) - f(u(x)) \right]\de x
					- \int_{a-\eps}^a f\left( \frac{\eps^2}{6} u''(x_\eps) \right) \de x \\
			& = -\eps \int_a^b f'(u(x))v_\eps(x) \de x - \frac{\eps^2}{2}\int_a^b f''(w_\eps(x))v_\eps(x)^2 \de x \\
			& \quad - \int_{a-\eps}^a f\left( \frac{\eps^2}{6} u''(x_\eps) \right) \de x,
	\end{split}\]
	where $w_\eps$ fulfils $w_\eps(x)\in(u(x),u(x)+\eps v_\eps(x_\eps))$ for all $x\in(a,b)$.
	
	It is not difficult to see that
		\[
			\left| \int_{a-\eps}^a f\left( \frac{\eps^2}{6} u''(x_\eps) \right) \de x \right| \leq c_1 \eps^5,
		\]
	for a constant $c_1>0$
	that depends only on $N\coloneqq \lVert u \rVert_{C^2(\R)}$
	and on $\lVert f'' \rVert_{L^\infty( [-N,N] )}$.
	Also, recalling the definition of $v_\eps$, we have
		\[
			\int_a^b f'(u(x))v_\eps(x) \de x
				=  \frac{\eps}{6}\int_a^b f'(u(x)) u''(x_\eps) \de x,
		\]
	and, therefore,
	\begin{equation}\label{eq:Eh-E0}
		\begin{split}
		\left| E_\eps(u) - E_0(u) \right| 
			& \leq \frac{1}{6}\left| - \int_a^b f'(u(x)) u''(x_\eps) \de x
				- \int_a^b f''(u(x)) u'(x)^2 \de x \right|
		\\ & \quad + \frac{1}{2} \left| \frac{1}{4} \int_a^b f''(u(x)) u'(x)^2 \de x
				- \int_a^b f''(w_\eps(x))v_\eps(x)^2 \de x \right| 
				+ c_1 \eps^3.
	\end{split}		
	\end{equation}
	Since $u\in Y\cap C^2(\R)$,
	$u''$ admits a uniform modulus of continuity $m_{u''}\colon [0,+\infty) \to [0,\infty)$.
	An integration by parts yields
		\[\begin{split}
			\left| -\int_a^b f'(u(x)) \right. & \left.\! u''(x_\eps) \de x  - \int_a^b f''(u(x)) u'(x)^2 \de x \right| \\
			&\leq   \int_a^b \left| f'(u(x)) \right|  \left| u''(x) - u''(x_\eps) \right| \de x \\
			&\leq  c_2 m_{u''}(\eps),
	\end{split}\]
	where $c_2 \coloneqq (b-a) \lVert f' \rVert_{L^\infty( [-N, N] )}$.
	
	Analogously,
	letting $m_{f''}$ be the modulus of continuity of the restriction of $f''$ to the interval $[-N,N]$,
	we find as well
		\[\begin{split}
			& \left| \frac{1}{4} \int_a^b f''(u(x)) u'(x)^2 \de x - \int_a^b f''(w_\eps(x))v_\eps(x)^2 \de x \right| 
			\\ &\quad  \leq  \int_a^b \left| f''(u(x)) \right|  \left| \frac{1}{4} u'(x)^2 -  v_\eps(x)^2\right| \de x 
				+   \int_a^b \left| f''(u(x)) - f''(w_\eps(x)) \right|  v_\eps(x)^2 \de x
			\\ &\quad  \leq c_3( \eps + m_{f''}(\eps) ),
	\end{split}\]
	with $c_3$ depending on $b-a$, $N$, and $\lVert f'' \rVert_{L^\infty( [-N, N] )}$.
	
	By combining \eqref{eq:Eh-E0} with the inequalities above,
	we obtain
	\begin{equation}\label{eq:h<1}
	\left| E_\eps(u) - E_0(u) \right| \leq c_0 \big( m_{u''}(\eps) + m_{f''}(\eps) +\eps + \eps^3 \big)
	\qquad \text{for } \eps\in(0,1],
	\end{equation}
	for a suitable constant $c_0>0$.
	Now, \eqref{eq:h>1} and \eqref{eq:h<1} get \eqref{eq:1D-point-est}.
	
	For what concerns \eqref{eq:Glimsup-Eeps},
	we construct a subset of $Y$ that is dense in energy,
	and we invoke Lemma \ref{stm:prop-Gconv}.
	The argument is standard,
	so we just sketch it.	
	If $u\in Y \setminus H^1(\R)$,
	\eqref{eq:Glimsup-Eeps} holds trivially.
	Else, by rescaling the domain and mollifying,
	we can produce a sequence of smooth functions $\Set{u_\ell} \subset Y$
	that approximate $u$ both uniformly and in $H^1(\R)$.
	Thanks to the continuity of $f''$, we see that
	$\lim_{\ell \to +\infty} E_0(u_\ell) = E_0(u)$.
	Also, $\lim_{\eps \to 0^+} E_\eps(u_\ell) = E_0(u_\ell)$
	for any $\ell\in \N$, because $u_\ell$ is smooth.
	Then, we are in position to apply Lemma \ref{stm:prop-Gconv}.
\end{proof}


With Proposition \ref{stm:1D-pointlim} on hand,
to achieve the proof of Theorem \ref{stm:1D-Gconv},
it suffices to validate statement \ref{stm:1D-liminf},
namely, for each $u\in Y$ and for each family $\Set{u_\eps}\subset Y$ converging to $u$ in $L^2(\R)$
it holds
	\[	E_0(u)\leq \liminf_{\eps \to 0^+} E_\eps(u_\eps). 	\]
It is in the proof of this inequality that
the assumption of strong convexity of $f$ comes into play:
it enables us to provide a lower bound on the energy $E_\eps$,
which, in turn, yields that
sequences with equibounded energy are precompact w.r.t. the $L^2$-topology.

\begin{lemma}[Lower bound on the energy]\label{stm:1D-enbound}
	Assume that {\bf A3} is satisfied.
	Then, for any $u\in Y$, there holds
		\begin{equation}\label{eq:1D-lbound}
			E_\eps(u)\geq \sup_{\phi \in \Cc^\infty\left(\R^2\right)}
			\left\{ \int_\R\fint_x^{x+\eps}
			\left(
			\frac{u(y) - \D_\eps U(x)}{\eps} \phi(x,y) - \frac{\phi(x,y)^2}{4\lambda_\eps(x,y)}
			\right) \de y \de x
			\right\},
		\end{equation}
	with		
	\begin{equation} \label{eq:lambda}
	\lambda_\eps(x,y)\coloneqq \int_0^1 (1-\theta)f''\big((1-\theta)\D_\eps U(x)+\theta u(y)\big)d\theta.
	\end{equation}
	Moreover, 
	\begin{equation}\label{eq:1D-lbound2}
	E_\eps(u) \geq \frac{\alpha}{4} \int_\R\int_{-\eps}^{\eps}
		H_\eps(r)\left(\frac{u(y+r) - u(y)}{\eps}\right)^2 \de r\de y,
	\end{equation}
	where
	\begin{equation*}
		H(r)\coloneqq (1-\va{r})^+
		\quad\text{and}\quad
		H_\eps(r)\coloneqq \frac{1}{\eps}H\left(\frac{r}{\eps}\right).
	\end{equation*}
\end{lemma}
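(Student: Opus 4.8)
The key algebraic identity behind both bounds is a second-order Taylor expansion of $f$ with integral remainder. For any two reals $a,b$ one has
\[
f(b) - f(a) - f'(a)(b-a) = (b-a)^2 \int_0^1 (1-\theta) f''\big((1-\theta)a + \theta b\big)\,\de\theta.
\]
Applying this with $a = \D_\eps U(x)$ and $b = u(y)$, and then averaging the identity over $y\in(x,x+\eps)$, I would exploit the crucial fact that $\fint_x^{x+\eps} u(y)\,\de y = \D_\eps U(x)$, so the linear term $f'(\D_\eps U(x))\cdot(u(y) - \D_\eps U(x))$ integrates to zero. This yields the exact representation
\[
f(u(x)) - f(\D_\eps U(x)) = \fint_x^{x+\eps} \big(u(y) - \D_\eps U(x)\big)^2 \lambda_\eps(x,y)\,\de y,
\]
where $\lambda_\eps$ is precisely \eqref{eq:lambda}; here I also use that $f(u(x)) = \fint_x^{x+\eps} f(u(x))\,\de y$ is constant in $y$, wait — more carefully, I should write $f(u(x)) - f(\D_\eps U(x)) = \fint_x^{x+\eps}\big(f(u(y)) - f(\D_\eps U(x))\big)\,\de y$ only if $f(u(y))$ averages to $f(u(x))$, which is false; instead the correct route is to note $f(u(x))$ is already the value at the point $x$, and one writes $f(u(x)) - f(\D_\eps U(x))$ directly, then separately expands. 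Let me restate: the clean identity is obtained by writing, for \emph{each fixed $x$}, $\eps^2 E_\eps(u) = \int_\R \big[f(u(x)) - f(\D_\eps U(x))\big]\,\de x$ and then, since $u(x)$ and $\D_\eps U(x)$ differ, applying the Taylor identity with $a=\D_\eps U(x)$, $b=u(x)$ gives $f(u(x)) - f(\D_\eps U(x)) = f'(\D_\eps U(x))(u(x)-\D_\eps U(x)) + (u(x)-\D_\eps U(x))^2\mu_\eps(x)$; but $u(x)-\D_\eps U(x)$ does \emph{not} vanish after $x$-integration, so this alone is not enough — one genuinely needs the $y$-average. So the right manipulation is: $f(u(x)) = \fint_x^{x+\eps} f(u(x))\,\de y$ trivially, and one compares $f(u(x))$ with $f(u(y))$ is not the point; rather, use that $\int_\R f(u(x))\,\de x = \int_\R \fint_x^{x+\eps} f(u(y))\,\de y\,\de x$ by translation invariance of Lebesgue measure (Fubini), so
\[
\eps^2 E_\eps(u) = \int_\R \fint_x^{x+\eps}\big[f(u(y)) - f(\D_\eps U(x))\big]\,\de y\,\de x,
\]
and \emph{now} the Taylor identity with $a = \D_\eps U(x)$, $b = u(y)$ kills the linear term upon $\fint_x^{x+\eps}\,\de y$, delivering
\[
\eps^2 E_\eps(u) = \int_\R \fint_x^{x+\eps}\big(u(y) - \D_\eps U(x)\big)^2 \lambda_\eps(x,y)\,\de y\,\de x.
\]

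From this exact formula, \eqref{eq:1D-lbound} is immediate: for fixed $(x,y)$ the scalar inequality $\lambda_\eps(x,y)\, t^2 \ge t\,\phi - \phi^2/(4\lambda_\eps(x,y))$ holds for every real $t$ and $\phi$ (it is the elementary $\lambda t^2 - t\phi + \phi^2/(4\lambda) = \lambda(t - \phi/(2\lambda))^2 \ge 0$, valid since $\lambda_\eps \ge 0$ by convexity of $f$; when $\lambda_\eps = 0$ one interprets the term as $+\infty$ unless $\phi=0$). Taking $t = (u(y)-\D_\eps U(x))/\eps$, multiplying through by $\eps^2$, integrating in $y$ and $x$, and then taking the supremum over $\phi\in C_c^\infty(\R^2)$ gives exactly \eqref{eq:1D-lbound}. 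This step is routine once the representation formula is in hand.

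For the more concrete bound \eqref{eq:1D-lbound2}, I would start again from the exact formula, now using assumption \textbf{A3} in the form $f'' \ge \alpha$ everywhere (this is exactly what strong convexity of $2f - \alpha t^2/\,$, i.e. convexity of $f - \tfrac{\alpha}{2}t^2$, gives, since $f\in C^2$). Hence $\lambda_\eps(x,y) \ge \alpha \int_0^1(1-\theta)\,\de\theta = \alpha/2$, so
\[
\eps^2 E_\eps(u) \ge \frac{\alpha}{2}\int_\R \fint_x^{x+\eps}\big(u(y) - \D_\eps U(x)\big)^2\,\de y\,\de x.
\]
Now I must replace the "variance-type" integrand $\fint_x^{x+\eps}(u(y) - \fint_x^{x+\eps}u)^2\,\de y$ by the double-difference expression in \eqref{eq:1D-lbound2}. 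The identity $\fint_x^{x+\eps}(u(y) - \fint u)^2\,\de y = \tfrac12 \fint_x^{x+\eps}\fint_x^{x+\eps}(u(y) - u(y'))^2\,\de y\,\de y'$ (variance as a double integral) converts it to a genuine double average over the square $(x,x+\eps)^2$; changing variables $y' = y + r$ and computing the measure of $\{(x,y) : y,y+r \in (x,x+\eps)\}$ produces exactly the triangular weight $H_\eps(r) = \eps^{-1}(1 - |r/\eps|)^+$ after a Fubini rearrangement, giving
\[
\int_\R \fint_x^{x+\eps}\Big(u(y) - \fint_x^{x+\eps}u\Big)^2\,\de y\,\de x = \frac{1}{2}\int_\R \int_{-\eps}^{\eps} H_\eps(r)\,\big(u(y+r) - u(y)\big)^2\,\de r\,\de y.
\]
Dividing by $\eps^2$ and absorbing one $\eps$ into each difference quotient as in the statement yields \eqref{eq:1D-lbound2}, with the constant $\tfrac{\alpha}{2}\cdot\tfrac12 = \tfrac{\alpha}{4}$ matching. \textbf{The main obstacle} is the bookkeeping in this last change of variables: one must carefully track the domain $\{(x,y,r)\}$, use translation invariance to integrate out $x$ (or $y$), and verify that the resulting kernel in $r$ is precisely the tent function $H_\eps$ rather than some other profile — a slip here changes the constant or the weight. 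Everything else (the Taylor identity, the Fubini translation trick, the elementary quadratic inequality, and $f'' \ge \alpha$) is straightforward.
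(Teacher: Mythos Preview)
Your proposal is correct and follows essentially the same route as the paper: rewrite $\int_\R f(u(x))\,\de x = \int_\R \fint_x^{x+\eps} f(u(y))\,\de y\,\de x$ by Fubini, apply the second-order Taylor identity with integral remainder about $\D_\eps U(x)$ so the linear term averages to zero, then use the elementary quadratic inequality for \eqref{eq:1D-lbound} and the bound $\lambda_\eps\ge\alpha/2$ together with the variance double-integral identity and a Fubini rearrangement (producing the tent kernel $H_\eps$) for \eqref{eq:1D-lbound2}. The only cosmetic difference is that the paper phrases the quadratic step as $a^2\ge 2ab-b^2$ and absorbs an $\eps$ into the test function, whereas you complete the square directly; the substance is identical.
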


\begin{proof}	
	Given $\eps>0$, let $u\in Y$ be such that $E_\eps(u)$ is finite.
	We write
		\[
		E_\eps(u) = \frac{1}{\eps^2} \int_\R e_\eps(x) \de x,
		\quad\text{where } e_\eps(x) \coloneqq \fint_x^{x+\eps} [f(u(y))-f(\D_\eps U(x))] \de y.
		\]
	The identity
		\[
			f(s)-f(t) = f'(t)(s-t)+(s-t)^2 \int_0^1 (1-\theta) f''((1-\theta)t+\theta s))d\theta,
		\]
	gets
		\begin{equation}\label{eq:Ch}
		e_\eps(x) = \fint_x^{x+\eps} \lambda_\eps(x,y) \left(u(y) - \D_\eps U(x)\right)^2 \de y,
		\end{equation}
	with $\lambda_\eps(x,y)$ as in \eqref{eq:lambda}.
	We now make use of the identity $a^2 \geq 2ab - b^2$ for $a,b\in\R$
	to recover the bound
		\[
		e_\eps(x) \geq
			\fint_x^{x+\eps} \left( \frac{u(y) - \D_\eps U(x)}{\eps} \phi(x,y) - \frac{\phi(x,y)^2}{4\lambda_\eps(x,y)} \right) \de y,
		\]
	for any $\phi \in C^\infty_c\left(\R^2\right)$.
	Hence, \eqref{eq:1D-lbound} is proved.
		
	Thanks to the strong convexity of $f$,
	$\lambda_\eps(x,y) \ge \alpha / 2$ for all $(x,y)\in \R^2$ and $\eps>0$.
	Thus, from \eqref{eq:Ch} we infer
		\[
			e_h(x)\geq \frac{\alpha}{2}\fint_x^{x+\eps}\left(u(y) - \D_\eps U(x)\right)^2,
		\]
	and, therefore,
		\begin{equation*}
		\begin{split}
			E_\eps(u)
				& \geq \frac{\alpha}{2} \int_\R\fint_x^{x+\eps}\left(\frac{u(y) - \D_\eps U(x)}{\eps}\right)^2\de y\de x \\
				& \geq \frac{\alpha}{4} \int_\R\fint_x^{x+\eps}\fint_x^{x+\eps}\left(\frac{u(z) - u(y)}{\eps}\right)^2
						\de z\de y\de x.
		\end{split}
		\end{equation*}
	The last inequality is a consequence of the identity
		\[ 
			\int |\phi(y)|^2 \de\mu(y) = \left|\int \phi(y) d\mu(y)\right|^2 + \frac{1}{2}\int\int |\phi(z)-\phi(y)|^2\de\mu(z)\de\mu(y),
		\]
	which holds whenever $\mu$ is a probability measure and $\phi\in L^2(\mu)$.
	By Fubini's Theorem and neglecting contributions near the boundary,
	we find:
		\begin{equation*}
		\begin{split}
			E_\eps(u) & \geq \frac{\alpha}{4} \int_\R\fint_{y-\eps}^y\fint_x^{x+\eps}
				\left(\frac{u(z) - u(y)}{\eps}\right)^2 \de z\de x\de y \\
				& = \frac{\alpha}{4\eps} \int_\R\int_{y-\eps}^{y+\eps}
					\left(1-\frac{\left|z-y\right|}{\eps}\right)\left(\frac{u(z) - u(y)}{\eps}\right)^2 \de z\de y.
	\end{split}
	\end{equation*}
	The change of variables $r=z-y$ yields \eqref{eq:1D-lbound2}.
\end{proof}

\begin{rmk}\label{stm:f''bounded}
	Let $u\in Y \cap H^1(\R)$.
	Then, the family $\Set{E_\eps(u)}$ is bounded above
	if there exists $c>0$ such that  $f''\leq c$.
	To see this,
	we observe that \eqref{eq:Ch} and the definition of $\lambda_\eps$ yield
	\begin{equation*}
	E_\eps(u) \leq \frac{c}{2 \eps^2} \int_{\R} \fint_x^{x+\eps} \left(u(y) - \D_\eps U(x)\right)^2 \de y \de x.
	\end{equation*}
	Being $u$ in $H^1(\R)$, for $y\in (x,x+\eps)$ we have
	\[
	\left| u(y) - \D_\eps U(x) \right|^2
	\leq \eps \int_{x}^{x+\eps} \left| u'(z)\right|^2 \de z
	= \eps^2 \int_{0}^{1} \left| u'(x + \eps z)\right|^2 \de z,
	\]
	and we conclude
	\begin{equation}\label{eq:Ehbounded}
	E_\eps(u) \leq \frac{c}{2} \int_{\R} \left| u'(x )\right|^2 \de x.
	\end{equation}
	
	The current remark will come in handy to prove a characterisation of $H^2_\loc(\Rd)$,
	see Remark \ref{stm:carattH2} below.
\end{rmk}

\begin{lemma}[Compactness]\label{stm:1D-cpt}
	Assume that $f$ fulfils {\bf A3}.
	If $\Set{u_\eps}\subset Y$ is a family such that
	$E_\eps(u_\eps)\leq M $ for some $M\geq 0$,
	then, there exist a sequence $\Set{\eps_\ell}$ and a function $u\in Y \cap H^1(\R)$
	such that $\Set{u_{\eps_\ell}}$ converges to $u$ in $L^2(\R)$, as $\ell\to+\infty$.		
\end{lemma}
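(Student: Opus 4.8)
The plan is to convert the energy bound into $L^2(\R)$-compactness by passing to the Fourier side, where the lower estimate \eqref{eq:1D-lbound2} of Lemma \ref{stm:1D-enbound} shows that the functions $u_\eps$ are uniformly bounded in $H^1$ at frequencies of size $\lesssim\eps^{-1}$ while carrying a vanishing amount of $L^2$-mass at higher frequencies; combined with a uniform $L^2$-bound, this gives precompactness via the Rellich embedding. First I would rewrite \eqref{eq:1D-lbound2}: since $E_\eps(u_\eps)\le M$ and $H_\eps(r)=\eps^{-1}H(r/\eps)$ with $H(s)=(1-\va{s})^+$ a probability density on $(-1,1)$, the substitution $r=\eps s$ turns that inequality into
\[
\int_{-1}^{1}H(s)\,\norm{\tau_{\eps s}u_\eps-u_\eps}_{L^2(\R)}^2\,\de s\ \le\ \frac{4M}{\alpha}\,\eps^2 ,
\]
where $\tau_h w(\cdot)\coloneqq w(\cdot+h)$.

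Next I would establish a uniform $L^2$-bound. Set $\omega_\eps(h)\coloneqq\norm{\tau_h u_\eps-u_\eps}_{L^2(\R)}^2$. A mean value argument on the interval $[\tfrac{1}{2},\tfrac{3}{4}]$, where $H\ge\tfrac{1}{4}$, produces a scale $\delta_\eps\in[\tfrac{\eps}{2},\eps]$ with $\omega_\eps(\delta_\eps)\le c\,\delta_\eps^2$ for a constant $c=c(M,\alpha)$. Since $h\mapsto\tau_h$ is an $L^2$-isometry, $\sqrt{\omega_\eps}$ is subadditive, hence $\omega_\eps(n\delta_\eps)\le c\,n^2\delta_\eps^2$ for every $n\in\N$; choosing the least $n_\eps$ with $n_\eps\delta_\eps>b-a$ makes the supports of $u_\eps$ and $\tau_{n_\eps\delta_\eps}u_\eps$ disjoint, so $2\norm{u_\eps}_{L^2(\R)}^2=\omega_\eps(n_\eps\delta_\eps)\le c\,(n_\eps\delta_\eps)^2$ with $n_\eps\delta_\eps\le (b-a)+\eps$. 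This yields $\sup_{0<\eps\le1}\norm{u_\eps}_{L^2(\R)}<+\infty$.

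Then I would go to Fourier space. Letting $\widehat{v}$ denote the Fourier transform of $v$, normalised to be unitary on $L^2(\R)$, Plancherel's identity gives $\omega_\eps(h)=4\int_\R\va{\widehat{u_\eps}(\xi)}^2\sin^2(h\xi/2)\,\de\xi$, so the displayed bound becomes
\[
\int_\R\va{\widehat{u_\eps}(\xi)}^2\,\Phi(\eps\xi)\,\de\xi\ \le\ \frac{4M}{\alpha}\,\eps^2 ,\qquad
\Phi(\tau)\coloneqq 4\int_{-1}^{1}H(s)\sin^2\!\Big(\tfrac{\tau s}{2}\Big)\,\de s .
\]
The function $\Phi$ is continuous, strictly positive on $\R\setminus\Set{0}$, behaves like $\tau^2/6$ near $0$, and tends to $2$ as $\va{\tau}\to+\infty$, so $\Phi(\tau)\ge c_1\min\Set{\tau^2,1}$ for some $c_1>0$. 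Splitting the integral at $\va{\xi}=\eps^{-1}$ gives $\int_{\va{\xi}\le\eps^{-1}}\va{\widehat{u_\eps}(\xi)}^2\,\xi^2\,\de\xi\le 4M/(\alpha c_1)$ and $\int_{\va{\xi}>\eps^{-1}}\va{\widehat{u_\eps}(\xi)}^2\,\de\xi\le 4M\eps^2/(\alpha c_1)$. Fixing a sequence $\eps_\ell\to0^+$ and decomposing $u_{\eps_\ell}=v_\ell+w_\ell$ by cutting the Fourier transform at $\va{\xi}=\eps_\ell^{-1}$, these two bounds together with the uniform $L^2$-bound give $\norm{w_\ell}_{L^2(\R)}\to0$ and $\Set{v_\ell}$ bounded in $H^1(\R)$; the compact embedding $H^1(\R)\hookrightarrow L^2_\loc(\R)$ then yields a subsequence with $v_{\ell_m}\to u$ in $L^2_\loc(\R)$ and $u\in H^1(\R)$. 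Consequently $u_{\eps_{\ell_m}}\to u$ in $L^2_\loc(\R)$, and since each $u_{\eps_\ell}$, and hence $u$, vanishes a.e. outside the bounded interval $I$, the convergence holds in $L^2(\R)$; as $Y$ is a closed subspace, $u\in Y\cap H^1(\R)$, which is the claim.

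The main obstacle is that \eqref{eq:1D-lbound2} only controls the oscillation of $u_\eps$ at the scale $\eps$, so one cannot expect equicontinuity of translations uniform in $\eps$ and the Riesz--Fr\'echet--Kolmogorov criterion is not directly applicable to $\Set{u_\eps}$. The frequency splitting circumvents this: the part of $u_\eps$ oscillating at scales below $\eps$ is asymptotically negligible in $L^2$, while the low-frequency part is genuinely $H^1$-precompact. A Fourier-free variant replaces the cut-off by a mollification at a scale $\delta\gg\eps$ and concludes by a diagonal argument as $\delta\to0$.
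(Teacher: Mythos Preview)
Your argument is correct, and the overall architecture---split $u_\eps$ into a part that is $H^1$-precompact and a remainder that is $L^2$-small---is the same as the paper's, but the implementation differs. The paper realises the smooth part by a physical-space mollification $v_\eps=\rho_\eps*u_\eps$ with a carefully chosen $\rho$ satisfying $0\le\rho\le H$ and $\va{\rho'}\le H$; these pointwise inequalities let one bound $\norm{v_\eps'}_{L^2}$ and $\norm{v_\eps-u_\eps}_{L^2}$ directly by the quantity in \eqref{eq:1D-lbound2}, and the uniform $L^2$ bound on $v_\eps$ comes from Poincar\'e on the enlarged interval $(a-1,b+1)$. You instead take a sharp Fourier cut-off at $\va{\xi}=\eps^{-1}$ and extract the same two bounds from the multiplier estimate $\Phi(\tau)\gtrsim\min\{\tau^2,1\}$, and you obtain the $L^2$ bound on $u_\eps$ itself by a nice subadditivity-plus-disjoint-supports trick. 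Your route makes the underlying frequency picture explicit and avoids the need to manufacture a mollifier with $\rho,\va{\rho'}\le H$; the paper's route stays in physical space and is the one that generalises verbatim to the $d$-dimensional compactness Lemma~\ref{stm:cpt-BBM}, where the effective kernel $\tilde G$ replacing $H$ is not explicit enough to make a Fourier argument convenient. Your closing remark about a ``Fourier-free variant'' via mollification is exactly the paper's approach, except that the paper mollifies at scale $\eps$ rather than $\delta\gg\eps$, which already suffices.
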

\begin{proof}	
	We adapt the proof of \cite{AB2}*{Theorem 3.1}.
	
	By Lemma \ref{stm:1D-enbound}, we infer that
	\begin{equation}\label{eq:1D-lbound-bis}
	\frac{\alpha}{4} \int_\R\int_{-\eps}^{\eps}
	H_\eps(r)\left(\frac{ u_\eps(y+r) -  u_\eps(y)}{h}\right)^2 \de r \de y \leq M.
	\end{equation}
	Observe that $H_\eps(r)\de r$ is a probability measure on $[-\eps,\eps]$.
	
	Let $\rho\in \Cc^{\infty}(\R)$ be a mollifier
	such that its support is contained in $[-1,1]$,
	$0\leq \rho \leq H$, and $\left|\rho'\right| \leq H$.
	For all $\epsilon>0$, we define
	 	\[
			 \rho_\eps(r)\coloneqq \frac{1}{c\eps}\rho\left(\frac{r}{\eps}\right),
			 \qquad \text{with }c\coloneqq \int_\R\rho(r)\de r,
	 	\]
	and we introduce the regularised functions $v_\eps \coloneqq \rho_\eps \ast  u_\eps$,
	Each $v_\eps\colon \R\to\R$ is a smooth function
	whose support is a subset of $(a-\eps,b+\eps)$,
	and the family of derivatives $\Set{v'_\eps}_{\eps \in(0,1)}$ is uniformly bounded in $L^2(\R)$.
	Indeed, since $\int_\R \rho'(r)\de r =0$, we have
		\[\begin{split}
			\int_\R \left| v_\eps'(y) \right|^2 \de y 
				& = \int_\R \left| \int_{-\eps}^{\eps} \rho'_\eps(r)[ u_\eps(y+r) - u_\eps(y)]\de r \right|^2 \de y \\
				& \le \int_\R\left(\int_{-\eps}^{\eps}
							\left|\rho'_\eps(r)\right|\left| u_\eps(y+r) - u_\eps(y) \right| \de r \right)^2 \de y \\
				& \le \frac{1}{c^2}\int_\R 
							\left(\int_{-\eps}^{\eps} H_\eps(r) \left|
							\frac{ u_\eps(y+r) - u_\eps(y)}{\eps}\right| \de r \right)^2 \de y \\
				& \le \frac{1}{c^2}\int_\R \int_{-\eps}^{\eps} H_\eps(r) \left|
							\frac{ u_\eps(y+r) -  u_\eps(y)}{\eps}\right|^2 \de r \de y,
		\end{split}\]
	whence
		\begin{equation}\label{eq:vh-H^1}	
			\int_\R \left|v_\eps'(y)\right|^2 \de y \leq \frac{4M}{c^2\alpha}.
		\end{equation}
	
	For all $\eps\in(0,1)$, let $\tilde v_\eps$ be the restriction of $v_\eps$ to the interval $(a-1,b+1)$.
	By virtue of Poincar\'e's inequality,
	\eqref{eq:vh-H^1} entails boundedness in $H^1_0((a-1,b+1))$
	of the family $\Set{\tilde v_\eps}_{\eps\in(0,1)}$,
	and, by Sobolev's Embedding Theorem,
	there exist a sequence $\Set{\eps_\ell}$ and a function $\tilde u\in H^1_0([a-1,b+1])$ such that
	$\Set{\tilde v_{\eps_\ell}}$ uniformly converges to $\tilde u$.
	Since each $\tilde v_{\eps_\ell}$ is supported in $(a-\eps_\ell,b+\eps_\ell)$,
	we see that $\tilde u\in H^1_0(\bar I)$.
	It follows that $\Set{v_{\eps_\ell}}$ converges uniformly to $u \in Y\cap H^1(\R)$
	if we set
		\[
			u(x)\coloneqq
				\begin{cases}
					\tilde u(x) 	&\text{if } x\in \bar I, \\
					0					& \text{otherwise}.
				\end{cases}
		\]
	
	Eventually,
	we focus on the $L^2$-distance between $u_\eps$ and $v_\eps$.
	As above, we have 
		\[\begin{split}
			\int_\R \left|v_\eps(y)- u_\eps(y)\right|^2 \de y 
				& =  \int_\R\left|\int_{-\eps}^{\eps} \rho_\eps(r)[ u_\eps(y+r) - u_\eps(y)]\de r \right|^2 \de y \\
				& \leq  \int_\R \int_{-\eps}^{\eps} \rho_\eps(r)\left| u_\eps(y+r) - u_\eps(y)\right|^2 \de r \de y \\
				& \leq  \frac{1}{c}\int_\R \int_{-\eps}^{\eps} H_\eps(r) \left| u_\eps(y+r) - u_\eps(y) \right|^2 \de r \de y,
		\end{split}	\]
	and, by \eqref{eq:1D-lbound-bis}, we get
		\begin{equation*}
			\int_\R \left|v_\eps(y) - u_\eps(y)\right|^2 \de y  \leq \frac{4M}{c\alpha} \eps^2.
		\end{equation*}
	The thesis is now proved, because
	there exists a subsequence $\Set{v_{\eps_\ell}}$ that
	converges uniformly to $u \in Y\cap H^1(\R)$.
\end{proof}

At this stage, we are in position to establish
statement \ref{stm:1D-liminf} in Theorem \ref{stm:1D-Gconv},
thus concluding the proof of the latter.

	\begin{prop}\label{stm:1D-Gliminf}
	Let $f$ satisfy {\bf A3}. Then, for any $u\in Y$ and for any family $\Set{u_\eps}\subset Y$
	that converges to $u$ in $L^2(\R)$, it holds
		\begin{equation}\label{eq:1D-Gliminf}
			E_0(u)\leq \liminf_{\eps \to 0^+} E_\eps(u_\eps).
		\end{equation}
	\end{prop}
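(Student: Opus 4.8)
The plan is the following. If $\liminf_{\eps\to0^+}E_\eps(u_\eps)=+\infty$ then \eqref{eq:1D-Gliminf} is trivial, so I assume the lower limit is finite and fix a sequence $\eps_\ell\to0^+$ realising it, with $E_{\eps_\ell}(u_{\eps_\ell})\le M$ for some $M>0$. By Lemma \ref{stm:1D-cpt} the $L^2(\R)$-limit of $\Set{u_{\eps_\ell}}$ --- which by hypothesis is $u$ --- lies in $Y\cap H^1(\R)$; in particular $E_0(u)<+\infty$, $u$ is bounded (being $H^1$ with support in the bounded interval $I$) and $f''\circ u$ is bounded on $\R$, say by $C_0$. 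Working henceforth along $\Set{\eps_\ell}$, I record that $\D_\eps U_\eps=\fint_x^{x+\eps}u_\eps$ is a mollification of $u_\eps$ by an approximate identity of unit mass, so $\D_\eps U_\eps\to u$ in $L^2(\R)$, and that $x\mapsto\fint_x^{x+\eps}\va{u_\eps(y)-u(y)}^2\de y$ tends to $0$ in $L^1(\R)$, its $L^1$-norm being $\norm{u_\eps-u}_{L^2(\R)}^2$; after a further (unrelabelled) extraction I may thus assume that, for a.e.\ $x\in\R$, $\D_\eps U_\eps(x)\to u(x)$ and $\fint_x^{x+\eps}\va{u_\eps(y)-u(x)}^2\de y\to0$ (the latter after splitting $\va{u_\eps(y)-u(x)}^2\le2\va{u_\eps(y)-u(y)}^2+2\va{u(y)-u(x)}^2$ and using that a.e.\ point is a Lebesgue point of $u$).

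The heart of the matter is to feed into the lower bound \eqref{eq:1D-lbound} the almost optimal competitor
\[
	\phi_\eps(x,y)\coloneqq f''(u(x))\,u'(x)\left(\frac{y-x}{\eps}-\frac12\right),
\]
i.e.\ the pointwise minimiser of the quadratic expression under the supremum, once $\lambda_\eps$ is replaced by its formal limit $\tfrac12 f''(u(x))$ and $\eps^{-1}\bigl(u_\eps(y)-\D_\eps U_\eps(x)\bigr)$ by $u'(x)\bigl(\tfrac{y-x}{\eps}-\tfrac12\bigr)$. The inequality behind \eqref{eq:1D-lbound} rests only on $a^2\ge2ab-b^2$, hence holds for every measurable $\phi$ (the restriction to $\Cc^\infty$ serving only to phrase the supremum), and --- all the integrals below being absolutely convergent since $f''(u)\,u'\in L^2(\R)$ --- I obtain
\[
	E_\eps(u_\eps)\ \ge\ \int_\R f''(u(x))\,u'(x)\,q_\eps(x)\de x\ -\ \int_\R\fint_x^{x+\eps}\frac{\phi_\eps(x,y)^2}{4\lambda_\eps(x,y)}\de y\de x,
\]
where, using $\fint_x^{x+\eps}\bigl(u_\eps(y)-\D_\eps U_\eps(x)\bigr)\de y=0$,
\[
	q_\eps(x)\coloneqq\fint_x^{x+\eps}\frac{u_\eps(y)-\D_\eps U_\eps(x)}{\eps}\left(\frac{y-x}{\eps}-\frac12\right)\de y=\frac1\eps\fint_x^{x+\eps}\left(\frac{y-x}{\eps}-\frac12\right)u_\eps(y)\de y.
\]

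I then pass to the limit in the two terms separately. For the linear one, Cauchy--Schwarz together with the inequality $\int_\R\fint_x^{x+\eps}\bigl(\eps^{-1}(u_\eps(y)-\D_\eps U_\eps(x))\bigr)^2\de y\de x\le\tfrac2\alpha E_\eps(u_\eps)\le\tfrac{2M}{\alpha}$ --- obtained in the proof of Lemma \ref{stm:1D-enbound} --- shows that $\Set{q_\eps}$ is bounded in $L^2(\R)$; testing against smooth functions and integrating by parts (the relevant moment being $\int_0^1 t\bigl(t-\tfrac12\bigr)\de t=\tfrac1{12}$) identifies the weak $L^2$-limit of $q_\eps$ as $\tfrac1{12}u'$, whence $\int_\R f''(u)\,u'\,q_\eps\to\tfrac1{12}\int_\R f''(u)\va{u'}^2$. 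For the quadratic one I split the inner integral according to whether $u_\eps(y)$ and $\D_\eps U_\eps(x)$ lie within a fixed $\delta>0$ of $u(x)$ or not: on the ``good'' set continuity of $f''$ gives $\lambda_\eps(x,y)\ge\tfrac12 m_\delta(x)$, with $m_\delta(x)\coloneqq\inf\Set{f''(s):\va{s-u(x)}\le\delta}\to f''(u(x))$ as $\delta\to0^+$; on the complementary ``bad'' set I keep only $\lambda_\eps\ge\alpha/2$. Since the bad set has relative measure $\le\delta^{-2}\fint_x^{x+\eps}\va{u_\eps(y)-u(x)}^2\de y\to0$ for a.e.\ $x$, and $\fint_x^{x+\eps}\bigl(\tfrac{y-x}{\eps}-\tfrac12\bigr)^2\de y=\tfrac1{12}$, I deduce that $\limsup\fint_x^{x+\eps}\tfrac{\phi_\eps(x,y)^2}{4\lambda_\eps(x,y)}\de y\le\tfrac1{24}f''(u(x))\va{u'(x)}^2$ for a.e.\ $x$; a reverse Fatou lemma --- legitimate because the left-hand side is dominated by $\tfrac1{24\alpha}C_0^2\va{u'(x)}^2\in L^1(\R)$ --- then gives $\limsup\int_\R\fint_x^{x+\eps}\tfrac{\phi_\eps^2}{4\lambda_\eps}\de y\de x\le\tfrac1{24}\int_\R f''(u)\va{u'}^2$. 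Combining the two,
\[
	\liminf_{\eps\to0^+}E_\eps(u_\eps)\ \ge\ \frac1{12}\int_\R f''(u)\va{u'}^2-\frac1{24}\int_\R f''(u)\va{u'}^2=\frac1{24}\int_\R f''(u)\va{u'}^2=E_0(u),
\]
which is \eqref{eq:1D-Gliminf}.

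The step I expect to be the main obstacle is the quadratic term --- controlling $\lambda_\eps(x,y)=\int_0^1(1-\theta)f''\bigl((1-\theta)\D_\eps U_\eps(x)+\theta u_\eps(y)\bigr)\de\theta$, which depends on the merely weakly convergent $u_\eps$ and not on the limit $u$ out of which $\phi_\eps$ is built, and coping with a possibly unbounded $f''$. Its treatment leans on three facts already in place: strong convexity of $f$ (equivalently $f''\ge\alpha$, so $\lambda_\eps\ge\alpha/2$ uniformly), which supplies the integrable domination on the bad set; boundedness of the limit $u$, which keeps $f''\circ u$ bounded and $f''(u)\,u'$ in $L^2$; and the fact, extracted from the compactness of Lemma \ref{stm:1D-cpt}, that $\D_\eps U_\eps\to u$ and $\fint_x^{x+\eps}\va{u_\eps(y)-u(x)}^2\de y\to0$ pointwise a.e.\ along a subsequence, which is precisely what makes the good region carry the sharp constant $\tfrac1{24}$.
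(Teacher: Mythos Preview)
Your argument is correct, and it takes a genuinely different route from the paper's. The paper keeps the test function $\phi(x,y)=\psi\bigl(x,(y-x)/\eps\bigr)$ with $\psi\in\Cc^\infty(\R^2)$ throughout, passes to the limit (the compact support of $\psi$ makes the quadratic term a plain Lebesgue convergence, since $\lambda_\eps\ge\alpha/2$), obtains a variational lower bound depending on $\psi$, then specialises to $\psi(x,r)=\eta(x)(r-\tfrac12)$ and takes the supremum over $\eta\in\Cc^\infty(\R)$ to reach $E_0(u)$. You instead go straight for the optimal competitor $\phi_\eps(x,y)=f''(u(x))\,u'(x)\bigl((y-x)/\eps-\tfrac12\bigr)$, which is not compactly supported; this forces you to do more work on the quadratic term --- the good/bad decomposition in $y$ and the reverse Fatou step, both powered by the uniform floor $\lambda_\eps\ge\alpha/2$ and the boundedness of $f''\circ u$ --- and to identify the linear term via the weak $L^2$-limit of $q_\eps$. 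What you gain is that no final optimisation is needed: the constants $\tfrac1{12}$ and $\tfrac1{24}$ fall out directly. What the paper gains is that the passage to the limit is essentially soft, with all the analytic content pushed into the closing supremum. Both arguments rely on Lemma~\ref{stm:1D-cpt} to place $u$ in $H^1$ and hence make $u$ bounded on its compact support, which is what keeps $f''(u)\,u'\in L^2$ in your version.
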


\begin{proof}
	Let us pick $u,u_\eps\in Y$ in such a way that $u_\eps\to u$ in $L^2(\R)$.
	We may suppose that the right-hand side in \eqref{eq:1D-Gliminf} is finite,
	otherwise the conclusion holds trivially.
	Then, up to extracting a subsequence, which we do not relabel,
	there exists $\lim_{\eps \to 0^+} E_\eps(u_\eps)$ and it is finite.
	In particular, for all $\eps>0$, $E_\eps(u_\eps)\leq M$ for some $M\geq 0$,
	so that, by Lemma \ref{stm:1D-cpt}, $u\in Y \cap H^1(\R)$.
	
	We apply formula \eqref{eq:1D-lbound} for each $u_\eps$,
	choosing, for $(x,y) \in \R^2$, 
	\[
	\phi(x,y) = \psi\left(x,\frac{y-x}{\eps}\right),
	\qquad \text{with } \psi\in \Cc^\infty(\R^2).
	\]
	We get
		\begin{equation}\label{eq:psi-estimate}
			\begin{split}
			E_\eps(u_\eps) \geq	
				& \int_\R \fint_x^{x+\eps}\frac{u_\eps(y) - \fint_{x}^{x+\eps} u_\eps}{\eps}
								\psi\left(x,\frac{y-x}{\eps}\right) \de y \de x \\
				& - \frac{1}{4} \int_\R\fint_x^{x+\eps}
											\frac{\psi\left(x,\tfrac{y-x}{\eps}\right)^2}{\lambda_\eps(x,y)} \de y \de x,
			\end{split}
		\end{equation}
	where, in agreement with \eqref{eq:lambda},
		\begin{equation*}
			\lambda_\eps(x,y) \coloneqq
				\int_0^1 (1-\theta)f''\left((1-\theta)\fint_{x}^{x+\eps}u_\eps(z)\de z+\theta u_\eps(y)\right)d\theta
			\geq \frac{\alpha}{2}.
	\end{equation*}
	
	Let us focus on the right-hand side of \eqref{eq:psi-estimate}.
	We have
		\[\begin{split}
			\frac{1}{\eps}\int_\R\fint_x^{x+\eps} &
				\left(\fint_{x}^{x+\eps} u_\eps(z) \de x \right)\psi\left(x,\frac{y-x}{\eps}\right) \de y \de x \\
	= & \frac{1}{\eps^3}\int_\R\int_x^{x+\eps}\int_x^{x+\eps} u_\eps(z) \psi\left(x,\frac{y-x}{\eps}\right) \de y \de z \de x \\
	= 
	& \frac{1}{\eps^3}\int_\R\int_{z-\eps}^{z} \int_x^{x+\eps}u_\eps(z) \psi\left(x,\frac{y-x}{\eps}\right) \de y \de x \de z,
	\end{split}\]
	and, similarly,
	\begin{equation}\label{eq:psi}
	\begin{split}
	\int_\R\fint_x^{x+\eps} & \frac{u_\eps(y) - \fint_{x}^{x+\eps} u_\eps}{\eps} \psi\left(x,\frac{y-x}{\eps}\right) \de y \de x \\
	= 
	& \frac{1}{\eps}\int_\R\fint_{y-\eps}^{y} \fint_x^{x+\eps}
	u_\eps(y)\left[\psi\left(x,\frac{y-x}{\eps}\right) - \psi\left(x,\frac{z-x}{\epsilon}\right)\right] 
	\de z \de x \de y.
	\end{split}
	\end{equation}
	By changing variables, we get
	\[
	\begin{aligned}
	\fint_{y-\eps}^{y} \fint_x^{x+\eps} \psi\left(x,\frac{y-x}{\eps}\right) \de z \de x
	& = \int_0^1\int_0^1\psi(y - \eps r,r) \de q \de r, \\
	\fint_{y-\eps}^{y} \fint_x^{x+\eps} \psi\left(x,\frac{z-x}{\eps}\right) \de z \de x
	& = \fint_{y-\eps}^{y} \int_0^1 \psi(x,r) \de r \de x \\
	& = \int_0^1 \int_0^1 \psi(y-\eps q,r) \de q \de r,
	\end{aligned}
	\]
	hence
	\[\begin{split}
	\frac{1}{\eps} \fint_{y-\eps}^{y} \fint_x^{x+\eps} 
	& \left[\psi\left(x,\frac{y-x}{\eps}\right) - \psi\left(x,\frac{z-x}{\eps}\right)\right] \de z \de x \\
	= & \int_0^1 \int_0^1 \frac{ \psi(y - \eps r, r) -  \psi(y - \eps q, r) }{h} \de q \de r \\
	= & -\int_0^1 \int_0^1 \int_q^r \partial_1 \psi(y - \eps s, r) \de s \de q \de r \\
	= & -\int_0^1 \int_0^1 (r-q) \fint_q^r \partial_1 \psi(y-\eps s,r) \de s \de q \de r.
	\end{split}\]
	Since $\psi$ is smooth, we have that
	$\partial_1\psi(y - \eps s,r) = \partial_1\psi(y,r)+O(\eps)$ as $\eps \to 0^+$, uniformly for $s\in [0,1]$.
	Consequently,
	\begin{multline*}
	\frac{1}{\eps} \fint_{y-\eps}^{y} \fint_x^{x+\eps} 
	\left[\psi\left(x,\frac{y-x}{\eps}\right) - \psi\left(x,\frac{z-x}{\eps}\right)\right] \de z \de x \\
	= -  \int_0^1 \left(r-\frac{1}{2}\right) \partial_1\psi(y,r) \de r + O(\eps).
	\end{multline*}
	Plugging this equality in \eqref{eq:psi} yields
	\begin{multline*}
	\int_\R\fint_x^{x+\eps} \frac{u_\eps(y) - \fint_{x}^{x+\eps} u_\eps}{h} \psi\left(x,\frac{y-x}{\eps}\right) \de y \de x
	\\ =  - \int_\R u_\eps(y)
	\int_0^1 \left(r-\frac{1}{2}\right) \partial_1\psi(y,r) \de r + O(\eps).
	\end{multline*}
	Since $u_\eps \to u$ in $L^2(\R)$,
	it is possible to take the limit $\eps \to 0^+$ in the previous formula,
	getting
	\begin{multline}\label{eq:1st}
	\lim_{\eps \to 0^+} 
	\int_\R\fint_x^{x+\eps} \frac{u_\eps(y) - \fint_{x}^{x+\eps} u_\eps}{h} \psi\left(x,\frac{y-x}{\eps}\right) \de y \de x 
	 \\ =- \int_\R \int_0^1 u(y) (r-\tfrac{1}{2}) \partial_1\psi(y,r) \de r \de y.
	\end{multline}
	
	Next, we focus on the second addendum
	in the right-hand side of \eqref{eq:psi-estimate}.
	By Fubini's Theorem and a change of variables, we have
	\begin{equation*}
	\begin{split}
	\int_\R\fint_x^{x+\eps} \frac{\psi(x,\frac{y-x}{\eps})^2}{\lambda_\eps(x,y)} \de y \de x 
	=
	\int_\R \int_{0}^{1} \frac{\psi\left(y - \eps r, r\right)^2}{\lambda_\eps(y - \eps r, y)} \de r \de y.
	\end{split}
	\end{equation*}
	The function $\psi$ has compact support and $\lambda_\eps\geq \alpha /2$ for all $\eps>0$,
	therefore we can apply Lebesgue's Convergence Theorem
	to let $\eps$ vanish in the equality above.
	We find
	\begin{multline*}
	\lim_{\eps \to 0^+} \int_\R \int_{0}^{1}
	\frac{\psi\left(y - \eps r, r\right)^2}{\int_0^1 (1-\theta) f''\left((1-\theta)\fint_{y - \eps r}^{y +(1-r)\eps}u_\eps(z)\de z+\theta u_\eps(y)\right)d\theta} \de r \de y \\				 
	= \int_\R \int_0^1 \frac{\psi(y,r)^2}{\int_0^1 (1-\theta) f''(u(y))d\theta} \de r \de y,
	\end{multline*}
	thus
	\begin{equation}\label{eq:2nd}
	\lim_{\eps \to 0^+} \int_\R\fint_x^{x+\eps} \frac{\psi(x,\frac{y-x}{\eps})^2}{\lambda_\eps(x,y)} \de y \de x = 
	2\int_\R \int_0^1 \frac{\psi(y,r)^2}{ f''(u(y))} \de r \de y.
	\end{equation}		
	Summing up, by \eqref{eq:1st} and \eqref{eq:2nd},
	we deduce
	\begin{equation*}
	\liminf_{\eps \to 0^+} E_\eps(u_\eps) \geq 
	- \int_\R \int_0^1 \left[
	u(y) \left(r-\frac{1}{2}\right) \partial_1\psi(y,r) \de r \de y
	+ \frac{1}{2} \int_\R \int_0^1 \frac{\psi(y,r)^2}{ f''(u(y))}
	\right] \de r \de y,
	\end{equation*}
	for all $\psi \in \Cc^\infty (\R^2)$.
	
	Let $\eta\in \Cc^\infty(\R)$. By a standard approximation argument,
	one can prove that
		\[
			\psi(x,y) \coloneqq \eta(x) \left(y- \frac{1}{2}\right)
		\]
	is an admissible test function in the previous estimate.
	This choice of $\psi$ gets
		\[
			\begin{split}
				\liminf_{\eps \to 0^+} E_\eps(u_\eps) & \geq 
					- \int_0^1 \left(r-\frac{1}{2}\right)^2\de r
						\int_\R  u(y) \eta'(y) \de y \\
				&\quad 
					- \frac{1}{2} \int_0^1 \left(r- \frac{1}{2}\right)^2 \de r
								\int_\R \frac{\eta(y)^2}{ f''(u(y))} \de y \\
				& = \frac{1}{12} \left[ \int_\R u'(y) \eta(y) \de y
						- \frac{1}{2} \int_\R \frac{\eta(y)^2}{ f''(u(y))} \de y \right],
			\end{split}
		\]
	where we used the identity $\int_0^1 (r -1/2)^2 \de r = 1/12$
	and $u'\in L^2(\R)$ is the distributional derivative of $u$
	(recall that $u\in H^1(\R)$).		
	Since the test function $\eta$ is arbitrary,
	we recover \eqref{eq:1D-Gliminf} by taking the supremum
	w.r.t. $\eta \in \Cc^\infty(\R)$.
\end{proof}
	\section{$\Gamma$-convergence of the rate functionals}\label{sec:Gconv-BBM}
		We devote this section to the proof of Theorem~\ref{stm:Gconv-BBM},
which relies on the $\Gamma$-convergence result of the previous section.

We firstly remark that, when the dimension is $1$,
the $\Gamma$-convergence of the functionals $\E_\eps$ follows easily from Theorem \ref{stm:1D-Gconv}.

\begin{cor}\label{stm:1D-cor}
	Let $\Omega \subset \R$ be bounded and open,
	and let $X \coloneqq \Set{ u\in H^1_\loc(\R) : u' = 0 \text{ in } \co{\Omega} }$.
	Suppose that
	$G\colon \R \to [0,+\infty)$ and $f\colon [0,+\infty) \to [0,+\infty)$
	satisfy {\bf A1}, {\bf A2}, and {\bf A3}.
	If for $\eps>0$ and $u\in H^1_\loc(\R)$ we define
		\[
			\E_\eps(u) \coloneqq \frac{1}{\eps^2}\int_{\R} \int_{\R}
								G_\eps(z) \left[
								f\big( \lvert u'(x) \rvert \big) -  f \left(\left|\frac{u(x+z)-u(x)}{z}\right|\right)
								\right] \de z\de x,
	\]			
	then the restrictions of the functionals $\E_\eps$ to $X$ $\Gamma$-converge
	w.r.t. the $H^1_\loc(\R)$-convergence to
		\[
			\E_0(u) \coloneqq 
				\begin{cases}
				\begin{displaystyle}
					\frac{1}{24} \left(\int_{\R} G(z)z^2 \de z\right) \int_\R f''(u'(x)) \left|u''(x)\right|^2 \de x
				\end{displaystyle} 		&\text{if } u\in X \cap  H^2_\loc(\R),  \\
				+\infty 						&\text{otherwise}.
				\end{cases} 
		\]			
	\end{cor}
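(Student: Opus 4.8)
The plan is to read the corollary off Theorem~\ref{stm:1D-Gconv} through the slicing identity of Lemma~\ref{stm:slicing}, which in dimension one degenerates to a genuine one-parameter superposition. Write $v\coloneqq u'$ and set $I\coloneqq(\inf\Omega,\sup\Omega)$. Then $u\in X$ exactly when $v$ lies in the space $Y$ of \eqref{eq:constr} relative to $I$; both $\E_\eps$ and $\E_0$ depend on $u$ only through $v$; and if $u_\eps\to u$ in $H^1_\loc(\R)$ then $u'_\eps\to u'$ in $L^2(\R)$, since all these derivatives are supported in the fixed compact set $\overline{\Omega}$. Thus the ambient topology matches the one in Theorem~\ref{stm:1D-Gconv}. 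Performing the change of variables $z=\eps w$ in the definition of $\E_\eps$ (so that $G_\eps(z)\,\de z=G(w)\,\de w$), treating the two signs of $w$ separately, recognising $\tfrac{u(x+\eps w)-u(x)}{\eps w}$ as the difference quotient $\D_{\eps\va{w}}V$ of a primitive $V$ of $v$ up to shifts of the integration variable that leave the integrals unchanged, and interpreting $f$ in \eqref{eq:Eh} as composed with $\va{\,\cdot\,}$, one obtains (by Tonelli, recalling $E_\delta(v)\in[0,+\infty]$ for $v\in Y$) the representation
\begin{equation*}
\E_\eps(u)=\int_\R G(z)\,z^{2}\,E_{\eps\va{z}}(u')\,\de z,
\qquad
\E_0(u)=\Big(\int_\R G(z)\,z^{2}\,\de z\Big)\,E_0(u'),
\end{equation*}
with $E_\delta$, $E_0$ as in \eqref{eq:Eh}, \eqref{eq:E0}; this is the $d=1$ instance of Lemma~\ref{stm:slicing}.

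For the $\Gamma$-lower limit, let $\Set{u_\eps}\subset X$ converge to $u$ in $H^1_\loc(\R)$, and assume $\liminf_{\eps\to0^+}\E_\eps(u_\eps)<+\infty$. Fatou's lemma applied to the representation gives
\begin{equation*}
\liminf_{\eps\to0^+}\E_\eps(u_\eps)\ \ge\ \int_\R G(z)\,z^{2}\,\Big(\liminf_{\eps\to0^+}E_{\eps\va{z}}(u'_\eps)\Big)\de z .
\end{equation*}
For each fixed $z\neq0$ the parameters $\eps\va{z}$ tend to $0^+$ and $u'_\eps\to u'$ in $L^2(\R)$, hence the $\Gamma$-lower limit inequality of Theorem~\ref{stm:1D-Gconv}\ref{stm:1D-liminf} yields $\liminf_{\eps\to0^+}E_{\eps\va{z}}(u'_\eps)\ge E_0(u')$; inserting this and using the representation again gives $\liminf_{\eps\to0^+}\E_\eps(u_\eps)\ge\E_0(u)$ (and if $u'\notin H^1(\R)$ both sides are $+\infty$, by \eqref{eq:E0} and the fact that $u'\in Y$ has compact support).

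For the $\Gamma$-upper limit I would use the density-in-energy criterion, Lemma~\ref{stm:prop-Gconv}\ref{stm:density-Gconv}, with $Z\coloneqq\Set{u\in X : u'\in C^2(\R)}$. Arguing as at the end of the proof of Proposition~\ref{stm:1D-pointlim} (rescaling the domain of $u'$ and mollifying), every $u\in X\cap H^2_\loc(\R)$ is approximated in $H^1_\loc(\R)$ by elements of $Z$ along which $\E_0$ converges, while when $u\notin X\cap H^2_\loc(\R)$ the inequality is trivial. For $u\in Z$ the constant sequence $u_\eps\equiv u$ is a recovery sequence: the \emph{uniform} pointwise estimate \eqref{eq:1D-point-est} of Proposition~\ref{stm:1D-pointlim} gives $\va{E_{\eps\va{z}}(u')-E_0(u')}\le c\,m(\eps\va{z})\le c\,\norm{m}_\infty$ with $c$ independent of $z$ and $\eps$, so — since $m$ is bounded with $m(\eps\va{z})\to0$ for each $z$, and $\int_\R G(z)\,z^{2}\,\de z<+\infty$ by {\bf A1} — dominated convergence in the representation of $\E_\eps(u)$ yields $\E_\eps(u)\to\E_0(u)$.

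The point requiring care — the only spot where reducing $\E_\eps$ to $E_\delta$ is not mechanical — is the $\Gamma$-upper limit: one cannot merely integrate a one-dimensional recovery sequence against $G(z)z^{2}\,\de z$, because Theorem~\ref{stm:1D-Gconv} furnishes a recovery sequence for $E_\eps$, not one adapted simultaneously to all the rescaled parameters $\eps\va{z}$. This forces the detour through the \emph{quantitative} convergence of Proposition~\ref{stm:1D-pointlim}, whose rate is uniform in $z$, together with the density-in-energy reduction; the lower limit, by contrast, is a direct Fatou argument.
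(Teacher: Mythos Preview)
Your proposal is correct and follows essentially the same route as the paper: the change of variables yielding the representation $\E_\eps(u)=\int_\R G(z)z^2 E_{\eps\va{z}}(u')\,\de z$, Fatou for the $\Gamma$-lower limit, and the quantitative pointwise estimate \eqref{eq:1D-point-est} combined with dominated convergence and density for the $\Gamma$-upper limit. The paper simply records the representation and says the conclusion follows by adapting the proof of Theorem~\ref{stm:1D-Gconv} (pointing forward to Proposition~\ref{stm:intermediate}); you have carried out that adaptation explicitly, including the observation that a one-dimensional recovery sequence for $E_\eps$ does not automatically serve all rescaled parameters $\eps\va{z}$, which is precisely why the uniform bound from Proposition~\ref{stm:1D-pointlim} is needed.
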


\begin{proof}
	A change of variables gets
		\[
			\E_\eps(u) = \int_{\R} G(z) z^2
				\left[	\frac{1}{(\eps z)^2} \int_{\R}
					f\big( \lvert u'(x) \rvert \big) -  f \left(\left|\frac{u(x+\eps z)-u(x)}{\eps z}\right|\right) \de x
				\right] \de z.
		\]
	By \eqref{eq:Eh},
	the quantity between square brackets is $E_{\eps z}(u')$,
	therefore the conclusion follows by adapting the proof of Theorem \ref{stm:1D-Gconv}
	(see also Proposition \ref{stm:intermediate} below).
\end{proof}

We may henceforth assume that $d\geq 2$.
In this case, the proof of Theorem \ref{stm:Gconv-BBM} is more elaborated,
because, to be able to apply the results of Section \ref{sec:1D},
we need the functions $w_{\hat z,\xi}$ in \eqref{eq:slicing}
to admit a second order weak derivative for a.e. $z$ and $\xi$.
This is no real issue as long as the upper limit inequality is concerned,
since we may reason on regular functions;
to cope with the lower limit one, instead, 
we shall make use of the compactness criterion provided by Lemma \ref{stm:cpt-BBM}.

For the moment being, we can establish a provisional result:

\begin{prop}\label{stm:intermediate}
	Let $u\in X$. Then:
	\begin{enumerate}
		\item If $u\in X\cap H^2_\loc(\Rd)$,
		for any family $\Set{u_\eps}\subset X$
		that converges to $u$ in $H^1_\loc(\Rd)$, there holds
		\begin{equation*}
		\E_0(u)\le \liminf_{\eps \to 0^+} \E_\eps(u_\eps).
		\end{equation*}
		\item If $u \in X\cap C^3(\Rd)$, then
		\begin{equation}\label{eq:pointlim-BBM}
		\E_0(u) = \lim_{\eps \to 0^+} \E_\eps(u).
		\end{equation}
	\end{enumerate}		
\end{prop}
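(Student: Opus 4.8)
The plan is to derive both statements by feeding the slicing identity of Lemma~\ref{stm:slicing} into the one‑dimensional results of Section~\ref{sec:1D}. Fix $R>0$ with $\Omega\subset B(0,R)$. The preliminary remark is that for $u\in X$ and a.e.\ pair $(z,\xi)$ with $z\in\Rdmz$, $\xi\in z^\perp$, the sliced derivative $w'_{\hat z,\xi}(t)=\nabla u(\xi+t\hat z)\cdot\hat z$ lies in the space $Y$ of \eqref{eq:constr} with $I=(-R,R)$: indeed $\nabla u\in L^2(\Rd)$ since it is supported in the compact set $\overline\Omega$, and the identity $|\xi+t\hat z|^2=|\xi|^2+t^2$ forces $w'_{\hat z,\xi}(t)=0$ for a.e.\ $|t|\geq R$. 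The same holds for every $u_\eps\in X$.

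For statement (ii), let $u\in X\cap C^3(\Rd)$. Then every slice $w'_{\hat z,\xi}$ is of class $C^2(\R)$, and since $\nabla u$ vanishes on $\Rd\setminus\overline\Omega$ the fields $\nabla u,\nabla^2u,\nabla^3u$ are continuous and compactly supported, hence bounded; consequently $\lVert w'_{\hat z,\xi}\rVert_{C^2(\R)}\leq N$ for some $N$ independent of $(z,\xi)$. Thus Proposition~\ref{stm:1D-pointlim} applies to every slice with the \emph{same} constant $c=c(2R,N,\lVert f\rVert_{C^2([-N,N])})$ in \eqref{eq:1D-point-est}, so that $|E_{\eps|z|}(w'_{\hat z,\xi})-E_0(w'_{\hat z,\xi})|\leq c\,m(\eps|z|)$. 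I would plug this into the slicing identities for $\E_\eps$ and $\E_0$, noting that both one‑dimensional energies vanish unless the line $\xi+\R\hat z$ meets $\Omega$ — which happens only for $\xi$ in a subset of $z^\perp$ of $\Hdmu$‑measure at most $\omega_{d-1}R^{d-1}$, uniformly in $z$ — to get
\[
|\E_\eps(u)-\E_0(u)|\leq c\,\omega_{d-1}R^{d-1}\int_{\Rd}G(z)\,|z|^2\,m(\eps|z|)\,\de z .
\]
Since $m$ is bounded with $m(0)=0$ and $\int_{\Rd}G(z)|z|^2\,\de z<+\infty$ by \textbf{A1}, dominated convergence sends the right‑hand side to $0$ as $\eps\to0^+$, which is \eqref{eq:pointlim-BBM}.

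For statement (i), let $\Set{u_\eps}\subset X$ converge to $u\in X\cap H^2_\loc(\Rd)$ in $H^1_\loc(\Rd)$; as all gradients are supported in $\overline\Omega$, this means $\nabla u_\eps\to\nabla u$ in $L^2(\Rd)$. It is enough to prove $\liminf_\ell\E_{\eps_\ell}(u_{\eps_\ell})\geq\E_0(u)$ for an arbitrary sequence $\eps_\ell\to0^+$; assuming this liminf finite, after passing to a non‑relabelled subsequence we may assume $\E_{\eps_\ell}(u_{\eps_\ell})$ converges and $\sum_\ell\lVert\nabla u_{\eps_\ell}-\nabla u\rVert_{L^2(\Rd)}^2<+\infty$. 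Integrating the sliced $L^2$‑norms over all directions and using that $\int_{\Sdmu}|v\cdot\omega|^2\,\de\Hdmu(\omega)$ is a dimensional constant times $|v|^2$, this summability transfers to almost every line: $w'_{\hat z,\xi,\eps_\ell}\to w'_{\hat z,\xi}$ in $L^2(\R)$ for a.e.\ $(z,\xi)$. For such $(z,\xi)$ with $z\neq0$ we have $\eps_\ell|z|\to0$, so Theorem~\ref{stm:1D-Gconv}\ref{stm:1D-liminf} yields $\liminf_\ell E_{\eps_\ell|z|}(w'_{\hat z,\xi,\eps_\ell})\geq E_0(w'_{\hat z,\xi})$. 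Applying Fatou's Lemma to the nonnegative integrands of \eqref{eq:slicing} and then the slicing identity for $\E_0$,
\begin{multline*}
\liminf_\ell\E_{\eps_\ell}(u_{\eps_\ell})\geq\int_{\Rd}\int_{z^\perp}G(z)\,|z|^2\liminf_\ell E_{\eps_\ell|z|}\bigl(w'_{\hat z,\xi,\eps_\ell}\bigr)\,\de\Hdmu(\xi)\,\de z\\
\geq\int_{\Rd}\int_{z^\perp}G(z)\,|z|^2\,E_0\bigl(w'_{\hat z,\xi}\bigr)\,\de\Hdmu(\xi)\,\de z=\E_0(u),
\end{multline*}
which is the claimed inequality.

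The slicing bookkeeping, the dominated‑convergence step in (ii) and the Fatou step in (i) are routine. The one genuinely delicate point is the passage from $H^1_\loc$‑convergence of $\Set{u_\eps}$ to $L^2(\R)$‑convergence of the sliced derivatives on almost every line: this is only available along a subsequence chosen so that the gradients converge fast enough, which is precisely why the liminf argument is run through the ``extract a further subsequence'' device above rather than by a direct application of the one‑dimensional $\Gamma$‑lower limit inequality.
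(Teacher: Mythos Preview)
Your proof is correct and follows essentially the same route as the paper: slicing via Lemma~\ref{stm:slicing}, then invoking the one-dimensional pointwise limit (Proposition~\ref{stm:1D-pointlim}) together with dominated convergence for (ii), and the one-dimensional $\Gamma$-liminf (Proposition~\ref{stm:1D-Gliminf}) together with Fatou for (i). The only cosmetic difference is in how the a.e.\ $L^2$-convergence of the sliced derivatives is extracted in (i): the paper uses an auxiliary probability kernel $\rho$ on $\Rd$ to rewrite $\lVert\nabla u_\eps-\nabla u\rVert_{L^2}^2$ as an integral over lines and then passes to an a.e.-convergent subsequence, whereas you achieve the same via the summability trick $\sum_\ell\lVert\nabla u_{\eps_\ell}-\nabla u\rVert_{L^2}^2<+\infty$ and integration over $\Sdmu$.
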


\begin{proof}
	We prove both the assertions by using the slicing formula \eqref{eq:slicing}.
	\begin{enumerate}
		\item For all $\eps>0$, $z\in\Rdmz$, and $\xi \in \hat{z}^\perp$,
		we let $w_{\eps; \hat z,\xi}\colon \R \to \R$ be defined
		as $w_{\eps; \hat z,\xi}(t)\coloneqq u_\eps( \xi + t\hat{z} )$.				
		Then,
		\[
		\E_\eps(u_\eps) = \int_{\Rd}\int_{z^\perp} G(z) \left| z \right|^2  E_{ \eps \left| z \right| } (w'_{\eps ; \hat z,\xi}) \de\Hdmu(\xi)\de z,
		\]
		and, by Fatou's Lemma,
		\begin{equation}\label{eq:liminf-sl}
		\liminf_{\eps \to 0^+} \E_\eps(u_\eps) 
		\geq \int_{\Rd}\int_{z^\perp} G(z) \left| z \right|^2 
		\big[ \liminf_{\eps \to 0^+} E_{ \eps \left| z \right| } (w'_{\eps ; \hat z,\xi}) \big] \de\Hdmu(\xi) \de z.
		\end{equation}
		
		Let $w_{\hat z,\xi}$ be as in Lemma \ref{stm:slicing}.				
		We remark that,
		for any kernel $\rho\colon \Rd \to [0,+\infty)$ such that
		$\lVert \rho \rVert_{L^1(\Rd)} = 1$,
		we have
			\begin{align*}
				& \int_{\Rd} \left| \nabla u_\eps - \nabla u  \right|^2 \\
					 & \qquad \geq \int_{\Rd} \rho( z ) \int_{\hat{z}^\perp} \int_{\R} 
									\left| \big(\nabla u_\eps(\xi + t\hat{z}) - \nabla u(\xi + t\hat{z}) \big) \cdot \hat{z}\right|^2
								\de t \de \Hdmu (\xi) \de z \\
					&  \qquad = \int_{\Rd} \rho( z ) \int_{\hat{z}^\perp} \int_{\R}
									\left| w'_{\eps; \hat z,\xi}(t) - w'_{\hat z,\xi}(t) \right|^2
								\de t \de \Hdmu(\xi) \de z.
			\end{align*}
		The left-hand side vanishes as $\eps \to 0^+$,
		and, therefore, there exists a subsequence of $\Set{w'_{\eps ; \hat z,\xi}}$,
		which we do not relabel,
		that converges to $w'_{\hat z,\xi}$ in $L^2(\R)$
		for a.e. $z\in\Rd$ and $\Hdmu$-a.e. $\xi\in\hat{z}^\perp$.
		
		We remind that, by assumption, $w'_{\hat z,\xi}\in H^1(\R)$ for a.e. $(z,\xi)$
		and it equals $0$ on the complement of some open interval $I_{\hat z,\xi}$.		
		It follows that we can appeal to Proposition \ref{stm:1D-Gliminf},
		obtaining
			\[
				\liminf_{\eps \to 0^+} \E_\eps(u_\eps) 
					\geq \int_{\Rd}\int_{z^\perp} G(z) \left| z \right|^2 E_0 (w'_{\hat z,\xi}) \de\Hdmu(\xi) \de z = \E_0(u).
			\]
		\item As above, for any fixed $z\in\Rd\setminus \Set{0}$ and $\xi\in\hat{z}^\perp$,
		we define the function $w_{\hat z,\xi}\in C^3(\R)$ setting
		$w(t)\coloneqq u(\xi + t\hat{z})$.
		Since $\Omega$ is bounded, there exists $r>0$ such that,
		for any choice of $z$, $w'_{\hat z,\xi}(t) = \nabla u(\xi + t \hat{z}) \cdot \hat{z} = 0$
		whenever $\xi\in z^\perp$ satisfies $\left| \xi \right| \geq r$,
		while $w'_{\hat z,\xi}$ is supported
		in an open interval $I_{\hat z,\xi}$ if $\left| \xi \right| < r$.
		
		We apply the slicing formula \eqref{eq:slicing}, which yields
		\[
		\left| \E_\eps(u) - \E_0(u) \right|
		\leq \int_{\Rd}\int_{z^\perp} G(z) \left| z \right|^2  
		\left| E_{ \eps \left| z \right| } (w'_{\hat z,\xi}) - E_0(w'_{\hat z,\xi})\right|
		\de \Hdmu(\xi) \de z
		\]
		According to Proposition \ref{stm:1D-pointlim}.
		there are a constant $c>0$ and a continuous, bounded, and increasing function
		$m\colon [0,+\infty) \to [0,+\infty)$ such that $m(0)=0$ and that
		\[
		\left| \E_\eps(u) - \E_0(u) \right|
		\leq  c \int_{\Rd}\int_{z^\perp}
		G(z) \left| z \right|^2 m(\eps \left| z \right|) \de \Hdmu(\xi) \de z.
		\]
		We highlight that here $m$ can be chosen 
		as to depend on $\nabla u$ only, and not on $\hat z$ and $\xi$.		
		Thus, by assumption {\bf A1}, we can apply Lebesgue's Convergence Theorem,
		and \eqref{eq:pointlim-BBM} is proved.			\qedhere
	\end{enumerate}
\end{proof}
	
	\begin{rmk}\label{remscala2} 
	Proposition \ref{stm:intermediate} entails that 
	the $\Gamma$-limit of the family $\Set{\eps \E_\eps}$ is $0$;
	the same limit being also found
	w.r.t. the $L^2_\loc(\Rd)$-topology on $X$.
	This is the analogue in arbitrary dimension
	of what we observed in Remark \ref{remscala1}.
	\end{rmk}	

We now deal with the proof of the compactness result,
that is, \ref{stm:cptgen} in Theorem \ref{stm:Gconv-BBM}.
As in Section \ref{sec:1D},
we start with a lower bound on the energy functionals.
Lemma \ref{stm:lbound} below shows that,
when $f$ fulfils {\bf A3},
$\E_\eps(u)$ is greater than a double integral
which takes into account, for each $z\in \Rdmz$,
the squared projection of the difference quotients of $\nabla u$ in the direction of $z$.
Thanks to the slicing formula, the inequality follows with no effort
by applying Lemma \ref{stm:1D-enbound} on each line of $\Rd$.

Our approach results in the appearance of an effective kernel $\tilde G$
in front of the difference quotients.
This function stands as a multidimensional counterpart
of the kernel $H$ in Lemma \ref{stm:1D-enbound}, and,
actually, $\tilde G$ depends both on $G$ and on $H$ 
(see \eqref{eq:tildeK} for the precise definition).
Some properties of the effective kernel are collected in Lemma \ref{stm:tildeK}.

\begin{lemma}[Lower bound on the energy]\label{stm:lbound}
	Let $\Omega$, $G$, and $f$ be as above, and set
		\begin{equation}\label{eq:tildeK}
			\tilde G(z) \coloneqq \int_{-1}^{1} H(r) G_{\left| r \right|}(z) \de r
				\qquad\text{for a.e. } z\in\Rd,				
		\end{equation}
	with $H$ as in Lemma \ref{stm:1D-enbound}.
	Then, it holds
		\begin{equation}\label{eq:lbound}
			\E_\eps(u)\geq 
				\frac{\alpha}{4} \int_{\Rd}\int_{\Rd}
				\tilde G(z) \left[ \frac{\big( \nabla u(x+ \eps z) - \nabla u(x)\big)\cdot\hat{z}}{\eps} \right]^2
				\de x \de z.
		\end{equation}			
	\end{lemma}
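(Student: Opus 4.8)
The plan is to deduce \eqref{eq:lbound} from the slicing identity of Lemma~\ref{stm:slicing} together with the one-dimensional lower bound \eqref{eq:1D-lbound2} of Lemma~\ref{stm:1D-enbound}, and then to reorganise the resulting multiple integral by two successive changes of variables so as to produce the effective kernel $\tilde G$ defined in \eqref{eq:tildeK}.

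First I would fix $u\in X$ and apply Lemma~\ref{stm:slicing} to write
\[
\E_\eps(u)=\int_{\Rd}\int_{z^\perp} G(z)\va{z}^2\, E_{\eps\va{z}}\big(w'_{\hat z,\xi}\big)\,\de\Hdmu(\xi)\,\de z,
\]
where $w_{\hat z,\xi}(t)=u(\xi+t\hat z)$ and $w'_{\hat z,\xi}(t)=\nabla u(\xi+t\hat z)\cdot\hat z$. Since $u\in X$ we have $\nabla u\in L^2(\Rd)$ with support in the bounded set $\Omega$, so by Fubini along lines the slice $w'_{\hat z,\xi}$ belongs to $L^2(\R)$ and vanishes outside a bounded interval for a.e.\ $z$ and $\Hdmu$-a.e.\ $\xi\in\hat z^\perp$; hence \eqref{eq:1D-lbound2} applies with $\eps$ replaced by $\eps\va{z}$. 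Inserting that estimate, using $w'_{\hat z,\xi}(t+r)-w'_{\hat z,\xi}(t)=\big(\nabla u(\xi+(t+r)\hat z)-\nabla u(\xi+t\hat z)\big)\cdot\hat z$, recombining $\int_{z^\perp}\de\Hdmu(\xi)\int_\R\de t$ into $\int_{\Rd}\de x$ through the orthogonal splitting $\Rd=z^\perp\oplus\R\hat z$ (with $x=\xi+t\hat z$), and appealing to Tonelli's theorem, I would arrive at
\[
\E_\eps(u)\ge\frac{\alpha}{4}\int_{\Rd}\int_{\Rd} G(z)\va{z}^2\int_{-\eps\va{z}}^{\eps\va{z}} H_{\eps\va{z}}(r)
\left(\frac{\big(\nabla u(x+r\hat z)-\nabla u(x)\big)\cdot\hat z}{\eps\va{z}}\right)^{2}\de r\,\de x\,\de z .
\]

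Then I would substitute $r=\eps\va{z}\rho$ with $\rho\in(-1,1)$ in the innermost integral, which replaces $H_{\eps\va{z}}(r)\,\de r$ by $H(\rho)\,\de\rho$ and the increment $r\hat z$ by $\eps\rho z$; and, for each fixed $\rho\neq0$, I would change variables $z\mapsto\zeta=\rho z$ in the outer integral. Here $\de z=\va{\rho}^{-d}\de\zeta$, $\va{z}^2=\va{\rho}^{-2}\va{\zeta}^2$, $\eps\rho z=\eps\zeta$, $\hat z=\sign(\rho)\hat\zeta$, $\eps\va{z}=\eps\va{\zeta}/\va{\rho}$, and---this is the one point that uses a hypothesis---$G(z)=G(\zeta/\rho)=G(\zeta/\va{\rho})$ by the evenness assumption {\bf A1}. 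Collecting the powers of $\va{\rho}$ and $\va{\zeta}$, the integrand becomes exactly $G_{\va{\rho}}(\zeta)\,\eps^{-2}\big((\nabla u(x+\eps\zeta)-\nabla u(x))\cdot\hat\zeta\big)^2$; integrating back over $\rho\in(-1,1)$ and recognising $\int_{-1}^{1} H(\rho)\,G_{\va{\rho}}(\zeta)\,\de\rho=\tilde G(\zeta)$ by \eqref{eq:tildeK}, I would obtain \eqref{eq:lbound}.

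The only delicate part is the bookkeeping of the Jacobian factors and of the normalising denominators across the two substitutions; conceptually, the sole substantive observation is that the change of variables $\zeta=\rho z$ necessarily involves negative values of $\rho$, so that the evenness of $G$ guaranteed by {\bf A1} is exactly what is needed to turn $G(\zeta/\rho)$ into $G_{\va{\rho}}(\zeta)$. Everything else---measurability of the slices, applicability of Fubini and Tonelli---is routine once one recalls that $u\in X$ forces $\nabla u\in L^2(\Rd)$.
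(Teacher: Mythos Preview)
Your proof is correct and follows essentially the same route as the paper: slice via Lemma~\ref{stm:slicing}, apply the one-dimensional bound \eqref{eq:1D-lbound2}, substitute $r=\eps\va{z}\rho$, rescale $z$ by $\rho$, and invoke the evenness of $G$ from {\bf A1} to absorb the sign of $\rho$ into $G_{\va{\rho}}$. The only cosmetic difference is that you recombine $\int_{z^\perp}\de\Hdmu(\xi)\int_\R\de t$ into $\int_{\Rd}\de x$ \emph{before} changing variables in $z$, which lets you handle the $\rho<0$ case by the observation that $\hat z=\sign(\rho)\hat\zeta$ disappears under the square; the paper instead stays in slicing coordinates and uses the identity $w'_{-\hat z,\xi}(-s)=-w'_{\hat z,\xi}(s)$ together with $z\mapsto -z$, which is the same symmetry expressed differently.
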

	\begin{proof}
	In the following lines, we use $\de'$ as a shorthand for $\de\Hdmu$.
	We reduce to the $1$-dimensional case by slicing, 
	and then we take advantage of the lower bound provided by Lemma \ref{stm:1D-enbound}.
	Keeping the notation of Lemma \ref{stm:slicing}, we find
		\[\begin{split}
			&\E_\eps(u) \\
			&\; \geq \frac{\alpha}{4} \int_{\Rd}\int_{z^\perp}
										\int_{\R}\int_{-\eps\left| z \right|}^{\eps\left| z \right|} H_{\eps \left| z \right|}(r) G(z) \left| z \right|^2 
										\left(\frac{w'_{\hat z,\xi}(t+r) - w'_{\hat z,\xi}(t)}{\eps\left| z \right|}\right)^2
							\de r \de t \de'\xi \de z \\
			&\; = \frac{\alpha}{4} \int_{\Rd}\int_{z^\perp}
									\int_{\R}\int_{-\eps\left| z \right|}^{\eps\left| z \right|} H_{\eps \left| z \right|}(r) G(z) 
									\left(\frac{w'_{\hat z,\xi}(t+r) - w'_{\hat z,\xi}(t)}{\eps}\right)^2
						\de r\de t\de'\xi \de z.
		\end{split}\]
	To retrieve \eqref{eq:lbound} from this bound,
	it suffices to change variables and use Fubini's Theorem:
		\[\begin{split}
			I &\coloneqq \int_{\Rd}\int_{z^\perp} \int_{\R}
					\int_{-\eps\left| z \right|}^{\eps\left| z \right|} H_{\eps \left| z \right|}(r) G(z) 
					\left(\frac{w'_{\hat z,\xi}(t+r) - w'_{\hat z,\xi}(t)}{\eps}\right)^2
					\de r \de t \de'\xi \de z \\
				& =  \int_{\Rd}\int_{z^\perp} \int_{\R} \int_{-1}^{1} H(r) G(z) 
					\left(\frac{w'_{\hat z,\xi}(t + \eps\left|z\right| r) - w'_{\hat z,\xi}(t)}{\eps}\right)^2
					\de r \de t \de'\xi \de z \\
				& =  \int_{-1}^{0} \int_{\Rd}\int_{z^\perp} \int_{\R}
						H(r) G_{-r}( z ) 
						\left(\frac{w'_{\hat z,\xi}(t - \eps\left|z\right|) - w'_{\hat z,\xi}(t)}{\eps}\right)^2
						\de t \de'\xi \de z \de r \\
				&\quad + \int_{0}^{1} \int_{\Rd}\int_{z^\perp} \int_{\R}
							H(r) G_r(z) 
							\left(\frac{w'_{\hat z,\xi}(t + \eps\left|z\right|) - w'_{\hat z,\xi}(t)}{\eps}\right)^2
							\de t \de'\xi \de z \de r
				\end{split}\]			
		Since $w'_{-\hat z,\xi}(-s) = - w'_{\hat z,\xi}(s)$ for all $s\in\R$, we have
		\[\begin{split}
			& \int_{-1}^{0} \int_{\Rd}\int_{z^\perp} \int_{\R}
					H(r) G_{-r}( z )
					\left(\frac{w'_{\hat z,\xi}\big( t + \eps\left|z\right| \big) - w'_{\hat z,\xi}( t )}{\eps}\right)^2
				\de t \de'\xi \de z \de r \\
			&\; = \int_{-1}^{0} \int_{\Rd}\int_{z^\perp} \int_{\R}
					H(r) G_{-r}( z ) 
					\left(\frac{w'_{-\hat z,\xi}\big( -(t + \eps\left|z\right|) \big) - w'_{-\hat z,\xi}( -t )}{\eps}\right)^2
				\de t \de'\xi \de z \de r \\
			&\; = \int_{-1}^{0} \int_{\Rd}\int_{z^\perp} \int_{\R}
					H(r) G_{-r}( z )
					\left(\frac{w'_{\hat z,\xi}\big( t + \eps\left|z\right| \big) - w'_{\hat z,\xi}( t )}{\eps}\right)^2
				\de t \de'\xi \de z \de r.
			\end{split}\]
	Thus, we conclude that			
		\[\begin{split}
			I & = 
				\int_{\Rd} \int_{z^\perp} \int_{\R}\left(\int_{-1}^{1} H(r) G_{\left| r \right|}(z) \de r\right)					
				\left(\frac{w'_{\hat z,\xi}\big(t + \eps\left|z\right|\big) - w'_{\hat z,\xi}(t)}{\eps}\right)^2
				\de t \de'\xi \de z \\
				& = \int_{\Rd}\int_{\Rd}
				\tilde G(z) \left[ \frac{\big( \nabla u(x+ \eps z) - \nabla u(x)\big)\cdot\hat{z}}{\eps}\right]^2
				\de x \de z.
		\end{split}\]
\end{proof}

By assumption {\bf A2},
the kernel $G$ is bounded away from $0$ in a suitable annulus.
The next lemma shows that
the effective kernel $\tilde G$ in \eqref{eq:lbound} inherits a similar property.

\begin{lemma}\label{stm:tildeK}
	Let $\tilde G\colon \Rd \to [0,+\infty)$ be as in \eqref{eq:tildeK}.
	Then,
	\begin{equation}\label{eq:summ-tildeK}
	\int_{\Rd} \tilde G(z)\left( 1 + \left| z \right|^2 \right) \de z < +\infty,
	\end{equation}
	and, if $\beta_d$ and $r_1$ are the constants in \eqref{eq:sigmad} and \eqref{eq:Kpos}, 
	then,
	\begin{equation}\label{eq:tildeKpos}
	\mathrm{ess} \inf \left\{ \tilde G(z) : z \in B(0,\beta_d r_1) \right\} > 0.
	\end{equation}
	\end{lemma}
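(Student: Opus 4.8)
The statement has two independent parts, the integrability bound \eqref{eq:summ-tildeK} and the non-degeneracy bound \eqref{eq:tildeKpos}; both follow from Tonelli's Theorem together with the radial rescaling built into the definition \eqref{eq:tildeK} of $\tilde G$, but only the second is delicate. For \eqref{eq:summ-tildeK} the plan is to exchange the order of integration and write
\[
	\int_{\Rd}\tilde G(z)\big(1+\va{z}^2\big)\de z
		= \int_{-1}^{1} H(r)\left( \int_{\Rd} G_{\va{r}}(z)\big(1+\va{z}^2\big)\de z \right)\de r .
\]
The inner integral is computed by the change of variables $z=\va{r}w$, which turns it into $\int_{\Rd} G(w)\big(1+\va{r}^2\va{w}^2\big)\de w$; since $\va{r}\le 1$ on $\supp H$, this is at most $\int_{\Rd} G(w)\big(1+\va{w}^2\big)\de w<+\infty$ by {\bf A1}, and, as $\int_{-1}^{1}H(r)\de r=1$, \eqref{eq:summ-tildeK} follows.

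For \eqref{eq:tildeKpos} the crucial step is to rewrite $\tilde G$ as a weighted integral of $G$ along the ray $\R_+\hat z$. Using that the integrand in \eqref{eq:tildeK} is even in $r$, I would first write $\tilde G(z)=2\int_0^1(1-r)r^{-d}G(z/r)\de r$; then the substitution $s=\va{z}/r$ yields
\[
	\tilde G(z)=\frac{2}{\va{z}^{d-1}}\int_{\va{z}}^{+\infty}(s-\va{z})\,s^{d-3}\,G(s\hat z)\,\de s .
\]
Set $m\coloneqq \mathrm{ess}\inf\Set{G(x):x\in B(0,r_1)\setminus B(0,r_0)}$, which is positive by \eqref{eq:Kpos}; by Fubini, for a.e.\ $z\in\Rd$ one has $G(s\hat z)\ge m$ for a.e.\ $s\in(r_0,r_1)$. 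Fixing $z$ with $0<\va{z}<\beta_d r_1$ and recalling $r_0<\beta_d r_1\le r_1$, the interval $\big(r_0\vee\va{z},\,r_1\big)$ is non-empty, and since $G\ge 0$ we may discard the rest of the ray to get
\[
	\tilde G(z)\ \ge\ \frac{2m}{\va{z}^{d-1}}\int_{r_0\vee\va{z}}^{r_1}(s-\va{z})\,s^{d-3}\,\de s .
\]

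It then remains to bound the right-hand side below by a constant independent of $z\in B(0,\beta_d r_1)$. For $\va{z}\to 0^+$ the prefactor $\va{z}^{1-d}$ forces $\tilde G(z)\to+\infty$, and for $\va{z}\le r_0$ (only relevant if $r_0>0$) the integral is at least the fixed positive number $\int_{r_0}^{r_1}(s-r_0)s^{d-3}\de s$ while $\va{z}^{1-d}\ge r_0^{1-d}$, so these regimes are harmless. For $r_0<\va{z}<\beta_d r_1$ I would bound $s^{d-3}$ from below on $(\va{z},r_1)$ by $\va{z}^{d-3}\wedge r_1^{d-3}$ and compute $\int_{\va{z}}^{r_1}(s-\va{z})\de s=\tfrac12(r_1-\va{z})^2$, using the key estimate $r_1-\va{z}\ge(1-\beta_d)r_1>0$; together with $\va{z}<r_1$ this gives a lower bound depending only on $m$, $d$, $r_0$, $r_1$. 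Taking the infimum of the bounds produced in the three regimes proves \eqref{eq:tildeKpos}.

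The main obstacle is precisely this last uniformity near the outer radius: one must ensure that the annulus on which $G$ stays above $m$ is still ``seen'' from every $z\in B(0,\beta_d r_1)$ with a weight that does not vanish as $\va{z}$ approaches $\beta_d r_1$, and this is exactly where the hypothesis $r_0<\beta_d r_1$ — equivalently the gap $1-\beta_d>0$ keeping $\va{z}/r_1$ away from $1$ — enters (any $\beta_d<1$ would do for this lemma; the precise value is dictated by the compactness argument of Lemma~\ref{stm:cpt-BBM}). The borderline case $d=2$, where $\beta_d=1$, is the tightest; there one first shrinks $r_1$ slightly in {\bf A2}, which leaves the rest of the argument unchanged.
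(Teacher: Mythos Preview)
Your argument follows essentially the same route as the paper's. Both parts of \eqref{eq:summ-tildeK} are handled identically (Tonelli plus the rescaling $z=\va{r}w$), and for \eqref{eq:tildeKpos} both you and the paper arrive at the same ray-integral lower bound
\[
\tilde G(z)\ \ge\ \frac{2m}{\va{z}^{d-1}}\int_{r_0\vee\va{z}}^{r_1}(s-\va{z})\,s^{d-3}\,\de s,
\]
the paper writing the integrand as $s^{d-2}(1-\va{z}/s)$. The only difference is in how this integral is then controlled: the paper evaluates it in closed form, treating $d=2$ and $d\ge 3$ separately, whereas you bound $s^{d-3}$ from below by $\va{z}^{d-3}\wedge r_1^{d-3}$ and integrate $(s-\va{z})$ directly. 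Your coarser estimate is perfectly adequate for $d\ge 3$, where $1-\beta_d>0$ gives the uniform gap. Your remark on $d=2$ is also well placed: with $\beta_2=1$ the paper's explicit lower bound likewise tends to $0$ as $\va{z}\to r_1^-$ (for $G=\chi_{B(0,r_1)}$ one checks that $\tilde G(z)\to 0$ there), so neither argument produces a uniform bound on all of $B(0,r_1)$; your fix of shrinking the radius slightly is exactly what is needed, and it suffices for the compactness argument in Lemma~\ref{stm:cpt-BBM}.
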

	\begin{proof}
	The convergence of the integral in \eqref{eq:summ-tildeK} follows easily from \eqref{eq:intG}.
	Indeed, by the definition of $\tilde G$, there holds
		\[
			\int_{\Rd} \tilde G(z) \de z
				= \int_{-1}^{1} \int_{\Rd} H(r) G_{\left| r \right|} (z) \de z \de r
				= \int_{\Rd} G(z) \de z.
		\]
	Analogously,
		\[
		\int_{\Rd} \tilde G(z) \left| z \right|^2 \de z = c \int_{\Rd} G(z) \left| z \right|^2 \de z,
		\]
	for some $c>0$.
	
	For what concerns \eqref{eq:tildeKpos},
	let us set $\gamma\coloneqq \mathrm{ess} \inf \Set{ G(z) : z \in B(0,r_1)\setminus B(0,r_0) }$.
	In view of \eqref{eq:Kpos}, $\gamma>0$.
	
	We distinguish between the case $ z\in B(0,r_0) $ and the case $ z\in B(0,r_1)\setminus B(0,r_0) $.
	In the first situation, for a.e. $z\in\Rd$,
		\[\begin{split}
			\tilde G(z) & \geq 2 \int_{\frac{ \left| z \right| }{r_1}}^{\frac{ \left| z \right| }{r_0}} H(r) G_r(z) \de r
				\geq  2\gamma \int_{\frac{ \left| z \right| }{r_1}}^{\frac{ \left| z \right| }{r_0}} \frac{1}{r^d}H(r) \de r \\
			& = \frac{2\gamma}{ \left| z \right|^{d-1} }
				\int_{r_0 }^{r_1} s^{d-2} \left( 1 - \frac{\left| z \right|}{s}  \right) \de s.
			\end{split}\]
	When $ z\in B(0,r_1)\setminus B(0,r_0) $, instead, similar computations get
			\[
			\tilde G(z)	\geq 2 \int_{\frac{ \left| z \right| }{r_1}}^{1} H(r) G_r(z) \de r
				= \frac{2\gamma}{ \left| z \right|^{d-1} }
					\int_{\left| z \right| }^{r_1} s^{d-2} \left( 1 - \frac{\left| z \right|}{s}  \right) \de s
			\qquad \text{for a.e. } z\in\Rd,
		\]
	so that we obtain
	\begin{equation}\label{eq:est-tildeK}
	\tilde G(z) 
	\geq  \frac{2\gamma}{\left| z \right|^{d-1} }
	\int_{\mathrm{max}(r_0,\left| z \right|) }^{r_1} s^{d-2} \left( 1 - \frac{\left| z \right|}{s}  \right) \de s
	\qquad \text{for a.e. } z\in\Rd.
	\end{equation}
	
	When $d=2$, the previous estimate becomes
	\[
	\tilde G(z) \geq 2\gamma
	\left[ \frac{r_1 - \mathrm{max}(r_0,\left| z \right|) }{\left| z \right| }
	-\log \left( \frac{r_1}{\mathrm{max}(r_0,\left| z \right|)}\right)
	\right]
	\qquad \text{for a.e. } z\in\Rd.
	\]
	By the concavity of the logarithm, we see that
	the lower bound is strictly positive
	if $ \left| z \right| < r_1 = \beta_2 r_1$.
	
	In the case $d\geq 3$, instead,
	the right-hand side in \eqref{eq:est-tildeK} equals
	\[
	\frac{2\gamma}{(d-1)(d-2) \left| z \right|^{d-1}}
	\left[ 
	(d-2) \left( r_1^{d-1} - M^{d-1} \right)
	- 
	(d-1)\left| z \right|\left(r_1^{d-2} - M^{d-2}\right)
	\right],
	\]
	where we put $M\coloneqq \mathrm{max}(r_0,\left| z \right|)$ for shortness.
	Hence,
	\begin{equation*} \label{eq:tildeK>0}\begin{split}
	\tilde G(z) \geq & \frac{2\gamma M^{d-2}}{(d-1)(d-2)\left| z \right|^{d-1}} \\
	& \cdot \left\{
	\left( \frac{r_1}{M}\right)^{d-2}
	\left[ (d-2)r_1 - (d - 1)\left| z \right| \right]
	- 								 
	\left[ (d-2)M - (d - 1)\left| z \right| \right]
	\right\} 
	\end{split}\end{equation*}
	for a.e. $z\in\Rd$. When $\left| z \right| < (d-2)r_1/(d-1) = \beta_d r_1$,
	the quantity between braces is strictly positive if
	\[
	\frac{(M - \left| z \right| ) d - (2M - \left| z \right| )}{(r_1 - \left| z \right| ) d - (2r_1 - \left| z \right| )}
	< \left( \frac{r_1}{M} \right)^{d-2}.
	\]
	Observe that both sides are strictly increasing in $d$, and
	that the left-hand one is bounded above by $(M - \left| z \right| ) / (r_1 - \left| z \right| )$.
	In conclusion, the last inequality holds if
	\[
	\frac{M - \left| z \right|}{r_1 - \left| z \right|} < \frac{r_1}{M},
	\]
	which, in turn, is true for all $z\in B(0,r_1)$.	
	\end{proof}

\begin{rmk}
	For our analysis, it is not relevant to identify the largest ball
	in which $\mathrm{ess}\inf \tilde G$ is strictly positive.
	For this reason, we are not interested
	in the optimal $\beta_d$ in \eqref{eq:tildeKpos} for $d \geq 3$.
\end{rmk}

We can finally prove compactness for families with equibouded energies.

\begin{lemma}[Compactness] \label{stm:cpt-BBM}
	Assume that $\Omega$, $G$, and $f$ are as above.
	Then, if $\Set{u_\eps}\subset X$ satisfies $\E_\eps(u_\eps)\leq M $ for some $M\geq 0$,
	there exist a subsequence $\Set{\eps_\ell}$ and
	a function $u\in H^2(\Rd)$ vanishing outside $\Omega$
	such that $\nabla u_{\eps_\ell}\to \nabla u$ in $L^2(\Rd)$.
\end{lemma}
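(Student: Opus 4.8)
The plan is to follow the scheme of the one‑dimensional compactness result, Lemma~\ref{stm:1D-cpt}: deduce from the energy bound and Lemma~\ref{stm:lbound} a control on the (directional, projected) difference quotients of $\nabla u_\eps$, mollify $u_\eps$ at the scale $\eps$, bound the Hessians of the mollifications uniformly, extract a convergent subsequence by Rellich's theorem, and control the mollification error. To begin, I would combine Lemma~\ref{stm:lbound} with $\E_\eps(u_\eps)\le M$, and then invoke Lemma~\ref{stm:tildeK}, which supplies $R>0$ and $\tilde\gamma>0$ with $\tilde G\ge\tilde\gamma$ on $B(0,R)$, to get, after restricting the $z$-integral,
\[
\frac1{\eps^2}\int_{B(0,R)}\int_{\Rd}\big|\big(\nabla u_\eps(y+\eps z)-\nabla u_\eps(y)\big)\cdot\hat z\big|^2\,\de y\,\de z\le\frac{4M}{\alpha\tilde\gamma}=:M'.
\]
I would also record that, since $u_\eps\in X$, the field $\nabla u_\eps$ vanishes outside the bounded set $\Omega$, so $\nabla u_\eps\in L^2(\Rd;\Rd)$ with support in $\overline\Omega$.

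The heart of the matter is the uniform bound $\|\nabla^2 v_\eps\|_{L^2(\Rd)}\le C$, where $v_\eps\coloneqq\theta_\eps\ast u_\eps$, $\theta$ is a radial function in $\Cc^\infty(\Rd)$ supported in $B(0,R)$ with $\int\theta=1$, and $\theta_\eps(\,\cdot\,)\coloneqq\eps^{-d}\theta(\,\cdot\,/\eps)$. Since $\theta_\eps$ is even, for every $\omega\in\Sdmu$ one has, a.e.,
\[
\nabla^2 v_\eps(x)\,\omega\cdot\omega=\tfrac12\int_{\Rd}\big(\partial^2_\omega\theta_\eps\big)(w)\,\big(u_\eps(x+w)+u_\eps(x-w)-2u_\eps(x)\big)\,\de w,
\]
so everything reduces to estimating the second differences $\Delta^2_w u_\eps(x)\coloneqq u_\eps(x+w)+u_\eps(x-w)-2u_\eps(x)$. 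Here the gradient structure is used: writing $u_\eps(x\pm w)-u_\eps(x)$ as line integrals of $\nabla u_\eps$ gives $\Delta^2_w u_\eps(x)=|w|\int_0^1\big(\nabla u_\eps(x+tw)-\nabla u_\eps(x+(t-1)w)\big)\cdot\hat w\,\de t$, whence, by translation invariance and Minkowski's inequality, $\|\Delta^2_w u_\eps\|_{L^2}\le|w|\,\big(\int_{\Rd}|(\nabla u_\eps(y+w)-\nabla u_\eps(y))\cdot\hat w|^2\,\de y\big)^{1/2}$ --- and the direction $\hat w$ onto which $\nabla u_\eps$ is projected is exactly the displacement direction, i.e.\ precisely the quantity bounded by $M'$. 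Plugging this into the identity, bounding $|\partial^2_\omega\theta_\eps|\le|\nabla^2\theta_\eps|$, rescaling $w=\eps z$ and using Cauchy--Schwarz, I obtain for $\|\nabla^2 v_\eps(\,\cdot\,)\omega\cdot\omega\|_{L^2}$ a bound $C_R\sqrt{M'}$ independent of $\omega$; since $v_\eps$ is a mollification, $\nabla^2 v_\eps$ is symmetric, and the Frobenius norm of a symmetric matrix $M$ is controlled by $\big(\int_{\Sdmu}|M\omega\cdot\omega|^2\,\de\Hdmu(\omega)\big)^{1/2}$, so integrating over $\Sdmu$ yields $\|\nabla^2 v_\eps\|_{L^2}\le C$. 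This is the step where the ``very weak isotropy'' of {\bf A2} is essential: a legitimate mollifier must be supported in a ball about the origin, and such a ball is contained in $\{\tilde G>0\}$ precisely by Lemma~\ref{stm:tildeK}; otherwise only some directions would be available and the quadratic forms $\omega\mapsto\nabla^2 v_\eps\,\omega\cdot\omega$ could not be recovered.

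The rest is routine. The field $\nabla v_\eps=\theta_\eps\ast\nabla u_\eps$ is supported in a fixed bounded neighbourhood of $\overline\Omega$ for $\eps$ small, so by the Poincar\'e--Friedrichs inequality $\{\nabla v_\eps\}$ is bounded in $H^1(\Rd;\Rd)$; Rellich's theorem then gives a sequence $\eps_\ell\to 0^+$ and $g\in H^1(\Rd;\Rd)$ with $\nabla v_{\eps_\ell}\to g$ in $L^2$. For the mollification error I would bound $\|\nabla v_\eps-\nabla u_\eps\|_{L^2}^2=\|\theta_\eps\ast\nabla u_\eps-\nabla u_\eps\|_{L^2}^2$ by Jensen's inequality in terms of an average of $\|\nabla u_\eps(\,\cdot+w)-\nabla u_\eps\|_{L^2}^2$ over $|w|\le\eps R$, and then pass from these \emph{full} differences of $\nabla u_\eps$ to the \emph{projected} ones controlled by $M'$ --- again possible because $\nabla u_\eps$ is a gradient and every direction is at hand, for instance by reconstructing the full difference from its projections along finitely many directions spanning $\Sdmu$ (or by a short computation on the Fourier side) --- getting $\|\nabla v_\eps-\nabla u_\eps\|_{L^2}=O(\eps)$. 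Hence $\nabla u_{\eps_\ell}\to g$ in $L^2(\Rd;\Rd)$; being a strong $L^2$-limit of curl‑free fields that vanish outside $\Omega$, $g$ is curl‑free and vanishes there, so $g=\nabla u$ with $u\in H^2_\loc(\Rd)$ locally constant in $\co\Omega$, and, after subtracting a constant, $u\in H^2(\Rd)$ vanishes outside $\Omega$ and $\nabla u_{\eps_\ell}\to\nabla u$ in $L^2(\Rd)$, which is the claim.

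I expect the crux to be the recurring passage used twice above --- converting the \emph{projected} directional difference quotients that Lemma~\ref{stm:lbound} delivers into genuine second‑order control of $u_\eps$. This is exactly why both structural facts are unavoidable: that $\nabla u_\eps$ is a gradient (symmetric Hessian; second differences of $u_\eps$ along a direction governed by first differences of $\nabla u_\eps$ projected onto that same direction), and that $\{\tilde G>0\}$ contains a ball centred at the origin --- i.e.\ {\bf A2} via Lemma~\ref{stm:tildeK} --- so that all directions are at our disposal.
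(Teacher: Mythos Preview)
Your overall architecture --- mollify at scale $\eps$, bound the Hessians via Lemma~\ref{stm:lbound}, apply Rellich, control the mollification error --- is exactly the paper's. Your Hessian step is done differently but correctly: the paper instead uses the identity $\lVert\nabla^2 v_\eps\rVert_{L^2}=\lVert\Delta v_\eps\rVert_{L^2}$ together with the fact that, for a \emph{radial} mollifier, $\nabla\rho_\eps(w)\parallel w$, so that $\Delta v_\eps(x)=\int\nabla\rho_\eps(w)\cdot(\nabla u_\eps(x+w)-\nabla u_\eps(x))\,\de w$ already involves the projected difference directly.

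The gap is the mollification-error step, which you treat as routine but which occupies the bulk of the paper's proof. Your ``finitely many directions'' suggestion does not work: writing $|p|^2=\sum_i|p\cdot e_i|^2$ yields $\lVert(\nabla u_\eps(\cdot+w)-\nabla u_\eps)\cdot e_i\rVert_{L^2}$, but the energy only controls this when $e_i=\hat w$; the projection direction is locked to the displacement, so a fixed basis cannot handle all $w$. Your Fourier suggestion \emph{can} be made rigorous --- using $\widehat{\nabla u_\eps}(\xi)=i\xi\,\hat u_\eps(\xi)$ one reduces to the multiplier inequality
\[
\int_{B(0,R)}\sin^2\!\Big(\tfrac{\eps z\cdot\xi}{2}\Big)\,\de z\ \le\ C\int_{B(0,R)}\sin^2\!\Big(\tfrac{\eps z\cdot\xi}{2}\Big)(\hat z\cdot\hat\xi)^2\,\de z\quad\text{uniformly in }\xi,
\]
which is true but is a lemma in its own right, not a remark. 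The paper avoids Fourier and, notably, does \emph{not} use the gradient structure here: it exploits the evenness of the radial mollifier to pass to the symmetrised second difference $p(x,y)=\nabla u_\eps(x+y)+\nabla u_\eps(x-y)-2\nabla u_\eps(x)$, splits $|p|^2=|p\cdot\hat y|^2+|\pi_{\hat y^\perp}p|^2$, and --- this is the crux --- rewrites $p(x,y)\cdot\eta$ for $\eta\perp y$ as a sum of four terms, each of the form $(\nabla u_\eps(a)-\nabla u_\eps(b))\cdot(a-b)$, i.e.\ projected onto its own displacement. This identity is purely algebraic (valid for any vector field); the supports of $\rho$ and of an auxiliary weight on $\hat y^\perp$ are both taken in balls of radius $\beta_d r_1/\sqrt{2}$ so that every displacement $a-b$ stays inside $\{\tilde G\ge\tilde\gamma\}$.

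One further small point: ``after subtracting a constant, $u$ vanishes outside $\Omega$'' only works when $\co\Omega$ is connected. The paper mollifies $u_\eps-\sum_i a_{\eps,i}\chi_{C_i}$, subtracting on each connected component $C_i$ of $\co\Omega$ the constant value of $u_\eps$ there, so that the $v_\eps$'s themselves --- and hence their limit --- are supported in $\bar\Omega$.
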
	
\begin{proof}
	Let $\tilde \gamma \coloneqq \mathrm{ess} \inf \{\, \tilde G(z) : z \in B(0,\beta_d r_1) \,\}$.
	Thanks to Lemma \ref{stm:tildeK}, we know that $\tilde \gamma >0$,
	and therefore there exists a function $\rho \in \Cc^\infty([0,+\infty))$ such that
		\[
			\rho(r) = 0 \qquad \text{if } r \in \left[ \frac{\beta_d r_1}{\sqrt{2}},+\infty \right)
		\]
	and that
		\[
			0\leq \rho(r) \leq \tilde \gamma
				\quad\text{and}\quad
			\left|\rho'(r)\right| \leq \tilde \gamma.
		\]	
	For $\eps>0$ and $y\in \Rd$, we set 
	\[
	\rho_\eps(y)\coloneqq \frac{1}{c\eps^d}\rho\left(\frac{\left| y \right|}{\eps}\right),
	\qquad \text{with } c\coloneqq \int_{\Rd} \rho(\left| y \right|)\de y,
	\]
	and we introduce the functions
		\[
			v_\eps(x) \coloneqq \big(\rho_{\eps}\ast (u_\eps - \sum_{i\in I} a_{\eps,i}\chi_{C_i})\big)(x),
		\]
	where $\Set{C_i}_{i\in I}$ is the family of connected components of $\co{\Omega}$
	and each $a_{\eps,i} \in\R$ satisfies $u_\eps(x) = a_{\eps,i}$ in $C_i$
	for all $i\in I$ and $\epsilon>0$.
	
	Each function $v_\eps$ is smooth and, for all $\delta \in(0,1)$,
	its support is contained in
	$\Omega_{\delta} \coloneqq \Set{x : \dist(x,\Omega)\leq 2^{-1/2}\delta \beta_d r_1 }$
	if $\eps\in(0,\delta)$.
	
	Let us fix $\delta$ small enough, so that
	$\partial \Omega_{\delta}$ is Lipschitz.
	For such $\delta$, we now prove that
	the family $\Set{ v_\eps }_{\eps\in(0,\delta)}$ is relatively compact in $H^1_0(\Omega_{\delta})$.
	To this aim, we first remark that
		\begin{equation}\label{eq:hess-lap}
			\int_{\Omega_{\delta}} \left| \nabla^2 v_\eps \right|^2
			= \int_{\Omega_{\delta}} \left| \Delta v_\eps \right|^2,
		\end{equation}
	and next we show that the right-hand side is uniformly bounded. 
	
	We observe that $\int_{\Rd} \nabla \rho_{\eps}(y)\de y = 0$ for all $\eps>0$,
	because $\rho$ is compactly supported. Hence,
	\[
	\begin{split}
	\lVert \Delta v_\eps \rVert^2_{ L^2( \Omega_{\delta} ) }
	= & \int_{\Rd} \left| \Delta v_\eps \right|^2 \\
	= &	\int_{\Rd} \left| \int_{\Rd}
	\nabla \rho_{\eps}(y) \cdot \big( \nabla u_\eps(x+y) -  \nabla u_\eps(x) \big)
	\de y \right|^2 \de x \\
	\leq & \int_{\Rd} \left[\frac{1}{c \eps^{d+1}}\int_{\Rd}
	\left| \rho' \left( \frac{\left| y \right|}{\eps}\right) \right|
	\left|\big( \nabla u_\eps(x+y) -  \nabla u_\eps(x) \big) \cdot \hat{y} \right| 
	\de y \right] ^2 \de x.
	\end{split}\]
	By our choice of $\rho$ and \eqref{eq:tildeKpos}, we find
	\[\begin{split}
	\lVert \Delta v_\eps \rVert^2_{ L^2( \Omega_{\delta} ) }
	\leq & \int_{\Rd} \left[ \frac{1}{ c\eps } \int_{\Rd}
	\tilde G_\eps ( y )
	\left| \big( \nabla u_\eps( x+y) -  \nabla u_\eps(x) \big) \cdot \hat{y} \right|
	\de y \right] ^2 \de x \\
	\leq & \int_{\Rd} \left[ \frac{1}{ c\eps } \int_{\Rd}
	\tilde G( z )
	\left| \big( \nabla u_\eps( x + \eps z ) -  \nabla u_\eps(x) \big) \cdot \hat{z} \right|
	\de z \right] ^2 \de x		
	\end{split}\]
	Further, since $\tilde G\in L^1(\Rd)$, Jensen's Inequality and Fubini's Theorem yield
	\[
	\lVert \Delta v_\eps \rVert^2_{ L^2( \Omega_{\delta} ) }
	\leq \frac{\lVert \tilde G \rVert_{L^1(\Rd)}}{c^2}
	\int_{\Rd} \int_{\Rd} \tilde G(z)
	\left[ \frac{\big( \nabla u_\eps( x+ \eps z) -  \nabla u_\eps(x) \big) \cdot \hat{z}}{\eps}\right]^2
	\de x \de z.
	\]				
	The lower bound \eqref{eq:lbound} entails
	\[
	\lVert \Delta v_\eps \rVert^2_{ L^2( \Omega_{\delta} ) }
	\leq \frac{4}{c^2 \alpha} \lVert \tilde G \rVert_{L^1(\Rd)} \E_\eps(u_\eps),
	\]
	so that, in view of the assumption $\E_\eps(u_\eps)\leq M$ and of \eqref{eq:hess-lap}, we get
	\begin{equation}\label{eq:nabla2}
	\lVert \nabla^2 v_\eps \rVert^2_{ L^2( \Omega_{\delta} ) }
	\leq \frac{4 M}{c^2 \alpha} \lVert \tilde G \rVert_{L^1(\Rd)} .
	\end{equation}
	
	We argue as in the proof of Lemma \ref{stm:1D-cpt}.
	We recall that,  for $\eps\in(0,\delta)$,
	each $v_\eps$ vanishes on the complement of $\Omega_{\delta}$,
	and thus, by Poincar\'e's Inequality,
	\eqref{eq:nabla2} implies a uniform bound on the norms
	$\lVert v_\eps \rVert_{H^2_0(\Omega_{\delta})}$.
	As a consequence, by Rellich-Kondrachov's Theorem,
	the family $\Set{\tilde v_\eps}_{\eps\in(0,\delta)}$
	of the restrictions of the functions $v_\eps$ to $\Omega_{\delta}$
	admits a subsequence $\Set{\tilde v_{\eps_\ell}}$ that converges in $H^1_0(\Omega_{\delta})$
	to a function $\tilde u \in H^2_0(\Omega_{\delta})$.
	Actually, the support of $\tilde u$ is contained in $\bar\Omega$, and,
	if we put,	
	\[
	u(x)\coloneqq\begin{cases}
	\tilde u(x) 	&\text{if } x\in \bar\Omega, \\
	0					& \text{otherwise},
	\end{cases}
	\]
	we infer that $\Set{v_{\eps_\ell}}$ converges in $H^1(\Rd)$
	to $u \in \Set{v\in H^2(\Rd) : v = 0 \text{ a.e. in } \co{\Omega}} \subset X$.
	
	To accomplish the proof, we need to show that
	the $L^2(\Rd)$-distance between $\nabla u_\eps$ and $\nabla v_\eps$ vanishes when $\eps \to 0^+$.
	Since $\rho_\eps$ has unit $L^1(\Rd)$-norm and is radial, we have
		\[\begin{split}
			\int_{\Rd} \lvert \nabla v_\eps(x) & - \nabla u_\eps(x)\rvert^2 \de x 
				= \int_{\Rd} \left| \int_{\Rd} 
					\rho_{\eps}(y) \big( \nabla u_\eps(x+y) -  \nabla u_\eps(x) \big)\de y
					\right|^2 \de x \\
			& \leq \frac{1}{4}\int_{\Rd} \left| \int_{\Rd}
					\rho_{\eps}(y)  \big( \nabla u_\eps(x+y) + \nabla u_\eps(x-y) -  2\nabla u_\eps(x) \big)\de y
					\right|^2 \de x \\
			& \leq \frac{1}{4}\int_{\Rd} \int_{\Rd}
					\rho_{\eps}(y) \left| \nabla u_\eps(x+y) + \nabla u_\eps(x-y) -  2\nabla u_\eps(x) \right|^2 \de y  \de x.
		\end{split}\]
	Let $\id$ be the identity matrix.
	For any fixed $y\in\Rdmz$ and for all $p\in\Rd$, we can rewrite the equality
	$ \left| p \right|^2 = \left| p\cdot y \right|^2 + \left| (\id - y\otimes y) p \right|^2 $
	as
	\begin{equation}\label{eq:proj}\begin{split}
	\left| p \right|^2 	= & \left| p\cdot y \right|^2
	+ \int_{\hat{y}^\perp}
	\pi( \left| \eta \right| )\left| p\cdot \eta\right|^2 \de'\eta \\
	= &  \left| p\cdot y \right|^2
	+ \frac{1}{\eps^2} \int_{\hat{y}^\perp}
	\pi_\eps(\eta)\left| p\cdot \eta\right|^2 	\de'\eta,
	\end{split}\end{equation}
	where $\pi\colon [0,+\infty) \to [0,+\infty)$ is a continuous function such that
	\[
	\int_{e_d^\perp} \pi( \left| \eta \right| ) \left| \eta \right|^2 \de'\eta = 1,
	\]
	and $\pi_\eps(\eta)\coloneqq \eps^{-d+1} \pi( \left| \eta \right| / \eps)$.
	We further prescribe that
	\begin{equation*}
	\pi(r) = 0 \qquad \text{if } r\in \left[ \frac{\beta_d r_1}{\sqrt{2}}, +\infty \right)
	\end{equation*}
	and that the limit $\lim_{r \to 0^+} \pi(r) / r$ is finite.
	
	We apply formula \eqref{eq:proj}
	to $p_\eps(x,y) \coloneqq \nabla u_\eps(x+y) + \nabla u_\eps(x-y) -  2\nabla u_\eps(x)$
	and we find that
	\begin{equation}\label{eq:nablas}
	\int_{\Rd} \left| \nabla v_\eps(x) - \nabla u_\eps(x)\right|^2 \de x  \leq \frac{1}{4}\left( I_1 + I_2 \right),
	\end{equation}
	where
	\[
	\begin{gathered}
	I_1 \coloneqq \int_{\Rd} \int_{\Rd} 
	\rho_{\eps}(y) \left| y \right|^2 
	\left| p_\eps(x,y) \cdot \hat{y} \right|^2 \de y  \de x, \\
	I_2 \coloneqq \frac{1}{\eps^2}\int_{\Rd} \int_{\Rd} \int_{\hat{y}^\perp} \rho_{\eps}(y) \pi_\eps(\eta)
	\left| p_\eps(x,y)\cdot \eta \right|^2 \de'\eta \de y  \de x .
	\end{gathered}
	\]
	We first consider $I_1$. 
	Keeping in mind that $\rho$ is compactly supported and
	$\rho(\left| y \right|) \leq \tilde \gamma \leq \tilde G(y)$ for a.e. $y\in B(0, 2^{-1/2}\beta_d r_1)$,
	we get
	\[
	\begin{split}
	I_1 \leq  \frac{(\beta_d r_1)^2}{c} 
	& \left[
	\int_{\Rd} \int_{\Rd}
	\tilde G_{\eps}(y) \left| \big( \nabla u_\eps(x+y) - \nabla u_\eps(x) \big) \cdot \hat{y} \right|^2 \de y \de x
	\right. \\
	& \left. + \int_{\Rd} \int_{\Rd}
	\tilde G_{\eps}(y) \left| \big( \nabla u_\eps(x-y)  - \nabla u_\eps(x) \big)\cdot \hat{y} \right|^2 \de y  \de x
	\right],
	\end{split}
	\]
	and, by \eqref{eq:lbound},
	\begin{equation}\label{eq:I1}
	I_1 \leq \frac{ 8 (\beta_d r_1)^2 M}{c \alpha} \eps^2.
	\end{equation}
	
	As for $I_2$, we claim that there exist a constant $L>0$,
	depending on $d$, $\beta_d$, $r_1$, $\tilde \gamma$, and $c$, such that
	\begin{equation}\label{eq:I2}
	I_2 \leq \frac{ L M}{\alpha} \eps^2.
	\end{equation}
	To prove this, we write the integrand appearing in $I_2$ as follows:
	\[
	\begin{split}
	p_\eps(x,y)\cdot \eta = & \big( \nabla u_\eps(x+y) + \nabla u_\eps(x-y) -  2\nabla u_\eps(x) \big)\cdot \eta \\
	= & \big( \nabla u_\eps(x+y) + \nabla u_\eps(x-y) - 2 \nabla u_\eps(x-\eta) \big)\cdot \eta \\
	& + 2 \big( \nabla u_\eps(x-\eta) - \nabla u_\eps(x) \big)\cdot \eta \\
	= & \big( \nabla u_\eps(x+y) - \nabla u_\eps(x-\eta) \big)\cdot ( \eta + y ) \\
	& + \big( \nabla u_\eps(x-y) - \nabla u_\eps(x-\eta) \big)\cdot ( \eta - y ) \\
	& - \big( \nabla u_\eps(x+y) - \nabla u_\eps(x-y) \big)\cdot y
	+ 2 \big( \nabla u_\eps(x-\eta) - \nabla u_\eps(x) \big)\cdot \eta .
	\end{split}
	\]
	We plug this expression in the definition of $I_2$.
	Using again the symbol $\de'$ as a short for $\de\Hdmu$,
	we find
	\[
	\begin{split}
	I_2 & \leq \frac{4}{c} \int_{\Rd} \int_{\Rd} \int_{\hat{y}^\perp} \rho(\left| y \right|) \pi( \left| \eta \right| )
	\left| 
	\big(\nabla u_\eps(x+\eps y) - \nabla u_\eps(x-\eps\eta) \big)\cdot ( \eta + y )
	\right|^2 \de'\eta \de y  \de x  \\
	&\quad + \frac{4}{c}  \int_{\Rd} \int_{\Rd} \int_{\hat{y}^\perp} \rho(\left| y \right|) \pi(\left| \eta \right|)
	\left|
	\big(\nabla u_\eps(x-\eps y) - \nabla u_\eps(x-\eps\eta) \big)\cdot ( \eta - y )
	\right|^2 \de'\eta \de y  \de x \\
	& \quad + \frac{8}{c}  \lVert \pi \rVert_{L^1(e_d^\perp)}
	\int_{\Rd} \int_{\Rd} \rho(\left| y \right|) \left| y \right|^2
	\left|
	\big(\nabla u_\eps(x+\eps y) - \nabla u_\eps(x) \big)\cdot \hat y
	\right|^2 \de y  \de x \\
	&\quad + \frac{16}{c}  \int_{\Rd} \int_{\Rd} \int_{\hat{y}^\perp}
	\rho(\left| y \right|) \pi( \left| \eta \right| )
	\left|
	\big(\nabla u_\eps(x+\eps\eta) - \nabla u_\eps(x) \big)\cdot \eta
	\right|^2 \de'\eta \de y \de x.
	\end{split}
	\]
	We estimate separately each of the contributions on the right-hand side.
	
	Let us set
	$\Sdmu_+ \coloneqq \Set{ e \in \Sdmu : e \cdot e_d > 0}$ and
	$\Sdmu_- \coloneqq \Set{ e \in \Sdmu : e \cdot e_d < 0}$.
	Hereafter, we denote by $L$ any strictly positive constant
	depending only on $d$, $\beta_d$, $r_1$, and on the norms of $\rho$ and $\pi$,
	possibly changing from line to line.
	
	By Coarea Formula, we can rewrite the first addendum as follows:
		\[\begin{split}
			& \int_{\Rd} \int_{\Rd} \int_{\hat{y}^\perp} \rho(\left| y \right|) \pi( \left| \eta \right|) \left| 
					\big(\nabla u_\eps(x+\eps y) - \nabla u_\eps(x-\eps\eta) \big)\cdot ( \eta + y )
					\right|^2 \de'\eta \de y  \de x \\
			&\quad = \int_{\Rd}\int_{\Rd} \int_{\hat{y}^\perp} \rho( \left| y \right|) \pi( \left| \eta \right|) \left|
					\big( \nabla u_\eps(x+\eps(\eta+y)) - \nabla u_\eps(x) \big)\cdot ( \eta + y )
					\right|^2 \de'\eta \de x  \de y \\
			&\quad= \int_{\Sdmu_+} \int_{\Rd} \int_{\R} \int_{e^\perp}
					r^{d-1}\rho( r ) \pi( \left| \eta \right| ) \\
					&\qquad\qquad\qquad\qquad\qquad \cdot \left|
						\big( \nabla u_\eps(x+\eps(\eta + r e)) - \nabla u_\eps(x) \big)\cdot ( \eta + r e )
					\right|^2 \de'\eta \de r \de x  \de'e \\
			&\quad= \int_{\Sdmu_+} \int_{\Rd} \int_{\Rd}
					\left| y \right|^2 \left| y\cdot e \right|^{d-1}\rho( \left| y\cdot e \right| )
					\pi\big( \left| (\id - e\otimes e) y \right| \big) \\
					&\qquad\qquad\qquad\qquad\qquad
						\cdot \left| \big( \nabla u_\eps(x+\eps y) - \nabla u_\eps(x) \big)\cdot \hat y \right|^2
					\de y \de x  \de'e.
		\end{split}\]
	Similarly, we have
	\[\begin{split}
		& \int_{\Rd} \int_{\Rd} \int_{\hat{y}^\perp} \rho(\left| y \right|) \pi( \left| \eta \right|)
					\left|
						\big(\nabla u_\eps(x-\eps y) - \nabla u_\eps(x-\eps\eta) \big)\cdot ( \eta - y )
					\right|^2 \de'\eta \de y  \de x
		\\ &\quad = \int_{\Sdmu_-} \int_{\Rd} \int_{\Rd}
					\left| y \right|^2 \left| y\cdot e \right|^{d-1}\rho( \left| y\cdot e \right| )
					\pi\big( \left|(\id - e\otimes e) y \right| \big) \\		
		& \qquad\qquad\qquad\qquad\qquad
				\cdot \left| \big( \nabla u_\eps(x+\eps y) - \nabla u_\eps(x) \big)\cdot \hat y \right|^2
				\de y \de x  \de'e,
		\end{split}\]
	and thus
	\[\begin{split}
	& \int_{\Rd}\int_{\Rd} \int_{\hat{y}^\perp} \rho(\left| y \right|) \pi( \left| \eta \right|)
	\left| 
	\big(\nabla u_\eps(x+\eps y) - \nabla u_\eps(x-\eps\eta) \big)\cdot ( \eta + y )
	\right|^2 \de'\eta \de y  \de x \\
	&\quad + \int_{\Rd} \int_{\Rd} \int_{ \hat{y}^\perp } \rho(\left| y \right|) \pi( \left| \eta \right|)
	\left|
	\big(\nabla u_\eps(x-\eps y) - \nabla u_\eps(x-\eps\eta) \big)\cdot ( \eta - y )
	\right|^2 \de'\eta \de y  \de x \\
	&\; =  \int_{ \Sdmu} \int_{\Rd} \int_{\Rd}
	\left| y\cdot e \right|^{d-1} \left| y \right|^2 \rho( \left| y\cdot e \right| )
	\pi\big( \left|(\id - e\otimes e) y \right|\big) \\
	& \qquad\qquad\qquad\qquad\qquad \cdot \left| \big( \nabla u_\eps(x+\eps y) - \nabla u_\eps(x) \big)\cdot \hat y \right|^2
	\de y \de x \de'e.
	\end{split}\]
	Let us recall that $\rho(r) = \pi(r) = 0$ if $r \notin [0,2^{-1/2}\beta_d r_1)$,
	whence, for any $e\in \Sdmu$,
	the product $\rho( \left| y\cdot e \right| )\pi\big( \left|(\id - e\otimes e) y \right|\big)$
	vanishes outside the cylinder
	\[
	C_e \coloneqq \Set{ y\in\Rd : 
		\left| y\cdot e \right|, \left|(\id - e\otimes e) y \right| 
		\in [0,2^{-1/2}\beta_d r_1)
	} \subset B(0,\beta_d r_1).
	\]
	Consequently, the last multiple integral equals
		\[\begin{split}
		& \int_{ \Sdmu} \int_{\Rd} \int_{C_e}
		\left| y\cdot e \right|^{d-1} \left| y \right|^2 \rho( \left| y\cdot e \right| )
		\pi\big( \left|(\id - e\otimes e) y \right|\big) \\
		&\qquad\qquad\qquad\qquad\qquad \cdot \left| \big( \nabla u_\eps(x+\eps y) - \nabla u_\eps(x) \big)\cdot \hat y \right|^2
		\de y \de x \de'e,
		\end{split}\]
	which, in turn, is bounded above by
		\[
			L \int_{ \Sdmu} \int_{\Rd} \int_{C_e}
		\tilde G(y) \left| \big( \nabla u_\eps(x+\eps y) - \nabla u_\eps(x) \big)\cdot \hat y \right|^2
		\de y \de x \de'e
		\leq \frac{L M}{\alpha} \eps^2 .
		\]
	We then obtain
	\begin{multline}\label{eq:1}
	\frac{4}{c} \int_{\Rd} \int_{\Rd} \int_{\hat{y}^\perp} \rho(\left| y \right|) \pi( \left| \eta \right| )
	\left| 
	\big(\nabla u_\eps(x+\eps y) - \nabla u_\eps(x-\eps\eta) \big)\cdot ( \eta + y )
	\right|^2 \de'\eta \de y  \de x  \\
	+ \frac{4}{c}  \int_{\Rd} \int_{\Rd} \int_{\hat{y}^\perp} \rho(\left| y \right|) \pi(\left| \eta \right|)
	\left|
	\big(\nabla u_\eps(x-\eps y) - \nabla u_\eps(x-\eps\eta) \big)\cdot ( \eta - y )
	\right|^2 \de'\eta \de y  \de x \\
	\leq  \frac{ LM }{ \alpha} \eps^2.
	\end{multline}
	Next, we have
	\begin{equation}\label{eq:2}
	\frac{8}{c}  \lVert \pi \rVert_{L^1(e_d^\perp)}
	\int_{\Rd} \int_{\Rd} \rho(\left| y \right|) \left| y \right|^2
	\left|	\big(\nabla u_\eps(x+\eps y) - \nabla u_\eps(x) \big)\cdot \hat y \right|^2
	\de y  \de x 
	\leq \frac{L M}{\alpha} \eps^2,
	\end{equation}
	\begin{equation}\label{eq:3}
	\frac{16}{c}  \int_{\Rd} \int_{\Rd} \int_{\hat{y}^\perp}
	\rho(\left| y \right|) \pi( \left| \eta \right| )
	\left| \big(\nabla u_\eps(x+\eps\eta) - \nabla u_\eps(x) \big)\cdot \eta \right|^2
	\de'\eta \de y \de x
	\leq \frac{L M}{\alpha} \eps^2. 			
	\end{equation}
	The bound in \eqref{eq:2} may be deduced as the one in \eqref{eq:I1},
	so, to establish \eqref{eq:I2}, we are only left to prove \eqref{eq:3}.
	To this purpose, let $\psi\in \Cc^\infty(\Rd\times\Rd)$ be a test function.
	By a standard argument and Fubini's Theorem we have that
	\[\begin{split}
	\int_{\Rd} \int_{\hat{y}^\perp}
	& \rho(\left| y \right|) \pi( \left| \eta \right| ) \psi(y,\eta)  \de'\eta \de y
	\\ = & \lim_{\delta \to 0^+} 
	\int_{\Rd}\int_{\Rd} \frac{\left| y \right|}{2\delta} \chi_{\Set{ t < \delta}}(\left| \eta \cdot y\right|)
	\rho(\left| y \right|) \pi( \left| \eta \right| ) \psi(y,\eta)  d\eta \de y
	\\ = & \lim_{\delta \to 0^+}
	\int_{\Rd} \frac{\pi( \left| \eta \right| )}{\left| \eta \right| }
	\left(\int_{\Rd} \frac{\left| \eta \right| }{2\delta} \chi_{\Set{ t < \delta}}(\left| \eta \cdot y\right|)
	\rho(\left| y \right|) \left| y \right|  \psi(y,\eta)  \de y
	\right) d\eta
	\\ = &  \int_{\Rd} \int_{\hat{\eta}^\perp}
	\frac{\pi( \left| \eta \right| )}{\left| \eta \right| }
	\rho(\left| y \right|) \left| y \right| \psi(y,\eta) 
	\de'y d\eta.
	\end{split}\]
	It follows that
	\[\begin{split}
	\int_{\Rd} \int_{\Rd} & \int_{\hat{y}^\perp}
	\rho(\left| y \right|) \pi( \left| \eta \right| )
	\left| \big(\nabla u_\eps(x+\eps\eta) - \nabla u_\eps(x) \big)\cdot \eta \right|^2
	\de'\eta \de y \de x
	\\ = & \int_{\Rd} \int_{\Rd} \int_{\hat{\eta}^\perp}
	\frac{\pi( \left| \eta \right| )}{\left| \eta \right| }
	\rho(\left| y \right|) \left| y \right| 
	\left| \big(\nabla u_\eps(x+\eps\eta) - \nabla u_\eps(x) \big)\cdot \eta \right|^2
	\de'y d\eta \de x
	\\ \leq & L \int_{\Rd} \int_{\Rd}
	\tilde G(\eta)\left| \big(\nabla u_\eps(x+\eps\eta) - \nabla u_\eps(x) \big)\cdot \eta \right|^2
	d\eta \de x
	\end{split}\] 
	(recall that we assume $\lim_{r\to 0^+} \pi(r) / r$ to be finite).
	
	In conclusion, combining \eqref{eq:nablas}, \eqref{eq:I1}, and \eqref{eq:I2},
	we infer
		\[
			\int_{\Rd} \left| \nabla v_\eps(x) - \nabla u_\eps(x)\right|^2 \de x  \leq \frac{L M}{\alpha} \eps^2.
		\]
	This concludes the proof.
\end{proof}

Eventually, we observe that
Theorem \ref{stm:Gconv-BBM} follows
from the results of the current section.

	\begin{proof}[Proof of Theorem \ref{stm:Gconv-BBM}]
	Lemma \ref{stm:cpt-BBM} shows that statement \ref{stm:cptgen} holds.
	
	As for the lower limit inequality, 
	for any $u\in X$ and for any $\Set{u_\eps} \subset X$
	that converges to $u$ in $H^1_\loc(\Rd)$,
	we can assume that $\E_\eps(u_\eps) \leq M$ for all $\eps>0$ and some $M\geq 0$;
	otherwise, the lower limit inequality holds trivially.
	Then, by Lemma \ref{stm:cpt-BBM},
	$u$ belongs to $H^2_\loc(\Rd)$ and we can invoke Proposition \ref{stm:intermediate},
	which yields statement \ref{stm:Gliminf}.	
		
	We prove the upper limit inequality
	in the same fashion as the $1$-dimensional case
	(see the proof of Proposition \ref{stm:1D-pointlim}).
	Let us suppose that $u\in X \cap H^2_\loc(\Rd)$.
	By mollification
	we can obtain a sequence $\Set{u_\ell} \subset X$ of smooth functions
	that converges to $u$ in $H^2_\loc(\Rd)$. More precisely,
	$\Set{\nabla u_\ell}$ and $\Set{\nabla^2 u_\ell}$ converge
	in $L^2(\Rd)$  respectively to $\nabla u$ and $\nabla^2 u$.
	If the gradient of $u$ is bounded in $\Omega$ or $f''$ is bounded,
	$f''(\left|\nabla u_\ell(x) \cdot \hat{z}\right|)\leq M$
	for a.e. $x$, all $z$ and some $M>0$.
	It follows that we can exploit the Dominated Convergence Theorem
	to get $\lim_{\ell \to +\infty} \E_0(u_\ell) = \E_0(u)$.
	So, statements \ref{stm:Glimsup-a} and \ref{stm:Glimsup-b} are achieved
	through the approximation by smooth functions,
	Proposition \ref{stm:intermediate}, and Lemma \ref{stm:prop-Gconv}.
	\end{proof}

	\begin{rmk}[A regularity criterion] \label{stm:carattH2}
		When $\Omega$, $K$, and $f$ are as in Theorem~\ref{stm:Gconv-BBM}
		and $f''$ is bounded,
		as a consequence of our results, we see that
		a function $u\in X$ belongs to $H^2_\loc(\Rd)$
		if and only if $\E_\eps(u)\leq M$ for some $M>0$ and for all sufficiently small $\eps$.
		Indeed, on one hand, if $\E_\eps(u)\leq M$
		we can invoke Lemma \ref{stm:cpt-BBM} with the choice $u_\eps=u$.
		On the other, by slicing and Remark \ref{stm:f''bounded},
		when $f''\leq c$ we get
		\[
		\E_\eps(u) \leq
		\frac{c}{2} \left(\int_{\Rd} K(z) \left| z \right|^2 dz\right)
		\int_{\Rd} \left| \nabla^2 u(x)\right|^2 \de x.
		\]
	\end{rmk}
		
\chapter{Nonlocal perimeters and~nonlocal~curvatures}\label{ch:nlpnlc}
		In the first chapter, we revised the key points of the well-established theory
of finite perimeter sets in $\Rd$.
Now, we go beyond it and we focus on its nonlocal extension.
The main sources of our exposition are the papers \cites{BP,Pa}.

Given a reference set $\Omega\subset \Rd$ and
a measurable function $K\colon \Rd \to [0,+\infty)$,
for us the \emph{nonlocal perimeter} associated with $K$
of a set $E\subset \Rd$ in $\Omega$ is
	\begin{equation*}\label{eq:PerK}
		\begin{split}
		\PerK(E,\Omega) \coloneqq& \int_{E\cap\Omega}\int_{E^c\cap\Omega}K(y-x)\de y\de x \\
		& + \int_{E\cap\Omega}\int_{E^c \cap \Omega^c}K(y-x)\de y\de x +\int_{E\cap\Omega^c}\int_{E^c\cap\Omega}K(y-x)\de y\de x
		\end{split}
	\end{equation*}
(recall that, when $F$ is a set, $\co{F}\coloneqq \Rd \setminus F$).
Functionals of this form were introduced in \cite{CRS}
by L. Caffarelli, J. Roquejoffre, and O. Savin,
who focused on the fractional case, that is, $K(x) = \va{x}^{-d-s}$ with $s\in(0,1)$.
Their analysis was motivated by phase field models where long range interactions occur.
We refer to the Introduction for a brief account of the reasons to study this sort of functionals, 
which, by now, have been intensively investigated.
We shall suggest the works that are more closely related to ours
among the ones in the vast available literature in the body of the chapter.

In analogy with the classical case,
it is natural to consider the nonlocal counterparts of two other functionals,
namely the total variation and the mean curvature.
We define them in Section \ref{sec:ca-pl} and Section \ref{sec:nlc}, respectively.
Ahead of this, to get acquainted with nonlocal perimeters,
Section \ref{sec:elem-nlp} provides an overview of their basic properties:
semicontinuity, submodularity, relation with the De Giorgi's perimeter.
In Section \ref{sec:ca-pl} we also deal with existence of solutions to Plateau's problem.
In this, an important role is played by a generalised Coarea Formula,
whose simple proof we recall.
Then, in Section \ref{sec:calib}, we propose a notion of calibration
that is tailored for the nonlocal Plateau's problem,
and we prove that calibrated functions are optimal w.r.t. their own boundary condition
(see the recent paper \cite{C} by X. Cabr\'e for similar results, too).
A case in which we are able to produce an explicit calibration
is the one of halfspaces, which turn out to be also the unique minimisers,
see Theorem \ref{stm:piani}.
Finally, moving from sufficient to necessary conditions for optimality,
in Section \ref{sec:nlc} we establish an upper bound
on the nonlocal curvature of sets with $C^{1,1}$ boundary.
	
	\section{Elementary properties of~nonlocal~perimeters}\label{sec:elem-nlp}
		In a na\"\i ve manner, we might say that
a functional named perimeter should measure the extension
of the locus that separates a set from its complement.
In the case of nonlocal perimeters,
this is achieved by means of a measurable function $K\colon \Rd \to [0,+\infty)$,
which we imagine to express some interaction between the points in $\Rd$.
Then, we define the \emph{nonlocal perimeter}\index{perimeter!nonlocal --!in $\Rd$}
associated with $K$ of a set $E\subset \Rd$ as
	\begin{equation}\label{eq:PerK-Rd}
		\PerK(E) \coloneqq \int_{E}\int_{\co{E}} K(y-x) \de y \de x.
	\end{equation}
The intuitive idea behind such a position is that
the interaction between the points $x\in E$ and $y\in\co{E}$ must ``cross'' the boundary of $E$,
so the size of the latter can by quantified by the iterated integral at stake.

We would also like to give a notion
of nonlocal perimeter restricted to a reference set $\Omega$,
in analogy to \eqref{eq:dfnPer}.
To this aim, we firstly introduce for any measurable $E,F$ the coupling
	\[
		L_K(E;F)\coloneqq\int_E\int_F K(y-x)\de y\de x,
	\]
and we discuss some of its properties.

By Tonelli's Theorem,
	\[
		L_K(E;F) = L_K(F;E) = \int_{E\times F} K(y-x)\de y \de x,
	\]
because $K$ is positive.
It follows that 
we may assume without loss of generality that $K$ is even, i.e.
	\begin{equation}\label{eq:Keven}
		K(x)=K(-x) \quad\text{for a.e. } x\in\Rd.
	\end{equation}
We also observe that when $E_1,E_2,F$ are measurable 
	\begin{gather*}
		L_K(E_1;F) = L_K(E_2;F) \quad\mbox{if } \Ld(E_1\symdif E_2)=0, \\
		L_K(E_1\cup E_2;F) = L_K(E_1;F)+L_K(E_2;F) \quad\mbox{if } \Ld(E_1\cap E_2)=0,
	\end{gather*}
and
	\[
		\PerK(E) = L_K(E;\co{E}).
	\]	

Whether the interaction $L_K(E;F)$ is finite,
it depends on the ``regularity'' of $K$, $E$, and $F$.
We consider various possibilities:
	\begin{enumerate}
		\item If $K$ is in $L^1(\Rd)$ and its support is contained in $B(0,r)$ for some $r>0$,
			then the interaction is determined by the points
			that are at a distance smaller than $r$ from $E$ and $F$:
				\begin{equation*}
				L_K(E;F) = \int_{\set{x\in E : \dist(x,F)<r}} \int_{\set{y\in F : \dist(y,E)<r}} K(y-x)\de y \de x.
				\end{equation*}
			In \cite{MRT}, J. Maz\'on, J. Rossi, and J. Toledo studied
			nonlocal perimeters and curvatures defined by kernels in this class.
		\item When $K\in L^1(\Rd)$,
			$L_K(E;F)$ is finite as soon as one between $E$ and $F$ has finite measure.
			Indeed,
				\[
					L_K(E;F) = \int_{\Rd}K(z)\va{E \cap (F-z)} \de z 
									\leq \norm{K}_{L^1(\Rd)}\big( \Ld(E) \wedge \Ld(F)\big).
				\]
		\item As a third case, we allow $K$ to be singular in the origin.
			Precisely, let us assume that
				\begin{equation} \label{eq:summK}
				\int_{\Rd} K(x)(1 \wedge \va{x}) \de x<+\infty.
				\end{equation}
			Then, we can bound the coupling $L_K$ from above,
			provided some information about the mutual position of the sets is available.
			Indeed, on one hand, because of the singularity of $K$,
			$L_K$ might blow up if the sets overlap on a region of full measure.
			On the other hand, if $\Ld( E \cap F) = 0$,
			then $F \subset \co{E}$ up to negligible sets, so that
				\[
					L_K(E;F) \leq L_K(E;\co{E}) = \PerK(E).
				\]
			We shall prove in Lemma \ref{stm:CMP} that
			$\PerK(E)$ is bounded above by the $\BV$-norm of $\chi_E$,
			up to a constant that depends on $K$.
			
			Observe that, by \eqref{eq:summK}, $K \in L^1(\co{B(0,r)})$
			for all balls $B(0,r)$ with centre in the origin and radius $r>0$.
	\end{enumerate}

We gather here some examples of kernels such that \eqref{eq:summK} holds.

\begin{exa}
	Functions in $L^1$ satisfy \eqref{eq:summK} trivially.
	Instead, a relevant example of kernels
	that exhibit a singularity in $0$ is given by the fractional ones
	(\cites{CRS,L}).
	We say that $K$ is of \emph{fractional}\index{fractional kernel} type if
		\[
			K(x)=\frac{a(x)}{\va{x}^{d+s}},
		\]
	with $s\in(0,1)$ and $a\colon \Rd \to \R$ a measurable even function
	such that $0<m\leq a(x)\leq M$ for any $x\in\Rd$ for some positive $m$ and $M$.
	
	A third class is formed by the kernels $K$ such that $\int_{\Rd} K(x)\va{x} \de x <+\infty$.
	This requirement allows for a fractional-type behaviour near the origin,
	but it implies faster-than-$L^1$ decay at infinity at the same time. 
	We shall deal with functions of this type in the next chapter, see Theorem \ref{stm:JK-Gconv}.
	\end{exa}

Now, we use the coupling $L_K$
to define the nonlocal perimeter restricted to a reference measurable set $\Omega$,
which we always assume to have strictly positive measure.

Again, we consider the interaction between the given set $E$ and its complement.
An initial attempt might be setting
$\PerK(E;\Omega)$ equal to $L_K( E \cap \Omega; \co{E} \cap \Omega )$,
but in this way we would be neglecting contributions
arising from the portions of boundary that $E$ and $\Omega$ share.
For this reason, we take into consideration also terms where $\co{\Omega}$ appears,
and we define the \emph{nonlocal perimeter}\index{perimeter!nonlocal --!in a reference set}
of a measurable set $E$ in $\Omega$ as
	\begin{equation}\label{eq:PerK-Om}
		\begin{split}
			\PerK(E;\Omega) & \coloneqq L_K(E\cap\Omega;\co{E}\cap\Omega) \\
										& \quad 	+ L_K(E\cap\Omega;\co{E} \cap \co{\Omega}) + L_K(E\cap\co{\Omega};\co{E}\cap\Omega).
		\end{split}
	\end{equation}
An illustration of what the different terms encode is provided by Figure \ref{fig:PerK}.

A simple but useful fact is that $\PerK(E;\Omega)$ may be rewritten
in terms of the characteristic function of $E$: 
\begin{equation}\label{eq:semicont}
\begin{split}
\PerK(E;\Omega) & = \frac{1}{2}\int_\Omega\int_\Omega K(y-x)\va{\chi_E(y)-\chi_E(x)}\de y\de x \\
&\quad + \int_{\Omega}\int_{\co{\Omega}}K(y-x)\va{\chi_E(y)-\chi_E(x)}\de y \de x.
\end{split}
\end{equation}
We also note that
the nonlocal perimeter in \eqref{eq:PerK-Om} coincides with $\PerK(\,\cdot\,;\Rd)$
and that $\PerK(E;\Omega)=\PerK(E)$ when $\Ld(E\cap\co{\Omega})=0$.
		
		\begin{figure}
		\caption{The three contributions that compound $\PerK$.
						The thick lines represent the portions of boundary crossed by the interaction.}
		\label{fig:PerK}		
		\centering
		\subfloat[][$L_K(E\cap\Omega;\co{E}\cap\Omega)$]
		{\begin{tikzpicture}[scale=0.80,rotate=25]
			\fill[lightgray] (-1.5,1) .. controls (-1,1.5) and  (0,2) .. (0.5,2)
			.. controls (1,2) and (3,3) .. (3.5,2.5)
			.. controls (4,2) and (3,2) .. (4,1.5)
			.. controls (4.5,1) and (3,-1) .. (2.5,0)
			.. controls (2,1) and (2.5,0.5)  .. (3,1)
			.. controls (3.2,1.2) and (3,1.3) .. (2,1.3)
			.. controls (1.8,1.3) and (1.5,-1) .. (1,-1)
			.. controls (0.7,-1) and (-0.2,-0.7) .. (0,-0.5)
			.. controls (0.5,0) and (-2,0.5) .. (-1.5,1);						
			\draw[thick] (3,1) .. controls (3.2,1.2) and (3,1.3) .. (2,1.3)
			.. controls (1.8,1.3) and (1.5,-1) .. (1,-1)
			.. controls (0.7,-1) and (-0.2,-0.7) .. (0,-0.5);			
			\path (1,1) node{$E$};			
			\path (2.3,-1) node{$\Omega$};
			\draw[dashed] (-1.5,1)  .. controls (-1,1.5) and (-1,2) .. (0,2.5);
			\draw[dashed] (0,2.5) .. controls (1,3) and (1,3) .. (1.5,3);
			\draw[dashed] (1.5,3) .. controls (2.25,3) and (3.5,1.5) .. (3,1);
			\draw[dashed] (3,1) .. controls (2.5,0.5) and (2,1) .. (2.5,0);
			\draw[dashed] (2.5,0) .. controls (3,-1) and (6,-0.5) .. (4,-1.5);
			\draw[dashed] (4,-1.5).. controls (2,-2.5) and (-0.5,-1) .. (0,-0.5);
			\draw[dashed] (0,-0.5) .. controls (0.5,0) and (-2,0.5) .. (-1.5,1);						
			\end{tikzpicture}
		}
		
		\subfloat[][$L_K(E\cap\Omega;\co{E}\cap\co{\Omega})$]
		{\begin{tikzpicture}[scale=0.80,rotate=25]
			\fill[lightgray] (-1.5,1) .. controls (-1,1.5) and  (0,2) .. (0.5,2)
			.. controls (1,2) and (3,3) .. (3.5,2.5)
			.. controls (4,2) and (3,2) .. (4,1.5)
			.. controls (4.5,1) and (3,-1) .. (2.5,0)
			.. controls (2,1) and (2.5,0.5)  .. (3,1)
			.. controls (3.2,1.2) and (3,1.3) .. (2,1.3)
			.. controls (1.8,1.3) and (1.5,-1) .. (1,-1)
			.. controls (0.7,-1) and (-0.2,-0.7) .. (0,-0.5)
			.. controls (0.5,0) and (-2,0.5) .. (-1.5,1);
			\path (1,1) node{$E$};
			\path (2.3,-1) node{$\Omega$};
			\draw[dashed] (-1.5,1)  .. controls (-1,1.5) and (-1,2) .. (0,2.5);
			\draw[dashed] (0,2.5) .. controls (1,3) and (1,3) .. (1.5,3);
			\draw[dashed] (1.5,3) .. controls (2.25,3) and (3.5,1.5) .. (3,1);
			\draw[dashed] (3,1) .. controls (2.5,0.5) and (2,1) .. (2.5,0);
			\draw[dashed] (2.5,0) .. controls (3,-1) and (6,-0.5) .. (4,-1.5);
			\draw[dashed] (4,-1.5).. controls (2,-2.5) and (-0.5,-1) .. (0,-0.5);
			\draw[thick] (0,-0.5) .. controls (0.5,0) and (-2,0.5) .. (-1.5,1);		
			\end{tikzpicture}
		}	
		\subfloat[][$L_K(E\cap\co{\Omega};\co{E}\cap\Omega)$]
		{\begin{tikzpicture}[scale=0.85,rotate=25]
				\fill[lightgray] (-1.5,1) .. controls (-1,1.5) and  (0,2) .. (0.5,2)
				.. controls (1,2) and (3,3) .. (3.5,2.5)
				.. controls (4,2) and (3,2) .. (4,1.5)
				.. controls (4.5,1) and (3,-1) .. (2.5,0)
				.. controls (2,1) and (2.5,0.5)  .. (3,1)
				.. controls (3.2,1.2) and (3,1.3) .. (2,1.3)
				.. controls (1.8,1.3) and (1.5,-1) .. (1,-1)
				.. controls (0.7,-1) and (-0.2,-0.7) .. (0,-0.5)
				.. controls (0.5,0) and (-2,0.5) .. (-1.5,1);
				\path (1,1) node{$E$};
				\path (2.3,-1) node{$\Omega$};
				\draw[dashed] (-1.5,1)  .. controls (-1,1.5) and (-1,2) .. (0,2.5);
				\draw[dashed] (0,2.5) .. controls (1,3) and (1,3) .. (1.5,3);
				\draw[dashed] (1.5,3) .. controls (2.25,3) and (3.5,1.5) .. (3,1);
				\draw[thick] (3,1) .. controls (2.5,0.5) and (2,1) .. (2.5,0);
				\draw[dashed] (2.5,0) .. controls (3,-1) and (6,-0.5) .. (4,-1.5);
				\draw[dashed] (4,-1.5).. controls (2,-2.5) and (-0.5,-1) .. (0,-0.5);
				\draw[dashed] (0,-0.5) .. controls (0.5,0) and (-2,0.5) .. (-1.5,1);		
				\end{tikzpicture}
			}
	\end{figure}
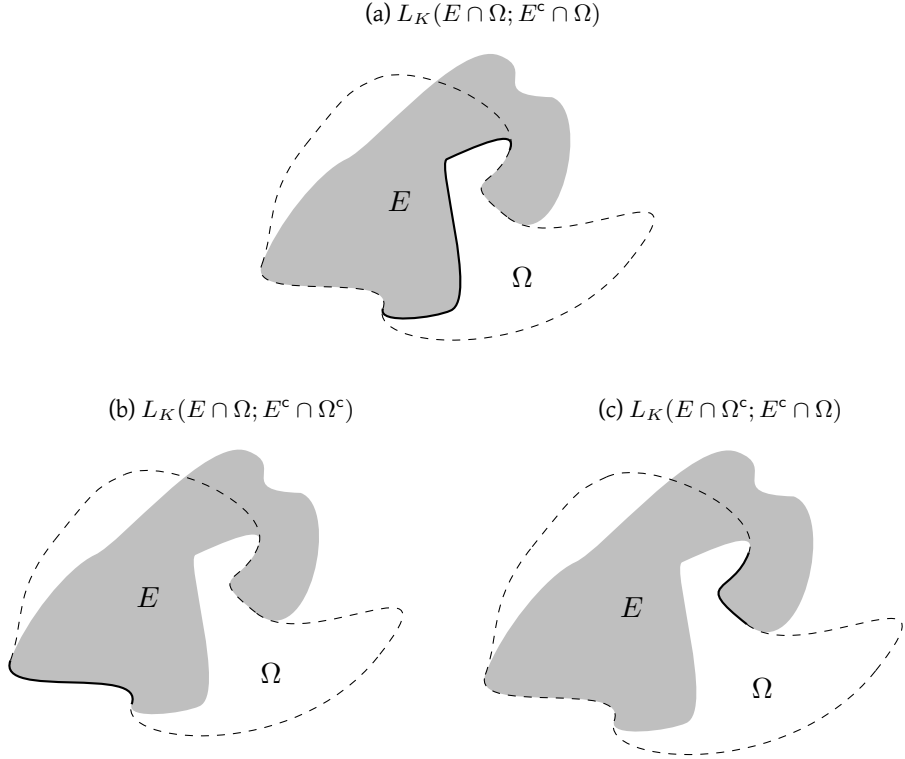

We collect some basic properties of the nonlocal perimeter in the following lemma:
	\begin{lemma}\label{stm:CMP}
		Let $\Omega\subset \Rd$ be an open set.
		If $K\colon\Rd \to [0,+\infty)$ fulfils \eqref{eq:Keven} and \eqref{eq:summK},
		then the functional $\PerK(\,\cdot\,;\Omega)$ defined by \eqref{eq:PerK-Om} satisfies the following:
		\begin{enumerate}
			\item\label{stm:per1} $\PerK(\emptyset;\Omega) = 0$,
					$\PerK(E;\Omega) = \PerK(F;\Omega)$ if $\Ld(E \symdif F) = 0$,
					and $\PerK(E+z;\Omega+z) = \PerK(E;\Omega)$ for all $z\in\Rd$.
			\item\label{stm:per2} Let $E$ be a set such that $\Ld(E)<+\infty$.
					If $\Omega\neq \Rd$ and there exists $r>0$ such that
					$E$ is a finite perimeter set in $\Omega_r\coloneqq \set{x \in\Rd: \dist(x,\Omega)<r}$,
					then 
							\begin{equation}\label{eq:PerK-leq-Per}
								\begin{split}
								\PerK(E;\Omega)
									& \leq \left( \Ld(E) \vee \frac{\Per(E;\Omega_r)}{2} \right) \int_{\Rd} K(x)(1\wedge \va{x}) \de x \\
									& \quad	+ \frac{\Ld(E)}{2 r} \int_{B(0,1)} K(x)\va{x}\de x;
								\end{split}
							\end{equation}
					in particular, $E$ has finite nonlocal $K$-perimeter in $\Omega$.					
					Also, if $E$ is a finite perimeter set in $\Rd$,
					then 
							\begin{equation*}
							\PerK(E) \leq \left( \Ld(E) \vee \frac{\Per(E)}{2} \right) \int_{\Rd} K(x)(1\wedge \va{x}) \de x.
							\end{equation*}
			\item\label{stm:per3} $\PerK(\,\cdot\,;\Omega)$ is lower semicontinuous w.r.t. the $L^1_\loc(\Rd)$-convergence.
			\item\label{stm:per4} $\PerK(\,\cdot\,;\Omega)$ is submodular, i.e. 
						for all $E,F\in\M$ it holds
						\[
							\PerK(E\cap F;\Omega) + \PerK(E\cup F;\Omega) \leq \PerK(E;\Omega) + \PerK(F;\Omega).
						\]
		\end{enumerate}
	\end{lemma}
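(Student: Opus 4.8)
The plan is to dispose of items \ref{stm:per1}, \ref{stm:per3}, \ref{stm:per4} quickly by means of the representation \eqref{eq:semicont}, and then to concentrate the real work on the estimate \ref{stm:per2}.

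Item \ref{stm:per1} is immediate: $\PerK(\emptyset;\Omega)=0$ because every coupling in \eqref{eq:PerK-Om} has an empty factor; invariance under Lebesgue-null modifications is inherited from the corresponding property of $L_K$ recorded before the statement; and $\PerK(E+z;\Omega+z)=\PerK(E;\Omega)$ follows from the change of variables $x\mapsto x+z$, $y\mapsto y+z$ in each iterated integral of \eqref{eq:PerK-Om}. For item \ref{stm:per3}, given $E_\ell\to E$ in $L^1_\loc(\Rd)$ I would pass to a subsequence realising $\liminf_\ell\PerK(E_\ell;\Omega)$ and then to a further subsequence along which $\chi_{E_\ell}\to\chi_E$ a.e.\ in $\Rd$; then $\va{\chi_{E_\ell}(y)-\chi_{E_\ell}(x)}\to\va{\chi_E(y)-\chi_E(x)}$ for a.e.\ $(x,y)$, and Fatou's lemma applied to each of the two integrals in \eqref{eq:semicont} with respect to the (nonnegative) measure $K(y-x)\,\de y\,\de x$ yields $\PerK(E;\Omega)\le\liminf_\ell\PerK(E_\ell;\Omega)$. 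For item \ref{stm:per4}, the core is the elementary pointwise submodularity
\[
	\va{\chi_{E\cap F}(y)-\chi_{E\cap F}(x)}+\va{\chi_{E\cup F}(y)-\chi_{E\cup F}(x)}
		\le \va{\chi_E(y)-\chi_E(x)}+\va{\chi_F(y)-\chi_F(x)},
\]
valid for every $x,y\in\Rd$ and checked by going through the finitely many cases for the pair of values $(\chi_E,\chi_F)$ at $x$ and at $y$; integrating this inequality against the nonnegative measure $\tfrac12 K(y-x)\,\mathbf 1_{\Omega\times\Omega}+K(y-x)\,\mathbf 1_{\Omega\times\co{\Omega}}$ and invoking \eqref{eq:semicont} gives submodularity.

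The substantive point is \ref{stm:per2}. When $E$ has finite perimeter in all of $\Rd$ I would first record the clean comparison $\PerK(E;\Omega)\le\PerK(E)$: writing \eqref{eq:semicont} both for $\Omega$ and for $\Rd$, and using that $K(y-x)\va{\chi_E(y)-\chi_E(x)}$ is symmetric under $x\leftrightarrow y$ (by \eqref{eq:Keven}), one obtains $\PerK(E)=\PerK(E;\Omega)+\tfrac12\int_{\co{\Omega}}\int_{\co{\Omega}}K(y-x)\va{\chi_E(y)-\chi_E(x)}\,\de y\,\de x$. Since moreover $\PerK(E)=\tfrac12\int_{\Rd}K(z)\,\norm{\chi_E(\cdot+z)-\chi_E}_{L^1(\Rd)}\,\de z$, I split the $z$-integral at $\va{z}=1$: for $\va{z}<1$ Proposition \ref{stm:char-BV} (with $\Rd$ in the role of the ambient domain) gives $\norm{\chi_E(\cdot+z)-\chi_E}_{L^1(\Rd)}\le\Per(E)\va{z}$, while for $\va{z}\ge1$ one has the trivial bound $\norm{\chi_E(\cdot+z)-\chi_E}_{L^1(\Rd)}\le 2\Ld(E)$; this produces $\PerK(E)\le\tfrac{\Per(E)}{2}\int_{B(0,1)}K(z)(1\wedge\va{z})\,\de z+\Ld(E)\int_{\co{B(0,1)}}K(z)(1\wedge\va{z})\,\de z$, which is dominated by the claimed right-hand side once the two coefficients are estimated by their maximum.

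For the general case, where $E$ is only assumed of finite perimeter in $\Omega_r$, the comparison with $\PerK(E)$ is no longer available and I would argue directly. Using additivity of $L_K$ in each slot I rewrite \eqref{eq:PerK-Om} as $\PerK(E;\Omega)=L_K(E\cap\Omega;\co{E})+L_K(E\cap\co{\Omega};\co{E}\cap\Omega)$ and split both couplings according to whether $\va{y-x}<\delta$ or $\va{y-x}\ge\delta$, with $\delta\coloneqq\min\{1,r\}$. In the near regime $\va{y-x}<\delta$ one always has $x\in\Omega$ (respectively $y\in\Omega$) and the complementary point in $\Omega_r$, and since $\delta\le r\le\dist(\Omega,\co{\Omega_r})$ and $\chi_E\in\BV(\Omega_r)$ (because $\Ld(E)<+\infty$ and $E$ has finite perimeter in $\Omega_r$), Proposition \ref{stm:char-BV} applies with ambient domain $\Omega_r$ and bounds the near contribution by $\Per(E;\Omega_r)\int_{B(0,\delta)}K(z)\va{z}\,\de z\le\Per(E;\Omega_r)\int_{\Rd}K(z)(1\wedge\va{z})\,\de z$. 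In the far regime $\va{y-x}\ge\delta$ one uses $\va{\chi_E(y)-\chi_E(x)}\le\chi_E(x)+\chi_E(y)$ together with $\Ld(E)<+\infty$ to bound the contribution by a multiple of $\Ld(E)\int_{\{\va{z}\ge\delta\}}K(z)\,\de z$; if $r\ge1$ this is at most $\Ld(E)\int_{\Rd}K(z)(1\wedge\va{z})\,\de z$, whereas if $r<1$ the extra annulus $\{r\le\va{z}<1\}$ is absorbed through $\int_{\{r\le\va{z}<1\}}K(z)\,\de z\le\tfrac1r\int_{B(0,1)}K(z)\va{z}\,\de z$, which is precisely the origin of the $\tfrac{1}{2r}$-term in \eqref{eq:PerK-leq-Per}. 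The hard part is the bookkeeping: to land on the sharp constants, in particular the maximum $\Ld(E)\vee\tfrac{\Per(E;\Omega_r)}{2}$ instead of a cruder sum, one must keep track of the disjointness of the pieces of $E$ (and of $\{\va{z}\ge\delta\}$) that appear and refrain from using $\va{\chi_E(y)-\chi_E(x)}\le\chi_E(x)+\chi_E(y)$ wastefully; this is routine but delicate. Finiteness of $\PerK(E;\Omega)$ is then an immediate consequence of \eqref{eq:PerK-leq-Per}.
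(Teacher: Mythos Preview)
Your treatment of \ref{stm:per1}, \ref{stm:per3}, and the global estimate $\PerK(E)\le\PerK(E;\Omega)+\tfrac12\int_{\co{\Omega}}\int_{\co{\Omega}}\dots$ in \ref{stm:per2} matches the paper's arguments exactly; for the localised estimate in \ref{stm:per2} you use the same ingredients (change of variables $z=y-x$, splitting in $\va{z}$, and Proposition~\ref{stm:char-BV}), though the paper organises them slightly differently: it treats the $\Omega\times\Omega$ contribution with threshold $1$ and the $\Omega\times\co{\Omega}$ contribution with threshold $r$ (assuming $r\le1$ without loss of generality), rather than merging both into a single split at $\delta=\min\{1,r\}$ as you do. The paper's separation makes the constants fall out more transparently, whereas you defer this to ``routine but delicate'' bookkeeping; the approaches are equivalent.

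The one genuine difference is \ref{stm:per4}. You prove submodularity by integrating the pointwise inequality
\[
	\va{\chi_{E\cap F}(y)-\chi_{E\cap F}(x)}+\va{\chi_{E\cup F}(y)-\chi_{E\cup F}(x)}
		\le \va{\chi_E(y)-\chi_E(x)}+\va{\chi_F(y)-\chi_F(x)}
\]
against the kernel in \eqref{eq:semicont}. The paper instead expands each coupling $L_K$ explicitly and obtains the \emph{exact identity}
\[
	\PerK(E;\Omega)+\PerK(F;\Omega)=\PerK(E\cap F;\Omega)+\PerK(E\cup F;\Omega)+2\bigl[\text{three nonnegative }L_K\text{ terms}\bigr].
\]
Your route is shorter and more conceptual; the paper's buys an explicit defect formula, which is not needed elsewhere in the text but is informative in its own right.
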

	\begin{proof}
		Statement \ref{stm:per1} follows easily from the definition of perimeter
		thanks to the properties of Lebesgue's integral.
		
		To prove \ref{stm:per2}, we take advantage of formula \eqref{eq:semicont}:
			\[\begin{split}
				L_K( E\cap\Omega;\co{E}\cap\Omega)
					& = \frac{1}{2}\int_\Omega\int_\Omega K(y-x)\va{\chi_E(y)-\chi_E(x)}\de y\de x \\
					& = \frac{1}{2}\int_\Omega\int_{\Rd} K(z)\va{\chi_E(x+z)-\chi_E(x)} \chi_\Omega(x+z)\de z \de x \\
					& = \frac{1}{2}\int_{\Rd} \int_{\Omega \cap (\Omega - z)}
													K(z) \va{\chi_E(x+z)-\chi_E(x)}\de x \de z \\
					& = \frac{1}{2}\int_{B(0,1)} \int_{\Omega \cap (\Omega - z)}
												K(z)\va{\chi_E(x+z)-\chi_E(x)}\de x \de z \\
					&	\quad +\frac{1}{2}\int_{\co{B(0,1)}} \int_{\Omega \cap (\Omega - z)}
												K(z)\va{\chi_E(x+z)-\chi_E(x)}\de x \de z
			\end{split}			
			\]
		
		The last double integral may be easily bounded by the triangular inequality:
			\[
				\frac{1}{2}\int_{\co{B(0,1)}} \int_{\Omega \cap (\Omega - z)} K(z)\va{\chi_E(x+z)-\chi_E(x)}\de x \de z
					\leq \Ld(E\cap\Omega) \int_{\co{B(0,1)}} K(z)\de z.
			\]
		Moreover, for any $z\in B(0,1)$ and $x\in\Omega\cap (\Omega - z)$,
		we may appeal to Proposition \ref{stm:char-BV},
		which gets
			\[
				\frac{1}{2}\int_{B(0,1)} \int_{\Omega \cap (\Omega - z)}
						K(z)\va{\chi_E(x+z)-\chi_E(x)}\de x \de z
					\leq \frac{\Per(E;\Omega)}{2} \int_{B(0,1)} K(z)\va{z}\de z.
			\]
		On the whole,
			\begin{equation}\label{eq:OmOm0}
				L_K( E\cap\Omega;\co{E}\cap\Omega) \leq
					\left( \Ld(E\cap\Omega) \vee \frac{\Per(E;\Omega)}{2} \right) \int_{\Rd} K(z)(1\wedge \va{z}) \de z;
			\end{equation}
		this concludes the proof when $\Omega=\Rd$.
		
		We focus now on the case when $\Omega$ is a proper subset of $\Rd$.
		Without loss of generality, we may assume that
		$E$ is a finite perimeter set in $\Omega_r$ for $r\in(0,1]$.
		By reasoning as above, we obtain
			\[\begin{split}
				L_K( E\cap\Omega;\co{E}\cap\co{\Omega}) & + L_K( E\cap\co{\Omega};\co{E}\cap \Omega) \\
					& = \int_\Omega\int_{\co{\Omega}} K(y-x)\va{\chi_E(y)-\chi_E(x)}\de y\de x \\
					& = \frac{1}{2}\int_{B(0,r)} \int_{\Omega \cap (\co{\Omega} - z)}
								K(z)\va{\chi_E(x+z)-\chi_E(x)}\de x \de z \\
					&	\quad +\frac{1}{2}\int_{\co{B(0,r)}} \int_{\Omega \cap (\co{\Omega} - z)}
								K(z)\va{\chi_E(x+z)-\chi_E(x)}\de x \de z.
			\end{split}\]
		Again, we are in position to apply Proposition \ref{stm:char-BV} and the triangular inequality.
		We get
			\[\begin{split}
				L_K( E\cap\Omega;\co{E}\cap\co{\Omega}) & + L_K( E\cap\co{\Omega};\co{E}\cap \Omega) \\
					& \leq \frac{\Per(E;\Omega_r)}{2} \int_{B(0,r)} K(z)\va{z}\de z
						+\frac{\Ld(E)}{2}\int_{\co{B(0,r)}} K(z)\de z \\
					& \leq \frac{\Per(E;\Omega_r)}{2} \int_{B(0,1)} K(z)\va{z}\de z \\
					& \quad +\frac{\Ld(E)}{2}
									\left( \frac{1}{r}\int_{B(0,1)} K(z)\va{z}\de z
											+ \int_{\co{B(0,1)}} K(z)\de z
									\right).
			\end{split}\]
		We now recover \eqref{eq:PerK-leq-Per}
		by combing \eqref{eq:OmOm0} and the last estimate.
		
		For what concerns assertion \ref{stm:per3}, the lower semicontinuity is a consequence
		of formula \eqref{eq:semicont} and of Fatou's Lemma.
		
		Eventually, we prove statement \ref{stm:per4}.
		It suffices to rewrite suitably each contribution:
		for instance, we have
			\[\begin{split}
				L_K( (E\cup F)\cap \Omega & ; \co{E} \cap \co{F}\cap\Omega) \\
					& = L_K(E\cap\Omega ; \co{E} \cap\Omega)
							+ L_K(F\cap\Omega ; \co{F} \cap\Omega) \\
					& \quad - L_K(E\cap\Omega ; \co{E} \cap F\cap\Omega)
								- L_K(F\cap\Omega ; E\cap \co{F} \cap\Omega) \\
					& \quad -L_K(E\cap F\cap \Omega; \co{E}\cap \co{F}\cap \Omega)
		\end{split}\]
		and
			\[\begin{split}
				L_K(E\cap F\cap \Omega & ;(\co{E} \cup \co{F})\cap\Omega) \\
					& = L_K(E\cap F\cap \Omega;\co{E}\cap \co{F}\cap \Omega) \\
					& \quad +L_K(E\cap F\cap \Omega;\co{E} \cap F\cap\Omega)
								+L_K(E\cap F\cap \Omega;E\cap \co{F} \cap\Omega).
		\end{split}\] 
		In the end, one finds
			\[\begin{split}
				\PerK(E;\Omega)+\PerK(F;\Omega)
					& = \PerK(E\cap F ; \Omega)+\PerK(E\cup F ; \Omega)   \\
					& \quad +2 L_K(E\cap \co{F}\cap \Omega ; \co{E}\cap F\cap \Omega) \\
					& \quad	+2L_K(E\cap \co{F}\cap \Omega ; \co{E}\cap F\cap \co{\Omega}) \\
					& \quad	+2L_K(E\cap \co{F}\cap \co{\Omega} ; \co{E}\cap F\cap \Omega).
		\end{split}\]
		
	\end{proof}

	\begin{rmk}[Generalised perimeters\index{perimeter!generalised --}]
		Definition \eqref{eq:PerK-Om} might seem very different from the one of De Giorgi's perimeter.
		Nevertheless, Lemma \ref{stm:CMP} shows
		that the classical and the nonlocal perimeter have some useful properties in common.
		Actually, in \cite{CMP}
		A. Chambolle, M. Morini, and M. Ponsiglione proposed that
		a set functional $p$ is a \emph{generalised perimeter} if
			\begin{enumerate}
				\item $p(\emptyset)=0$, $p(E)=p(F)$ whenever $\Ld(E\symdif F)=0$,
					and $p$ is invariant under translations;
				\item it is finite on the closures of open sets with compact $C^2$ boundary;
				\item it is $L^1_\mathrm{loc}(\Rd)$-lower semicontinuous;
				\item it is submodular.
			\end{enumerate}
		
		In \cite{CMP},
		the functional in \eqref{eq:PerK-Rd} appears as an instance of generalised perimeter.
		In that work, the authors are concerned with geometric evolutions driven by generalised curvatures,
		and $\PerK$ is listed as an example of functional
		whose first variation is a curvature in that sense.
		We shall come back to this point later on,
		see Section \ref{sec:nlc} and Chapter \ref{ch:nlc}.
	\end{rmk}

	\section{Coarea Formula and Plateau's problem}\label{sec:ca-pl}
		We recalled in Section \ref{sec:aniso}
that the theory of De Giorgi's perimeter may be formulated
in terms of functions of bounded variation.
In the same spirit, we introduce a nonlocal functional
that might be regarded as a \emph{nonlocal total variation}\index{total variation!nonlocal --}.
Standing the previous assumptions on $\Omega$ and $K$,
for any measurable $u\colon \Rd \to \R$ we set
	\begin{gather}
		J_K^1(u;\Omega) \coloneqq \frac{1}{2}\int_\Omega\int_\Omega K(y-x)\va{u(y)-u(x)}\de x\de y, \nonumber\\
		J_K^2(u;\Omega) \coloneqq \int_{\Omega}\int_{\Omega^c}K(y-x)\va{u(y)-u(x)}\de x \de y, \\ \label{eq:JK}
		J_K(u;\Omega) \coloneqq J_K^1(u;\Omega) + J_K^2(u;\Omega) \nonumber.
	\end{gather}
By a small abuse of notation,
we shall write $J^i_K(E;\Omega) \coloneqq J^i_K(\chi_E;\Omega)$ for $i=1,2$
and $J_K(E;\Omega) \coloneqq J_K(\chi_E;\Omega)$,
so that \eqref{eq:semicont} reads
	\[
		\PerK(E;\Omega) = J_K^1(E;\Omega) + J_K^2(E;\Omega)
									= J_K(E;\Omega) .
	\]

By imitating the proof of statement \ref{stm:per2} in Lemma \ref{stm:CMP},
the reader may verify that
the $\BV$-norm bounds the nonlocal total variation:
	\begin{prop}
		Let $\Omega$ be an open set
		and let $K\colon \Rd \to [0,+\infty)$ satisfy \eqref{eq:Keven} and \eqref{eq:summK}.
		Suppose also that $u\in L^1(\Rd)$.
		Then, if $\Omega\neq \Rd$ and there exists $r>0$ such that
		$u\in\BV(\Omega_r)$, with $\Omega_r\coloneqq \Set{x \in\Rd: \dist(x,\Omega)<r}$,
			\begin{equation*}\label{eq:JK-leq-BV}
			\begin{split}
			J_K(u;\Omega)
					& \leq \left( \norm{u}_{L_1(\Rd)} \vee \frac{\va{\D u}(\Omega_r)}{2} \right)
							\int_{\Rd} K(x)(1\wedge \va{x}) \de x \\
					& \quad	+ \frac{\norm{u}_{L_1(\Rd)}}{2 r} \int_{B(0,1)} K(x)\va{x}\de x.
			\end{split}
			\end{equation*}
		Moreover, if $u\in\BV(\Rd)$, then 
			\begin{equation*}
				J_K(u) \leq \left( \norm{u}_{L_1(\Rd)} \vee \frac{\norm{\D u}}{2} \right) \int_{\Rd} K(x)(1\wedge \va{x}) \de x.
			\end{equation*}
	\end{prop}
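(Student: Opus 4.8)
The plan is to adapt, essentially verbatim, the proof of Lemma~\ref{stm:CMP}\ref{stm:per2}, with $\chi_E$ replaced by the function $u$ and the perimeter $\Per(E;\,\cdot\,)$ replaced by the total variation $\va{\D u}(\,\cdot\,)$. Decompose $J_K(u;\Omega) = J_K^1(u;\Omega) + J_K^2(u;\Omega)$ as in \eqref{eq:JK} and estimate the two summands separately. For each, the idea is to rewrite the iterated integral, via Tonelli's Theorem and the substitution $z = y-x$, as a weighted integral of the $L^1$-moduli of continuity of $u$, and then to split the integral in the increment $z$ according to whether $z$ is close to the origin---where $K$ may be singular but the smallness of $z$ can be exploited---or far from it---where $K$ is integrable by \eqref{eq:summK}.

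After the substitution (and, for $J_K^2$, a symmetrisation that uses \eqref{eq:Keven}, as in Lemma~\ref{stm:CMP}), one is reduced to bounding integrals of the form $\int_{D(z)}\va{u(x+z)-u(x)}\de x$, where $D(z) = \Omega\cap(\Omega-z)$ for $J_K^1$ and $D(z) = \big(\Omega\cap(\co{\Omega}-z)\big)\cup\big(\co{\Omega}\cap(\Omega-z)\big)$ for $J_K^2$. For the portion with $\va{z}<r$ I would invoke the difference-quotient characterisation of $\BV$, Proposition~\ref{stm:char-BV}. The geometric point that allows this for an arbitrary open $\Omega$ is that $\dist(\Omega,\co{\Omega_r})\ge r$, so that for $\va{z}<r$ and $x\in D(z)$ the whole segment $[x,x+z]$ lies in $\Omega_r\coloneqq\Set{x\in\Rd:\dist(x,\Omega)<r}$; by an exhaustion of $\Omega_r$ by open sets compactly contained in it and a passage to the limit, one obtains $\int_{D(z)}\va{u(x+z)-u(x)}\de x\le\va{\D u}(\Omega_r)\,\va{z}$, whence this portion contributes a term controlled by $\va{\D u}(\Omega_r)\int_{B(0,1)}K(z)\va{z}\de z$.

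For the portion with $\va{z}\ge r$ I would discard the cancellation and use $\va{u(x+z)-u(x)}\le\va{u(x+z)}+\va{u(x)}$; since the two parts of $D(z)$ sit on opposite sides of $\partial\Omega$ (or both inside $\Omega$, for $J_K^1$), translation invariance of $\Ld$ together with the disjointness of $\Omega$ and $\co{\Omega}$ bounds $\int_{D(z)}\va{u(x+z)-u(x)}\de x$ by a multiple of $\norm{u}_{L^1(\Rd)}$. Writing $\co{B(0,r)} = \co{B(0,1)}\cup\big(B(0,1)\setminus B(0,r)\big)$ and using $\va{z}\ge r$ on the annulus to replace $K(z)$ by $r^{-1}K(z)\va{z}$, this portion contributes terms controlled by $\norm{u}_{L^1(\Rd)}\int_{\co{B(0,1)}}K(z)\de z$ and by $\tfrac{\norm{u}_{L^1(\Rd)}}{r}\int_{B(0,1)}K(z)\va{z}\de z$. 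Collecting all the contributions, using $a c_1 + b c_2\le(a\vee b)(c_1+c_2)$ and $\int_{\Rd}K(z)(1\wedge\va{z})\de z = \int_{B(0,1)}K(z)\va{z}\de z + \int_{\co{B(0,1)}}K(z)\de z$, and tracking the constants exactly as in Lemma~\ref{stm:CMP}\ref{stm:per2}, gives the stated inequality; the case $u\in\BV(\Rd)$ is the instance $\Omega=\Rd$, for which $J_K^2\equiv0$, $\Omega_r=\Rd$, and the annulus term vanishes.

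The one step that is more than routine manipulation of iterated integrals is the use of Proposition~\ref{stm:char-BV} on a general open set: the plain estimate $\int_{\Omega\cap(\Omega-z)}\va{u(x+z)-u(x)}\de x\le\va{\D u}(\Omega)\va{z}$ need not hold when $\Omega$ is badly disconnected, which is precisely why the increment is restricted to $\va{z}<r$ and the total variation is read on the enlarged set $\Omega_r$, where the relevant segments are guaranteed to remain; this, and the symmetrisation required to recover the exact constants, are carried out exactly as in the proof of Lemma~\ref{stm:CMP}\ref{stm:per2}.
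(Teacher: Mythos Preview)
Your proposal is correct and is exactly what the paper does: the paper gives no separate proof of this proposition, stating only that the reader may verify it ``by imitating the proof of statement \ref{stm:per2} in Lemma \ref{stm:CMP}'', with the obvious substitutions $\chi_E\rightsquigarrow u$, $\Ld(E)\rightsquigarrow\norm{u}_{L^1(\Rd)}$, $\Per(E;\,\cdot\,)\rightsquigarrow\va{\D u}(\,\cdot\,)$. Your outline reproduces that argument faithfully, and you correctly flag the one genuinely non-routine point---reading the difference-quotient bound on $\Omega_r$ rather than $\Omega$ so that Proposition~\ref{stm:char-BV} applies.
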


In the remainder of this Section,
we investigate some of the main features of the nonlocal total variation.
We firstly state two fundamental properties of $J_K(\,\cdot\,;\Omega)$
whose validation is straightforward.
	
	\begin{prop}\label{stm:JK}
		The nonlocal total variation $J_K(\,\cdot\,;\Omega)$ is convex
		and lower semicontinuous w.r.t. the $L^1_\loc(\Rd)$-convergence.
	\end{prop}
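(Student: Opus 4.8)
The plan is to verify each of the two assertions directly from the definition \eqref{eq:JK}, exploiting only elementary properties of the absolute value and of Lebesgue integration. For convexity, fix measurable functions $u,v\colon\Rd\to\R$ and $t\in[0,1]$, and recall that for every pair of points the integrand of $J_K^1$ and $J_K^2$ depends on $u$ only through the map $u\mapsto\va{u(y)-u(x)}$. First I would note that, since $x\mapsto\va{x}$ is convex and $(y,x)\mapsto(tu+(1-t)v)(y)-(tu+(1-t)v)(x)$ is affine in $(u,v)$, we have pointwise
\[
	\va{(tu+(1-t)v)(y)-(tu+(1-t)v)(x)} \leq t\va{u(y)-u(x)} + (1-t)\va{v(y)-v(x)}.
\]
Multiplying by the nonnegative kernel $K(y-x)$ and integrating over $\Omega\times\Omega$ (for $J_K^1$, with the factor $1/2$) and over $\Omega\times\co{\Omega}$ (for $J_K^2$), monotonicity and linearity of the integral give $J_K^i(tu+(1-t)v;\Omega)\leq tJ_K^i(u;\Omega)+(1-t)J_K^i(v;\Omega)$ for $i=1,2$; summing yields convexity of $J_K(\,\cdot\,;\Omega)$. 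The only point to be slightly careful about is that the inequality is meaningful even when some of the quantities are $+\infty$, which is fine since all terms are nonnegative.

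For lower semicontinuity, let $\Set{u_\ell}$ converge to $u$ in $L^1_\loc(\Rd)$. Up to passing to a subsequence that does not decrease the $\liminf$ of $J_K(u_\ell;\Omega)$, I would extract a further subsequence (not relabelled) such that $u_\ell\to u$ a.e.\ in $\Rd$; this is possible because $L^1_\loc$ convergence implies $L^1$ convergence on every ball, hence a.e.\ convergence along a subsequence, and a diagonal argument over an exhaustion of $\Rd$ by balls yields a single subsequence converging a.e.\ on all of $\Rd$. Then for a.e.\ $(x,y)\in\Rd\times\Rd$ one has $K(y-x)\va{u_\ell(y)-u_\ell(x)}\to K(y-x)\va{u(y)-u(x)}$, and since the integrands are nonnegative, Fatou's Lemma applied on $\Omega\times\Omega$ and on $\Omega\times\co{\Omega}$ gives
\[
	J_K^i(u;\Omega)\leq\liminf_{\ell\to+\infty}J_K^i(u_\ell;\Omega)\qquad(i=1,2),
\]
and adding the two (using superadditivity of $\liminf$) yields $J_K(u;\Omega)\leq\liminf_\ell J_K(u_\ell;\Omega)$, which is exactly lower semicontinuity.

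I do not anticipate a serious obstacle here: both statements are ``straightforward'' as the text itself says, and the proof is essentially the same pattern (pointwise inequality on the integrand, then integrate) already used for assertion \ref{stm:per3} in Lemma \ref{stm:CMP}. The only mildly delicate point is the passage to an a.e.-convergent subsequence on all of $\Rd$ (rather than just on a fixed bounded set) in the semicontinuity argument; this is handled by the diagonal extraction described above, and after that Fatou's Lemma closes the argument with no further work.
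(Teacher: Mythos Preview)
Your proof is correct and matches the approach the paper has in mind: the paper omits the proof as ``straightforward'', but the earlier treatment of lower semicontinuity for $\PerK$ in Lemma~\ref{stm:CMP}\ref{stm:per3} already indicates that Fatou's Lemma is the intended tool, and your convexity argument via the pointwise convexity of the integrand is the obvious route. The subsequence-plus-diagonal extraction you flag is exactly the small technical point one needs to make Fatou applicable on all of $\Rd$, and you handle it correctly.
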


Another tool that will be crucial for the forthcoming analysis is a generalised version of the Coarea Formula.
Following A. Visintin \cites{Vi,Vi2}, we say that
a functional $J$ defined on measurable scalar functions fulfils the
\emph{generalised Coarea Formula}\index{Coarea Formula! generalised --} if
	\begin{equation}\label{eq:gencoarea}
	J(u)=\int_{-\infty}^{+\infty}J(\chi_{\Set{u>t}})\de t.
	\end{equation}
Hereafter, if $u\colon \Rd \to \R$ is a function and $t\in\R$, we set
	\[
		\Set{u>t} \coloneqq \Set{ x\in\Omega : u(x)>t}.
	\]
	
It is not difficult to show that
the nonlocal total variation $J_K$ and the $K$-perimeter satisfy a coarea-type equality.
	\begin{prop}[Nonlocal Coarea Formula]
	\label{stm:JK-coarea}\index{Coarea Formula!nonlocal --}
	Let $K\colon \Rd \to [0,+\infty)$ and $u\colon \Rd \to \R$ be measurable.
	It holds 
		\[
			J_K^i(u;\Omega) = \int_{-\infty}^{+\infty} J_K^i(\Set{u>t};\Omega)\de t \quad\text{for } i=1,2,
		\]
	thus
		\[
			J_K(u;\Omega) = \int_{-\infty}^{+\infty}\PerK(\Set{u>t};\Omega)\de t.
		\]
	\end{prop}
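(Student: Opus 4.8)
The plan is to reduce the statement to the elementary layer-cake identity for the absolute value of a difference. The key observation is that for any two real numbers $a,b$ one has
\[
\va{a-b} = \int_{-\infty}^{+\infty} \va{\chi_{\Set{a>t}} - \chi_{\Set{b>t}}}\,\de t,
\]
which is immediate: the integrand equals $1$ precisely for $t$ strictly between $a\wedge b$ and $a\vee b$, and $0$ otherwise, so the integral is $\va{a-b}$. First I would apply this with $a=u(y)$ and $b=u(x)$ pointwise, obtaining
\[
K(y-x)\va{u(y)-u(x)} = \int_{-\infty}^{+\infty} K(y-x)\,\va{\chi_{\Set{u>t}}(y) - \chi_{\Set{u>t}}(x)}\,\de t
\]
for a.e.\ $(x,y)$, where I have used that $\Set{u>t}$, as a subset of $\Omega$ in the notation fixed just before the statement, has characteristic function whose value at a point equals $\chi_{\Set{a>t}}$ with $a$ the value of $u$ at that point.

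Next I would integrate this identity over $\Omega\times\Omega$ (for $J_K^1$) and over $\Omega\times\co{\Omega}$ (for $J_K^2$) and invoke Tonelli's Theorem to exchange the order of integration with the $t$-integral; this is legitimate because every integrand in sight is nonnegative, $K$ being nonnegative, so no integrability hypothesis is needed and both sides may equal $+\infty$. This yields
\[
J_K^i(u;\Omega) = \int_{-\infty}^{+\infty} J_K^i\bigl(\chi_{\Set{u>t}};\Omega\bigr)\,\de t = \int_{-\infty}^{+\infty} J_K^i\bigl(\Set{u>t};\Omega\bigr)\,\de t
\]
for $i=1,2$, where the last equality is just the abuse-of-notation convention $J^i_K(E;\Omega)\coloneqq J^i_K(\chi_E;\Omega)$ introduced after \eqref{eq:JK}. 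Summing the cases $i=1$ and $i=2$, and recalling that $J_K = J_K^1 + J_K^2$ while $\PerK(\,\cdot\,;\Omega) = J_K(\,\cdot\,;\Omega)$ on characteristic functions by \eqref{eq:semicont}, gives
\[
J_K(u;\Omega) = \int_{-\infty}^{+\infty} \PerK\bigl(\Set{u>t};\Omega\bigr)\,\de t,
\]
which is the asserted formula.

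There is essentially no obstacle here: the only point requiring a word of care is the measurability, in the variable $t$, of the functions $t\mapsto J_K^i(\Set{u>t};\Omega)$, which follows automatically from Tonelli once one knows that $(x,y,t)\mapsto K(y-x)\va{\chi_{\Set{u>t}}(y)-\chi_{\Set{u>t}}(x)}$ is jointly measurable; this in turn holds because $\Set{(x,t): u(x)>t}$ is a measurable subset of $\Rd\times\R$ whenever $u$ is measurable. Thus the proof is a one-line pointwise identity followed by a nonnegative Fubini--Tonelli argument, and no structural property of $K$ beyond nonnegativity and measurability is used.
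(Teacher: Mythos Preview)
Your proof is correct and follows essentially the same approach as the paper: both rest on the pointwise layer-cake identity $\va{u(y)-u(x)}=\int_\R \va{\chi_{\{u>t\}}(y)-\chi_{\{u>t\}}(x)}\,\de t$ followed by Tonelli's Theorem to swap the $(x,y)$-integration with the $t$-integration. Your added remark on joint measurability is a welcome clarification that the paper omits, but the core argument is identical.
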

The proof has already appeared several times in the literature
(see \cite{CSV} or
\cites{Vi,CN,MRT,ADM}).		
	\begin{proof}
	Given $x,y\in\Omega$, we may suppose that $u(x)\leq u(y)$.
	We observe that the function $t\mapsto\chi_{\Set{u>t}}(x)-\chi_{\Set{u>t}}(y)$ vanishes
	for $t\in (-\infty,u(x)) \cup (u(y),+\infty)$,
	therefore
		\[\begin{split}
			\int_{\R} \va{\chi_{\Set{u>t}}(y)-\chi_{\Set{u>t}}(x)}\de t
			& = \int_{u(x)}^{u(y)} \va{\chi_{\Set{u>t}}(y)-\chi_{\Set{u>t}}(x)}\de t \\
			& = \va{u(y)-u(x)}.
		\end{split}\]
	By Tonelli's Theorem,
		\[\begin{split}
			J^1_K(u;\Omega) & \coloneqq \frac{1}{2} 	\int_{\Omega}\int_{\Omega}K(x-y)\va{u(x)-u(y)}\de x\de y \\
				&= \frac{1}{2}  \int_{\R}\int_{\Omega}\int_{\Omega} K(x-y)\va{\chi_{\Set{u>t}}(x)-\chi_{\Set{u>t}}(y)}\de x\de y\de t \\
				& = \int_{\R} J_K^1(\Set{u>t};\Omega)\de t
		\end{split}\]
	The equality involving $J_K^2(u;\Omega)$ may be proved in the same manner.
	\end{proof}

The validity of the generalised Coarea Formula has interesting consequences from a variational perspective.
For instance, in \cite{CGL}, A. Chambolle, A. Giacomini, and L. Lussardi proved that
a proper $L^1$-lower semicontinuous functional
that satisfies the generalised Coarea Formula
and whose restriction to characteristic functions is submodular
is necessarily convex.
Thus, we might see the convexity of $J_K(\,\cdot\,;\Omega)$ as an example of this general principle.
Another useful contribution in \cite{CGL}
that involves the Coarea Formula will be invoked in Section \ref{sec:strategy},
when we deal with the asymptotics of rescaled energies.

Proposition \ref{stm:JK-coarea} comes in handy also when
addressing a quite natural variational problem concerning $J_K$.
Its classical analogue is the well-known Plateau's problem,
which amounts to minimise the perimeter under prescribed boundary conditions.
If $\Omega$ is a given references set and
$E_0$ is a measurable set playing the role of the boundary datum,
one usually considers the class of competitors
$\F\coloneqq \Set{E :  E \cap \co{\Omega} = E_0 \cap \co{\Omega}}$.
Remember that, in our notation, equality holds up to negligible sets.
Existence of minimisers may be easily achieved
thanks to the compactness criterion Theorem \ref{stm:BVcpt},
and the $L^1_\loc$-semicontinuity of De Giorgi's perimeter.

The approach does not require modifications
if the classical perimeter is replaced by an anisotropic surface energy
such as $\Per_\sigma$ in \eqref{eq:aPer}, provided the anisotropy $\sigma$ is a norm.
Conversely, the very same strategy is not effective in the nonlocal framework,
because sequences
that have uniformly bounded nonlocal perimeter
are not necessarily precompact in $L^1$.
As an example of this phenomenon,
we may consider a sequence $\Set{E_\ell}$
whose elements are subsets of some $\Omega$ with finite measure,
and we observe that, for any $K \in L^1(\Omega)$, it holds
$\PerK(E_\ell;\Omega)\leq3\norm{K}_{L^1(\Rd)}\Ld(\Omega)$.

\begin{rmk}\label{stm:coercivity}
	It would be natural
	to investigate under which general conditions on $K$
	coercivity of $\PerK(\,\cdot\,,\Omega)$ is ensured.
	In a recent paper \cite{CS}, J. Chaker and L. Silvestre provided an answer
	for a functionals of the form
		\[
		\int_{\Rd}\int_{\Rd}K(x,y)\va{u(y) - u(x)}^2 \de x \de y.
		\]
\end{rmk}

Even though, in general, a uniform bound on the nonlocal perimeter is not informative,
the direct method of calculus of variations is still a viable option.

\begin{thm}[Existence of solutions to Plateau's problem]\label{stm:plateau}
	Let $K\colon \Rd \to [0,+\infty)$ be measurable
	and let $\Omega\subset \Rd$ be an open set with finite measure.
	We suppose that $E_0$ is a measurable set such that $\PerK(E_0; \Omega)<+\infty$,
	and we define the family
	\begin{equation}\label{eq:F}
	\call{F}\coloneqq\Set{v\colon \Rd \to [0,1] :
		v \text{ is measurable and }
		v = \chi_{E_0} \text{ in } \co{\Omega} }.
	\end{equation}
	Then, there exists $E\subset \Rd$ such that $\chi_E\in\call{F}$ and
	\[
	\PerK(E;\Omega) \leq J_K(v;\Omega) \quad\text{for any } v\in\call{F}.
	\]
\end{thm}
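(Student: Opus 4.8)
The plan is to apply the direct method of the calculus of variations, using the generalised Coarea Formula (Proposition \ref{stm:JK-coarea}) to handle the lack of $L^1$-compactness for sublevel sets. First I would observe that the infimum
\[
    \mu \coloneqq \inf\Set{ J_K(v;\Omega) : v\in\call{F} }
\]
is finite: indeed $\chi_{E_0}\in\call{F}$ and $J_K(\chi_{E_0};\Omega) = \PerK(E_0;\Omega)<+\infty$ by hypothesis. Take a minimising sequence $\Set{v_n}\subset\call{F}$, so $v_n\colon\Rd\to[0,1]$, $v_n = \chi_{E_0}$ in $\co{\Omega}$, and $J_K(v_n;\Omega)\to\mu$. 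Since $\Omega$ has finite measure and $0\le v_n\le 1$, the sequence $\Set{v_n}$ is bounded in $L^1(\Omega)$ (and agrees with the fixed function $\chi_{E_0}$ outside $\Omega$), hence is bounded in $L^1_\loc(\Rd)$; by weak-$\ast$ compactness in $L^\infty$, or simply by boundedness in $L^2(\Omega)$ together with Mazur's lemma, we may extract a subsequence (not relabelled) converging weakly in $L^2(\Omega)$ — but this is \emph{not} enough for the lower semicontinuity in Proposition \ref{stm:JK}, which is stated for $L^1_\loc$-convergence.

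To get genuine $L^1_\loc$-convergence I would instead argue at the level of level sets. Set $E_n^t \coloneqq \Set{v_n>t}$ for $t\in(0,1)$. By the Nonlocal Coarea Formula,
\[
    J_K(v_n;\Omega) = \int_0^1 \PerK(E_n^t;\Omega)\,\de t,
\]
the integral being over $(0,1)$ because $v_n$ takes values in $[0,1]$ and $E_n^t = E_0$ in $\co\Omega$ for such $t$. Each $E_n^t \subset \Omega \cup (E_0\cap\co\Omega)$, and since $\Ld(\Omega)<+\infty$, $E_n^t$ differs from the fixed set $E_0\cap\co\Omega$ by a set of finite measure; thus $\Set{\chi_{E_n^t}}_n$ is bounded in $L^1_\loc(\Rd)$ for each fixed $t$. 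Here is where the key obstacle lies: a bound on $\PerK(E_n^t;\Omega)$ alone does not give $L^1$-precompactness (this is exactly the phenomenon noted before Remark \ref{stm:coercivity}). I circumvent it as follows. Because $\Set{v_n}$ is bounded in $L^2(\Omega)$, up to a subsequence $v_n \rightharpoonup v$ weakly in $L^2(\Omega)$ with $v\colon\Rd\to[0,1]$ and $v=\chi_{E_0}$ in $\co\Omega$, so $v\in\call{F}$. By Mazur's lemma there are finite convex combinations $w_n = \sum_{j} \lambda_j^{(n)} v_{k_j^{(n)}}$ with $w_n\to v$ strongly in $L^2(\Omega)$, hence in $L^1(\Omega)$, hence (after a further subsequence) pointwise a.e.; and $w_n\in\call{F}$ since $\call{F}$ is convex and each $w_n$ agrees with $\chi_{E_0}$ outside $\Omega$. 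By convexity of $J_K(\,\cdot\,;\Omega)$ (Proposition \ref{stm:JK}),
\[
    J_K(w_n;\Omega) \le \sum_j \lambda_j^{(n)} J_K(v_{k_j^{(n)}};\Omega),
\]
and since the right-hand side is a convex combination of terms each converging to $\mu$, we get $\limsup_n J_K(w_n;\Omega)\le\mu$. Now $w_n\to v$ in $L^1_\loc(\Rd)$ (strongly in $L^1(\Omega)$, and constantly equal to $\chi_{E_0}$ outside), so $L^1_\loc$-lower semicontinuity of $J_K(\,\cdot\,;\Omega)$ gives
\[
    J_K(v;\Omega) \le \liminf_n J_K(w_n;\Omega) \le \mu,
\]
whence $J_K(v;\Omega) = \mu$ and $v$ is a minimiser in $\call{F}$.

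It remains to pass from the minimiser $v$, a $[0,1]$-valued function, to a \emph{set} $E$ attaining the same value, which is the precise content of the statement. For this I invoke the Nonlocal Coarea Formula once more: writing $E^t \coloneqq \Set{v>t}$, we have $\chi_{E^t}\in\call{F}$ for a.e.\ $t\in(0,1)$ (the boundary condition $v=\chi_{E_0}$ in $\co\Omega$ forces $E^t\cap\co\Omega = E_0\cap\co\Omega$ for every $t\in(0,1)$), hence $\PerK(E^t;\Omega)\ge\mu$ for a.e.\ such $t$, while
\[
    \mu = J_K(v;\Omega) = \int_0^1 \PerK(E^t;\Omega)\,\de t \ge \int_0^1 \mu\,\de t = \mu.
\]
Therefore $\PerK(E^t;\Omega) = \mu$ for a.e.\ $t\in(0,1)$; choosing any such $t$ and setting $E\coloneqq E^t$ gives a set with $\chi_E\in\call{F}$ and $\PerK(E;\Omega)=\mu\le J_K(v;\Omega)$ for every $v\in\call{F}$, which is the desired conclusion. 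The main obstacle, as indicated, is the failure of $L^1$-compactness for equibounded nonlocal perimeters; the device that resolves it is to exploit convexity of $J_K$ together with weak $L^2$-compactness (valid since $\Omega$ has finite measure and competitors are uniformly bounded) via Mazur's lemma, rather than trying to compactify the minimising sequence directly.
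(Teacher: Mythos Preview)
Your proof is correct and follows essentially the same route as the paper's: weak $L^p$-compactness (you take $p=2$, the paper allows any $p>1$), then convexity plus strong $L^1_\loc$-lower semicontinuity of $J_K$ to pass to the weak limit, and finally the nonlocal Coarea Formula to replace the $[0,1]$-valued minimiser by a set. The only cosmetic difference is that you spell out the weak lower semicontinuity step via Mazur's lemma, whereas the paper invokes directly the classical fact that a convex, strongly lower semicontinuous functional is weakly lower semicontinuous.
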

We premise some comments about the statement
before dealing with its proof.

\begin{rmk}[Truncation]
	For $s\in\R$, let us set $T(s) \coloneqq 0 \vee  (s \wedge 1)$.
	Observe that $T \circ \chi_{E_0} = \chi_{E_0}$ and $J_K( T \circ u; \Omega ) \leq J_K( u; \Omega )$.
	Therefore,
	\[
	\inf\Set{ J_K(v;\Omega) : v\in\call{F} }
	=
	\inf\Set{J_K(v;\Omega) :
		v\colon \Rd \to \R \text{ is measurable and }
		v = \chi_{E_0} \text{ in } \co{\Omega},
	}.
	\]
	and we see that choice of $\call{F}$ as the class of competitors is not restrictive.
\end{rmk}

\begin{rmk}[The class of competitors is nonempty]
	Let us suppose that $E_0$ has finite perimeter in some open set $\Omega_0$
	that strictly  contains the closure of $\Omega$ and that $K$ fulfils \eqref{eq:summK}.
	Then, by Lemma \ref{stm:CMP}--\ref{stm:per2},
	$\PerK(E_0; \Omega ) <+\infty$.
\end{rmk}

\begin{proof}[Proof of Theorem \ref{stm:plateau}]
	Let $\Set{u_\ell}\subset \call{F}$ be a minimising sequence.
	Since $\Omega$ has finite measure,
	for any choice of $p\in(1,+\infty)$,
	$\Set{u_\ell}$ is bounded in $L^p(\Omega;[0,1])$.
	It  follows that there exist a subsequence $\Set{u_{\ell_m}}$
	and some $u\in L^p(\Omega;[0,1])$
	such that $\Set{u_{\ell_m}\rvert_\Omega}$ weakly converges in $L^p$ to $u$.
	Let us extend $u$ to the whole $\Rd$
	setting $u\rvert_{\co{\Omega}} = \chi_{E_0}$.
	By Proposition \ref{stm:JK}, we know that
	$J_K$ is convex and lower semicontinuous w.r.t. strong convergence in $L^p(\Rd;[0,1])$
	for any $p\in[1,+\infty)$,
	therefore it is lower semicontinuous w.r.t. the weak $L^p(\Omega;[0,1])$-convergence.
	We deduce
	\[
	\limseq{k}J_K(u_{\ell_m};\Omega)\geq J_K(u;\Omega)\]
	(recall that $u_{\ell_m} = u = \chi_{E_0}$ in $\co{\Omega}$),
	that is, $u$ is a minimiser for $J_K(\,\cdot\,,\Omega)$.
	
	To conclude the proof, it suffices to show that
	we can recover a set $E$ such that $\chi_E\in\call{F}$
	and $\PerK(E;\Omega)\leq J_K(u;\Omega)$.
	We do this by the Coarea Formula: since
	\[
	J_K(u;\Omega) = \int_0^1\PerK(\Set{u>t};\Omega)\de t,
	\]
	for some $t^*\in(0,1)$ it must hold
	$J_K(u;\Omega) \geq \PerK(\Set{u>t^*};\Omega)$.
	Then, the choice $E\coloneqq \chi_{\Set{u>t^*}}$ gets the conclusion.	
\end{proof}
	\section{A nonlocal notion of calibration}\label{sec:calib} 
		In the previous Section
we proved existence of solutions to the nonlocal Plateau's problem.
Now, relying on the paper \cite{Pa},
we want to validate the nonlocal version of the classical sufficiency principle for minimality
based on the concept of calibration.

The notion of calibration may be expressed in very general terms
(see \cites{Mo1,HL} and references therein).
As far as least area surfaces are concerned,
we say that a (classical) \emph{calibration} for the finite perimeter set $E$
is a divergence-free vector field  $\zeta\colon \Rd \to \Rd$
such that $\left| \zeta(x) \right| \leq 1$ a.e. and
$\zeta(x) = \hat{n}(x)$ for $\call{H}^{d-1}$-a.e. $x\in\partial^\ast E$,
$\hat{n}$ being the measure-theoretic inner normal to $E$ defined in \eqref{eq:in-norm}.

It can be shown that if the set $E$ admits a calibration,
then its perimeter equals the infimum in the classical, isotropic Plateau's problem.
To transpose this minimality criterion in the nonlocal setting,
a suitable notion of calibration has to be introduced.

We remind that we assume that $\Rd\times\Rd$ is equipped with the product measure $\Ld \otimes \Ld$.

\begin{dfn}\label{stm:calib}
	Let $u\colon \Rd \to [0,1]$ and	$\zeta\colon \Rd\times\Rd \to \R$ be measurable functions.
	We say that $\zeta$ is a \emph{nonlocal calibration} for $u$ if the following hold:
	\begin{enumerate}
		\item $\va{\zeta(x,y)}\leq 1$ for a.e. $(x,y)\in\Rd\times\Rd$;
		\item for a.e. $x\in\Rd$,
		\begin{equation}\label{eq:div=0}
		\lim_{r\to 0^+}\int_{\co{B(x,r)}} K(y-x)\left(\zeta(y,x)-\zeta(x,y)\right)\de y = 0;
		\end{equation}
		\item for a.e. $(x,y)\in\Rd\times\Rd$ such that $u(x)\neq u(y)$,
		\begin{equation}\label{eq:nlnormal}
		\zeta(x,y)(u(y)-u(x))=\va{u(y)-u(x)}.
		\end{equation}
	\end{enumerate}
\end{dfn}

Some comments about the definition are in order.
Suppose that $\zeta\colon \Rd\times\Rd \to \R$ is a calibration for $u\colon \Rd \to [0,1]$.
	\begin{enumerate}
		\item Up to replacing $\zeta$ with $\tilde\zeta(x,y)\coloneqq ( \zeta(x,y)-\zeta(y,x) )/2$,
			we may always assume that $\zeta(x,y)=-\zeta(y,x)$.
		\item In view of \eqref{eq:summK}, the integral in \eqref{eq:div=0} is convergent for each $r>0$.
		We may interpret \eqref{eq:div=0} as a nonlocal reformulation
		of the classical vanishing divergence condition.
		Nonlocal notions of gradient and divergence operators
		where introduced by G. Gilboa and S. Osher in \cite{GO},
		and they have already been exploited
		to study nonlocal perimeters by Maz\'on, Rossi, and Toledo in \cite{MRT},
		where the authors propose a notion of $K$-calibrable set
		in relation to a nonlocal Cheeger energy.
		\item Suppose that $u=\chi_E$ for some measurable $E\subset \Rd$.
		By \eqref{eq:nlnormal}, $\zeta$ must satisfy 
		\[
		\zeta(x,y) = \begin{cases}
		-1	& \text{if } x\in E, y\in \co{E} \\
		1	& \text{if } x\in \co{E}, y\in E.
		\end{cases}
		\]
		Heuristically, this means that 
		the calibration gives the sign of the inner product
		between the vector $y-x$ and the inner normal to $E$ at the ``crossing point'',
		provided the boundary of $E$ is sufficiently regular (see Figure \ref{fig:zeta}).
		Indeed, if we imagine to displace a particle from to $x$ and $y$,
		$\zeta$ equals $-1$ when the particle exits $E$,
		and it equals $1$ if the particles enters $E$.
	\end{enumerate}

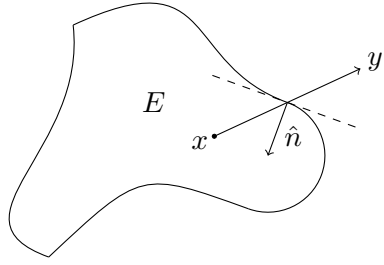
\begin{figure}
	\caption{If $\zeta$ is a calibration for the set $E$
		(i.e. for $\chi_E$) and $x,y$ are as in the picture,
		then $\zeta(x,y)=-1$.}
	\label{fig:zeta}
	\centering
	\begin{tikzpicture}[scale=1.5,rotate=-20]
	\path (-0.1,-0.4) node{$E$};
	
	\draw (-1,0) .. controls (0,1) and (0,0) .. (1,0);
	\draw (1,-1) arc (90:270:-0.5);
	\draw (-0.5,-2) .. controls (0,-1) and (0,-1) .. (1,-1);
	\draw (-0.5,-2) .. controls (-1.5,-2) and (-0.5,-1) .. (-1,0);
	
	\path (0.4,-0.6) node{$x$};
	\filldraw [black] (0.5,-0.5) circle (0.5pt);
	\path (1.6,0.6) node{$y$};
	\path (1.15,-0.26) node{$\hat n$};
	\draw[->] (0.5,-0.5) -- (1.5,0.5);
	\draw[->] (1,0) -- (1,-0.5);
	\draw[dashed] (0.3,0) -- (1.7,0);
	\end{tikzpicture}
\end{figure}

\begin{rmk}
In a very recent, independent work \cite{C},
Cabr\'e studied essentially the same concept of nonlocal calibration:
given an open bounded set $\Omega\subset \Rd$
and a measurable $E\subset\Rd$ such that
$E = \Set{f_E >0}$ for some measurable $f_E\colon \Rd \to \R$,
he introduced the set functional
	\[
		\call{C}_\Omega(F) \coloneqq
			\int\int_{\co{(\co{\Omega} \times \co{\Omega})}} K(y-x)(\chi_F(y)-\chi_F(x)) \sign(f_E(y)-f_E(x)) \de y \de x,
	\]
where $F\subset \Rd$ satisfies $F\cap \co{\Omega}=E\cap \co{\Omega}$.
In \cite{C}*{Theorem 2.4} the author gives sufficient conditions
for the set $E$ to be a minimiser for Plateau's problem 
as well as conditions to grant uniqueness.
As applications, he establishes the local minimality of graphs with $0$ nonlocal curvature
and, very interestingly, re-proves a result in \cite{CRS}
stating that minimisers have null nonlocal curvature in a viscosity sense. 
\end{rmk}
	
The existence of a calibration
is a sufficient condition for a function $u$
to minimise the energy $J_K$ w.r.t compact perturbations,
as the following statement shows:

	\begin{thm}\label{stm:calibcrit}
		Let $E_0\subset \Rd$ be a measurable set such that $J_K(\chi_{E_0};\Omega) <+\infty$,
		and let $\F$ be the family in \eqref{eq:F}.
		If for some $u\in\F$ there exists a calibration $\zeta$, then
		\[
			J_K(u;\Omega)\leq J_K(v;\Omega)\quad \text{for all } v\in\call{F}.
		\]
		Also, when $K>0$, if $\tilde u\in\call{F}$ is another minimiser,
		then $\zeta$ is a calibration for $\tilde u$ as well.
	\end{thm}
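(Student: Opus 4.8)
\medskip
The plan is to pair the integrand of $J_K(\,\cdot\,;\Omega)$ against the calibration, obtaining a functional that is affine in its argument, that coincides with $J_K(u;\Omega)$ on $u$, that lies below $J_K(v;\Omega)$ on every competitor, and that depends only on the boundary datum; the three clauses of Definition \ref{stm:calib} translate exactly into these three facts. First I would reduce to the case $\zeta(x,y)=-\zeta(y,x)$ a.e., which is harmless because, as observed after Definition \ref{stm:calib}, antisymmetrising $\zeta$ preserves all three calibration properties. For $\rho>0$ and bounded measurable $w\colon\Rd\to\R$ set
\[
	\Lambda^\rho_K(w) \coloneqq \tfrac12\int_\Omega\int_{\Omega\setminus B(x,\rho)} K(y-x)\,\zeta(x,y)\,(w(y)-w(x))\,\de y\,\de x + \int_\Omega\int_{\co{\Omega}\setminus B(x,\rho)} K(y-x)\,\zeta(x,y)\,(w(y)-w(x))\,\de y\,\de x ,
\]
and let $J^\rho_K(w)$ be the analogous truncation of $J_K(w;\Omega)$ in \eqref{eq:JK}. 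Since $K\chi_{\co{B(0,\rho)}}\in L^1(\Rd)$ by \eqref{eq:summK} and $\Ld(\Omega)<+\infty$, both quantities are finite, may be rearranged by Fubini's theorem, and $J^\rho_K(w)\uparrow J_K(w;\Omega)$ as $\rho\to0^+$ by monotone convergence. Clause (iii), namely \eqref{eq:nlnormal}, yields $\zeta(x,y)(u(y)-u(x))=\va{u(y)-u(x)}$ where $u(x)\neq u(y)$, while both sides vanish where $u(x)=u(y)$; hence $\Lambda^\rho_K(u)=J^\rho_K(u)$ for every $\rho$. Clause (i), $\va{\zeta}\le1$, gives $\Lambda^\rho_K(v)\le J^\rho_K(v)$ for every $v$.

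\medskip
The core of the proof is the assertion that $\Lambda^\rho_K(u)-\Lambda^\rho_K(v)\to0$ as $\rho\to0^+$ for every $v\in\F$. Writing $\psi\coloneqq u-v$, which vanishes on $\co\Omega$, and using Fubini on $\co{B(0,\rho)}$, the antisymmetry of $\zeta$, and the evenness \eqref{eq:Keven} of $K$, one rewrites $\Lambda^\rho_K(u)-\Lambda^\rho_K(v)=\Lambda^\rho_K(\psi)$ as
\[
	\Lambda^\rho_K(\psi) = -\int_\Omega \psi(x)\left(\int_{\co{B(x,\rho)}} K(y-x)\,\zeta(x,y)\,\de y\right)\de x ;
\]
by clause (ii), i.e.\ \eqref{eq:div=0}, combined with the antisymmetry, the inner integral converges to $0$ for a.e.\ $x$ as $\rho\to0^+$. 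The main obstacle is the passage to the limit in the outer integral: the singularity of $K$ at the origin prevents the inner integral from being dominated, uniformly in $\rho$, by an $L^1(\Omega)$ function, so a crude absolute estimate does not suffice. I expect this to be the technical heart of the argument, to be resolved by keeping the double integral intact and exploiting the cancellation already encoded in \eqref{eq:div=0} and the integrability \eqref{eq:summK} — splitting $\co{B(x,\rho)}$ at unit distance, controlling the far part uniformly by $\int_{\co{B(0,1)}}K$, and symmetrising the near-diagonal part so as to bring out a factor $\va{y-x}$ to be absorbed by $\int_{B(0,1)}K(z)\va{z}\,\de z<+\infty$ — rather than by a pointwise bound.

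\medskip
Granting this, minimality is immediate. Let $v\in\F$; we may assume $J_K(v;\Omega)<+\infty$, the contrary case being trivial. Then, as $\rho\to0^+$,
\[
	J^\rho_K(u) = \Lambda^\rho_K(u) = \Lambda^\rho_K(v) + o(1) \le J^\rho_K(v) + o(1) \le J_K(v;\Omega) + o(1),
\]
and letting $\rho\to0^+$ gives $J_K(u;\Omega)=\lim_\rho J^\rho_K(u)\le J_K(v;\Omega)$; taking $v=\chi_{E_0}$ shows moreover that $J_K(u;\Omega)<+\infty$.

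\medskip
For the uniqueness claim, assume $K>0$ a.e.\ and let $\tilde u\in\F$ be another minimiser, so $J_K(\tilde u;\Omega)=J_K(u;\Omega)<+\infty$. Applying the core assertion to the pair $(u,\tilde u)$ and using $\Lambda^\rho_K(u)=J^\rho_K(u)\to J_K(u;\Omega)$, we obtain $\Lambda^\rho_K(\tilde u)\to J_K(u;\Omega)=J_K(\tilde u;\Omega)$, whence $J^\rho_K(\tilde u)-\Lambda^\rho_K(\tilde u)\to0$. But $J^\rho_K(\tilde u)-\Lambda^\rho_K(\tilde u)$ is nonnegative and nondecreasing as $\rho\to0^+$, so by monotone convergence its limit, which equals
\[
	\tfrac12\int_\Omega\int_\Omega K(y-x)\,g(x,y)\,\de y\,\de x + \int_\Omega\int_{\co\Omega} K(y-x)\,g(x,y)\,\de y\,\de x
\]
with $g(x,y)\coloneqq\va{\tilde u(y)-\tilde u(x)}-\zeta(x,y)(\tilde u(y)-\tilde u(x))\ge0$, must be $0$. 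Since $K>0$ a.e., $g$ vanishes a.e.\ on $(\Omega\times\Omega)\cup(\Omega\times\co\Omega)$, which is clause (iii) for $\tilde u$ there; on $\co\Omega\times\co\Omega$ clause (iii) holds because $\tilde u=u=\chi_{E_0}$ and $\zeta$ calibrates $u$, and on $\co\Omega\times\Omega$ by the antisymmetry of $\zeta$. As clauses (i) and (ii) involve $\zeta$ alone, we conclude that $\zeta$ is a calibration for $\tilde u$ as well.
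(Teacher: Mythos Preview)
Your approach coincides with the paper's. There, three pieces
\[
a(v)=\tfrac12\int_{\Omega}\int_{\Omega}K\zeta\bigl(v(y)-v(x)\bigr),\qquad
b_1(v)=-\int_{\Omega}\int_{\co\Omega}K\zeta\,v(x),\qquad
b_0=\int_{\Omega}\int_{\co\Omega}K\zeta\,\chi_{E_0}(y)
\]
are introduced; one has $J_K(v;\Omega)\ge a(v)+b_1(v)+b_0$ with equality at $u$, and the key identity $a(v)=-b_1(v)$ collapses the bound to the constant $b_0$. Your truncated $\Lambda^\rho_K$ is exactly $a+b_1+b_0$ with the diagonal removed, and your ``core assertion'' $\Lambda^\rho_K(\psi)\to0$ is that same identity. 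Your uniqueness argument also matches the paper's.

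You correctly locate the delicate point, but the resolution you sketch misfires. Symmetrising the near-diagonal $\Omega\times\Omega$ part of $\int_\Omega\psi\,g_\rho$ turns $\psi(x)$ into $\tfrac12\bigl(\psi(x)-\psi(y)\bigr)$, not into something times $\va{y-x}$; since $\psi=u-v$ is only bounded and measurable, no Lipschitz factor can emerge, so $\int_{B(0,1)}K(z)\va{z}\,\de z<+\infty$ is of no help here. What the symmetrisation actually gives is domination by the integrand of $J^1_K(u;\Omega)+J^1_K(v;\Omega)$, hence \emph{existence} of $\lim_\rho\Lambda^\rho_K(\psi)$ --- but that was already clear from $\Lambda^\rho_K(\psi)=\Lambda^\rho_K(u)-\Lambda^\rho_K(v)$, and it does not show the limit is $0$. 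The paper is equally brisk at this juncture: after the justified rewriting $a(v)=-\lim_{r\to0^+}\int_\Omega v(x)\int_{\Omega\cap\co{B(x,r)}}K\zeta\,\de y\,\de x$, it reads \eqref{eq:div=0} as the pointwise identity $\lim_r\int_{\Omega\cap\co{B(x,r)}}K\zeta\,\de y=-\int_{\co\Omega}K\zeta\,\de y$ and passes directly to $a(v)=-b_1(v)$, without supplying a domination for the interchange of limit and outer integral. So the step you flagged is precisely where the work lies; your $\va{y-x}$ heuristic does not close it, and the paper's own argument is terse at the same spot.
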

	
	\begin{proof}
		For any $v\in\call{F}$, let
			\begin{gather*}
				a(v)\coloneqq \frac{1}{2}\int_{\Omega}\int_{\Omega}K(y-x)\zeta(x,y)(v(y)-v(x))\de y\de x,\\
				b_1(v)\coloneqq - \int_{\Omega}\int_{\co{\Omega}} K(y-x) \zeta(x,y) v(x)\de y \de x, \\
				b_0\coloneqq \int_{\Omega}\int_{\co{\Omega}} K(y-x) \zeta(x,y) \chi_{E_0}(y)\de y \de x.		
			\end{gather*}
		Then, by the definitions of $J_K(\,\cdot\,;\Omega)$, $\zeta$, and $\call{F}$,
			\begin{equation}\label{eq:lowbound0}
				J_K(v;\Omega)\geq a(v) + b_1(v) + b_0.
			\end{equation}
		
		Since it is not restrictive to assume that $J_K(v;\Omega)$ is finite,
		we can suppose that  $a(v)$, $b_1(v)$, and $b_0$  are finite as well.
		
		First of all, we show that $a(v)=-b_1(v)$ for all $v\in\call{F}$.
		Since we may always suppose that $\zeta$ is antisymmetric,
		we have
		\begin{equation*}
		a(v)=-\int_{\Omega}\int_{\Omega}K(y-x)\zeta(x,y)v(x)\de y\de x.
		\end{equation*}
		Besides, \eqref{eq:div=0} gets
		\[\begin{split}
		0 & = -2\lim_{r\to 0^+}\int_{\co{B(x,r)}} K(y-x) \zeta(x,y)\de y \\
		& = -2\lim_{r\to 0^+}\int_{\co{B(x,r)} \cap \Omega} K(y-x) \zeta(x,y)\de y
		-2\int_{\co{\Omega}} K(y-x) \zeta(x,y)\de y,
		\end{split}\]
		whence
		\[
		a(v) = -\lim_{r\to 0^+} \int_\Omega\int_{\co{B(x,r)} \cap \Omega} K(y-x) \zeta(x,y) v(x)\de y \de x
		= -b_1(v).
		\]		
		
		We plug the previous equality in \eqref{eq:lowbound0}
		and obtain
			\begin{equation}\label{eq:lowbound}
			J_K(v;\Omega)\geq b_0 \quad\text{for all } v\in\call{F}.
			\end{equation}
		To infer the optimality of $u$, observe that
		$u$ attains the lower bounds,
		because, being $\zeta$ a calibration for $u$, equality holds in \eqref{eq:lowbound0} when $v=u$.
		
		Now, let $K$ be strictly positive and
		assume that $\tilde u\in\call{F}$ is another minimiser of $J_K(\,\cdot\,;\Omega)$,
		or, in other words, assume that $J_K(\tilde u;\Omega)=b_0$.
		We claim that for a.e. $(x,y)\in\Rd\times\Rd$ such that $ \tilde u(x)\neq\tilde u(y) $ it holds
			\begin{equation}\label{eq:optcond}
				\zeta(x,y)\left(\tilde u(y)-\tilde u(x)\right) = \va{\tilde u(y)-\tilde u(x)}.
			\end{equation}		
		
		The equality holds trivially
		for a.e. $(x,y)\in\co{\Omega} \times \co{\Omega}$, because $u=\tilde u$ in $\co{\Omega}$. 
		Furthermore, from \eqref{eq:lowbound0} we have
		\[b_0=J_K(\tilde u;\Omega)\geq a(\tilde u)+b_1(\tilde u)+b_0 =b_0,\]
		thus
		\[\begin{split}
		\frac{1}{2}\int_{\Omega}\int_{\Omega}K(y-x)
		\left[\va{\tilde u(y)-\tilde u(x)}-\zeta(x,y)(\tilde u(y)-\tilde u(x))\right]\de y \de x \\
		+\int_{\Omega}\int_{\co{\Omega}}K(y-x)
		\left[\va{\tilde u(y)-\tilde u(x)}-\zeta(x,y)(\tilde u(y)-\tilde u(x))\right]\de y \de x = 0.
		\end{split}\]
		Since the integrand is positive and $K>0$,
		\eqref{eq:optcond} is satisfied for a.e. $(x,y)\in\Omega\times\Rd$.
		Eventually, in the case $x\in\co{\Omega}$ and $y\in\Omega$,
		we can derive the conclusion thanks to the antisymmetry of $\zeta$.
	\end{proof}

We put the previous theorem in action
to prove that halfspaces are the unique local minimisers of $J_K(\,\cdot\,;B(0,1))$
w.r.t. their own boundary condition.
This is in line with the case of local perimeters like $\Per_\sigma$ in \eqref{eq:aPer}
associated with strictly convex anisotropies $\sigma$ (see the monograph by F. Maggi \cite{Ma}).
An analogue in the nonlocal setting is already available for fractional perimeters \cite{CRS,ADM}
and, in wider generality, when the kernel is radial and strictly decreasing \cite{BP}.
The use of calibrations enables us to extend the result to all positive, even kernels.

\begin{thm}\label{stm:piani}
	For all $\hat{n}\in\mathbb{S}^{d-1}$,
	set $H_{\hat{n}}\coloneqq \Set{x\in\Rd : x\cdot \hat{n} > 0}$
	and
		\begin{equation}\label{eq:zeta}
		\zeta_{\hat{n}}(x,y)\coloneqq \sign((y-x)\cdot \hat{n}).
		\end{equation}
	Then, $\zeta_{\hat{n}}$ is a calibration for $\chi_{H_{\hat{n}}}$
	in the sense of Definition \ref{stm:calib}, and
		\[	J_K( \chi_{H_{\hat{n}}}; B(0,1) ) \leq J_K( v; B(0,1) )	\]
	for all measurable $v\colon \Rd \to [0,1]$
	such that $v(x)=\chi_{H_{\hat{n}}}(x)$ in $\co{B(0,1)}$.
	
	Moreover, if $K>0$, for any other minimiser $u$ satisfying the same constraint,
	it holds $u(x)=\chi_{H_{\hat{n}}}(x)$.
\end{thm}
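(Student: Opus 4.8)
The plan is to verify that the field $\zeta_{\hat n}$ defined in \eqref{eq:zeta} is a \emph{calibration} for $\chi_{H_{\hat n}}$ in the sense of Definition \ref{stm:calib}, and then to feed this into Theorem \ref{stm:calibcrit}, whose two conclusions give respectively the minimality and (when $K>0$) the uniqueness.

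\textbf{Calibration.} The bound $\va{\zeta_{\hat n}}\le 1$ is immediate, and $\zeta_{\hat n}(y,x)=-\zeta_{\hat n}(x,y)$, so $\zeta_{\hat n}$ is already antisymmetric. For the vanishing‑divergence condition \eqref{eq:div=0}, the substitution $z=y-x$ turns the integral into $-2\int_{\co{B(0,r)}}K(z)\,\sign(z\cdot\hat n)\,\de z$; by \eqref{eq:summK} the integrand is absolutely integrable over $\co{B(0,r)}$, and it is odd in $z$ since $K$ is even (recall \eqref{eq:Keven}) while $\sign(z\cdot\hat n)$ changes sign under $z\mapsto-z$, so the integral vanishes for every $r>0$, the domain being symmetric. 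For \eqref{eq:nlnormal}, if $\chi_{H_{\hat n}}(x)\ne\chi_{H_{\hat n}}(y)$, say $x\cdot\hat n>0\ge y\cdot\hat n$, then $\chi_{H_{\hat n}}(y)-\chi_{H_{\hat n}}(x)=-1$ while $(y-x)\cdot\hat n=y\cdot\hat n-x\cdot\hat n<0$, hence $\zeta_{\hat n}(x,y)=-1$ and \eqref{eq:nlnormal} holds; the opposite case is symmetric. Thus $\zeta_{\hat n}$ is a calibration for $\chi_{H_{\hat n}}$.

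\textbf{Minimality.} To apply Theorem \ref{stm:calibcrit} with $\Omega=B(0,1)$ and $E_0=H_{\hat n}$ one first checks $J_K(\chi_{H_{\hat n}};B(0,1))<+\infty$. Writing the two summands of \eqref{eq:JK} via \eqref{eq:semicont} and substituting $z=y-x$, for fixed $z$ the fibre of the first summand lies in the slab $\Set{w:\va{w\cdot\hat n}\le\va{z}}\cap B(0,1)$, of measure $\lesssim 1\wedge\va{z}$, and the fibre of the second lies in the intersection of that slab with the annulus $\Set{w:1-\va{z}\le\va{w}<1}$, of measure $\lesssim 1\wedge\va{z}^2$; hence both terms are $\lesssim\int_{\Rd}K(z)(1\wedge\va{z})\,\de z<+\infty$. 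Since $\zeta_{\hat n}$ calibrates $\chi_{H_{\hat n}}$, which belongs to the class $\F$ of \eqref{eq:F}, Theorem \ref{stm:calibcrit} gives $J_K(\chi_{H_{\hat n}};B(0,1))\le J_K(v;B(0,1))$ for every admissible $v$.

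\textbf{Uniqueness.} Assume $K>0$ and let $u\in\F$ be any minimiser. By the nonlocal Coarea Formula (Proposition \ref{stm:JK-coarea}), $J_K(u;B(0,1))=\int_0^1\PerK(\Set{u>t};B(0,1))\,\de t$ and $J_K(\chi_{H_{\hat n}};B(0,1))=\PerK(H_{\hat n};B(0,1))$. For $t\in(0,1)$ the set $E_t:=\Set{u>t}$ agrees with $H_{\hat n}$ outside $B(0,1)$, so $\chi_{E_t}\in\F$ and the minimality above gives $\PerK(E_t;B(0,1))\ge\PerK(H_{\hat n};B(0,1))$; integrating and using that $u$ is a minimiser forces equality for a.e. $t\in(0,1)$, i.e. each such $\chi_{E_t}$ is itself a minimiser, whence by the last assertion of Theorem \ref{stm:calibcrit} $\zeta_{\hat n}$ calibrates $\chi_{E_t}$. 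Then \eqref{eq:nlnormal} for $\chi_{E_t}$ says that for a.e. $(x,y)\in E_t\times\co{E_t}$ one has $\zeta_{\hat n}(x,y)=-1$, i.e. $y\cdot\hat n<x\cdot\hat n$; by Fubini, $x\cdot\hat n\ge c_t$ for a.e. $x\in E_t$ and $y\cdot\hat n\le c_t$ for a.e. $y\in\co{E_t}$, where $c_t$ is the essential supremum of $y\cdot\hat n$ over $y\in\co{E_t}$, which is finite because the constraint makes $E_t$ and $\co{E_t}$ meet $\co{B(0,1)}$ in positive measure. Hence $E_t=\Set{x:x\cdot\hat n>c_t}$ up to a null set, and the boundary condition $E_t\cap\co{B(0,1)}=H_{\hat n}\cap\co{B(0,1)}$ forces $c_t=0$, since for $c_t\ne0$ the set $\Set{x:x\cdot\hat n>c_t}\symdif H_{\hat n}$ meets $\co{B(0,1)}$ in positive measure. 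Thus $E_t=H_{\hat n}$ for a.e. $t\in(0,1)$, and since $u(x)=\int_0^1\chi_{E_t}(x)\,\de t$ for a.e. $x$, we conclude $u=\chi_{H_{\hat n}}$ a.e.

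\textbf{Main difficulty.} Once $\zeta_{\hat n}$ is guessed, the calibration machinery makes the skeleton short; the two points that need care are the finiteness $J_K(\chi_{H_{\hat n}};B(0,1))<+\infty$ — where one must exploit the cancellation of $\va{\chi_{H_{\hat n}}(y)-\chi_{H_{\hat n}}(x)}$ near the diagonal rather than the crude bound by $\int K$ — and, in the uniqueness step, extracting the \emph{exact} description $E_t=\Set{x:x\cdot\hat n>c_t}$ from the relation \eqref{eq:nlnormal}, which holds only a.e. in $\Rd\times\Rd$: because the pairs $(x,y)$ with $x\cdot\hat n=y\cdot\hat n$ form a null set, one cannot read off monotonicity pointwise and has to argue through essential suprema/infima over half‑spaces.
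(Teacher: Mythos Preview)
Your proof is correct. The calibration verification, the finiteness check for $J_K(\chi_{H_{\hat n}};B(0,1))$ (which the paper leaves implicit), and the minimality via Theorem~\ref{stm:calibcrit} all match the paper's approach.

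The uniqueness argument, however, takes a slightly different route. The paper applies the last clause of Theorem~\ref{stm:calibcrit} \emph{directly to the minimiser $u$}: since $\zeta_{\hat n}$ must calibrate $u$ itself, condition~\eqref{eq:nlnormal} reads $\sign((y-x)\cdot\hat n)\,(u(y)-u(x))=\va{u(y)-u(x)}$ a.e., which gives monotonicity of $u$ along $\hat n$; from this the superlevel sets $E_t$ are immediately seen to be halfspaces. You instead pass through Coarea first, showing that almost every $E_t$ is a minimiser in its own right, and only then invoke Theorem~\ref{stm:calibcrit} on each $\chi_{E_t}$. Both arguments are sound and end at the same place; the paper's is marginally shorter because it avoids the Coarea detour, while yours has the small advantage of reducing everything to the set case. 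Your final reconstruction $u(x)=\int_0^1\chi_{E_t}(x)\,\de t$ is the layer-cake formula; the paper instead argues via countable intersections of the sets $\{u=0\}$ and $\{u=1\}$, which is equivalent.

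One cosmetic point: in your finiteness estimate you claim the $J_K^2$ fibre has measure $\lesssim 1\wedge\va{z}^2$, but then only use $\lesssim 1\wedge\va{z}$ for the conclusion. The weaker bound (from either the slab or the annulus alone) is what you actually need and is immediate, so the argument is unaffected.
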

\begin{proof}
	It is readily shown that $\zeta$ is a calibration for $\chi_{H_{\hat{n}}}$,
	so that, in the light of Theorem \ref{stm:calibcrit},
	$\chi_{H_{\hat{n}}}$ is a minimiser of the nonlocal perimeter
	subject to its own boundary conditions.
	
	To prove uniqueness, we suppose that $u\colon \Rd \to [0,1]$ is another minimiser. 
	Again by Theorem \ref{stm:calibcrit}, we infer that
	$\zeta_{\hat{n}}$ is a calibration for $u$ as well.
	Explicitly,
	\begin{equation*}\label{eq:optcond-H}
	\sign((y-x)\cdot \hat{n})(u(y)-u(x))=\va{u(y)-u(x)}\qquad \text{a.e. $(x,y)\in \Rd\times\Rd$,}
	\end{equation*}
	whence
	\begin{equation}\label{eq:mon}
	u(x) \leq u(y)	\qquad\text{for a.e. } x,y \in \Rd \text{ such that } x\cdot\hat{n}<y\cdot\hat{n}.
	\end{equation}
	
	We now focus on the superlevel sets of $u$: for $t\in(0,1)$, we define
	\[
	E_t \coloneqq \Set{ u > t},
	\]
	and we observe that if $(x,y)\in E_t  \times \co{E_t}$, it must be $x\cdot \hat n \geq y \cdot \hat n$,
	otherwise, by \eqref{eq:mon} we would have $u(x) \leq u(y)$.
	Therefore, there exists $\lambda_t \in \R$ such that
	$E_t \subset \Set{x : x\cdot \hat{n} \geq \lambda_t}$ and
	$\co{E_t} \subset \Set{y : y\cdot \hat{n} \leq \lambda_t}$,
	whence $\Ld( E_t\, \symdif\,  \Set{x : x\cdot \hat{n} \geq \lambda_t} ) = 0$ for all $t\in(0,1)$.
	Recalling that it holds $u = \chi_H$ in $\co{B(0,1)}$,
	we infer that $\lambda_t = 0$ and this gets
	\[
	\Ld( E_t \symdif H_{\hat{n}} ) = 0 \qquad \text{for all $t\in(0,1)$}.
	\]
	
	Summarising, we found that $u\colon \Rd \to [0,1]$ is a function
	such that, for all $t\in(0,1)$,
	the superlevel set $E_t$ coincides with the halfspace $H_{\hat{n}}$,
	up to a negligible set.
	To conclude,
	we let $\Set{t_\ell}_{\ell\in \N}\subset (0,1)$ be a sequence that converges to $0$ when $\ell\to +\infty$.
	Because it holds
	\[
	\Set{ x : u(x) = 0} = \bigcap_{\ell\in\N} \co{E_{t_\ell}} 
	\quad\text{and}\quad
	\Set{ x : u(x) = 1} = \bigcap_{\ell\in\N} E_{1 - t_\ell},
	\]
	we see that
	$\Ld( \Set{ x : u(x) = 0} \symdif  \co{H} _{\hat{n}}) = 0$ and $\Ld( \Set{ x : u(x) = 1} \symdif  H_{\hat{n}}) = 0$.
	Thus, $u = \chi_{H_{\hat{n}}}$ in $\Rd$
\end{proof}

\begin{rmk}
	Observe that $\zeta_{\hat{n}}$ in \eqref{eq:zeta} is a calibration
	for all $u\colon \Rd \to \R$ such that
	$u(x)=f(x\cdot \hat{n})$ for some increasing $f\colon \R \to \R$.
	Hence, functions of this form are local minimisers.
\end{rmk}

We shall make use of Theorem \ref{stm:piani} in the next chapter,
see Section \ref{sec:sigmaK}.
	\section{Nonlocal curvatures}\label{sec:nlc}
		Let $E\subset \Rd$ be a set with smooth boundary
and let $E_t \coloneqq \Phi(t,E)$,
where $\Phi\colon \R \times \Rd \to \Rd$ is a smooth function
such that $\Phi(0,x) = x$ and $\Phi(t,\,\cdot\,)$ is a diffeomorphism for all $t\in \R$.
Then, it can be proved \cite{CMP} that
	\[
		\der{\PerK(E_t)}{t}\rvert_{t=0}
			= \int_{\partial E} H_K(E,x) \zeta(x)\cdot \hat{n}(x) \de\Hdmu(x),
	\]
where $\zeta(x) \coloneqq \partial_t \Phi(0,x)$,
$\hat{n}(x)$ is the outer unit normal to $\partial E$ at $x$, and
	\begin{equation}\label{eq:dfncurv}\index{curvature!nonlocal --}
	H_K(E,x) \coloneqq -\lim_{r\to 0^+}\int_{\co{B(x,r)}}K(y-x)\left(\chi_E(y)-\chi_{\co{E}}(y)\right)\de y.
	\end{equation}
Thus, we see that $H_K$ is the geometric first variation of the nonlocal perimeter,
and, consistently with the classical case, we dub it \emph{nonlocal curvature}.

Once again, in the fractional case
this kind of nonlocal functional was introduced in the seminal work \cite{CRS}.
For a discussion of the properties of fractional curvatures,
we refer to the paper \cite{AV} by N. Abatangelo and E. Valdinoci.
A wider study of general, translation-invariant curvature functionals was carried out in \cite{CMP}
(see Remark \ref{stm:gencurv} and Chapter \ref{ch:nlc} for more details).

When $K\in L^1(\Rd)$, the nonlocal curvature can be written as a convolution: 
	\[
		H_K(E,x) = -(K\ast \tilde\chi_E)(x) \coloneqq -\int_{\Rd}K(y-x)\tilde\chi_E(y)\de y.
	\]
Hereafter, if $E$ is a set,
	\[
		\tilde\chi_E(x) \coloneqq
			\begin{cases}
				1 & \text{if } x\in E,\\
				-1 & \text{if } x\in \co{E}.
			\end{cases}
	\]
In general, $H_K$ has to be defined by means of the principal value
because we include singular kernels in our analysis.
Precisely, we make the following assumptions:
$K\colon \Rd \to [0,+\infty)$ is a measurable, even function such that
	\begin{equation}\label{eq:complB}
		K\in L^1(\co{B(0,r)}) \qquad\text{for all } r>0
	\end{equation}
and
	\begin{equation}\label{eq:quadriche}
	K \in L^1(Q_\lambda(e)) \qquad \text{for all } e \in \Sdmu \text{ and } \lambda>0,
	\end{equation}
where, for any $e \in \Sdmu$ and $\lambda>0$,
$e^\perp \coloneqq \Set{y\in \Rd : y \cdot e = 0}$,
$\pi_{e^\perp} \coloneqq \id - e\otimes e$ is the orthogonal projection on $e^\perp$, and
	\[
		Q_\lambda(e)\coloneqq
		\left\{y\in \Rd : \va{y\cdot e}\leq \frac{\lambda}{2} \va{\pi_{e^\perp}(y)}^2\right\}
	\]
(see Figure \ref{fig:quadriche}).
The two requirements grant that sets with $C^{1,1}$ boundary have finite nonlocal curvature,
see Proposition \ref{stm:imbert} below.

	\begin{figure}
	\caption{$Q_\lambda(e)$ is the region comprised between an upward and a downward quadric.}
	\label{fig:quadriche}
	\centering
	\begin{tikzpicture}[scale=2,rotate=15]				
	\draw  (0,0) parabola (-1,1);
	\draw  (0,0) parabola (-1,-1);
	\draw  (0,0) parabola (1,1);
	\draw  (0,0) parabola (1,-1);
	
	\shade[right color=lightgray, left color=white] (0,0) parabola (-1,1) -- (-1,0) -- cycle;
	\shade[right color=white, left color=lightgray] (0,0) parabola (1,1) -- (1,0) -- cycle;
	\shade[right color=lightgray, left color=white] (0,0) parabola (-1,-1) -- (-1,0) -- cycle;
	\shade[right color=white, left color=lightgray] (0,0) parabola (1,-1) -- (1,0) -- cycle;
	
	\path (1.3,0.5) node{$Q_\lambda(e)$};
	
	\path (-0.8,0.2) node{$\lambda$};
	\draw[<->]  (-0.7,-0.49) -- (-0.7,0.49);
	\path (-0.1,0.5) node{$e$};
	\draw[->] (0,0) -- (0,0.5);
	
	\path (-1.5,0) node{$e^\perp$};
	\draw (-1.3,0) -- (1.3,0);
	\path (0.05,-0.15) node{$0$};						
	\end{tikzpicture}
\end{figure}
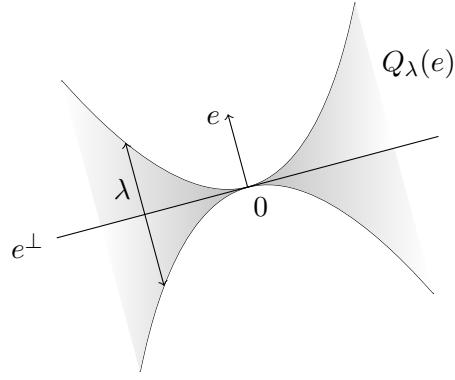

We collect some basic properties of the nonlocal curvature.

\begin{lemma}\label{stm:basicsHK}
	Let $E\subset\Rd$ be an open set
	such that $H_K(E,x)$ is finite for some $x\in\partial E$.
	\begin{enumerate} 
		\item For any $z\in \Rd$ and any orthogonal matrix $R$,
				if $T(y)\coloneqq Ry + z$, then
					\begin{equation}\label{eq:nl-cng-var}
						H_K(E,x)=H_{\tilde K}(T(E),T(x)),
					\end{equation}
				where $\tilde K \coloneqq K\circ R^\trasp$,
				$R^\trasp$ being the transpose of $R$.
				In particular, $H_K$ is invariant under translation.
		\item If $F\subset E$ and $x\in\partial E \cap\partial F$,
				then $H_K(E,x)\leq H_K(F,x)$.
		\item If $\Set{E_\ell}$ is a sequence of sets
				that converges to $E$ in $C^2$
				and the points $x_\ell\in\partial E_\ell$ satisfy $x_\ell\to x$, 
				then $H_K(E_\ell,x_\ell)$ converges to $H_K(E,x)$.
		\item\label{stm:convsets} If $E$ is convex, then $H_K(E,x) \geq 0$.
	\end{enumerate}
	\end{lemma}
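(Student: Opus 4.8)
The plan is to establish the four items in order, the first one serving as a tool for the others.

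\emph{Item (i).} I would simply change variables $y = T(\xi) = R\xi + z$ in the integral \eqref{eq:dfncurv} written for $H_{\tilde K}(T(E),T(x))$. Since $R$ is orthogonal, $\va{\det R}=1$ so $\de y = \de\xi$; moreover $y - T(x) = R(\xi-x)$, hence $\co{B(T(x),r)}$ corresponds to $\co{B(x,r)}$, $\tilde K(y-T(x)) = K(R^\trasp R(\xi-x)) = K(\xi-x)$, and $\tilde\chi_{T(E)}(y) = \tilde\chi_E(\xi)$. Thus the truncated integral at scale $r$ for $H_{\tilde K}(T(E),T(x))$ equals the one for $H_K(E,x)$, and letting $r\to0^+$ gives \eqref{eq:nl-cng-var}, with the limit existing on one side iff it exists on the other. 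The choice $R=\id$ gives translation invariance.

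\emph{Items (ii) and (iv).} If $F\subset E$ then $\tilde\chi_E - \tilde\chi_F = 2\chi_{E\setminus F}\geq0$ pointwise, so for every $r>0$ the quantity $\int_{\co{B(x,r)}}K(y-x)\big(\tilde\chi_E(y)-\tilde\chi_F(y)\big)\de y$ is nonnegative (as $K\geq0$) and nondecreasing as $r\downarrow0$, the domain being enlarged. Hence its limit exists in $[0,+\infty]$ and equals $H_K(F,x)-H_K(E,x)$; this proves (ii). For (iv), convexity of $E$ provides a supporting halfspace at $x$, i.e.\ an $H = x + H_{e}$, with $e$ the inner normal to $E$ at $x$, such that $E\subset H$ and $x\in\partial H$. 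By translation invariance $H_K(H,x) = H_K(H_e,0)$, and for each $r>0$ the integrand $w\mapsto K(w)\tilde\chi_{H_e}(w) = K(w)\sign(w\cdot e)$ is odd and, by \eqref{eq:complB}, integrable on the symmetric set $\co{B(0,r)}$, so its integral vanishes and $H_K(H,x)=0$. Applying (ii) to $E\subset H$ yields $H_K(E,x)\geq H_K(H,x) = 0$.

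\emph{Item (iii): setup.} By (i) I would first translate to the origin: setting $F_\ell \coloneqq E_\ell - x_\ell$ and $F \coloneqq E - x$, one has $0\in\partial F_\ell\cap\partial F$, $F_\ell\to F$ in $C^2$, and $H_K(E_\ell,x_\ell) = H_K(F_\ell,0)$, $H_K(E,x) = H_K(F,0)$, so it suffices to show $H_K(F_\ell,0)\to H_K(F,0)$. The $C^2$-convergence supplies a uniform curvature bound $\Lambda$ and a radius $\delta_0>0$ such that, for all large $\ell$ and all $\delta\le\delta_0$, $\partial F_\ell\cap B(0,\delta)$ is a graph of uniformly controlled $C^2$-norm over the tangent plane $e_\ell^\perp$ (with $e_\ell$ the inner normal at $0$ and $e_\ell\to e$), so that $(F_\ell\symdif H_{e_\ell})\cap B(0,\delta)\subset Q_\Lambda(e_\ell)$. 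Splitting \eqref{eq:dfncurv} at radius $\delta$ and subtracting the tangent halfspace---whose contribution over the symmetric annuli $B(0,\delta)\setminus B(0,r)$ vanishes as in (iv), since $K\in L^1(\co{B(0,r)})$---produces the absolutely convergent representation
\[
-H_K(F_\ell,0) = \int_{\co{B(0,\delta)}}K(w)\tilde\chi_{F_\ell}(w)\de w + \int_{B(0,\delta)}K(w)\big(\tilde\chi_{F_\ell}(w)-\tilde\chi_{H_{e_\ell}}(w)\big)\de w ,
\]
the last integrand being supported in $Q_\Lambda(e_\ell)\cap B(0,\delta)$, where $K$ is integrable by \eqref{eq:quadriche}; the same formula holds for $F$ with $e$ in place of $e_\ell$.

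\emph{Item (iii): passage to the limit, and the main obstacle.} Letting $\ell\to\infty$ with $\delta$ fixed, the first integral converges by dominated convergence ($K\in L^1(\co{B(0,\delta)})$, $\tilde\chi_{F_\ell}\to\tilde\chi_F$ a.e.\ from the $C^2$, hence $L^1_\loc$, convergence), while the second has integrand tending a.e.\ to $K(\tilde\chi_F-\tilde\chi_{H_e})$ and bounded by $2K\,\chi_{Q_\Lambda(e_\ell)\cap B(0,\delta)}$. The delicate point---and the one I expect to be the real obstacle---is the uniform smallness estimate $\sup_{e'\in\Sdmu}\int_{Q_\Lambda(e')\cap B(0,\delta)}K \to 0$ as $\delta\to0^+$, which must be extracted from \eqref{eq:quadriche} by a compactness argument on $\Sdmu$ (exploiting that, for $e'$ near a direction $\bar e$ and $w$ near the origin, $Q_\Lambda(e')$ is contained in $Q_{2\Lambda}(\bar e)$ up to a ball shrinking with $\va{e'-\bar e}$); this is closely tied to the finiteness statement of Proposition \ref{stm:imbert}. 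Granting it, the second integral is controlled uniformly in $\ell$, so one first sends $\ell\to\infty$ at fixed $\delta$ and then $\delta\to0^+$ in both representations, and the limits agree: $H_K(F_\ell,0)\to H_K(F,0)$.
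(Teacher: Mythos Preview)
Your arguments for (i) and (ii) are correct and essentially what the paper means by ``follow readily from the definition.'' For (iii) the paper gives no details beyond pointing to Proposition~\ref{stm:imbert}; your sketch is precisely in that spirit, and you are right to flag the uniformity-in-direction of $\int_{Q_\Lambda(e')\cap B(0,\delta)}K$ as the delicate step (the paper does not spell this out either). So on (i)--(iii) you and the paper are aligned.

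The one genuine methodological difference is (iv). You argue via a supporting halfspace $H\supset E$ and then invoke (ii) together with $H_K(H,x)=0$. The paper instead uses the point reflection $T(y)=2x-y$: since $K$ is even, the change of variables $y\mapsto T(y)$ gives
\[
\int_{\co{B(x,r)}}K(y-x)\chi_E(y)\,\de y=\int_{\co{B(x,r)}}K(y-x)\chi_{T(E)}(y)\,\de y,
\]
and convexity of the open set $E$ with $x\in\partial E$ forces $T(E)\subset\co{E}$; hence the signed integrand $\chi_E-\chi_{\co E}\le \chi_{T(E)}-\chi_{\co E}\le 0$ after substitution, and the limit is $\le0$, i.e.\ $H_K(E,x)\ge0$. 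Both routes are short and correct. The paper's reflection trick is self-contained (no appeal to the supporting-hyperplane theorem or to (ii)) and uses only symmetry of $K$; your approach has the advantage of reusing the monotonicity (ii) and making explicit that halfspaces have zero nonlocal curvature, which is a fact worth recording in its own right.
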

	\begin{proof}
		The first two statements follow readily from the definition of $H_K$,
		while the third one can be established similarly to Proposition \ref{stm:imbert} below.		
		As for \ref{stm:convsets}, let $T(y) \coloneqq 2x - y$. 
		Notice that, for any $r>0$,
			\[
				\int_{\co{B(0,r)}} K(y-x) \chi_{E}(y)\de y = \int_{\co{B(0,r)}} K(y-x) \chi_{T(E)}(y)\de y
			\]
		and that, by convexity, $T(E) \subset \co{E}$.
		Then, the right-hand side of \ref{eq:dfncurv} is positive.
	\end{proof}

Finally, we show that the nonlocal curvature is finite
on sets with $C^{1,1}$ boundaries \cites{I,CMP}.
We include the proof for the sake of completeness,
and also to recover \eqref{eq:est-reg-hyp}, which we shall utilise in Chapter \ref{ch:nlc}.
We use the following notation: 
for $e\in\Sdmu$, $x\in\Rd$ and $\delta>0$,
we denote the cylinder of centre $x$ and axis $e$ as 
	\begin{equation*}
	C_e(x,\delta)\coloneqq
		\Set{y\in\Rd : y = x+z+te,\text{ with } z\in e^\perp\cap B(0,\delta),t\in(-\delta,\delta)}.
	\end{equation*}

\begin{prop}\label{stm:imbert}
	Let $E\subset \Rd$ be an open set
	such that $\partial E$ is a $C^{1,1}$-hypersurface.
	Then, for all $x\in\partial E$,
	there exist $\bar{\delta},\lambda>0$ such that 
	\begin{equation}\label{eq:est-reg-hyp}
	\va{H_K(E,x)}\leq \int_{Q_{\lambda,\bar{\delta}}(\hat n)} K(y)\de y
	+ \int_{\co{B(0,\bar\delta)}}K(y)\de y,	
	\end{equation}
	where $\hat{n}$ is the outer unit normal in $x$
	and $Q_{\lambda,\bar\delta}(\hat n)\coloneqq
	\Set{y\in Q_\lambda(\hat n) : \va{\pi_{\hat{n}^\perp}(y)}<\bar{\delta}}$.
	In particu-lar, $H_K(E,x)$ is finite.
\end{prop}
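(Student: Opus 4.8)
The plan is to reduce to the case $x=0$ via the translation invariance recorded in Lemma~\ref{stm:basicsHK}, and then to split the integral defining $H_K(E,0)$ into a piece inside a small ball $B(0,\bar\delta)$ and a piece outside it. The outer piece is harmless: since $\va{\tilde\chi_E}\equiv 1$ and, by \eqref{eq:complB}, $K\in L^1(\co{B(0,\bar\delta)})$, we immediately get
$\big\vert \int_{\co{B(0,\bar\delta)}} K(y)\tilde\chi_E(y)\,\de y \big\vert \le \int_{\co{B(0,\bar\delta)}} K(y)\,\de y <+\infty$,
which accounts for the second summand on the right-hand side of \eqref{eq:est-reg-hyp}. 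So the only real work concerns the inner piece $\mathrm{p.v.}\int_{B(0,\bar\delta)} K(y)\tilde\chi_E(y)\,\de y$, where the singularity of $K$ at the origin must be controlled.

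First I would choose coordinates so that the tangent hyperplane to $\partial E$ at $0$ is $\hat{n}^\perp$, $\hat n$ being the outer normal. Since $\partial E$ is $C^{1,1}$, for $\bar\delta$ small enough $E$ coincides in $B(0,\bar\delta)$ with the subgraph $\{z+t\hat n : z\in\hat{n}^\perp,\ t< g(z)\}$ of a function $g$ with $g(0)=0$, $\nabla g(0)=0$ and Lipschitz gradient; a second-order Taylor estimate then gives $\va{g(z)}\le \tfrac{\lambda}{2}\va{z}^2$ for a constant $\lambda$ depending only on the $C^{1,1}$-norm of $\partial E$. Shrinking $\bar\delta$ so that $\tfrac{\lambda}{2}\bar\delta^2<\bar\delta$, it follows that the symmetric difference $E\symdif H$, with $H\coloneqq\Set{y : y\cdot\hat n<0}$ the tangent half-space, satisfies $(E\symdif H)\cap B(0,\bar\delta)\subset Q_{\lambda,\bar\delta}(\hat n)$, the paraboloidal region appearing in the statement; crucially, $K\in L^1(Q_{\lambda,\bar\delta}(\hat n))$ by assumption \eqref{eq:quadriche}.

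The key step is then to write $\tilde\chi_E=\tilde\chi_H+(\tilde\chi_E-\tilde\chi_H)$ on $B(0,\bar\delta)$. For every $r\in(0,\bar\delta)$ the domain $\co{B(0,r)}\cap B(0,\bar\delta)$ is invariant under $y\mapsto -y$, $K$ is even, and $\tilde\chi_H$ is odd, so $\int_{\co{B(0,r)}\cap B(0,\bar\delta)} K(y)\tilde\chi_H(y)\,\de y=0$: the tangent half-space contributes nothing. On the other hand $\tilde\chi_E-\tilde\chi_H$ is supported, within $B(0,\bar\delta)$, on $E\symdif H\subset Q_{\lambda,\bar\delta}(\hat n)$, where $K$ is integrable; hence $\int_{\co{B(0,r)}\cap B(0,\bar\delta)} K(y)\big(\tilde\chi_E(y)-\tilde\chi_H(y)\big)\,\de y$ converges as $r\to 0^+$ by dominated convergence. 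This simultaneously proves that the principal value defining $H_K(E,0)$ exists and that $\big\vert \mathrm{p.v.}\int_{B(0,\bar\delta)} K\tilde\chi_E \big\vert \le \int_{Q_{\lambda,\bar\delta}(\hat n)} K$ (after choosing $\lambda$ so as to absorb the elementary factor coming from $\va{\tilde\chi_E-\tilde\chi_H}\le 2$). Combining with the outer estimate yields \eqref{eq:est-reg-hyp}, and in particular the finiteness of $H_K(E,x)$.

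The hard part is making the principal-value manipulation rigorous: one must argue that the near-singular behaviour splits into a genuinely zero contribution from the tangent half-space — which is exactly where evenness of $K$ enters — and an absolutely convergent remainder, which is integrable precisely because the $C^{1,1}$ geometry squeezes $E\symdif H$ into the quadric region $Q_{\lambda,\bar\delta}(\hat n)$ on which hypothesis \eqref{eq:quadriche} guarantees $K\in L^1$. Everything else — the local graph representation, the choice of $\bar\delta$ and $\lambda$, and the reduction to $x=0$ — is routine.
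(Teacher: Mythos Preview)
Your argument and the paper's share the same core idea: represent $\partial E$ locally as a $C^{1,1}$ graph with $\va{f(z)}\le\tfrac{\lambda}{2}\va{z}^2$, use the evenness of $K$ to kill the contribution of the tangent half-space, and control the remainder by the integrability of $K$ on $Q_\lambda(\hat n)$ from \eqref{eq:quadriche}. The bookkeeping differs: the paper works inside the cylinder $C_{\hat n}(x,\bar\delta)$ rather than a ball and, instead of writing $\tilde\chi_E=\tilde\chi_H+(\tilde\chi_E-\tilde\chi_H)$, applies the change of variables $(z,t)\mapsto(-z,-t)$ directly to the graph integral, obtaining
\[
I_r=-\int_{\hat n^\perp\cap B(0,\bar\delta)}\int_{-f(-z)}^{f(z)} K(z-t\hat n)\,b_r(z,t)\,\de t\,\de\Hdmu(z).
\]
Since both endpoints lie in $[-\tfrac{\lambda}{2}\va{z}^2,\tfrac{\lambda}{2}\va{z}^2]$, this gives exactly $\int_{Q_{\lambda,\bar\delta}(\hat n)}K$ with no spare constant. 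Your decomposition is conceptually cleaner, but it produces $2\int_{(E\symdif H)\cap B(0,\bar\delta)}K\le 2\int_{Q_{\lambda,\bar\delta}(\hat n)}K$, and your proposed fix of ``choosing $\lambda$ so as to absorb the factor $2$'' does not work: there is no general reason why $2\int_{Q_{\lambda,\bar\delta}(\hat n)}K\le\int_{Q_{\lambda',\bar\delta}(\hat n)}K$ for some other $\lambda'$. This is immaterial for the finiteness of $H_K(E,x)$ and for every later use of the estimate in the thesis, but it means you have not established \eqref{eq:est-reg-hyp} exactly as written.
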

\begin{proof}	
	Since $\Sigma\coloneqq \partial E$ is regular, 
	there exist $\bar{\delta}\coloneqq \bar{\delta}(x)$
	and a function $f\colon \hat{n}^\perp\cap B(0,\bar{\delta})\to (-\bar{\delta},\bar{\delta})$ of class $C^{1,1}$ 
	such that  
		\begin{gather}
			\Sigma\cap C_{\hat n}(x,\bar\delta)
				= \Set{y=x+z-f(z)\hat n: z\in \hat n^\perp \cap B(0,\bar\delta)},
				\nonumber	 \\ 
			E\cap C_{\hat n}(x,\bar\delta) 
				= \Set{ y=x+z-t\hat n: z\in \hat n^\perp \cap B(0,\bar\delta), t\in(f(z),\bar{\delta})}, 
				\nonumber	 \\ 
			 \va{f(z)}\leq \dfrac{\lambda}{2} \va{z}^2\quad\text{for some } \lambda>0. 	\label{ftre}
	\end{gather}
	We may always suppose that $r<\bar\delta$
	and split the integral in \eqref{eq:dfncurv} into the sum
	\[\int_C K(y-x)\tilde\chi_E(y)\chi_{\co{B(x,r)}}(y) \de y
	+ \int_{\co{C}}K(y-x)\tilde\chi_E(y)\de y,\]
	where $C\coloneqq C_{\hat n}(x,\bar\delta)$ for short. 
	Thanks to \eqref{eq:complB}, the second term is finite:
	indeed, since $B(x,\bar\delta)\subset C$, we have that
	\begin{equation}\label{eq:stima-lontano}
	\va{\int_{\co{C}}K(y-x)\tilde\chi_E(y)\de y} \leq \int_{\co{B(0,\bar\delta)}}K(y)\de y.
	\end{equation}
	
	So, it remains to show that the integral
	\[I_r \coloneqq \int_C K(y-x)\tilde\chi_E(y)\chi_{\co{B(x,r)}}(y) \de y\]
	is bounded by a constant that does not depend on $r$.	
	In view of \eqref{eq:epigraph},  and since $K$ belongs to $L^1(\co{B(0,r)})$ for any $r>0$,
	we can write
	\[	I_r	 = \int_{\hat n^\perp\cap B(0,\bar\delta)}\left[
	\int_{f(z)}^{\bar\delta} K(z-t\hat n) b_r(z,t) \de t
	-\int_{-\bar\delta}^{f(z)} K(z-t\hat n) b_r(z,t) \de t
	\right]\de\Hdmu(z),
	\]
	where, for $(z,t)\in [\hat n^\perp\cap B(0,\bar\delta)]\times (-\bar{\delta},\bar{\delta})$,
	\begin{equation}\label{eq:br}
	b_r(z,t)\coloneqq \begin{cases}
	0 & \text{if } \va{z}< r\text{ and } \va{t} < \sqrt{r^2-\va{z}^2}\\
	1 & \text{otherwise}.
	\end{cases}
	\end{equation}
	
	Recalling that $K$ is even,
	we get 
	\[\begin{split}
	I_r = & \int_{\hat n^\perp\cap B(0,\bar\delta)}\left[
	\int_{f(z)}^{\bar\delta} K(z-t\hat n) b_r(z,t) \de t
	- \int_{-f(-z)}^{\bar\delta} K(z-t\hat n) b_r(z,t) \de t
	\right]\de\Hdmu(z) \\
	= & -\int_{\hat n^\perp\cap B(0,\bar\delta)}
	\int_{-f(-z)}^{f(z)} K(z-t\hat n) b_r(z,t) \de t \de\Hdmu(z)
	\end{split}\]
	By \eqref{ftre}, we infer
	\[\begin{split}
	\va{I_r}	\leq &
	\int_{\hat n^\perp\cap B(0,\bar\delta)}
	\int_{-\frac{\lambda}{2}\va{z}^2}^{\frac{\lambda}{2}\va{z}^2} K(z-t\hat n) b_r(z,t) \de t \de\Hdmu(z) \\
	= &
	\int_{Q_{\lambda,\bar{\delta}}(\hat n)} K(y)\chi_{\co{B(0,r)}}(y)\de y.
	\end{split}\]
	Assumption \eqref{eq:quadriche} allows to take the limit in the last inequality,
	and we obtain \eqref{eq:est-reg-hyp}.
\end{proof}

\begin{rmk}
	The second integral in \eqref{eq:est-reg-hyp} takes into account the ``tails'' of $K$,
	while the first one is related to the mean curvature of $\Sigma$.
	Indeed, $\lambda$ in \eqref{ftre} measures how much
	the hypersurface bends in a neighbourhood of $x$.
\end{rmk}

\begin{rmk}	[Generalised curvatures]\label{stm:gencurv}\index{curvature!generalised --}
	According to \cite{CMP}, a function $H$ defined on the couples $(E,x)$,
	where $E\subset \Rd$ is an open set with compact $C^2$ boundary and $x\in \partial E$,
	is a \emph{generalised curvature} if
		\begin{enumerate}
			\item it is invariant under translation;
			\item it is monotone, that is, if $E\subset F$ and $x\in\partial E \cap \partial F$,
				then $H(E,x) \geq H(F,x)$;
			\item it is continuous, that is,
				if $E_\ell\to E$ in $C^2$ and $x_\ell\to x$,
				with $x_\ell\in\partial E_\ell$ and $x\in\partial E$, 
				then $H(E_\ell,x_\ell)$ converges to $H(E,x)$.
		\end{enumerate}
	Well-posedness for the level set flows
	associated with motions by generalised curvature (be it local or nonlocal)
	was established in the same paper.
	Lemma \ref{stm:basicsHK} and Proposition \ref{stm:imbert}
	ensure that $H_K$ lies in framework of \cite{CMP}.
	We shall gain profit of this in Chapter \ref{ch:nlc}.
	\end{rmk}
	
\chapter{$\Gamma$-convergence of rescaled~nonlocal~total~variations}\label{ch:judith}
		In Chapter \ref{ch:nlpnlc} we discussed some general properties
of the nonlocal functionals $\PerK$ and $J_K$,
and we drew some comparisons
between them and, respectively, De Giorgi's perimeter and the total variation.
A further link, of asymptotic nature,
is the subject of the present chapter,
whose content firstly appeared in \cite{BP}.

Let us describe the problem we deal with.
When $u\colon \Rd \to  [0,1]$ is a measurable function,
recalling \eqref{eq:JK}, we define
	\begin{gather*}
		J_\eps^1(u; \Omega) \coloneqq\frac{1}{2}\int_\Omega\int_\Omega K_\eps(y-x)\va{u(y)-u(x)}\de y\de x, \\
		J_\eps^2(u; \Omega) \coloneqq\int_{\Omega}\int_{\co{\Omega}}K_\eps(y-x)\va{u(y)-u(x)}\de y \de x,\\
		J_\eps(u; \Omega) \coloneqq J_{K_\eps}(u; \Omega) = J_\eps^1(u; \Omega) + J_\eps^2(u; \Omega),
	\end{gather*}
where, for $\epsilon>0$, we let
	\begin{equation*}\label{eq:resc-K}
		K_\epsilon(x) \coloneqq \frac{1}{\epsilon^d} K\left( \frac{x}{\eps} \right).
	\end{equation*}
We are interested in the limit
of the family of functionals $\Set{ J_\epsilon(\,\cdot\,;\Omega) }$ 
with respect to the $L^1_{\loc}(\Rd)$-convergence as $\eps \to 0^+$.
This is somewhat reminiscent of the Bourgain-Brezis-Mironescu formula
that we mentioned in Chapter \ref{ch:nle}.
We shall be more precise on this connection in a while,
when we comment the statement of the main result of the chapter,
Theorem \ref{stm:JK-Gconv} below.

Note that the rescaled kernel $K_\epsilon$ has the same mass as $K$,
i.e. formally $\norm{K_\eps}_{L^1(\Rd)} = \norm{K}_{L^1(\Rd)}$,
but, as $\epsilon$ approaches $0$, it gets more and more concentrated around the origin.
So, qualitatively, we guess that
the limiting procedure has a sort of localisation effect, and,
in particular, we expect to recover a total-variation-type functional.
We shall see in Section \ref{sec:statement}
that $\Gamma$-convergence is the tool to describe the phenomenon rigorously.
For the definition of this kind of convergence and its fundamental properties,
we refer to Section \ref{sec:def-Gconv}.

Besides $\Gamma$-convergence,
we also present a compactness property of families
that have equibounded energies.
We establish it in Section \ref{sec:statement}, while
in the following one we outline our approach
to the $\Gamma$-convergence result. 
Detailed proofs are expounded in the remaining sections.

\section{Main result and proof of compactness} \label{sec:statement}
The current chapter is devoted to the proof of the following:

	\begin{thm}[Compactness and $\Gamma$-convergence of the rescaled energy]\label{stm:JK-Gconv}
		Let $\Omega \subset \Rd$ be an open bounded set
		with Lipschitz boundary.
		Let also $K\colon \Rd \to (0,+\infty)$ be an even function such that
		\begin{equation}\label{eq:fastK}
		c_K \coloneqq \frac{1}{2} \int_{\Rd} K(x)\va{x} dx < +\infty,
		\end{equation}
		and define for any measurable $u\colon \Rd \to [0,1]$
			\begin{equation*}
			J_0(u;\Omega) \coloneqq 
			\begin{cases}
			\displaystyle{
				\frac{1}{2}\int_{\Rd} K(z) \left( \int_\Omega \va{ z \cdot \D u} \right) \de z
			}
			& \text{if } u\in\BV(\Omega), \\
			+\infty & \text{otherwise}.
			\end{cases}
			\end{equation*}
		Then, the following hold:
		\begin{enumerate}
			\item\label{stm:JK1-cpt}\index{compactness!for the rescaled nonlocal total variations}
			For all $\ell\in \N$, let $\epsilon_\ell >0$ and
			let $u_\ell \colon \Omega \to [0,1]$ be a measurable function.
			If $\epsilon_\ell \to 0^+$ and
			\begin{equation*}
			\frac{1}{\eps_\ell}J_{\epsilon_\ell}^1(u_\ell ; \Omega) \quad\mbox{is uniformly bounded,}
			\end{equation*}
			then  $\Set{u_\ell}$ is precompact in $L^1(\Omega)$ and
			its cluster points belong to $\BV(\Omega;[0,1])$.  
			\item\index{$\Gamma$-convergence!of the rescaled nonlocal total variations}
			As $\epsilon\to 0^+$, the family $\Set{\epsilon^{-1} J_\epsilon(\,\cdot\,;\Omega)}$ $\Gamma$-converges
			in $L^1_\loc(\Rd)$ to $J_0(\,\cdot\,;\Omega)$.
		\end{enumerate}
	\end{thm}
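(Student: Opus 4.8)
The plan is to prove the three assertions in turn, relying crucially on the nonlocal Coarea Formula (Proposition \ref{stm:JK-coarea}) and on the slicing/change-of-variables techniques that already appeared for $J_K$.

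\emph{Step 1: Compactness.} First I would fix a sequence with $\eps_\ell^{-1} J^1_{\eps_\ell}(u_\ell;\Omega) \le M$. The idea is to reduce to the $\BV$-compactness Theorem \ref{stm:BVcpt} by a mollification argument in the spirit of the proof of Lemma \ref{stm:1D-cpt} and Lemma \ref{stm:cpt-BBM}. Writing
	\[
		\frac{1}{\eps_\ell} J^1_{\eps_\ell}(u_\ell;\Omega)
			= \frac{1}{2\eps_\ell} \int_\Omega\int_\Omega \frac{1}{\eps_\ell^d} K\!\left(\frac{y-x}{\eps_\ell}\right) \va{u_\ell(y)-u_\ell(x)} \de y \de x
			= \frac{1}{2} \int_{\Rd} K(z) \va{z}
				\int_{\Omega\cap(\Omega-\eps_\ell z)} \frac{\va{u_\ell(x+\eps_\ell z)-u_\ell(x)}}{\eps_\ell\va{z}} \de x \de z,
	\]
one sees that, for a fixed direction, the difference quotients of $u_\ell$ are controlled on average against the probability-like weight $K(z)\va{z}\de z / (2c_K)$. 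Choosing a smooth radial mollifier $\rho$ with $0\le\rho(z)\le K(z)\va{z}$ on a small ball and $\va{\nabla\rho}$ similarly dominated, one estimates $\|\nabla(\rho_{\eps_\ell}\ast \bar u_\ell)\|_{L^1}$ by the energy (here $\bar u_\ell$ is $u_\ell$ extended by a fixed boundary value, or one works on a slightly enlarged Lipschitz domain), gets a uniform $\BV$ bound on the mollified functions on a set compactly containing $\Omega$, applies Theorem \ref{stm:BVcpt}, and controls the $L^1$-distance between $u_\ell$ and its mollification by $O(\eps_\ell)$ times the energy. The cluster point lies in $\BV(\Omega;[0,1])$ by $L^1$-lower semicontinuity of the total variation. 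A cleaner alternative: use the nonlocal Coarea Formula to pass to level sets $E^t_\ell := \{u_\ell > t\}$, note $\eps_\ell^{-1}J^1_{\eps_\ell}(u_\ell;\Omega) = \int_0^1 \eps_\ell^{-1} J^1_{\eps_\ell}(E^t_\ell;\Omega)\de t$, extract $t$ with good bounds via Fatou, and run the set version of the mollification argument; but for a quantitative cluster-point statement the direct route above is more transparent.

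\emph{Step 2: $\Gamma$-liminf.} Given $u_\eps \to u$ in $L^1_\loc$ with $\liminf \eps^{-1}J^1_\eps(u_\eps;\Omega)<+\infty$, by Step 1 (applied along a subsequence realizing the liminf) $u\in\BV(\Omega;[0,1])$. Then I would slice: for each $z\in\Rdmz$ write $\eps^{-1}J^1_\eps(u_\eps;\Omega) \ge \tfrac12\int_{\Rd} K(z)\va{z}\int_{z^\perp}\!\big[\tfrac{1}{\eps\va{z}}\int_{\R}\va{w_{\hat z,\xi}(t+\eps\va{z})-w_{\hat z,\xi}(t)}\de t\big]\de\Hdmu(\xi)\de z$ where $w_{\hat z,\xi}(t):=u_\eps(\xi+t\hat z)$, apply Fatou in $(z,\xi)$, and use the elementary one-dimensional fact that $\liminf_{h\to 0^+} h^{-1}\int_\R\va{v(t+h)-v(t)}\de t \ge \va{Dv}$ together with the characterisation of $\BV$ via difference quotients (Proposition \ref{stm:char-BV}) and Reshetnyak-type lower semicontinuity. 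Reassembling via the one-dimensional sections of $Du$ gives $\tfrac12\int_{\Rd}K(z)\int_\Omega\va{z\cdot Du}\de z = J_0(u;\Omega)$. Pointwise convergence on polyhedra (cf. the reduction to nonlocal perimeters using \cite{CGL} and \cite{P} mentioned in the Introduction) handles the interaction term $J^2_\eps$: since $u_\eps\to u$ in $L^1_\loc$ and $K_\eps$ concentrates, $\eps^{-1}J^2_\eps(u_\eps;\Omega)\to 0$ for $u$ with no boundary jump against $\co\Omega$ — but in general one must be careful, and the cleanest formulation is to note $J^2$ only adds a boundary term, so the liminf inequality survives with $J_0$ as the interior functional.

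\emph{Step 3: $\Gamma$-limsup.} By Lemma \ref{stm:prop-Gconv}\ref{stm:density-Gconv} it suffices to find a dense-in-energy class; polyhedra (finite perimeter sets with polyhedral boundary), together with the Coarea Formula to pass from sets to functions, are dense in energy for $J_0$ by the anisotropic coarea and density of polyhedra among finite-perimeter sets. For a polyhedron $P$, take the constant recovery sequence $u_\eps = \chi_P$ and compute $\eps^{-1}J_\eps(\chi_P;\Omega)$ directly: by the change of variables above it equals $\tfrac12\int_{\Rd}K(z)\va{z}\,\Ld(\text{symmetric-difference strip of width }\sim\eps\va{z})/(\eps\va{z})\de z$, which converges to $\tfrac12\int_{\Rd}K(z)\int_{\partial^*P\cap\Omega}\va{z\cdot \hat n}\de\Hdmu\de z = J_0(\chi_P;\Omega)$ by dominated convergence using \eqref{eq:fastK}; the interaction term $J^2_\eps$ contributes the correct boundary term in the limit, again by a strip estimate.

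\emph{Main obstacle.} The delicate point is the interface term $J^2_\eps$ and the precise role of the boundary datum: unlike in \cite{P}, interactions across $\partial\Omega$ are retained, so one must verify that slicing, the strip estimates, and the dominated-convergence bounds all go through uniformly up to $\partial\Omega$ — this is exactly why the Lipschitz regularity of $\partial\Omega$ is assumed. The other technical nuisance is making the compactness argument of Step 1 produce a cluster point in $\BV(\Omega)$ rather than merely in $L^1_\loc$, which forces the mollification to be done on a slightly enlarged domain with careful bookkeeping of the extension, exactly as in Lemma \ref{stm:cpt-BBM}.
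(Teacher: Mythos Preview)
Your compactness sketch is close in spirit to the paper's, though the paper uses a slightly different mollifier trick: it truncates $K$ to $G = (0\vee K)\wedge 1 \in L^1\cap L^\infty$, picks $\rho$ with $\rho,\va{\nabla\rho} \le G\ast G$, and invokes an elementary convolution inequality (Lemma \ref{stm:G-ineq}) to control $J_{G\ast G}$ by $J_G \le J_K$. The extension by a constant and the bookkeeping near $\partial\Omega$ are handled via Lemma \ref{stm:frame}, which uses the Lipschitz regularity of $\partial\Omega$ exactly as you anticipate.

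For the $\Gamma$-convergence, your approach and the paper's diverge substantially, and your limsup has a genuine gap.

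\textbf{Liminf.} You propose a direct slicing argument for functions, reducing to the one-dimensional lower semicontinuity of difference quotients. The paper instead first reduces the whole $\Gamma$-convergence problem from functions to \emph{sets} via Proposition \ref{stm:CGL} (the Chambolle--Giacomini--Lussardi result: generalised coarea plus convexity lets one recover the functional $\Gamma$-limit from the set version). Then, for sets, the liminf is proved by the Fonseca--M\"uller blow-up technique: one defines energy-density measures $\mu_\eps$, passes to a weak-$\ast$ limit $\mu$, and shows that the Radon--Nikodym derivative of $\mu$ against $\Hdmu\llcorner\partial^\ast E$ is bounded below by $\sigma_K(\hat n)$. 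The last step is the heart of the argument: the derivative is naturally bounded below by a cell-problem quantity $\sigma'_K(\hat n)$ (defined in \eqref{eq:sigma'K}), and the identification $\sigma'_K = \sigma_K$ (Lemma \ref{stm:sigmaK}) requires the minimality of halfspaces (Theorem \ref{stm:piani}, proved via nonlocal calibrations) together with a gluing lemma (Lemma \ref{stm:gluing}). Your slicing route, if made precise, would bypass all of this and is closer to Ponce's original argument; the paper's route is heavier but makes essential use of the calibration result from Chapter \ref{ch:nlpnlc}, which is one of the thesis's own contributions.

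\textbf{Limsup --- the gap.} Your claim that ``$\eps^{-1}J^2_\eps(u_\eps;\Omega)\to 0$'' is false, and the phrase ``contributes the correct boundary term'' hides the problem. For a fixed finite-perimeter set $E$, Proposition \ref{stm:pointconv} shows that the pointwise limit of $\eps^{-1}J_\eps(E;\Omega)$ is
\[
	J_0(E;\Omega) \;+\; \int_{\partial^\ast E \cap \partial\Omega} \sigma_K(\hat n)\,\de\Hdmu,
\]
and the second term is in general strictly positive. Hence the constant sequence is a recovery sequence only for sets that are \emph{transversal} to $\partial\Omega$, i.e.\ satisfy $\Hdmu(\partial^\ast E \cap \partial\Omega)=0$. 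The paper's limsup (Section \ref{sec:PerK-Glimsup}) therefore restricts to polyhedra that are transversal to $\Omega$ and shows (Lemma \ref{stm:density}) that this subclass is still dense in energy for $J_0$. Without this refinement your limsup inequality fails for any set whose reduced boundary meets $\partial\Omega$ in positive $\Hdmu$-measure.
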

Assertion \ref{stm:JK1-cpt} provides a compactness criterion for families
that are equibounded in energy and
it grants some extra regularity for the limits point of such families.
The second aspect will be useful for the proof of the lower limit inequality.
The criterion has been already established in \cite{AB2}
by G. Alberti and G. Bellettini in a slightly different framework.
For the reader's convenience, in this Section we shall include its proof. 

Our theorem is very close to a result by Ponce:
	
	\begin{thm}[Corollary 2 and Theorem 8 in \cite{P}]\label{stm:ponce}
		Let $\Omega$ an opens set with compact, Lipschitz boundary
		and let $K$ and $K_\epsilon$ be as above.
		If $u\in \BV(\Omega)$, then
			\begin{equation}\label{eq:pointlim}
				\lim_{\epsilon\to 0^+} \frac{1}{\eps}
					\int_{\Omega}\int_{\Omega}
						K_\eps(y-x)\va{u(y)-u(x)} \de y \de x = J_0(u;\Omega)
			\end{equation}
		In addition, if $\Omega$ is also bounded,
		the right-hand side is the $\Gamma$-limit as $\epsilon\to 0^+$
		of the left-hand one w.r.t. the $L^1(\Omega)$-distance.
	\end{thm}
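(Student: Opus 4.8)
The plan is to reduce the iterated integral to a single ``directional'' one by the substitution $y=x+\eps z$. Setting
\[
	g_\eps(z)\coloneqq \frac{1}{\eps}\int_{\Omega\cap(\Omega-\eps z)}\va{u(x+\eps z)-u(x)}\,\de x,
\]
Tonelli's Theorem gives $\frac{1}{\eps}\int_\Omega\int_\Omega K_\eps(y-x)\va{u(y)-u(x)}\,\de y\,\de x=\int_{\Rd}K(z)\,g_\eps(z)\,\de z$, so the whole statement comes down to the behaviour of $\Set{g_\eps}$ as $\eps\to 0^+$. First I would record a uniform bound that legitimises all the limit passages: fixing a standard $\BV$ extension $\tilde u\in\BV(\Rd)$ of $u$, compactly supported and with $\tilde u=u$ a.e.\ on $\Omega$, Proposition \ref{stm:char-BV} applied to $\tilde u$ yields $g_\eps(z)\le\tfrac{1}{\eps}\norm{\tau_{\eps z}\tilde u-\tilde u}_{L^1(\Rd)}\le\va{z}\,\va{\D\tilde u}(\Rd)$, and $z\mapsto\va{z}K(z)$ lies in $L^1(\Rd)$ by \eqref{eq:fastK}. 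This is exactly the integrability that the hypothesis on $K$ provides, because the difference quotient scales like $\va{z}$.

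The core of the argument is the pointwise limit $g_\eps(z)\to\va{z\cdot\D u}(\Omega)$ for every $z$, with the convention $\va{z\cdot\D u}(\Omega)\coloneqq+\infty$ when $u\notin\BV(\Omega)$. For the lower estimate I would argue by duality against test functions $\phi\in\Cuc(\Omega)$ with $\va{\phi}\le 1$: a change of variables, together with $\supp\phi$ being compactly contained in $\Omega$, shows that for $\eps$ small
\[
	\frac{1}{\eps}\int_{\Omega\cap(\Omega-\eps z)}\bigl(u(x+\eps z)-u(x)\bigr)\phi(x+\eps z)\,\de x
	=-\int_\Omega u(x)\,\frac{\phi(x+\eps z)-\phi(x)}{\eps}\,\de x,
\]
whose right-hand side tends to $-\int_\Omega u\,(z\cdot\nabla\phi)\,\de x=\int_\Omega\phi\,\de(z\cdot\D u)$ as $\eps\to 0^+$; since $\va{\phi}\le1$ the left-hand side is dominated by $g_\eps(z)$, so taking the supremum over admissible $\phi$ and using Theorem \ref{stm:riesz} gives $\liminf_\eps g_\eps(z)\ge\va{z\cdot\D u}(\Omega)$. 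For the matching upper estimate I would split $\Omega\cap(\Omega-\eps z)$ into the points $x$ whose segment $[x,x+\eps z]$ stays inside $\Omega$ and the remaining ``bad'' ones, which lie in the $O(\eps\va{z})$-neighbourhood of $\partial\Omega$. On the good part one-dimensional slicing of $\BV$ functions gives the bound $\va{z\cdot\D u}(\Omega)$; on the bad part, estimated through $\tilde u$ and again by slicing, the contribution is controlled by $\va{z}$ times $\va{\D\tilde u}$ of a shrinking neighbourhood of $\partial\Omega$. Choosing the extension so that $\va{\D\tilde u}(\partial\Omega)=0$, continuity from above of the finite measure $\va{\D\tilde u}$ kills that term, whence $\limsup_\eps g_\eps(z)\le\va{z\cdot\D u}(\Omega)$. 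Dominated convergence then yields $\int_{\Rd}K(z)\,g_\eps(z)\,\de z\to J_0(u;\Omega)$, i.e.\ \eqref{eq:pointlim}.

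For the $\Gamma$-convergence statement ($\Omega$ now bounded), the upper limit inequality is immediate: for $u\in\BV(\Omega;[0,1])$ the constant sequence $u_\eps\equiv u$ is a recovery sequence by the pointwise limit just established, and there is nothing to prove when $J_0(u;\Omega)=+\infty$. For the lower limit inequality, given $u_\eps\to u$ in $L^1(\Omega)$, I observe that the duality computation above uses only the $L^1$-convergence of the competitors, so it gives $\liminf_\eps\tilde g_\eps(z)\ge\va{z\cdot\D u}(\Omega)$ for every $z$, where $\tilde g_\eps$ is $g_\eps$ with $u$ replaced by $u_\eps$. Fatou's Lemma then yields
\[
	\liminf_{\eps\to 0^+}\frac{1}{\eps}\int_\Omega\int_\Omega K_\eps(y-x)\va{u_\eps(y)-u_\eps(x)}\,\de y\,\de x
	\ge\int_{\Rd}K(z)\,\va{z\cdot\D u}(\Omega)\,\de z=J_0(u;\Omega).
\]
If $u\notin\BV(\Omega)$, the map $z\mapsto\va{z\cdot\D u}(\Omega)$ is positively homogeneous and subadditive and cannot be finite on a set of positive Lebesgue measure (such a set spans $\Rd$, and finiteness on a basis would force $u\in\BV(\Omega)$), so it equals $+\infty$ a.e.; as $K>0$, the right-hand side is again $+\infty=J_0(u;\Omega)$.

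The step I expect to be the genuine obstacle is the $\limsup$ in the pointwise limit near $\partial\Omega$: the contribution to $g_\eps(z)$ of the points whose translate $x+\eps z$ has slipped across $\partial\Omega$ is a priori only bounded, not infinitesimal, and it is precisely the availability of a $\BV$ extension carrying no singular mass on $\partial\Omega$ — which rests on $\partial\Omega$ being Lipschitz, so that $\Hdmu(\partial\Omega)<+\infty$ and the trace of $u$ on $\partial\Omega$ is well defined — that disposes of it. Everything else (the change of variables, the duality bound, the domination via \eqref{eq:fastK} and Proposition \ref{stm:char-BV}, and the Fatou step) is routine.
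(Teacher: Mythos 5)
Your argument is sound in substance, and it is worth noting that the thesis itself does not prove Theorem \ref{stm:ponce} at all: it is imported verbatim from Ponce's paper \cite{P}, and the thesis explicitly relies on it for the pointwise limit (Ponce's $\Gamma$-liminf, as the thesis remarks, goes through representation formulas for relaxations of integral functionals). So yours is by construction a different, self-contained route, and a reasonable one: the change of variables reducing everything to $\int_{\Rd}K(z)g_\eps(z)\,\de z$, the duality lower bound for $g_\eps(z)$ (which, as you observe, only uses $L^1$-convergence of the competitors and therefore doubles as the $\Gamma$-liminf via Fatou), the slicing upper bound with a $\BV$ extension $\tilde u$ carrying no mass on $\partial\Omega$, the constant recovery sequence, and the homogeneity/subadditivity argument showing $z\mapsto\va{z\cdot\D u}(\Omega)$ is finite only on a linear subspace when $u\notin\BV(\Omega)$ (so that $K>0$ forces the $\Gamma$-liminf to be $+\infty$ there) — all of these steps check out. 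One cosmetic point: when $\Omega$ is merely assumed to have compact Lipschitz boundary it may be unbounded, so a compactly supported extension agreeing with $u$ on $\Omega$ need not exist; all your argument needs is $\tilde u\in\BV(\Rd)$ with $\tilde u=u$ on $\Omega$ and $\va{\D\tilde u}(\partial\Omega)=0$, which the extension theorem for Lipschitz boundaries provides.

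There is, however, a factor-of-$2$ error in your final identification, which you should not let pass silently. What your computation actually yields is $\int_{\Rd}K(z)g_\eps(z)\,\de z\to\int_{\Rd}K(z)\va{z\cdot\D u}(\Omega)\,\de z$, and with the thesis's definition $J_0(u;\Omega)=\tfrac12\int_{\Rd}K(z)\big(\int_\Omega\va{z\cdot\D u}\big)\de z$ this limit equals $2J_0(u;\Omega)$, not $J_0(u;\Omega)$. The culprit is a typo in the statement itself: the left-hand side of \eqref{eq:pointlim} is missing the factor $\tfrac12$ that is built into $J^1_\eps$ — indeed the theorem is invoked in the thesis only in the form $\lim_{\eps\to0^+}\eps^{-1}J^1_\eps(u;\Omega)=J_0(u;\Omega)$ (see the proof of Proposition \ref{stm:pointconv}), which is the correct normalisation and is what your proof establishes once you insert the $\tfrac12$. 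As it stands, your $\Gamma$-limsup step (constant recovery sequence plus "the pointwise limit just established") would give $2J_0(u;\Omega)$ for the unnormalised functional, so the $\Gamma$-convergence claim is only true for $\eps^{-1}J^1_\eps$; your $\Gamma$-liminf survives either way only because $2J_0\geq J_0$. Fix the bookkeeping (or flag the misprint) and the proof is complete.
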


We remark that, in \cite{P},
contributions to the energy functional of the form of $J_\epsilon^2$ are not considered,
so, to prove our result,
some explanation concerning the $\Gamma$-upper limit would be still required,
even if one took the results by Ponce for granted.
Besides, there, the approach to the $\Gamma$-lower limit inequality relies
on representation formulas for the relaxations of a certain class of integral functionals.
In fact, following \cite{BP}, we propose an independent argument,
which combines the pointwise limit \eqref{eq:pointlim}
and Theorem \ref{stm:piani} about the minimality of halfspaces.

We notice that
in \eqref{eq:fastK} we require a condition that is stronger than \eqref{eq:summK}.
Heuristically, the faster-than-$L^1$ decay at infinity entails that
the ``local'' contribution to the energy, i.e. $J^1_\epsilon$, prevails in the large scale limit;
for this reason, the same limit of Theorem \ref{stm:ponce} is recovered.
We shall motivate the choice of the scaling factor below in this Section,
see Remark \ref{stm:scfactor}.

We conclude this section by proving the first statement of Theorem \ref{stm:JK-Gconv},
that is, the compactness criterion. 
We recall that an analogous result appeared in \cite{AB2}.
We premise a couple of lemmas.

\begin{lemma}\label{stm:G-ineq}
	Let $G \in L^1(\Rd)$ be a positive function.
	Then, for any $u\in L^\infty(\Rd)$ it holds
	\begin{equation*}
	\int_{\Rd\times\Rd} (G\ast G)(z) \va{u(x+z)-u(x)}\de z\de x \leq 4 \norm{G}_{L^1(\Rd)} J_G(u;\Rd).
	\end{equation*}
\end{lemma}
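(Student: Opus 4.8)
The plan is to unfold the iterated integral on the left‐hand side and compare it, term by term, with $J_G(u;\Rd)$. Recall that $J_G(u;\Rd) = J_G^1(u;\Rd) + J_G^2(u;\Rd)$, but when $\Omega = \Rd$ the boundary term $J_G^2$ is absent, so $J_G(u;\Rd) = \tfrac{1}{2}\int_{\Rd}\int_{\Rd} G(y-x)\va{u(y)-u(x)}\de y\de x$. The first move is to write $(G\ast G)(z) = \int_{\Rd} G(w)G(z-w)\de w$ and substitute this into the left‐hand side, obtaining the triple integral
\[
\int_{\Rd}\int_{\Rd}\int_{\Rd} G(w)G(z-w)\va{u(x+z)-u(x)}\de w\de z\de x .
\]
After the change of variables $z \mapsto z$ with $w$ fixed, I would set $\zeta \coloneqq w$ and $\eta \coloneqq z-w$, so that $z = \zeta + \eta$ and the expression becomes $\int\int\int G(\zeta)G(\eta)\va{u(x+\zeta+\eta)-u(x)}\de\zeta\de\eta\de x$.

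\textbf{Triangle inequality and symmetry.} Next I would insert the intermediate value $u(x+\zeta)$ and apply the triangle inequality:
\[
\va{u(x+\zeta+\eta)-u(x)} \leq \va{u(x+\zeta+\eta)-u(x+\zeta)} + \va{u(x+\zeta)-u(x)} .
\]
This splits the triple integral into two pieces. In the first piece, the substitution $x \mapsto x - \zeta$ (for fixed $\zeta,\eta$) turns $\va{u(x+\zeta+\eta)-u(x+\zeta)}$ into $\va{u(x+\eta)-u(x)}$, and then $\int_{\Rd} G(\zeta)\de\zeta = \norm{G}_{L^1(\Rd)}$ factors out, leaving $\norm{G}_{L^1(\Rd)} \int\int G(\eta)\va{u(x+\eta)-u(x)}\de\eta\de x$. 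In the second piece, one simply integrates out $\eta$ to get the factor $\norm{G}_{L^1(\Rd)}$ and is left with $\norm{G}_{L^1(\Rd)} \int\int G(\zeta)\va{u(x+\zeta)-u(x)}\de\zeta\de x$. Each of these remaining double integrals equals $2 J_G(u;\Rd)$ by the formula above (here I use that $G$ is positive, hence may be taken even, so $\int\int G(z)\va{u(x+z)-u(x)}\de z\de x = \int\int G(y-x)\va{u(y)-u(x)}\de y\de x = 2J_G(u;\Rd)$). Summing the two pieces gives $2\norm{G}_{L^1(\Rd)} \cdot 2 J_G(u;\Rd) = 4\norm{G}_{L^1(\Rd)} J_G(u;\Rd)$, which is the claimed bound.

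\textbf{Where care is needed.} The computation is essentially bookkeeping with Tonelli's theorem, which applies throughout since all integrands are non‐negative; the hypotheses $G \in L^1(\Rd)$ and $u \in L^\infty(\Rd)$ guarantee that everything is finite (indeed $J_G(u;\Rd) \leq 2\norm{G}_{L^1(\Rd)}\norm{u}_{L^\infty(\Rd)}$, which also shows the right‐hand side is finite, so the inequality is not vacuous). The only genuine subtlety is making sure the change of variables $x \mapsto x-\zeta$ inside the triple integral is legitimate — this is fine because it is a translation, hence measure‐preserving, and can be performed slice by slice in $(\zeta,\eta)$ before reintegrating. I do not anticipate a real obstacle here; the lemma is a standard convolution estimate whose role is presumably to feed into the compactness proof of Theorem~\ref{stm:JK-Gconv}\ref{stm:JK1-cpt} via a mollification argument analogous to the one in Lemma~\ref{stm:cpt-BBM}.
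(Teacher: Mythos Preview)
Your argument is correct and matches the paper's proof essentially line for line: unfold the convolution, insert the midpoint $u(x+\zeta)$, apply the triangle inequality, and use a translation in $x$ on the first piece to reduce both to $\norm{G}_{L^1}\cdot 2J_G(u;\Rd)$. One small slip in your write-up: the remark ``$G$ is positive, hence may be taken even'' is both unnecessary and not a valid inference---the identity $\int\int G(z)\va{u(x+z)-u(x)}\de z\de x = 2J_G(u;\Rd)$ follows from the substitution $y=x+z$ alone and needs no parity assumption on $G$.
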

\begin{proof}
	The proof is elementary:
		\[
		\begin{split}
			& \int_{\Rd\times\Rd} (G\ast G)(z) \va{u(x+z)-u(x)}\de z\de x \\
			& \quad = \int_{\Rd} \int_{\Rd} \int_{\Rd} G(y)G(z-y) \va{u(x+z)-u(x)} \de y \de z\de x \\
			& \quad \leq \int_{\Rd} \int_{\Rd} \int_{\Rd} G(y)G(z-y) \va{u(x+z)-u(x+y)} \de y \de z\de x \\
			& \qquad 			+ \int_{\Rd} \int_{\Rd} \int_{\Rd} G(y)G(z-y) \va{u(x+y)-u(x)} \de y \de z\de x \\
			& \quad = 2 \norm{G}_{L^1(\Rd)} \int_{\Rd\times\Rd} G(y) \va{u(x+y)-u(x)}\de y\de x.
		\end{split}
		\]		
\end{proof}

The second lemma is more closely related to the nonlocal nature of the problem we treat.
It shows that 
the rescaled nonlocal energy is bounded, or even vanishing for $\epsilon\to 0^+$,
provided that the sets appearing in the functional are separated by a regular ``frame'',
or that the function under consideration is sufficiently regular.

\begin{lemma}\label{stm:frame}
	Let $E_1$ and $E_2$ be measurable sets in $\Rd$
	and let $u\colon \Rd \to [0,1]$ be measurable.
	\begin{enumerate}
		\item\label{stm:frame1}
		If either there exists a set $F$ with finite perimeter in $\Rd$   
		such that, up to negligible sets, $E_1\subset F$ and $E_2\subset \co{F}$,
		or if $u\in\BV(\Rd)$, then
			\[
				\frac{1}{ \eps}\int_{E_1}\int_{E_2} K_\epsilon(y-x)\va{u(y)-u(x)}\de y \de x \leq c,
			\]
		where $c$ is a constant such that
		either $c = c(K,\Per(F))$ or $c = c(K,\norm{\D u})$.
		\item\label{stm:frame2}
		If $\dist(E_1,E_2)>0$, then
			\[
			\lim_{\epsilon\to0^+} \frac{1}{ \eps}\int_{E_1}\int_{E_2} K_\epsilon(y-x)\va{u(y)-u(x)}\de y \de x = 0
			\]
	\end{enumerate}
\end{lemma}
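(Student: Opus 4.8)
The plan is to reduce both statements to estimates on the quantity $\eps^{-1}\int_{E_1}\int_{E_2} K_\eps(y-x)\va{u(y)-u(x)}\de y\de x$ by passing to the change of variables $z = (y-x)/\eps$, which rewrites the double integral as $\int_{\Rd} K(z) \left( \int_{E_1 \cap (E_2 - \eps z)} \va{u(x+\eps z)-u(x)}\de x \right)\de z$. For statement \ref{stm:frame1}, the key observation is that $\va{u(y)-u(x)} \le \va{u(y)} + \va{u(x)}$ is too crude when $K$ is merely $L^1$; instead, in the first alternative I would split according to whether a straight segment from $x$ to $y=x+\eps z$ must cross $\partial F$. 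Since $E_1 \subset F$ and $E_2 \subset \co{F}$, for every pair $(x, x+\eps z)$ contributing to the integral the segment joining them crosses $\partial^\ast F$, so $\int_{E_1\cap(E_2-\eps z)} \va{u(x+\eps z)-u(x)}\de x \le \Ld\big(\Set{x : [x, x+\eps z]\cap \partial^\ast F \ne \emptyset}\big) \le \eps \va{z}\, \Per(F)$, using that $u$ is $[0,1]$-valued together with the elementary fact (a consequence of the projection/integralgeometric characterisation behind Proposition \ref{stm:char-BV}) that the measure of the set of $x$ whose $\eps z$-translate lies on the other side of a finite-perimeter set is bounded by $\eps\va{z}\,\Per(F)$. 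Inserting this bound and recalling \eqref{eq:fastK}, one gets the estimate with $c = 2 c_K \Per(F)$. In the second alternative ($u \in \BV(\Rd)$), one forgets $E_1, E_2$ altogether, bounds the double integral by $\eps^{-1}\int_{\Rd}\int_{\Rd} K_\eps(y-x)\va{u(y)-u(x)}\de y\de x = \eps^{-1}\int_{\Rd} K(z)\int_{\Rd}\va{u(x+\eps z)-u(x)}\de x\,\de z$, and applies Proposition \ref{stm:char-BV} (with $V = \Rd$, $\Omega = \Rd$) to get $\int_{\Rd}\va{u(x+\eps z)-u(x)}\de x \le \eps\va{z}\,\norm{\D u}$; the $z$-integral then converges by \eqref{eq:fastK}, giving $c = 2 c_K \norm{\D u}$.

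For statement \ref{stm:frame2}, let $\delta \coloneqq \dist(E_1, E_2) > 0$. After the change of variables, the inner integral over $x$ is supported where $E_1 \cap (E_2 - \eps z) \ne \emptyset$, which forces $\eps\va{z} \ge \delta$, i.e.\ $\va{z} \ge \delta/\eps$. Hence
\[
\frac{1}{\eps}\int_{E_1}\int_{E_2} K_\eps(y-x)\va{u(y)-u(x)}\de y\de x
\le \frac{1}{\eps}\int_{\Set{\va{z}\ge \delta/\eps}} K(z)\va{z}\cdot\frac{1}{\va{z}}\,\Ld(E_1 \cap (E_2-\eps z))\,\de z,
\]
and bounding $\va{u(y)-u(x)}\le 1$ and $\Ld(E_1\cap(E_2-\eps z)) \le $ either $\Ld(E_1)$ or is controlled via the same segment-crossing idea if $E_1$ has infinite measure, one arrives at a bound of the form $C\int_{\Set{\va{z}\ge \delta/\eps}} K(z)\va{z}\,\de z$. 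Since $K(z)\va{z} \in L^1(\Rd)$ by \eqref{eq:fastK}, the tail integral over $\Set{\va{z}\ge \delta/\eps}$ tends to $0$ as $\eps\to 0^+$ by dominated convergence, which is exactly the claim. (If one wants a clean statement without assuming $\Ld(E_1) < \infty$, one can instead localise: since the lemma is applied in contexts where the relevant sets sit inside a fixed bounded region, intersecting with a large ball is harmless; alternatively, invoke the first-alternative trick to replace $\Ld(E_1 \cap (E_2-\eps z))$ by $\eps\va{z}\,\Per$ of a suitable separating set.)

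The main obstacle is the first alternative of \ref{stm:frame1}: one must make rigorous the claim that, for a finite-perimeter set $F$ with $E_1\subset F$, $E_2\subset\co{F}$, the Lebesgue measure of $\Set{x\in F : x+\eps z\in\co{F}}$ is at most $\eps\va{z}\,\Per(F)$. This is precisely the difference-quotient characterisation of $\BV$ functions applied to $\chi_F$: indeed $\Ld\big(\Set{x : \chi_F(x+\eps z)\ne\chi_F(x)}\big) = \norm{\tau_{\eps z}\chi_F - \chi_F}_{L^1(\Rd)} \le \eps\va{z}\,\va{\D\chi_F}(\Rd) = \eps\va{z}\,\Per(F)$ by Proposition \ref{stm:char-BV}. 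So the obstacle dissolves once one recognises this is the right tool; the rest is bookkeeping with the change of variables and \eqref{eq:fastK}. I would present \ref{stm:frame1} first (both alternatives resting on Proposition \ref{stm:char-BV}), then deduce \ref{stm:frame2} by the tail argument, noting that it can also be read off from \ref{stm:frame1} once one observes $\dist(E_1,E_2)>0$ forces the $z$-integration to the far tail.
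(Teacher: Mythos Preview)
Your proposal is correct and follows essentially the same route as the paper. Both arguments hinge on the change of variables $z=(y-x)/\eps$ together with Proposition \ref{stm:char-BV}: for \ref{stm:frame1} the paper bounds $\va{u(y)-u(x)}\le 2$ and then, in the first alternative, passes to $2\int_F\int_{\co F}K_\eps=\int_{\Rd}\int_{\Rd}K(z)\va{\chi_F(x+\eps z)-\chi_F(x)}\,\de z\,\de x\le 2\eps c_K\Per(F)$, which is exactly your ``segment-crossing'' observation once you recognise (as you do) that $\Ld\big(E_1\cap(E_2-\eps z)\big)\le\norm{\tau_{\eps z}\chi_F-\chi_F}_{L^1}$; the $u\in\BV(\Rd)$ case is handled identically in both. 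For \ref{stm:frame2} the paper inserts $1\le\va{y-x}/r$ before changing variables and then invokes dominated convergence, which is equivalent to your explicit tail cutoff $\{\va{z}\ge\delta/\eps\}$; your caveat about needing $\Ld(E_1)<\infty$ (or a localisation) for the final step is fair and is tacitly present in the paper's appeal to Lebesgue's Theorem as well.
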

\begin{proof}
	The argument is the same as Lemma \ref{stm:CMP}-\ref{stm:per2}.
	
	For what concerns \ref{stm:frame1},
	suppose firstly that $E_1\subset F$ and $E_2\subset \co{F}$.
	We bound the double integral over $E_1$ and $E_2$
	by the nonlocal $K_\eps$-perimeter of $F$ in $\Rd$,
	which in turn is bounded by the classical perimeter of $F$:
		\[
		\begin{split}
			& \int_{E_1}\int_{E_2} K_\epsilon(y-x)\va{u(y)-u(x)}\de y \de x \\
					& \quad \leq 2 \int_{F}\int_{\co{F}}K_\epsilon(y-x)\de y \de x \\
					& \quad = \int_{\Rd}\int_{\Rd}
							K(z) \va{\chi_{F}(x+\eps z)-\chi_{F}(x)}\de z \de x \\
					& \quad  \leq 2 \eps c_K \Per(F).
		\end{split}
		\]
	On the other hand, if $u\in \BV(\Rd)$, a similar reasoning shows
		\[
		\begin{split}
				\int_{E_1}\int_{E_2} K_\epsilon(y-x)\va{u(y)-u(x)}\de y \de x
					& \leq \int_{\Rd}\int_{\Rd} K_\epsilon(y-x) \va{u(y)-u(x)}\de y \de x \\
					& \leq 2 \eps c_K \norm{\D u}.
		\end{split}
		\]
		
	Turning to \ref{stm:frame2}, we have
		\[
		\begin{split}
			\frac{1}{ \eps}\int_{E_1}\int_{E_2} K_\epsilon(y-x)\va{u(y)-u(x)}\de y \de x
				& \leq \frac{2}{\eps r}\int_{E_1}\int_{E_2} K_\epsilon(y-x)\va{y-x}\de y \de x \\
				& = \frac{2}{r}\int_{E_1}\int_{\Rd}K(z)\va{z}\chi_{E_2}(x+\eps z)\de z \de x,
		\end{split}
		\]
	where $r\coloneqq \dist(E_1,E_2)>0$.
	Statement \ref{stm:frame2} follows by Lebesgue's Theorem. 
\end{proof}

\begin{rmk}[Choice of the scaling factor]\label{stm:scfactor}
	The previous Lemma shows that
	$J_\epsilon(u;\Omega) = O(\eps)$
	when $u$ is a function of bounded variation in $\Rd$.
	Accordingly, we multiply $J_\epsilon$ by $\epsilon^{-1}$ to recover a nontrivial limit
	in our $\Gamma$-convergence theorem.
\end{rmk}

Now we are ready to prove the compactness criterion.

\begin{proof}[Proof of statement \ref{stm:JK1-cpt} in Theorem \ref{stm:JK-Gconv}]
	For notational convenience,
	we write respectively $\epsilon$ and $u_\epsilon$
	in place of $\epsilon_\ell$ and of $u_{\ell}$.
	
	We are going to exhibit a sequence $\Set{v_\eps}$ 
	that is asymptotically equivalent in $L^1(\Rd)$ to $\Set{u_\epsilon}$,
	and that is precompact as well.
	To this aim, we extend each $u_\epsilon$ outside $\Omega$
	setting $u_\eps = 0$
	(in wider generality, any constant extension would fit).
	Let $\rho\in C^\infty_c(\Rd)$ be positive
	and let
		\[
			v_\eps (x) \coloneqq (\rho_\eps \ast u_\eps)(x),
		\]
	where
		\[
			\rho_\epsilon(x) \coloneqq \frac{1}{c \epsilon^d} \rho\left(\frac{x}{\eps}\right),
			\quad\text{with } c \coloneqq \int_{\Rd}\rho(x)\de x.
		\]
	Notice that any $v_\epsilon$ is supported in some ball $B$ containing $\Omega$.
	By properties of convolutions,
		\begin{equation}\label{eq:asympteq}
			\int_{\Rd} \va{v_\eps(x)-u_\eps}(x)\de x
							\leq \int_{\Rd}\int_{\Rd} \va{\rho_\eps(z)}\va{u_\eps(x+z) - u_\eps(x)}\de z\de x
		\end{equation}
	and
		\begin{equation}\label{eq:BVbound}
			\begin{split}
				\int_B \va{\nabla v_\eps(x)}\de x
							& =\int_{\Rd} \va{\nabla v_\eps(x)}\de x \\
							& \leq \int_{\Rd}\int_{\Rd}\va{\nabla\rho_\eps(z)}\,
										\va{u_\eps(x+z) - u_\epsilon(x)}\de z\de x.
		\end{split}
		\end{equation}
	We claim that, if we choose the mollifier $\rho$ suitably, then
	the asymptotic equivalence between $\Set{u_\eps}$ and $\Set{v_\eps}$
	and a uniform bound on the $\BV$-norm of $\Set{v_\eps}$
	follow respectively from \eqref{eq:asympteq} and \eqref{eq:BVbound}.
	In turn, the claim entails the conclusion:
	by virtue of Theorem \ref{stm:BVcpt}, up to extraction of subsequences,
	$\Set{v_\epsilon}$ converges to some $u\in\BV(B)$ in $L^1(B)$,
	and hence $\Set{u_\epsilon}$ tends in $L^1(B)$ to the same function.
	
	Let us now validate the claim.
	We define the function
	$T(s)\coloneqq 0 \vee (s \wedge 1)$
	and the truncated kernel $G \coloneqq T \circ K\in L^1(\Rd)\cap L^\infty(\Rd)$.
	We observe that the convolution $G*G$ is positive and continuous,
	therefore we can pick $\rho\in C_c^{\infty}(\Rd)$ such that
		\begin{equation}
			0 \leq \rho  \leq G\ast G \quad \text{and} \quad \va{\nabla \rho} \leq G \ast G. 
		\end{equation}
	Setting
		\[
			G_\epsilon(x) \coloneqq \frac{1}{\epsilon^d}G\left(\frac{x}{\eps}\right),
		\]
	we infer from \eqref{eq:asympteq} and \eqref{eq:BVbound} 
		\begin{equation*}\label{eq:asympteq2}
			\int_{\Rd} \va{v_\eps(x)-u_\eps(x)}\de x
				\leq \int_{\Rd}\int_{\Rd} \va{G_\eps\ast G_\eps(z)}\va{u_\eps(x+z) - u_\eps(x)}\de z\de x
		\end{equation*}
	and
		\begin{equation*}
			\int_{\Rd} \va{\nabla v_\eps(x)}\de x
				\leq \frac{1}{\eps}\int_{\Rd}\int_{\Rd}
					\va{G_\eps\ast G_\eps(z)}\va{u_\eps(x+z)-u_\eps(x)}\de z\de x.
	\end{equation*}
	Both the right-hand sides of the inequalities can be bounded above
	by Lemma \ref{stm:G-ineq}. Indeed, we have 
		\[
			\begin{split}
			& \int_{\Rd}\int_{\Rd} \va{G_\eps\ast G_\eps(z)} \va{u_\eps(x+z)-u_\eps(x)}\de z\de x \\
			& \quad \leq 4 \norm{G}_{L^1(\Rd)} J_{G_\eps}(u_\eps;\Rd) \\
			& \quad \leq 4 \norm{G}_{L^1(\Rd)} J_\eps(u_\eps;\Rd) \\
			& \quad = 4 \norm{G}_{L^1(\Rd)}
						\left( J^1_\eps(u_\eps;\Omega) + J^2_\eps(u_\eps;\Omega) + J^1_\eps(u_\eps;\co{\Omega}) \right)
		\end{split}
		\]
	Note that $J^1_\eps(u_\eps;\co{\Omega})=0$,
	because we set $u_\eps = 0$ outside of $\Omega$.
	Moreover, by assumption, there exists $M\geq 0$ such that
		\[
			J^1_\eps(u_\eps;\Omega) \leq \eps M.
		\]
	In third place, we can invoke Lemma \ref{stm:frame} on $J^2_\epsilon(u_\epsilon;\Omega)$,
	because the boundary of $\Omega$ is Lipschitz.
	On the whole, we deduce
		\[
			\int_{\Rd}\int_{\Rd} \va{G_\eps\ast G_\eps(z)}\va{\chi_{E_\eps}(x+z)-\chi_{E_\eps}(x)}\de z\de x = O(\eps),
		\]
	as desired.		
\end{proof}

\begin{rmk}[Locality defect]
	In the previous proof,
	we utilised the identity
		\[
			J_\eps(u;\Rd) = J^1_\eps(u;\Omega) + J^2_\eps(u;\Omega) + J^1_\eps(u;\co{\Omega}).
		\]
	More broadly, let $F$ be an arbitrary measurable set.
	If we split a given reference domain  $\Omega$
	in the disjoint regions $\Omega\cap F$ and $\Omega\cap \co{F}$,
	we have
		\[
			\begin{split}
			J_K^1(u;\Omega) & = J_K^1(u;\Omega\cap F) + J_K^1(u;\Omega\cap \co{F}) \\
										& \quad + \int_{\Omega\cap F}\int_{\Omega\cap \co{F}} K(y-x)\va{u(y)-u(x)}\de x\de y.
			\end{split}
		\]
	We remark that the sum of the energies stored in each of the sets of the partition
	is smaller than the energy of $\Omega$.
	This can be seen as a feature of nonlocality.
	The difference is given by the mutual interaction
	between $\Omega\cap F$ and $\Omega\cap \co{F}$,
	which, borrowing the terminology suggested in \cite{AB2},
	we may dub \emph{locality defect}\index{locality defect}.
\end{rmk}
	
	\section{Overview of the proving strategy}\label{sec:strategy}
		Before discussing in depth the proof
of the $\Gamma$-convergence result in Theorem \ref{stm:JK-Gconv},
we summarise in this Section the strategy that we adopt.

The key point of our approach is the possibility  
of reasoning in terms of sets rather than functions.
Indeed, thanks to an abstract result in \cite{CGL}
by A. Chambolle, A. Giacomini, and L. Lussardi,
the $\Gamma$-convergence of $\Set{\epsilon^{-1}J_\epsilon}$
regarded as a family of functionals on measurable sets
to the restriction of $J_0$
is sufficient to yield the more general result of Theorem \ref{stm:JK-Gconv}.
We include the precise statement in the next lines;
we recall that we say that
the collection of measurable sets $\Set{E_\eps}$ $L^1_\loc(\Rd)$-converges to $E$
if $\Set{\chi_{E_\eps}}$ converges to $\chi_E$ w.r.t. the $L^1_\loc(\Rd)$-convergence.
	
	\begin{prop}[Proposition 3.5 in \cite{CGL}]\label{stm:CGL}
		Let $U\subset \Rd$ be open and bounded.
		Suppose that $\Set{J_\ell}$ is a sequence of convex functionals on $L^1(U)$
		such that the generalised Coarea Formula \eqref{eq:gencoarea} holds for all $\ell\in\N$
		and let us set for all measurable $E\subset U$
			\[
				\tilde J_\ell(E) \coloneqq J_\ell(\chi_E)
			\]
		Suppose that there exists a functional
		$\tilde J$ defined on measurable sets of $U$
		such that the sequence $\{\ \tilde J_\ell\ \}$ 
		$\Gamma$-converges to $\tilde J$ w.r.t. the $L^1$-convergence,
		and put
			\[
				J(u)\coloneqq\int_{-\infty}^{+\infty}\tilde J(\chi_{\Set{u>t}})\de t.
			\]
		Then, the sequence $\Set{J_\ell}$ $\Gamma$-converges to $J$ w.r.t. $L^1$-convergence.				
	\end{prop}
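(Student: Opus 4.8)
The plan is to establish the two $\Gamma$-convergence inequalities separately, using the Coarea formula \eqref{eq:gencoarea} to pass back and forth between a function $u$ and its superlevel sets $\Set{u>t}$, and thereby to reduce the function-level statement to the set-level one, which holds by hypothesis. The decisive preliminary observation is that, since each $J_\ell$ satisfies the Coarea formula, it is automatically positively $1$-homogeneous and invariant under addition of constants (both immediate from $\Set{\lambda u>t}=\Set{u>t/\lambda}$ for $\lambda>0$ and $\Set{u+c>t}=\Set{u>t-c}$); together with convexity this makes $J_\ell$ \emph{sublinear}, i.e. $J_\ell\big(\sum_k w_k v_k\big)\le\sum_k w_k J_\ell(v_k)$ for all $w_k\ge 0$ and $v_k\in L^1(U)$ (divide by $\sum_k w_k$, use convexity, multiply back using homogeneity). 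The same holds for the candidate limit $J$. We may also assume $J_\ell\ge 0$, as in all the applications, so that $\tilde J_\ell(\emptyset)=\tilde J_\ell(U)=0$ and $\tilde J_\ell\ge 0$; since $U$ is bounded, the $\tilde J_\ell$ are functionals on the complete separable metric space of measurable subsets of $U$ endowed with the distance $\Ld(E\symdif F)$.

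For the $\Gamma$-liminf inequality, take $u_\ell\to u$ in $L^1(U)$; it suffices to treat a subsequence realising $\liminf_\ell J_\ell(u_\ell)$. From the layer-cake identity $\va{u_\ell(x)-u(x)}=\int_{\R}\va{\chi_{\Set{u_\ell>t}}(x)-\chi_{\Set{u>t}}(x)}\,\de t$ and Fubini one gets $\int_{\R}\va{\chi_{\Set{u_\ell>t}}-\chi_{\Set{u>t}}}_{L^1(U)}\de t\to 0$, so along a further subsequence $\Set{u_\ell>t}\to\Set{u>t}$ in $L^1(U)$ for a.e.\ $t\in\R$. Writing $J_\ell(u_\ell)=\int_{\R}\tilde J_\ell(\Set{u_\ell>t})\,\de t$ via the Coarea formula, applying the $\Gamma$-lower limit inequality for $\tilde J_\ell$ at each such level $t$, and using Fatou's Lemma (legitimate because $\tilde J_\ell\ge 0$), one obtains $\liminf_\ell J_\ell(u_\ell)\ge\int_{\R}\tilde J(\Set{u>t})\,\de t=J(u)$.

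For the $\Gamma$-limsup inequality, the core case is a function $v$ taking finitely many values, $v=\sum_{k=1}^m (s_k-s_{k-1})\chi_{B_k}$ with $0=s_0<\dots<s_m$ and nested superlevel sets $B_1\supset\dots\supset B_m$, so that $J(v)=\sum_k(s_k-s_{k-1})\tilde J(B_k)$. For each of the finitely many $B_k$ pick a recovery sequence $E^\ell_k\to B_k$ with $\limsup_\ell\tilde J_\ell(E^\ell_k)\le\tilde J(B_k)$ and set $v_\ell:=\sum_k(s_k-s_{k-1})\chi_{E^\ell_k}$; then $v_\ell\to v$ in $L^1(U)$, and sublinearity of $J_\ell$ gives $J_\ell(v_\ell)\le\sum_k(s_k-s_{k-1})\tilde J_\ell(E^\ell_k)$, whence $\limsup_\ell J_\ell(v_\ell)\le J(v)$. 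It then remains to show that finitely valued functions are \emph{dense in energy} for $J$, so that Lemma~\ref{stm:prop-Gconv}\ref{stm:density-Gconv} upgrades these recovery sequences to all of $L^1(U)$. First truncate, $u^N:=(u\wedge N)\vee(-N)$, and add a constant: by monotone convergence ($\tilde J\ge 0$) and the Coarea formula $J(u^N)\uparrow J(u)$, while $u^N\to u$ in $L^1$. Then, for bounded nonnegative $u$, approximate it by the step function $v$ built on the uniform grid $\{a+jh\}_{j\ge 0}$ of mesh $h$, whose superlevel sets are superlevel sets of $u$; averaging $J(v)$ over the shift $a\in(0,h)$ gives exactly $J(u)$ by Fubini, so for a generic $a$ (also avoiding the at most countably many atoms of $u$) one has $\norm{v-u}_{L^1(U)}\le h\,\Ld(U)$ and $J(v)\le J(u)$. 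Letting $h\to 0$ and diagonalising yields the density in energy.

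The step I expect to be the main obstacle is exactly this last reduction of the $\Gamma$-limsup inequality to finitely valued functions: only lower semicontinuity of $\tilde J$ (as a $\Gamma$-limit) is available, and along a monotone step approximation of $u$ it pushes the energy the wrong way, so a naive comparison fails; the remedy — averaging the energy of the step approximation over translations of the grid — works precisely because $t\mapsto\tilde J(\Set{u>t})$ is integrable, a fact built into the very definition of $J$, so the Coarea structure is genuinely what makes the density hold rather than mere bookkeeping. By comparison, the $\Gamma$-liminf half (slicing in $t$ plus Fatou) and the core superposition step (a one-line use of sublinearity) are routine once the homogeneity observation is in place.
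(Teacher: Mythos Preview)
The paper does not contain a proof of this proposition: it is quoted verbatim as Proposition~3.5 of \cite{CGL} and used as a black box to reduce the $\Gamma$-convergence statement for the functionals $J_\eps$ on $L^1$ (Theorem~\ref{stm:JK-Gconv}) to the set-level statement for the nonlocal perimeters (Theorem~\ref{stm:PerK-Gconv}). There is therefore nothing in the present paper to compare your attempt against.

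That said, your argument is sound and is in fact the standard route. The $\Gamma$-liminf via slicing in $t$ and Fatou is the expected computation. Your observation that the generalised Coarea formula forces positive $1$-homogeneity, hence (with convexity) sublinearity, is the key structural input on the $\Gamma$-limsup side, and the superposition of set-level recovery sequences for finitely valued functions goes through exactly as you write. The grid-averaging trick you use to obtain step approximants $v_h\to u$ with $J(v_h)\le J(u)$ is precisely what is needed for density in energy; note incidentally that $J$ is $L^1$-lower semicontinuous (same Fatou argument applied to $\tilde J$, which is lsc as a $\Gamma$-limit), so in fact $J(v_h)\to J(u)$, matching the hypothesis of Lemma~\ref{stm:prop-Gconv}\ref{stm:density-Gconv} on the nose. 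Your added assumption $J_\ell\ge 0$ is genuinely needed for Fatou and is satisfied in the application.
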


Recalling Propositions \ref{stm:a-coarea}, \ref{stm:JK}, and \ref{stm:JK-coarea},
we see that the $\Gamma$-convergence result in Theorem \ref{stm:JK-Gconv} holds as soon as
we prove the following:
	\begin{thm}[$\Gamma$-convergence of rescaled nonlocal perimeters]
		\label{stm:PerK-Gconv}\index{$\Gamma$-convergence!of the rescaled nonlocal perimeters}
		Let $\Omega$ and $K$ be as in Theorem \ref{stm:JK-Gconv}.
		For $E\subset \Rd$ measurable, consider
			\begin{gather*}
				J^i_\epsilon(E;\Omega) \coloneqq J^i_\epsilon(\chi_E;\Omega)
					\quad\text{for } i=1,2,\quad
				J_\eps(E;\Omega) \coloneqq J_\epsilon(\chi_E;\Omega),
			\end{gather*}
		and
			\[
				J_0(E;\Omega) \coloneqq J_0(\chi_E;\Omega).
			\]
		Then, for any given measurable $E$, we have that
			\begin{enumerate}
				\item for any family $\Set{E_\eps}$
					that converges to $E$ in $L^1_\loc(\Rd)$ as $\epsilon\to0^+$, it holds
					\begin{equation}\label{eq:PerK-Gliminf}
						J_0(E;\Omega) \leq \liminf_{\epsilon\to 0^+} \frac{1}{\eps} J_\epsilon^1(E_\epsilon;\Omega).
					\end{equation}
				\item there exists a family $\Set{E_\eps}$
					that converges to $E$ in $L^1_\loc(\Rd)$ as $\epsilon\to0^+$
					and that satisfies					
					\begin{equation*}
						\limsup_{\epsilon\to 0^+} \frac{1}{\eps} J_\epsilon (E_\epsilon;\Omega) \leq J_0(E;\Omega).
					\end{equation*}
			\end{enumerate}
	\end{thm}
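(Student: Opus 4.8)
The plan is to prove the upper and lower $\Gamma$-limit inequalities for nonlocal perimeters separately, exploiting in an essential way the minimality of halfspaces established in Theorem \ref{stm:piani}. I begin with the \textbf{upper limit inequality}, which is the easier of the two. By Lemma \ref{stm:prop-Gconv}\ref{stm:density-Gconv}, it suffices to exhibit recovery sequences on a dense-in-energy class; since polyhedra are dense in energy among finite perimeter sets for the anisotropic perimeter $\Per_\sigma$ underlying $J_0(\,\cdot\,;\Omega)$, I would restrict to $E$ a polyhedron and take the constant sequence $E_\eps = E$. Then $\eps^{-1}J_\eps(E;\Omega)$ splits, via the change of variables $y = x+\eps z$, into a bulk term $\frac{1}{2\eps}\int_\Omega\int_\Omega K_\eps(y-x)|\chi_E(y)-\chi_E(x)|\,\de y\,\de x$ and the interaction term $\eps^{-1}J^2_\eps$, the latter being $O(\eps)$ by Lemma \ref{stm:frame}\ref{stm:frame1} since $\partial\Omega$ is Lipschitz. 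For the bulk term, near each face of $\partial E$ (away from the lower-dimensional skeleton, which contributes negligibly) the set $E$ looks locally like a halfspace with normal $\hat n$, and a direct computation using $\int_{\Rd}K(z)\,|$(layer thickness crossed by $z$)$|\,\de z$ recovers exactly $\frac12\int_{\Rd}K(z)|z\cdot\hat n|\,\de z$ per unit area, i.e.\ the integrand of $J_0$; summing over faces and controlling the error near the skeleton and near $\partial\Omega$ gives $\limsup_\eps \eps^{-1}J_\eps(E;\Omega)\le J_0(E;\Omega)$.

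For the \textbf{lower limit inequality}, the plan is a blow-up argument in the spirit of Fonseca--M\"uller, as indicated in the excerpt. Fix $\Set{E_\eps}$ converging to $E$ in $L^1_\loc$; assume the $\liminf$ is finite, so by the compactness statement \ref{stm:JK1-cpt} already proved, $E\in\BV(\Omega;[0,1])$, hence $E$ has finite perimeter in $\Omega$. Consider the measures $\mu_\eps(A) \coloneqq \eps^{-1}J^1_\eps(E_\eps;A)$ for Borel $A\subset\Omega$ (the localized bulk energy is superadditive up to the locality defect, so this is essentially a sequence of measures with bounded mass); pass to a weak-$\ast$ limit $\mu$. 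It suffices to show $\frac{\de\mu}{\de|\D\chi_E|}(x_0) \ge \frac12\int_{\Rd}K(z)|z\cdot\hat n(x_0)|\,\de z$ for $\Hdmu$-a.e.\ $x_0\in\partial^\ast E\cap\Omega$. Blowing up at such $x_0$, by \eqref{eq:blowup} the rescaled sets converge in $L^1_\loc$ to the halfspace $H_{\hat n(x_0)}$, and a diagonal argument reduces the Radon--Nikodym density to the value of a \emph{cell problem}: the infimum of $\liminf$ of the bulk nonlocal energy over sequences of sets converging in $L^1$ to a fixed halfspace $H_{\hat n}$. Call this value $\tilde\sigma_K(\hat n)$; this produces a lower bound with an \emph{a priori} distinct anisotropic functional $\tilde J_0$ built from $\tilde\sigma_K$.

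The \textbf{main obstacle} — and the step where Theorem \ref{stm:piani} enters decisively — is the \textbf{identification $\tilde\sigma_K(\hat n) = \frac12\int_{\Rd}K(z)|z\cdot\hat n|\,\de z$}, i.e.\ showing the cell-problem anisotropy agrees with the one of $J_0$. The inequality $\tilde\sigma_K \le \frac12\int K(z)|z\cdot\hat n|\,\de z$ is free from the upper limit construction (the constant sequence $H_{\hat n}$ is admissible, and by the pointwise limit \eqref{eq:pointlim} of Theorem \ref{stm:ponce} applied on a large ball its bulk energy converges to the right quantity). For the reverse inequality one argues by contradiction: if some sequence $\Set{F_k}\to H_{\hat n}$ had strictly smaller asymptotic bulk energy, then — after localizing in a fixed ball $B(0,R)$, using Lemma \ref{stm:frame}\ref{stm:frame1} to bound the cross terms $J^2$ by the perimeter of $H_{\hat n}$, and rescaling back — one would produce competitors in the nonlocal Plateau problem on $B(0,R)$ with boundary datum $\chi_{H_{\hat n}}$ whose energy $J_K(\,\cdot\,;B(0,R))$ undercuts $J_K(\chi_{H_{\hat n}};B(0,R))$, contradicting the minimality (indeed uniqueness) of halfspaces in Theorem \ref{stm:piani}. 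Making the localization and the rescaling quantitative — keeping track of the boundary-layer errors so that the $L^1$-convergence $F_k\to H_{\hat n}$ survives the rescaling and the competitor genuinely satisfies the halfspace boundary condition outside the ball — is the technically delicate part. Once $\tilde\sigma_K = \frac12\int K(z)|z\cdot\hat n|\,\de z$ is established, $\tilde J_0 = J_0$ on sets, and combining with the upper limit inequality, Proposition \ref{stm:CGL} (together with Propositions \ref{stm:a-coarea}, \ref{stm:JK}, \ref{stm:JK-coarea}) upgrades the set-level $\Gamma$-convergence to Theorem \ref{stm:JK-Gconv}.
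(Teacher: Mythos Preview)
Your overall architecture matches the paper's --- density of polyhedra for the limsup, Fonseca--M\"uller blow-up for the liminf, and halfspace minimality (Theorem \ref{stm:piani}) to pin down the cell-problem anisotropy --- but two steps are not right as written.

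\emph{Upper limit.} Lemma \ref{stm:frame}\ref{stm:frame1} gives only $\eps^{-1}J^2_\eps(E;\Omega)=O(1)$, not $O(\eps)$; in fact $\eps^{-1}J^2_\eps(E;\Omega)\to\int_{\partial^\ast E\cap\partial\Omega}\sigma_K(\hat n)\,\de\Hdmu$ (this is Proposition \ref{stm:pointconv} in the paper), which is in general strictly positive. So the constant sequence $E_\eps\equiv P$ is \emph{not} automatically a recovery sequence. The paper repairs this by approximating with polyhedra that are \emph{transversal} to $\Omega$, i.e.\ satisfy $\Hdmu(\partial P\cap\partial\Omega)=0$ (Lemma \ref{stm:density}); for such $P$ the boundary term vanishes and $E_\eps\equiv P$ works.

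\emph{Identification $\tilde\sigma_K\ge\sigma_K$.} Your contradiction via ``rescaling back'' to a single competitor for $J_K$ on a fixed ball $B(0,R)$ does not match the scales: the cell problem is an $\eps\to0$ limit of $J_{K_\eps}$ on the unit ball, and there is no single Plateau problem to contradict. The paper instead introduces an intermediate quantity $\sigma''_K$, the same cell infimum but restricted to competitors with $E_\eps\symdif H_{\hat n}\subset B_{1-\delta}$. It proves $\sigma_K\le\sigma''_K$ by applying the minimality of $H_{\hat n}$ for $J_{K_\eps}(\,\cdot\,;B)$ \emph{at each fixed $\eps$} (the boundary constraint forces $J^2_\eps(E_\eps;B)-J^2_\eps(H_{\hat n};B)\to0$, so the $J^1$ comparison survives), and then proves $\sigma''_K\le\sigma'_K$ via a gluing lemma (Lemma \ref{stm:gluing}) that splices an arbitrary competitor with $H_{\hat n}$ in the annulus $B\setminus B_{1-\delta}$ at a cost controlled by $\Ld(E_\eps\symdif H_{\hat n})$. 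Your closing sentence correctly identifies that forcing the boundary condition is the crux, but the mechanism is gluing plus minimality applied at every scale $\eps$, not a rescaling to a single limiting problem.
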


The two assertions in Theorem \ref{stm:PerK-Gconv} clearly entail
the $\Gamma$-convergence of $\Set{\epsilon^{-1}J_\epsilon(\,\cdot\,;\Omega)}$ to $J_0(\,\cdot\,;\Omega)$
w.r.t. the $L^1_\loc(\Rd)$-convergence,
because $J_\eps^2(\,\cdot\,;\Omega)$ is positive.

Our result is akin to several others already appeared in the literature.
In addition to \cites{BBM,Da,P},
we refer to the aforementioned \cite{AB2},
where the asymptotics of a nonlocal model for phase transitions is studied.
The authors consider the rescaled energy
	\[
		\F_\eps(u;\Omega) \coloneqq \frac{1}{4}\int_{\Omega}\int_{\Omega} K_\eps(y-x)\big( u(y) - u(x) \big)^2 \de y \de x
										+ \int_{\Omega} W(u(x)) \de x,
	\]
where $W\colon \R \to [0,+\infty)$ is a double-well potential.
Under the same summability assumptions on $K$ as ours,
they establish the $\Gamma$-convergence of the family $\Set{\epsilon^{-1}\F_\eps(\,\cdot\,;\Omega)}$
to a limit anisotropic surface energy.
Interactions between $\Omega$ and its complement are not relevant in this framework.
The reader interested in this sort of energies may consult also \cite{BBP}.

Another result that is linked to Theorem \ref{stm:PerK-Gconv} was proved
by L. Ambrosio, G. De Philippis, and L. Martinazzi in \cite{ADM}.
There, they showed that fractional perimeters $\Gamma$-converge,
as the fractional parameter approaches $1$, to De Giorgi's perimeter.
The scaling is substantially different from ours,
but some of the techniques exploited in that work still fit in our setting.
For the asymptotics of fractional energies, we refer to the survey \cite{V}. 

Let us now outline how we deal with the proof of Theorem \ref{stm:PerK-Gconv}.
We start with a qualitative picture.

The contributions $J^1_\epsilon$ and $J^2_\epsilon$
that compound the rescaled nonlocal perimeter functional have different asymptotic behaviours:
when $\epsilon$ is small,
the former is concentrated near the portions of the boundary of $E$ inside $\Omega$,
the latter instead captures the parts that are close to $\partial\Omega$.
Proposition \ref{stm:pointconv} in Section \ref{sec:PerK-Glimsup} makes the heuristics precise,
showing that the pointwise limit and the $\Gamma$-limit do not agree.
However, once the pointwise limit has been computed,
it is possible to recover the $\Gamma$-upper limit by a density result,
see Lemma \ref{stm:density}.

As for the $\Gamma$-lower limit, it is convenient to regard $J_0$
as an anisotropic perimeter functional.
We observe that we can write
	\begin{equation}\label{eq:J0}
	J_0(E;\Omega) =
		\begin{cases}
			\displaystyle{
				\int_{\partial^\ast E \cap \Omega} \sigma_K(\hat{n}(x)) \de \Hdmu(x)
			}			& \text{if $E$ is a finite perimeter set in $\Omega$,} \\
		+\infty 	& \text{otherwise},
	\end{cases}
	\end{equation}
where $\hat{n}\colon \partial^\ast E \to \mathbb{S}^{d-1}$ is
the measure-theoretic inner normal of $E$ defined by \eqref{eq:in-norm}
and $\sigma_K\colon \Rd \to [0,+\infty)$ is the anisotropy
\begin{equation}\label{eq:sigmaK}
\sigma_K(p) \coloneqq \frac{1}{2}\int_{\Rd} K(z)\va{z\cdot p}\de z,
\qquad\text{for } p\in\Rd.
\end{equation}
This function is evidently a norm on $\Rd$.

\begin{rmk}[The radial case \cite{BP}]
	When $K$ is radial, $J_0$ equals De Giorgi's perimeter
	times a constant that depends on the dimension of the space and on $K$.
	To see this, admit that $K(x) = \bar K (\va{x})$ for some $\bar K\colon [0,+\infty) \to [0,+\infty)$.
	Then, for any $\hat p\in\mathbb{S}^{d-1}$, we find
	\begin{align*}
	\sigma_K(\hat p) & = \frac{1}{2} \left(\int_{0}^{+\infty} \bar K(r) r^d \de r \right) 
	\int_{\Sdmu} \va{e\cdot \hat p} \de \call{H}^{d-1}(e) \\
	& = c_K	\fint_{\Sdmu} \va{e\cdot e_d} \de \call{H}^{d-1}(e),
	\end{align*}
	where $e_d \coloneqq (0,\dots,0,1)$ is the last element of the canonical basis.	
	
	Theorem \ref{stm:PerK-Gconv} was proved for radial and strictly decreasing kernels
	in \cite{BP}. The content of this chapter shows that
	the same arguments may be conveniently adapted to a more general setting.	
\end{rmk}

Following \cite{ADM,AB2}, we prove the lower limit inequality
\textit{via} the strategy introduced by I. Fonseca and S. M\"uller in \cite{FM},
which amounts to turn the proof of \eqref{eq:PerK-Gliminf}
into an inequality of Radon-Nikodym derivatives.
To accomplish the task,
the compactness criterion that we proved in the previous section comes in handy.

\begin{rmk}[Semicontinuity of the $\Gamma$-limit]\label{stm:J0-lsc}
	If Theorem \ref{stm:JK-Gconv} holds,
	then we obtain as a by-product that
	$J_0$ is $L^1_\loc(\Rd)$-lower semicontinuos. 
	Indeed, $\Gamma$-limits are always lower semicontinuous \cite{B}.
	More straightforwardly,
	the semicontinuity may be seen as a consequence of the following equality,
	which holds for all $u\in\BV(\Omega)$ \cite{Gr}:
		\[
			J_0(u;\Omega) = \sup
						\Set{-\int_{\Omega} u\div \zeta :
								\zeta\in C^1_{\mathrm{c}}(\Omega;\Rd), \sigma^\circ_K(\zeta) \leq 1
						}.
		\]
	Here, $\sigma^\circ_K$ is the dual norm of $\sigma_K$, that is
		\[
			\sigma^\circ_K(q) \coloneqq \sup\Set{ q \cdot p : p\in\Rd \text{ and } \sigma_K(p) \leq 1}
			\qquad \text{ for all } q\in\Rd.
		\]
	The previous formula may be seen as a more general version of \eqref{eq:TV}.
	
	Note further that, being $\sigma_K$ a norm $\Rd$,
	(semi)continuity w.r.t. other notions of convergence may be derived for the $\Gamma$-limit 
	as a consequence of Reshetnyak's Theorems \ref{stm:reshetnyak}.
\end{rmk}
	\section{Pontwise limit and $\Gamma$-upper limit}\label{sec:PerK-Glimsup}
		In the current section, we establish the $\Gamma$-upper limit inequality:
we show that, for any given measurable $E\subset \Rd$,
there exists a family $\Set{E_\eps}$
that converges to $E$ in $L^1_\loc(\Rd)$ as $\epsilon\to0^+$
and that has the property that
	\begin{equation}\label{eq:PerK-Glimsup}
		\limsup_{\epsilon\to 0^+} \frac{1}{\eps} J_\eps(E_\epsilon;\Omega) \leq 	J_0(E;\Omega).
	\end{equation}

We obtain the inequality above in two steps.
We firstly compute the pointwise limit of $\Set{\eps^{-1}J_\epsilon}$
by taking advantage of \eqref{eq:pointlim}.
We find that, as $\epsilon\to0^+$,
$\Set{J_\epsilon}$ approaches the presumed $\Gamma$-limit $J_0$ plus a boundary contribution.
Next, we prove that the latter is negligible in the $\Gamma$-limit
by reasoning on a suitable class of sets that is dense in energy for $J_0$
(recall Lemma \ref{stm:prop-Gconv}\ref{stm:density-Gconv}). 

We now study the pointwise convergence of $\Set{\eps^{-1}J_\epsilon}$.
An application of the Bourgain-Brezis-Mironescu formula
to the study of nonlocal perimeters already appeared in \cite{MRT},
where J. Maz\'on, J. Rossi, and J. Toledo exploited the approximation
established by J. D\'avila in \cite{Da}.
In \cite{MRT} the authors focus on compactly supported, $L^1$ kernels.
Here, we include a slightly more general statement:

	\begin{prop}[Pointwise limit of the rescaled nonocal perimeter]\label{stm:pointconv}
		Let $\Omega \subset \Rd$ be an open set
		whose boundary is compact and Lipschitz.
		Let also \eqref{eq:fastK} hold.
		Then, if $E$ is a finite perimeter set in $\Rd$,
		\begin{equation}\label{eq:MRT}
			\lim_{\epsilon\to0^+}\frac{1}{\eps}J_\epsilon(E;\Omega)
			= J_0(E;\Omega) + \int_{\partial^\ast E\cap\partial \Omega} \sigma_K(\hat{n}(x)) \de\Hdmu(x). 
		\end{equation}
	\end{prop}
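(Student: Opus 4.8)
The plan is to derive the pointwise formula \eqref{eq:MRT} from Ponce's pointwise limit \eqref{eq:pointlim} by a ball-truncation combined with the nonlocal additivity (``locality defect'') identity for $J_K^1$; there is no new analytic difficulty here, the work being essentially bookkeeping. I would assume throughout that $\Omega$ is bounded (if instead $\co\Omega$ is bounded one runs the symmetric argument, with $\co\Omega$ playing the role of $\Omega$), and I would fix an open ball $B$ with $\bar\Omega\subset B$ and $\dist(\bar\Omega,\co B)>0$. At the outset one records that $B$ and $B\setminus\bar\Omega$ are bounded and have Lipschitz boundary (since $\partial(B\setminus\bar\Omega)\subset\partial B\cup\partial\Omega$), that $\chi_E\in\BV(D)$ for every bounded open $D\subset\Rd$ because $\va{\D\chi_E}(D)\le\Per(E)<+\infty$, and that, consequently, all the quantities $J^i_\eps(\chi_E;D)$ with $D$ bounded are finite by Lemma \ref{stm:CMP}\ref{stm:per2}.

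First I would discard the long-range part of $J^2_\eps$. Since $\partial\Omega$ and $\partial B$ are $\Ld$-negligible, $\co\Omega=(B\setminus\bar\Omega)\cup\co B$ up to a null set, so $J^2_\eps(\chi_E;\Omega)$ splits into the interaction of $\Omega$ with $B\setminus\bar\Omega$ and the interaction of $\Omega$ with $\co B$. Because $\dist(\Omega,\co B)>0$, Lemma \ref{stm:frame}\ref{stm:frame2} shows that $\frac{1}{\eps}$ times the $\co B$-piece tends to $0$, hence it is negligible in the limit.

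Next I would invoke the locality-defect identity (the additivity relation for $J_K^1$ recorded in Section \ref{sec:statement}), applied with reference set $B$ partitioned into $\Omega$ and $B\setminus\bar\Omega$:
\[
J^1_\eps(\chi_E;B)=J^1_\eps(\chi_E;\Omega)+J^1_\eps(\chi_E;B\setminus\bar\Omega)+\int_\Omega\int_{B\setminus\bar\Omega}K_\eps(y-x)\va{\chi_E(y)-\chi_E(x)}\de y\de x .
\]
Plugging the resulting expression for the surviving double integral into $J_\eps(\chi_E;\Omega)=J^1_\eps(\chi_E;\Omega)+J^2_\eps(\chi_E;\Omega)$, the terms $J^1_\eps(\chi_E;\Omega)$ cancel and I am left with
\[
\frac{1}{\eps} J_\eps(\chi_E;\Omega)=\frac{1}{\eps} J^1_\eps(\chi_E;B)-\frac{1}{\eps} J^1_\eps(\chi_E;B\setminus\bar\Omega)+o(1)\qquad\text{as }\eps\to0^+ .
\]
Then I would apply \eqref{eq:pointlim} to $u=\chi_E$ on the two bounded Lipschitz domains $B$ and $B\setminus\bar\Omega$, obtaining $\frac{1}{\eps} J^1_\eps(\chi_E;B)\to J_0(\chi_E;B)$ and $\frac{1}{\eps} J^1_\eps(\chi_E;B\setminus\bar\Omega)\to J_0(\chi_E;B\setminus\bar\Omega)$; finally, using the representation \eqref{eq:J0} of $J_0$ on sets and the partition $B=\Omega\sqcup\partial\Omega\sqcup(B\setminus\bar\Omega)$,
\[
J_0(\chi_E;B)-J_0(\chi_E;B\setminus\bar\Omega)=\int_{\partial^\ast E\cap\Omega}\sigma_K(\hat n)\de\Hdmu+\int_{\partial^\ast E\cap\partial\Omega}\sigma_K(\hat n)\de\Hdmu=J_0(\chi_E;\Omega)+\int_{\partial^\ast E\cap\partial\Omega}\sigma_K(\hat n)\de\Hdmu ,
\]
which is precisely \eqref{eq:MRT}.

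I do not expect a serious obstacle once \eqref{eq:pointlim} and Lemma \ref{stm:frame} are on hand; the one conceptually important point is that $\Hdmu(\partial^\ast E\cap\partial\Omega)$ may be strictly positive, which is exactly why the pointwise limit \eqref{eq:MRT} exceeds the candidate $\Gamma$-limit $J_0(\chi_E;\Omega)$ and why $J^1_\eps(\chi_E;B)$ cannot be replaced in the limit by $J^1_\eps(\chi_E;\Omega)+J^1_\eps(\chi_E;B\setminus\bar\Omega)$. On the technical side, truncating to a ball rather than arguing directly in $\Rd$ is what keeps every quantity finite: for an unbounded $E$ one cannot expect the global nonlocal perimeter $\Per_{K_\eps}(E;\Rd)$, or the membership $\chi_E\in\BV(\Rd\setminus\bar\Omega)$, to hold. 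The case of unbounded $\Omega$ with compact boundary requires the additional step of discarding the portion of $\Omega$ lying far from $\partial\Omega$, once more by Lemma \ref{stm:frame}; this is routine but a little more involved.
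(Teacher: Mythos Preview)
Your proof is correct and follows the same route as the paper's: both reduce to Ponce's pointwise limit \eqref{eq:pointlim} via the additivity/locality-defect identity for $J^1_K$, then subtract to isolate the boundary contribution on $\partial\Omega$. The only difference is cosmetic: the paper applies \eqref{eq:pointlim} directly on $\Rd$, $\Omega$, and $\mathrm{int}\,\co\Omega$, whereas you first truncate to a large ball $B$ (killing the $\co B$-interaction via Lemma \ref{stm:frame}\ref{stm:frame2}) and then apply \eqref{eq:pointlim} on $B$ and $B\setminus\bar\Omega$---a harmless localization that, as you note, sidesteps any worry about $\chi_E\in\BV$ of an unbounded domain.
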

	\begin{proof}
	By virtue of Theorem \ref{stm:ponce},
	\[
	\lim_{\epsilon\to 0^+} \frac{1} {\epsilon} J^1_\epsilon(E;\Omega) = J_0(E;\Omega),
	\]
	so, we only have to take care of $J^2_\eps$.
	We start from the $K_\epsilon$-perimeter of $E$ in the whole space
	and we rewrite it as the sum of three contributions:
	\[
	\int_E \int_{\co{E}} K_\epsilon(y-x) \de x \de y = 
	J^1_\epsilon( E ; \Omega ) +  J^2_\epsilon( E ; \Omega ) + J^1_\epsilon( E ; \co{\Omega} ).
	\]
	Since the topological boundary of $\Omega$ is negligible,
	we may appeal again to Theorem \ref{stm:ponce}
	and take the limit $\epsilon\to 0^+$.
	Let $\mathrm{int}\, \co{\Omega}$ be the interior of $\co{\Omega}$. We get
	\[\begin{split}
	\lim_{\epsilon\to 0^+} \frac{1}{\eps} J^2_\epsilon(E;\Omega)
	& = J_0(E) - J_0(E;\Omega) - J_0(E; \mathrm{int}\, \co{\Omega}) \\
	& = \int_{\partial^\ast E\cap\partial \Omega} \sigma_K(\hat{n}(x)) \de\Hdmu(x),
	\end{split}\]
	as desired.

\end{proof}

Let us now go back to \eqref{eq:PerK-Glimsup}.
The inequality holds trivially if the right-hand side is not finite,
therefore we may assume that $E$ is a Caccioppoli set in $\Omega$.
Note as well that if $E$ has finite perimeter in the whole space $\Rd$
and if 
	\begin{equation}\label{eq:transversality}
		\Hdmu(\partial^\ast E \cap \partial\Omega)=0,
	\end{equation}
then, in view of Proposition \ref{stm:pointconv}, 
the choice $E_\eps \coloneqq E$ for all $\epsilon>0$ defines a recovery family.
When \eqref{eq:transversality} hold,
we say that $E$ is \emph{transversal} to $\Omega$.

At this stage, by Lemma \ref{stm:prop-Gconv}\ref{stm:density-Gconv},
the proof of the upper limit inequality is accomplished
once we show that the class of finite perimeter sets in $\Rd$
that are transversal to $\Omega$ is dense in energy;
in particular, we consider approximations by polyhedra.
We say that a set $P\subset \Rd$ is a $d$-dimensional \emph{polyhedron}
if it is an open set whose boundary is a Lipschitz hypersurface
contained in the union of a finite number of affine hyperplanes.

\begin{lemma}[Density of polyhedra]\label{stm:density}
	Let $E$ be a finite perimeter set in $\Omega$.
	Then, there exists a family $\Set{P_\epsilon}$ of polyhedra with the following properties:
		\begin{enumerate}
			\item\label{stm:approx1} $\Hdmu(\partial P_\eps \cap \partial \Omega) = 0$;
			\item\label{stm:approx2} $P_\eps \to E$ in $L^1(\Omega)$
					and $\Per(P_\eps;\Omega)\to\Per(E;\Omega)$;
			\item\label{stm:approx3} $J_0(P_\epsilon;\Omega) \to J_0(E;\Omega)$.
		\end{enumerate}
\end{lemma}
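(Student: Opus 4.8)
The strategy is a two–step approximation: first reduce a general finite perimeter set $E$ in $\Omega$ to a polyhedron $P$ that is close to $E$ both in $L^1(\Omega)$ and in perimeter, and then perturb $P$ slightly so that its boundary meets $\partial\Omega$ in an $\Hdmu$–null set, keeping control of all quantities. The classical density of polyhedra in the family of finite perimeter sets (see e.g.\ \cite{Ma}) provides a sequence of polyhedra $\Set{Q_k}$ with $Q_k\to E$ in $L^1(\Omega)$ and $\Per(Q_k;\Omega)\to\Per(E;\Omega)$; by a diagonal argument it suffices to produce, for each fixed polyhedron $Q$ and each $\delta>0$, another polyhedron $P$ with $\Ld((P\symdif Q)\cap\Omega)<\delta$, $\va{\Per(P;\Omega)-\Per(Q;\Omega)}<\delta$, and $\Hdmu(\partial P\cap\partial\Omega)=0$. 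Property \ref{stm:approx3} will then come for free: since $\sigma_K$ is a bounded norm on $\Rd$, one has $J_0(F;\Omega)=\int_{\partial^\ast F\cap\Omega}\sigma_K(\hat n)\,\de\Hdmu \le \Lambda\,\Per(F;\Omega)$ and $\ge \lambda\,\Per(F;\Omega)$ for constants $0<\lambda\le\Lambda$, but more precisely one should invoke the anisotropic version of Reshetnyak's continuity theorem (Theorem \ref{stm:reshetnyak}): from $\D\chi_{P_\eps}\weakstar\D\chi_E$ in $\Omega$ together with $\Per(P_\eps;\Omega)\to\Per(E;\Omega)$, and since $\sigma_K$ is continuous and bounded on $\Sdmu$, the functional $J_0(\,\cdot\,;\Omega)=\Per_{\sigma_K}(\,\cdot\,;\Omega)$ is continuous along this sequence, which gives \ref{stm:approx3}.

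\textbf{The transversality perturbation.} For a fixed polyhedron $Q$, its boundary $\partial Q$ lies in a finite union of affine hyperplanes $\bigcup_{j}\Pi_j$. The boundary $\partial\Omega$ is a compact Lipschitz hypersurface, so $\Hdmu(\partial\Omega)<+\infty$. The plan is to translate $Q$ by a small vector $v$: the translated set $Q-v$ has boundary contained in $\bigcup_j(\Pi_j-v)$, and I want to choose $\va{v}$ small and $v$ in a suitable direction so that $\Hdmu\big((\partial Q - v)\cap\partial\Omega\big)=0$. To see such $v$ exists, fix one hyperplane $\Pi_j$ with unit normal $\nu_j$; for a given transversal direction, the family $\Set{\Pi_j - t\nu_j}_{t\in\R}$ is a foliation of a slab by parallel hyperplanes, and by Proposition \ref{stm:foliations} applied to the Radon measure $\Hdmu\llcorner\partial\Omega$ (which is Radon by Proposition \ref{stm:restriction} since $\Hdmu(\partial\Omega)<\infty$), at most countably many leaves $\Pi_j - t\nu_j$ carry positive $\Hdmu\llcorner\partial\Omega$–mass; hence for all but countably many $t$ the intersection $(\Pi_j-t\nu_j)\cap\partial\Omega$ is $\Hdmu$–null. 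Intersecting over the finitely many hyperplanes $j$ and choosing $t$ (equivalently $v=t\nu$) small, outside a countable bad set, yields a translation $v$ with $\va{v}$ as small as we like and $\Hdmu\big((\partial Q - v)\cap\partial\Omega\big)=0$. Set $P\coloneqq Q-v$: it is again a polyhedron, $\Ld((P\symdif Q)\cap\Omega)\le\Ld(Q\symdif(Q-v))\to 0$ as $\va v\to 0$, and since translation is an isometry, $\Per(P;\Rd)=\Per(Q;\Rd)$; the local perimeters $\Per(P;\Omega)$ and $\Per(Q;\Omega)$ differ only through the portion of $\partial Q$ near $\partial\Omega$, and this difference tends to $0$ as $v\to 0$ because $\Hdmu\llcorner\partial Q$ charges no hyperplane parallel to itself sitting exactly on $\partial\Omega$ — more carefully, one uses that $\Hdmu(\partial Q\cap\partial\Omega)$ can be made irrelevant by an a priori further small translation, or simply that $\eps\mapsto\Per(Q-v;\Omega)$ is continuous at generic $v$.

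\textbf{Conclusion and the main obstacle.} Assembling these pieces: given $E$ with $\Per(E;\Omega)<+\infty$, pick polyhedra $Q_k\to E$ with $\Per(Q_k;\Omega)\to\Per(E;\Omega)$; for each $k$ choose by the perturbation argument a polyhedron $P_k=Q_k-v_k$ with $\va{v_k}<1/k$ and $\Hdmu(\partial P_k\cap\partial\Omega)=0$ and $\va{\Per(P_k;\Omega)-\Per(Q_k;\Omega)}<1/k$; then $P_k\to E$ in $L^1(\Omega)$, $\Per(P_k;\Omega)\to\Per(E;\Omega)$, so \ref{stm:approx1} and \ref{stm:approx2} hold, and \ref{stm:approx3} follows from Reshetnyak's continuity theorem as above. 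Relabelling $P_k$ as $P_\eps$ along a subsequence gives the asserted family. The step I expect to be the main obstacle is controlling the local perimeter $\Per(P;\Omega)$ under the translation: while $\Per(\,\cdot\,;\Rd)$ is translation–invariant, the restriction to $\Omega$ is not, and one must argue that the mass of $\D\chi_{P}$ does not concentrate on $\partial\Omega$ — this is exactly what the transversality condition $\Hdmu(\partial P\cap\partial\Omega)=0$ buys, via the fact that $\D\chi_{P_k}(\partial\Omega)=0$ guarantees $\Per(P_k;\Omega)=\Per(P_k;\overline\Omega)$ and hence convergence from the weak–$\ast$ convergence of measures together with convergence of total masses on the closed set $\overline\Omega$. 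Making this last implication precise (essentially a Portmanteau–type argument for weak–$\ast$ convergent Radon measures, using that the limit measure $\va{\D\chi_E}$ also charges $\partial\Omega$ with zero mass — which may require one further generic choice) is the delicate point of the proof.
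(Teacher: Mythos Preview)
Your approach is essentially the same as the paper's: the paper's proof is just three citations --- \ref{stm:approx2} is the classical polyhedral density (Maggi), the refinement to \ref{stm:approx1} is exactly the translation/foliation perturbation you describe (this is \cite{ADM}*{Proposition 15}), and \ref{stm:approx3} is Reshetnyak's continuity theorem applied via \ref{stm:approx2}. The obstacle you flag (controlling $\Per(Q-v;\Omega)$ under small translations) is real and your resolution of it is the weakest part of the sketch --- note in particular that you should translate along a \emph{single} direction $e$ not parallel to any face (so that $\{\Pi_j - te\}_t$ foliates for every $j$ simultaneously), and that the Portmanteau-type step requires knowing the approximation converges in perimeter on a slightly larger open set than $\Omega$; both points are handled in the cited reference.
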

\begin{proof}
	Assertion \ref{stm:approx2} is a standard one,
	see for instance \cite{Ma}.
	Also, a family that satisfies \ref{stm:approx2} may be refined
	so that \ref{stm:approx1} hold as well, see for instance \cite{ADM}*{Proposition 15}.	
	Lastly, \ref{stm:approx3} is a consequence
	of Reshetnyak's Continuity Theorem and \ref{stm:approx2}.
\end{proof}		
	\section{Characterisation of the anisotropy}\label{sec:sigmaK}
		This section is devoted to the proof
of a characterisation of the anisotropic norm $\sigma_K$
that appears in the definition of the limit functional $J_0$.
A key tool in the proof is the local minimality of halfspaces.
The characterisation proves to be useful
to establish the $\Gamma$-inferior limit inequality in Theorem \ref{stm:PerK-Gconv},
see the next section.

Recall that, for all $p\in\Rd$,
	\begin{equation*}
	\sigma_K(p) \coloneqq \frac{1}{2}\int_{\Rd} K(z)\va{z\cdot p}\de z.
	\end{equation*}
By homogeneity,
without loss of generality, we can think of $\sigma_K$ as function defined on $\Sdmu$.
For $p\in\Rdmz$, let us recall the notations 
	\[	\hat{p}\coloneqq \frac{p}{\va{p}}
		\quad\text{and }\quad
		H_{\hat{p}}\coloneqq \Set{x\in\Rd : x\cdot \hat{p} > 0}.
	\]
Also, in this section we use the symbol $B_r$ as a shorthand for $B(0,r)$, with $r>0$,
and we set $B\coloneqq B_1 = B(0,1)$.

Our goal is validating the following:

\begin{lemma}\label{stm:sigmaK}
	For any $\hat{p}\in\mathbb{S}^{d-1}$,
		\begin{equation}\label{eq:char-sigmaK}
			\sigma_K (\hat{p})
				= \inf\Set{
							\liminf_{\epsilon\to 0^+} \frac{1}{\omega_{d-1} \eps} J^1_\eps(E_\epsilon;B) :
							E_\eps \to H_{\hat{p}} \text{ in } L^1(B)
							},
		\end{equation}
	where $\omega_{d-1}$ is the $(d-1)$-dimensional Lebesgue measure
	of the unit ball in $\R^{d-1}$.
\end{lemma}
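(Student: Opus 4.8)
The claim splits into the two inequalities ``$\le$'' and ``$\ge$''. For ``$\le$'' it suffices to exhibit one admissible family: taking $E_\eps\equiv H_{\hat{p}}$, the pointwise limit of Ponce (Theorem~\ref{stm:ponce}, in the normalisation used in the proof of Proposition~\ref{stm:pointconv}) gives
\[
	\frac1\eps J^1_\eps(H_{\hat{p}};B)\;\longrightarrow\; J_0(\chi_{H_{\hat{p}}};B)=\sigma_K(\hat{p})\,\omega_{d-1},
\]
because $\partial^\ast H_{\hat{p}}\cap B$ is a $(d-1)$-dimensional unit ball and the inner normal of $H_{\hat{p}}$ is constantly $\hat{p}$; hence this family realises the value $\sigma_K(\hat{p})$ inside the infimum, so that infimum is $\le\sigma_K(\hat{p})$.

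For the reverse inequality I would fix an arbitrary family $\Set{E_\eps}$ with $E_\eps\to H_{\hat{p}}$ in $L^1(B)$ and prove $\liminf_{\eps\to0^+}\frac1{\omega_{d-1}\eps}J^1_\eps(E_\eps;B)\ge\sigma_K(\hat{p})$. The idea is to feed $E_\eps$ into the minimality criterion for halfspaces. Recall (Theorem~\ref{stm:piani}, Definition~\ref{stm:calib}) that $\zeta_{\hat{p}}(x,y)=\sign((y-x)\cdot\hat{p})$ is a nonlocal calibration for $\chi_{H_{\hat{p}}}$; since the conditions of Definition~\ref{stm:calib} are pointwise and do not involve the reference set, and since evenness of $K$ gives $\int_{\co{B(x,r)}}K_\eps(y-x)\sign((y-x)\cdot\hat{p})\,\de y=0$ for every $r>0$, the same $\zeta_{\hat{p}}$ calibrates $\chi_{H_{\hat{p}}}$ with respect to each rescaled kernel $K_\eps$ and each reference set. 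Thus, by Theorem~\ref{stm:calibcrit}, for every $s\in(0,1)$ the set $G^s_\eps\coloneqq(E_\eps\cap B_s)\cup(H_{\hat{p}}\setminus B_s)$ — whose characteristic function coincides with $\chi_{H_{\hat{p}}}$ on $\co{B_s}$ — satisfies $J_{K_\eps}(\chi_{H_{\hat{p}}};B_s)\le J_{K_\eps}(\chi_{G^s_\eps};B_s)$, the left-hand side being finite because $H_{\hat{p}}$ is locally of finite perimeter (cf.\ the estimates in Lemma~\ref{stm:CMP}). On $B_s$ one has $\chi_{G^s_\eps}=\chi_{E_\eps}$, so $J^1_{K_\eps}(\chi_{G^s_\eps};B_s)=J^1_\eps(E_\eps;B_s)\le J^1_\eps(E_\eps;B)$; writing $J^1_{K_\eps}(\chi_{G^s_\eps};B_s)=J_{K_\eps}(\chi_{G^s_\eps};B_s)-J^2_{K_\eps}(\chi_{G^s_\eps};B_s)$, using the minimality, and estimating $|J^2_{K_\eps}(\chi_{G^s_\eps};B_s)-J^2_{K_\eps}(\chi_{H_{\hat{p}}};B_s)|\le L_{K_\eps}\big((E_\eps\symdif H_{\hat{p}})\cap B_s;\co{B_s}\big)$ by the triangle inequality, one arrives at
\[
	\frac1\eps J^1_\eps(E_\eps;B)\;\ge\;\frac1\eps J^1_{K_\eps}(\chi_{H_{\hat{p}}};B_s)-\frac1\eps L_{K_\eps}\big((E_\eps\symdif H_{\hat{p}})\cap B_s;\co{B_s}\big).
\]

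The first term on the right converges, by Ponce's theorem, to $J_0(\chi_{H_{\hat{p}}};B_s)=\sigma_K(\hat{p})\,\omega_{d-1}s^{d-1}$, so everything reduces to choosing the cutting radius $s$, depending on $\eps$, so that the error term becomes infinitesimal. This is the technical core. Since, for $x\in B_s$, the inner integral defining $L_{K_\eps}$ lives on $\Set{y:|y-x|\ge s-|x|}$, a Fubini/coarea computation shows that its average over $s\in(s_0,1)$ is small: for $x$ in a compact subset of $B$ one has $\int_{s_0}^1\big(\int_{\co{B_s}}K_\eps(y-x)\,\de y\big)\de s\le C\int_{\Rd}K_\eps(z)\,|z|\,\de z=2C\,c_K\,\eps$, with $C=C(s_0)$ — finite and $O(\eps)$ precisely because of the first-moment bound~\eqref{eq:fastK} (the faster-than-$L^1$ decay of $K$ is used exactly here, via $\int_0^\infty\int_{z\cdot e>t}K_\eps(z)\,\de z\,\de t=\tfrac12\int K_\eps(z)|z\cdot e|\,\de z$). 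Integrating in $x$ against $\chi_{E_\eps\symdif H_{\hat{p}}}$ gives $\int_{s_0}^1 L_{K_\eps}\big((E_\eps\symdif H_{\hat{p}})\cap B_s;\co{B_s}\big)\de s\le C\,\eps\,\norm{\chi_{E_\eps}-\chi_{H_{\hat{p}}}}_{L^1(B)}$, so a mean-value choice $s=s_\eps\in(s_0,1)$ yields $\frac1\eps L_{K_\eps}\big((E_\eps\symdif H_{\hat{p}})\cap B_{s_\eps};\co{B_{s_\eps}}\big)\le\frac{C}{1-s_0}\norm{\chi_{E_\eps}-\chi_{H_{\hat{p}}}}_{L^1(B)}\to0$. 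Since $J^1_{K_\eps}(\chi_{H_{\hat{p}}};B_{s_\eps})\ge J^1_{K_\eps}(\chi_{H_{\hat{p}}};B_{s_0})$, letting first $\eps\to0^+$ and then $s_0\to1$ yields $\liminf_{\eps\to0^+}\frac1\eps J^1_\eps(E_\eps;B)\ge\sigma_K(\hat{p})\,\omega_{d-1}$, which is the claim.

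The step I expect to be the main obstacle is exactly this last one: forcing $E_\eps$ to equal $H_{\hat{p}}$ outside $B_{s_\eps}$ produces a ``locality defect'' (the interaction $L_{K_\eps}$ across $\partial B_{s_\eps}$), and a naive choice of the radius only makes it $o(1)$, not $o(\eps)$; the resolution is to average over the radius and combine the $L^1$-smallness of $\chi_{E_\eps}-\chi_{H_{\hat{p}}}$ with the $\eps$-scaling of the first moment $\int K_\eps(z)|z|\,\de z$.
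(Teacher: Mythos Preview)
Your argument is correct and follows the same overall strategy as the paper: the easy inequality comes from Ponce's pointwise limit applied to the constant family $E_\eps=H_{\hat p}$, and the hard inequality comes from the local minimality of halfspaces (Theorem~\ref{stm:piani}) after modifying $E_\eps$ to agree with $H_{\hat p}$ near $\partial B$. The difference lies only in how this ``boundary adjustment'' is implemented. The paper introduces an intermediate quantity $\sigma''_K$ (the same infimum restricted to families with $E_\eps\symdif H_{\hat p}\subset B_{1-\delta}$) and proves the chain $\sigma_K\le\sigma''_K\le\sigma'_K$ in two separate steps: the first step uses minimality exactly as you do, while the second step invokes a stand-alone Gluing Lemma (Lemma~\ref{stm:gluing}, built on a Lipschitz cutoff and the Coarea Formula) to pass from a generic family to one satisfying the constraint. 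You instead perform a \emph{hard} cutoff at a variable radius $s$ and then average over $s\in(s_0,1)$; the first-moment bound \eqref{eq:fastK} turns the averaged locality defect into $O(\eps)\,\|\chi_{E_\eps}-\chi_{H_{\hat p}}\|_{L^1(B)}$, and a mean-value choice of $s_\eps$ finishes the job. Your route bypasses both $\sigma''_K$ and the Gluing Lemma, at the cost of a mild subtlety you glossed over: the pointwise bound on $\int_{s_0}^1\int_{\co{B_s}}K_\eps(y-x)\,\de y\,\de s$ actually holds for \emph{every} $x\in B$, not merely for $x$ in a compact subset, because the factor $\chi_{B_s}(x)$ hidden in $L_{K_\eps}((E_\eps\symdif H_{\hat p})\cap B_s;\co{B_s})$ forces $s>|x|$ and hence $[(|x+z|\wedge1)-(s_0\vee|x|)]^+\le|z|$. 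With that clarification, your argument is complete; the paper's two-step decomposition is more modular (the Gluing Lemma is reusable), while your averaging trick is shorter and more self-contained.
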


For $\hat{p}\in  \Sdmu$,
we label the quantity in the right-hand side of \eqref{eq:char-sigmaK}:
	\begin{equation}\label{eq:sigma'K}
		\sigma'_K(\hat{p}) \coloneqq \inf\Set{
			\liminf_{\epsilon\to 0^+} \frac{1}{\omega_{d-1} \eps} J^1_\eps(E_\epsilon;B) :
						E_\eps \to H_{\hat{p}} \text{ in } L^1(B)
		}.
	\end{equation}
We highlight that the previous equality corresponds
to the definition of the surface tension given in the phase-field model in \cite{AB2}.

Observe that, by virtue of Theorem \ref{stm:ponce}, we can compute $\sigma_K$ as a pointwise limit:
	\begin{equation}\label{eq:sigmaK-P}
		\sigma_K(\hat{p}) = \lim_{\epsilon\to 0^+} \frac{1}{\omega_{d-1}\eps} J^1_\eps(H_{\hat{p}};B).
	\end{equation}
It follows that $\sigma_K (\hat{p}) \geq \sigma'_K(\hat{p})$.

To the purpose of proving the reverse inequality,
we introduce a third function $\sigma''_K$:
for $\hat{p}\in \mathbb{S}^{d-1}$ and $\delta\in(0,1)$, we let
	\[
		\sigma''_K(\hat{p}) \coloneqq \inf\Set{
		\liminf_{\epsilon\to 0^+} \frac{1}{ \omega_{d-1} \eps} J^1_\eps(E_\epsilon;B) :
			E_\eps \to H_{\hat{p}} \text{ in } L^1(B)
				\text{ and }
			E_\eps \symdif H_{\hat{p}} \subset B_{1-\delta}
			}.
	\]
Our notation does not explicitly show the dependence of $\sigma''_K$ on the parameter $\delta$
since \textit{a posteriori} the values of $\sigma''_K$ are not influenced by it.

The thesis of Lemma \ref{stm:sigmaK} holds
as soon as we prove that
$\sigma_K(\hat{p}) \leq \sigma''_K(\hat{p}) \leq \sigma'_K(\hat{p})$ for all $\hat{p}\in  \Sdmu$.
We establish the former inequality by taking advantage
of the minimality of halfspaces, i.e. Theorem \ref{stm:piani},
while for the latter we need the following technical result,
which parallels a similar one proved in \cite{ADM} for fractional perimeters:
\begin{lemma}[Gluing]\label{stm:gluing}
	Suppose that $\delta_1,\delta_2\in\R$ satisfy $\delta_2 > \delta_1>0$,
	and that $E_1,E_2$ are measurable sets such that
	$J^1_K(E_i;\Omega)<+\infty$ for both $i=1,2$.
	Let us define
		\[
			\Omega_1 \coloneqq \Set{ x \in \Omega : \dist(x,\co{\Omega}) \leq \delta_1}
			\quad\text{and}\quad
			\Omega_2 \coloneqq \Set{ x \in \Omega : \dist(x,\co{\Omega}) > \delta_2}.
		\]
	Then, there exists a measurable set $F$ with the following properties: 
	\begin{enumerate}
		\item\label{stm:glue1}
			$F \cap \Omega_1 = E_1 \cap \Omega_1$
			and
			$F \cap \Omega_2 = E_2 \cap \Omega_2$;
		\item\label{stm:glue2}
			$\Ld \big( (F \symdif E_2)\cap\Omega\big) \leq \Ld \big((E_1\symdif E_2)\cap\Omega\big)$;
		\item for all $\eta >0$
			\begin{equation}\label{eq:gluing}
				\begin{split}
					J_\eps^1(F;\Omega) & \leq J^1_\eps(E_1;\Omega_\eta) + J_\eps^1(E_2;\Omega) \\
									& \quad
										+ \frac{ \eps}{\delta_2-\delta_1} \int_{\Rd} K(z)\va{z} \de z\,
										\Ld\big( (E_1 \symdif E_2) \cap \Omega \big) \\
									& \quad
										+ \frac{\eps}{\eta} \int_{\Omega\cap\co{\Omega}_\eta}
										\int_{\Rd} K(z)\va{z}\chi_{\Omega \cap \co{\Omega}_2}(x+\eps z)\de z\de x.					
			\end{split}
			\end{equation}
		where $\Omega_\eta\coloneqq \Set{x\in\Omega : \dist(x,\co{\Omega})\leq \delta_2+\eta}$.
		\end{enumerate}
	\end{lemma}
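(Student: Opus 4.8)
The plan is to realise $F$ as a \emph{sharp interpolation} between $E_1$ and $E_2$ across a level set of the distance function $d(x)\coloneqq\dist(x,\co{\Omega})$, which is $1$-Lipschitz, choosing the right level by a mean value argument. For $t\in(\delta_1,\delta_2)$ set
\[
F_t\coloneqq\big(E_1\cap\Set{d<t}\big)\cup\big(E_2\cap\Set{d\geq t}\big).
\]
Since $t>\delta_1$ gives $\Omega_1\subset\Set{d<t}$ and $t<\delta_2$ gives $\Omega_2\subset\Set{d\geq t}$, property \ref{stm:glue1} holds for every such $t$; and since $\chi_{F_t\symdif E_2}$ coincides with $\chi_{E_1\symdif E_2}$ on $\Set{d<t}\cap\Omega$ and vanishes on $\Set{d\geq t}\cap\Omega$, property \ref{stm:glue2} holds for every $t$ as well. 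It thus remains to pick a single level $t^\ast\in(\delta_1,\delta_2)$ for which $F\coloneqq F_{t^\ast}$ fulfils \eqref{eq:gluing} for all $\eta>0$ simultaneously.

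Fix $t\in(\delta_1,\delta_2)$ and write $\Omega=A_t\cup B_t$ with $A_t\coloneqq\Set{x\in\Omega:d(x)<t}$, $B_t\coloneqq\Set{x\in\Omega:d(x)\geq t}$; note $A_t\subset\Omega_\eta$ for \emph{every} $\eta>0$, because $t<\delta_2<\delta_2+\eta$. Splitting the integral defining $2J_\eps^1(F_t;\Omega)$ over $A_t\times A_t$, $B_t\times B_t$ and the two mixed pieces, using $\chi_{F_t}=\chi_{E_1}$ on $A_t$, $\chi_{F_t}=\chi_{E_2}$ on $B_t$ and the elementary bound $\va{\chi_{E_1}(x)-\chi_{E_2}(y)}\leq\chi_{E_1\symdif E_2}(x)+\va{\chi_{E_2}(x)-\chi_{E_2}(y)}$ on the mixed pieces, one estimates the $A_t\times A_t$ contribution by $2J_\eps^1(E_1;\Omega_\eta)$ (as $A_t\subset\Omega_\eta$), recombines the $B_t\times B_t$ contribution with the $\chi_{E_2}$-part of the mixed pieces into $\int_\Omega\int_\Omega K_\eps(y-x)\va{\chi_{E_2}(y)-\chi_{E_2}(x)}\,\de y\,\de x=2J_\eps^1(E_2;\Omega)$ (dropping the nonnegative $A_t\times A_t$ part of that integral), and is left with the \emph{defect}
\[
J_\eps^1(F_t;\Omega)\leq J_\eps^1(E_1;\Omega_\eta)+J_\eps^1(E_2;\Omega)+R(t),\qquad R(t)\coloneqq\int_{A_t}\chi_{E_1\symdif E_2}(x)\int_{B_t}K_\eps(y-x)\,\de y\,\de x,
\]
valid for every $t$ and every $\eta>0$. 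Split $B_t=(B_t\cap\Omega_\eta)\cup(B_t\cap\co{\Omega}_\eta)$ and correspondingly $R(t)=R'(t)+R''(t)$. On $R''(t)$ one has $d(y)>\delta_2+\eta$ while $x\in A_t\subset\Omega\cap\co{\Omega}_2$ has $d(x)<\delta_2$, hence $\va{y-x}>\eta$; bounding $\chi_{E_1\symdif E_2}(x)\leq\chi_{\Omega\cap\co{\Omega}_2}(x)$, using the symmetry of $K_\eps$, the change of variables $z\mapsto\eps z$ and $1\leq\eps\va{z}/\eta$ on $\Set{\eps\va{z}>\eta}$, one obtains $R''(t)\leq\frac{\eps}{\eta}\int_{\Omega\cap\co{\Omega}_\eta}\int_{\R^d}K(z)\va{z}\,\chi_{\Omega\cap\co{\Omega}_2}(x+\eps z)\,\de z\,\de x$, the last (nonnegative) term of \eqref{eq:gluing}, independently of $t$.

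For $R'(t)$ one uses $\va{y-x}\geq d(y)-d(x)\geq t-d(x)$ for $x\in A_t$, $y\in B_t$, so $R'(t)\leq\int_{A_t}\chi_{E_1\symdif E_2}(x)\int_{\Set{\va{z}\geq t-d(x)}}K_\eps(z)\,\de z\,\de x\leq R(t)$; averaging in $t$, Tonelli and the $1$-Lipschitz bound for $d$ give
\[
\int_{\delta_1}^{\delta_2}R(t)\,\de t\leq\int_\Omega\chi_{E_1\symdif E_2}(x)\Big(\int_{\R^d}K_\eps(z)\va{z}\,\de z\Big)\,\de x=\eps\int_{\R^d}K(z)\va{z}\,\de z\;\Ld\big((E_1\symdif E_2)\cap\Omega\big),
\]
which is finite precisely by \eqref{eq:fastK} (via $\int_0^{+\infty}\int_{\Set{\va{z}\geq s}}K_\eps(z)\,\de z\,\de s=\int_{\R^d}K_\eps(z)\va{z}\,\de z$). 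Picking $t^\ast\in(\delta_1,\delta_2)$ with $R(t^\ast)\leq(\delta_2-\delta_1)^{-1}\int_{\delta_1}^{\delta_2}R$, the set $F\coloneqq F_{t^\ast}$ satisfies \ref{stm:glue1} and \ref{stm:glue2}, and for every $\eta>0$ one has $R'(t^\ast)\leq R(t^\ast)\leq\frac{\eps}{\delta_2-\delta_1}\int_{\R^d}K(z)\va{z}\,\de z\,\Ld((E_1\symdif E_2)\cap\Omega)$; inserting the bounds for $R'(t^\ast)$ and $R''(t^\ast)$ into the displayed defect inequality yields \eqref{eq:gluing}.

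The main obstacle is the control of the defect $R(t)$: one cannot bound the inner mass $\int_{B_t}K_\eps(y-x)\,\de y$ by $\norm{K_\eps}_{L^1(\R^d)}$, since under \eqref{eq:fastK} the kernel $K$ need not be integrable. What rescues the estimate is that the mean value over $t\in(\delta_1,\delta_2)$ turns this inner mass into $\int_0^{+\infty}\int_{\Set{\va{z}\geq s}}K_\eps(z)\,\de z\,\de s=\eps\int_{\R^d}K(z)\va{z}\,\de z$, which is finite; so the Lipschitz geometry of $d$ and the first-moment bound \eqref{eq:fastK} have to be used together, and the deep-interior part $B_t\cap\co{\Omega}_\eta$ must be peeled off beforehand, so that there the weight $\va{z}/\eta$ (rather than the a priori infinite $L^1$-norm) controls the long-range interaction.
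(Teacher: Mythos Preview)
Your argument is correct and complete, up to a small slip: the chain ``$R'(t)\leq\int_{A_t}\chi_{E_1\symdif E_2}(x)\int_{\Set{\va{z}\geq t-d(x)}}K_\eps(z)\,\de z\,\de x\leq R(t)$'' is written backwards in its second inequality; what you mean (and use) is that the middle expression is an \emph{upper} bound for $R(t)$, obtained from $\va{y-x}\geq t-d(x)$ on all of $B_t$. With that reading the averaging step is fine, since after Tonelli and the substitution $s=t-d(x)$ one lands on $\int_0^{+\infty}\int_{\Set{\va{z}\geq s}}K_\eps(z)\,\de z\,\de s=\eps\int_{\Rd}K(z)\va{z}\,\de z$, finite by \eqref{eq:fastK}.

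Your route is genuinely different from the paper's. The paper interpolates \emph{smoothly}: it sets $w=(1-\phi)\chi_{E_1}+\phi\chi_{E_2}$ with a Lipschitz cutoff $\phi$, proves the energy bound for $w$ directly (the $\va{\nabla\phi}$ bound produces the $\frac{\eps}{\delta_2-\delta_1}$ term, and a separate splitting of the $E_1$-integral over $\Omega_\eta$ versus $\Omega\cap\co{\Omega}_\eta$ produces the last term), and then extracts a set $F=\Set{w>t^\ast}$ via the generalised Coarea Formula. You interpolate \emph{sharply} across a level of the distance function and replace Coarea by a mean-value choice of the level $t^\ast$. Both selections play the same role, but your decomposition is tighter: because $A_t\subset\Omega_\eta$ for every $\eta>0$, the $E_1$-contribution is already confined to $\Omega_\eta\times\Omega_\eta$, and the whole defect is $R(t)$, bounded by the middle term alone. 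In other words, your proof actually shows that the last summand in \eqref{eq:gluing} is superfluous, so the $R'/R''$ splitting and the closing discussion about ``peeling off the deep-interior part'' are unnecessary in your approach---you can simply add the nonnegative last term at the end to match the stated inequality. The paper's smooth-cutoff method is perhaps more in line with standard gluing constructions, while yours is more elementary and yields a marginally sharper estimate for free.
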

\begin{proof}
	We consider a function $w\colon \Omega \to [0,1]$,
	which is, loosely speaking, a convex combination
	of the data $u \coloneqq \chi_{E_1}$ and $v \coloneqq \chi_{E_2}$.
	Precisely, we set $w\coloneqq (1-\phi)u  + \phi v$,
	where $\phi\in C^{\infty}_\mathrm{c}(\Rd)$ is such that 
		\[
			0\leq \phi \leq 1 \text{ in } \Omega,
			\quad
			\phi=0 \text{ in } \Omega_1,
			\quad
			\phi =1 \text{ in } \Omega_2,
			\quad\text{and}\quad
			\va{\nabla \phi} \leq \frac{2}{\delta_2-\delta_1}.\]
			
	We claim that, for all $\eta>0$, $w$ satisfies the following inequality:
		\begin{equation}\label{eq:w}
		\begin{split}
		J_\eps^1(w;\Omega) & \leq J^1_\eps(E_1;\Omega_\eta) + J_\eps^1(E_2;\Omega) \\
					& \quad
						+ \frac{\eps}{\delta_2-\delta_1} \int_{\Rd} K(z)\va{z} \de z\,
							\Ld\big( (E_1 \symdif E_2) \cap \Omega \big) \\
					& \quad
						+ \frac{\eps}{\eta} \int_{\Omega\cap\co{\Omega}_\eta}
								\int_{\Rd} K(z)\va{z}\chi_{\Omega \cap \co{\Omega}_2}(x+\eps z)\de z\de x.					
		\end{split}
		\end{equation}
	
	Let us assume provisionally that the claim holds.
	Then, by virtue of the Coarea Formula,
	there exists $t^\ast\in(0,1)$ such that \eqref{eq:gluing} holds
	for the superlevel $F\coloneqq \Set{w>t^\ast}$.
	Furthermore, we see that such set fulfils \ref{stm:glue1} and \ref{stm:glue2} as well.
	Indeed, $\phi$ is supported in $\Omega\cap \co{\Omega}_1$
	and it equals $1$ in $\Omega_2$,
	whence $F \cap \Omega_1 = E_1 \cap \Omega_1$
	and $F \cap \Omega_2 = E_2 \cap \Omega_2$.
	As for \ref{stm:glue2}, we remark that
	$x\in E_2\cap \co{F}\cap \Omega$ and $x\in \co{E_2}\cap F\cap \Omega$ respectively get the equalities
	$w(x)=(1-\phi(x))\chi_{E_1}(x)+\phi(x)\leq t^\ast < 1$
	and $w(x)=(1-\phi(x))\chi_{E_1}(x)>t^\ast>0$,
	which in turn entail $x\in \co{E_1}\cap E_2\cap\Omega$ and $x\in E_1 \cap \co{E}_2\cap\Omega$.
	
	So, to achieve the conclusion, we only need to validate \eqref{eq:w}.		
	To this aim, we explicit the integrand appearing in $J^1_K(w;\Omega)$:
	when $x,y\in \Omega$, we have
		\[\begin{split}
			\va{w(y) - w(x)} & \leq (1-\phi(y)) \va{u(y)-u(x)} + \phi(y)\va{v(y)-v(x)} \\
									& \quad + \va{\phi(y)-\phi(x)}\va{v(x)-u(x)} \\
									& \leq \chi_{\Set{\phi \neq 1}}(y) \va{u(y)-u(x)}
											+ \chi_{\Set{\phi \neq 0}}(y) \va{v(y)-v(x)} \\
									& \quad + \va{\phi(y)-\phi(x)}\va{v(x)-u(x)}.
		\end{split}\]
	Since
	$\Set{\phi \neq 0} \subset \Omega \cap \co{\Omega}_1$ and
	$\Set{\phi \neq 1} \subset \Omega \cap \co{\Omega}_2$,
	we see that
		\[\begin{split}
			2J_\eps^1(w;\Omega) & \leq
					\int_\Omega\int_{\Omega \cap \co{\Omega}_2} K_\eps(y-x) \va{u(y)-u(x)}\de y\de x \\
				& \quad  + \int_\Omega\int_{\Omega \cap \co{\Omega}_1} K_\eps(y-x)\va{v(y)-v(x)}\de y\de x \\
				& \quad + \int_\Omega\int_\Omega K_\eps(y-x)\va{\phi(y)-\phi(x)}\va{v(x)-u(x)}\de y\de x.
	\end{split}\]
	We bound each of three double integrals above separately.
	Firstly, we divide $\Omega$ in the disjoint regions
	$\Omega_\eta$ and $\Omega\cap\co{\Omega}_\eta$.
	We obtain
		\[\begin{split}
			\int_\Omega\int_{\Omega \cap \co{\Omega}_2} & K_\eps(y-x)\va{u(y)-u(x)}\de y\de x \\
					& = \int_{\Omega_\eta}\int_{\Omega \cap \co{\Omega}_2}
									K_\eps(y-x)\va{u(y)-u(x)}\de y\de x \\
					& \quad +\int_{\Omega\cap\co{\Omega}_\eta}\int_{\Omega \cap \co{\Omega}_2}
									K_\eps(y-x)\va{u(y)-u(x)}\de y\de x.
		\end{split}\]
	Note that
		\[
			\int_{\Omega_\eta}\int_{\Omega \cap \co{\Omega}_2}
					K_\eps(y-x)\va{u(y)-u(x)}\de y\de x
				\leq 2J^1_\eps(u;\Omega_\eta)
		\]
	and that
		\[\begin{split}
			\int_{\Omega\cap\co{\Omega}_\eta}\int_{\Omega \cap \co{\Omega}_2}
					& K_\eps(y-x)\va{u(y)-u(x)}\de y\de x \\
			& \leq \frac{2\eps}{\eta} \int_{\Omega\cap\co{\Omega}_\eta}\int_{\Omega \cap \co{\Omega}_2}
						K_\eps(y-x)\frac{\va{y-x}}{\eps}\de y\de x \\
			& =  \frac{2\eps}{\eta}
						\int_{\Omega\cap\co{\Omega}_\eta}\int_{\Rd}
						K(z)\va{z}\chi_{\Omega \cap \co{\Omega}_2}(x+\eps z)\de z\de x,
	\end{split}\]
	therefore,
	\begin{multline}\label{eq:Om2}
	\int_\Omega\int_{\Omega \cap \co{\Omega}_2} K_\eps(y-x)\va{u(y)-u(x)}\de y\de x \\
			\leq  2J^1_\eps(u;\Omega_\eta) 
					+\frac{2\eps}{\eta}
					\int_{\Omega\cap\co{\Omega}_\eta}\int_{\Rd}
					K(z)\va{z}\chi_{\Omega \cap \co{\Omega}_2}(x+\eps z)\de z\de x.
	\end{multline}
	
	Next, as for the second integral, we have
		\begin{equation}\label{eq:Om1}
			\int_\Omega\int_{\Omega \cap \co{\Omega}_1} K_\eps(y-x) \va{v(y)-v(x)}\de y\de x
				\leq 2J_\eps^1(v;\Omega).
		\end{equation}
			
	Finally, we observe that
		\[
			\va{\phi(y)-\phi(x)}\leq \frac{2}{\delta_2-\delta_1}\va{y-x}
		\]
	and hence
		\begin{multline}\label{eq:OmOm}
			\int_\Omega\int_\Omega K_\eps(y-x)\va{\phi(y)-\phi(x)}\va{v(x)-u(x)}\de y\de x
				 \\ \leq
				\frac{2 \eps}{\delta_2-\delta_1} \int_{\Rd} K(z)\va{z} \de z \int_\Omega \va{v(x)-u(x)}\de x
		\end{multline}
	Combining \eqref{eq:Om2}, \eqref{eq:Om1}, and \eqref{eq:OmOm}
	we retrieve \eqref{eq:w}.
	\end{proof}

Now, we are in position to prove a chain of inequalities
which yields Lemma \ref{stm:sigmaK} as a corollary:

	\begin{lemma}\label{stm:sK-s''K}
		For all $\hat{p}\in\Sdmu$, $\sigma_K(\hat{p}) \leq \sigma''_K(\hat{p})\leq \sigma'_K(\hat{p})$.
	\end{lemma}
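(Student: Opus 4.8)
The statement decomposes into two inequalities, $\sigma_K(\hat p)\le \sigma''_K(\hat p)$ and $\sigma''_K(\hat p)\le \sigma'_K(\hat p)$, and I would treat them separately since they use different ideas. Throughout, fix $\hat p\in\Sdmu$ and recall that, by homogeneity, we may place the sets on the ball $B=B(0,1)$ and use $\sigma_K(\hat p)=\lim_{\eps\to0^+}(\omega_{d-1}\eps)^{-1}J^1_\eps(H_{\hat p};B)$ from \eqref{eq:sigmaK-P}, together with $\sigma_K(\hat p)\ge\sigma'_K(\hat p)$, which is already noted right after that formula.

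For the \emph{first inequality} $\sigma_K(\hat p)\le\sigma''_K(\hat p)$, the idea is to exploit the minimality of halfspaces, Theorem \ref{stm:piani}. Pick any competitor family $\Set{E_\eps}$ with $E_\eps\to H_{\hat p}$ in $L^1(B)$ and $E_\eps\symdif H_{\hat p}\subset B_{1-\delta}$; I may assume the $\liminf$ is finite, otherwise there is nothing to prove. The key point of the constraint $E_\eps\symdif H_{\hat p}\subset B_{1-\delta}$ is that $E_\eps$ agrees with $H_{\hat p}$ on the annulus $B\setminus B_{1-\delta}$, hence $E_\eps$ (more precisely $\chi_{E_\eps}$, truncated) is an admissible competitor for the Plateau-type problem on $B$ with boundary datum $\chi_{H_{\hat p}}$. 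By rescaling: the set $\eps E_\eps$ has the same boundary datum as $\eps H_{\hat p}=H_{\hat p}$ on $\co{B(0,\eps)}$... more carefully, I would apply Theorem \ref{stm:piani} with the kernel $K_\eps$ (which is again a positive even kernel, so the theorem applies) to the ball $B$, obtaining $J^1_{K_\eps}(H_{\hat p};B)\le J^1_{K_\eps}(E_\eps;B)$ once I have checked that the $J^1$-term in \eqref{eq:semicont} is the relevant one — note that since $E_\eps$ and $H_{\hat p}$ coincide outside $B_{1-\delta}\subset\subset B$, the interaction term $J^2_\eps$ with $\co B$ is unaffected and can be discarded, or the inequality $J_{K_\eps}(H_{\hat p};B)\le J_{K_\eps}(E_\eps;B)$ from Theorem \ref{stm:piani} restricted to such competitors reduces to the $J^1$ inequality. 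Dividing by $\omega_{d-1}\eps$, taking $\liminf_{\eps\to0^+}$, and using \eqref{eq:sigmaK-P} on the left gives $\sigma_K(\hat p)\le\liminf_\eps(\omega_{d-1}\eps)^{-1}J^1_\eps(E_\eps;B)$; taking the infimum over admissible families yields $\sigma_K(\hat p)\le\sigma''_K(\hat p)$.

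For the \emph{second inequality} $\sigma''_K(\hat p)\le\sigma'_K(\hat p)$, I would take any family $\Set{E_\eps}$ with $E_\eps\to H_{\hat p}$ in $L^1(B)$ (no support constraint) realising a value close to $\sigma'_K(\hat p)$, and modify it near $\partial B$ so as to make it coincide with $H_{\hat p}$ inside $B\setminus B_{1-\delta}$ while controlling the energy increase. This is exactly what Lemma \ref{stm:gluing} (Gluing) is designed for: apply it on $\Omega=B$ with $E_1=H_{\hat p}$ on the outer shell and $E_2=E_\eps$ in the interior, choosing $\delta_1,\delta_2$ comparable to $\delta$ and then letting the auxiliary parameter $\eta\to0$. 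The produced set $F_\eps$ then satisfies $F_\eps\symdif H_{\hat p}\subset B_{1-\delta_1}$ (hence is admissible for $\sigma''_K$), $F_\eps\to H_{\hat p}$ in $L^1(B)$ by property \ref{stm:glue2} together with $E_\eps\to H_{\hat p}$, and by \eqref{eq:gluing} its energy is bounded by $J^1_\eps(H_{\hat p};\Omega_\eta)+J^1_\eps(E_\eps;B)$ plus two error terms each carrying a factor $\eps$ times $\Ld((E_1\symdif E_2)\cap\Omega)=\Ld((H_{\hat p}\symdif E_\eps)\cap B)\to0$ — so after dividing by $\omega_{d-1}\eps$ and taking $\liminf$, the error terms vanish and the term $(\omega_{d-1}\eps)^{-1}J^1_\eps(H_{\hat p};\Omega_\eta)$ converges by \eqref{eq:sigmaK-P} to something of order $\Hdmu$-measure of the thin shell $\Omega_\eta\cap\partial H_{\hat p}$, which can be made arbitrarily small by shrinking $\delta$. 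Passing to the infimum over $\Set{E_\eps}$ and then letting $\delta\to0$ gives $\sigma''_K(\hat p)\le\sigma'_K(\hat p)$. Combining the two inequalities with $\sigma_K(\hat p)\ge\sigma'_K(\hat p)$ sandwiches everything and proves the lemma.

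\textbf{Main obstacle.} The delicate point is the first inequality: one must verify carefully that the support constraint $E_\eps\symdif H_{\hat p}\subset B_{1-\delta}$ genuinely puts $E_\eps$ inside the admissible class of Theorem \ref{stm:piani} for the \emph{rescaled} kernel $K_\eps$ (equivalently, after the change of variables $x\mapsto x/\eps$, for $K$ itself on the rescaled ball), and that in passing between $J_{K_\eps}$ and $J^1_{K_\eps}$ no boundary interaction is lost — i.e. that the contributions involving $\co B$ cancel on both sides because the two sets agree near $\partial B$. The gluing step is more routine, the only care needed being the order of limits ($\eta\to0$ first, then $\eps\to0$, then $\delta\to0$) and checking that the double-integral error term in \eqref{eq:gluing} involving $\chi_{\Omega\cap\co\Omega_2}(x+\eps z)$ indeed vanishes after division by $\eps$ — which follows from $\int_{\Rd}K(z)|z|\,dz<\infty$ and dominated convergence, exactly as in Lemma \ref{stm:frame}\ref{stm:frame2}.
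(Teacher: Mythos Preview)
Your overall strategy matches the paper's: Theorem \ref{stm:piani} for $\sigma_K\le\sigma''_K$, Lemma \ref{stm:gluing} for $\sigma''_K\le\sigma'_K$. There is, however, a genuine gap in your treatment of the first inequality, and a minor slip in the second.

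\textbf{First inequality.} You assert that since $E_\eps$ and $H_{\hat p}$ agree on $B\setminus B_{1-\delta}$, the $J^2$ contributions ``cancel on both sides''. They do not. The functional $J^2_\eps(\,\cdot\,;B)$ integrates over $B\times\co B$, and the two sets differ on $B_{1-\delta}$, so the integrand differs on $B_{1-\delta}\times\co B$. What is true --- and what the paper actually proves --- is that the difference is $o(\eps)$: one computes
\[
\va{J^2_\eps(E_\eps;B)-J^2_\eps(H_{\hat p};B)}\le L_\eps(E_\eps\symdif H_{\hat p};\co B)
\le\frac{1}{\delta}\,\Ld(E_\eps\symdif H_{\hat p})\int_{\Rd}K(z)\va{z}\,\de z,
\]
the last step using $\va{y-x}\ge\delta$ for $x\in B_{1-\delta}$ and $y\in\co B$. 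Dividing by $\eps$ and letting $\eps\to0$, this vanishes because $E_\eps\to H_{\hat p}$ in $L^1(B)$. Only then does the full inequality $J_\eps(H_{\hat p};B)\le J_\eps(E_\eps;B)$ from Theorem \ref{stm:piani} reduce, in the limit, to the desired $J^1$ inequality. Your resolution of the ``main obstacle'' misses this quantitative step.

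\textbf{Second inequality.} Your order of limits --- $\eta\to0$, then $\eps\to0$, then $\delta\to0$ --- is backwards. With the paper's choice $\delta_1=\delta$, $\delta_2=\delta+\eta$, the bound \eqref{eq:gluing} carries factors $1/\eta$, so sending $\eta\to0$ first blows up. The correct order is $\eps\to0$ first with $\eta,\delta$ fixed: then the third term vanishes since $\Ld(E_\eps\symdif H_{\hat p})\to0$, the fourth vanishes by dominated convergence (exactly as you say, via Lemma \ref{stm:frame}\ref{stm:frame2}), and the first term converges by \eqref{eq:pointlim} to $J_0(H_{\hat p};B\setminus B_{1-\delta-2\eta})$. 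Only afterwards do $\eta,\delta\to0$.
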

	\begin{proof}
	We begin with the inequality $\sigma_K \leq \sigma''_K$.
	Let $E_\eps$ be measurable subsets of $\Rd$ such that
	$E_\eps \cap \co{B} = H_{\hat{p}}\cap \co{B}$ for all $\epsilon>0$,
	and suppose that $E_\eps \to H_{\hat{p}}$ in $L^1(B)$.
	By Theorem \ref{stm:piani},
	we have that
	\begin{align*}
	0 & \leq J_\epsilon(E_\eps; B) - J_\epsilon(H_{\hat{p}}; B)  \\
	& =  J^1_\epsilon(E_\eps; B) - J^1_\epsilon(H_{\hat{p}}; B)
	+ J^2_\epsilon(E_\eps; B) - J^2_\epsilon(H_{\hat{p}}; B).
	\end{align*}
	If in addition we require that $E_\eps \symdif H_{\hat{p}} \subset B_{1-\delta}$,
	a direct computation shows
		\[\begin{split}
			J^2_\epsilon(E_\eps; B) & - J^2_\epsilon(H_{\hat{p}}; B) \\
				& = L_\eps( E_\eps \cap \co{H}_{\hat{p}} ; \co{H}_{\hat{p}} \cap \co{B} ) 
					+ L_\eps( \co{E}_\eps \cap H_{\hat{p}} ; H_{\hat{p}} \cap \co{B} ) \\
				& \quad		- L_\eps( \co{E}_\eps \cap H_{\hat{p}} ; \co{H}_{\hat{p}} \cap \co{B} )
					- L_\eps( E_\eps \cap \co{H}_{\hat{p}} ; H_{\hat{p}} \cap \co{B} ),
		\end{split}\]
	and so
		\[
			\va{J^2_\epsilon(E_\eps; B) - J^2_\epsilon(H_{\hat{p}}; B)}
				\leq L_\eps (E_\eps \symdif H_{\hat{p}} ; \co{B} \big).
		\]
	Since $E_\eps \symdif H_{\hat{p}} \subset B_{1-\delta}$,
	we notice that $\va{y-x}\geq\delta$ if $y\in \co{B}$ and $x\in E_\eps \symdif H_{\hat{p}}$,
	and thus
		\begin{align*}
			\frac{1}{\eps}\va{J^2_\epsilon(E_\eps; B) - J^2_\epsilon(H_{\hat{p}}; B)}
			& \leq \frac{1}{\delta} \int_{ E_\eps \symdif H_{\hat{p}} } \int_{ \co{B} }
					K_\eps (y-x) \frac{\va{y-x}}{\eps}\de y \de x  \\
			& \leq \frac{1}{\delta} \Ld( E_\eps \symdif H_{\hat{p}} )\int_{ \Rd } K (z) \va{z}\de z.
		\end{align*}
	We deduce that
		\[ 
		\lim_{\epsilon\to 0^+} \frac{1}{\eps} \va{ J^2_\epsilon(E_\eps; B) - J^2_\epsilon(H_{\hat{p}}; B) } = 0,
		\]
	whence 
		\begin{align*}
		0 & \leq \liminf_{\eps \to 0^+}   \frac{1}{\eps}\left[ J_\epsilon(E_\eps; B) - J_\epsilon(H_{\hat{p}}; B) \right]  \\
				& =  \liminf_{\eps \to 0^+}   \frac{1}{\eps}\left[ J^1_\epsilon(E_\eps; B) - J^1_\epsilon(H_{\hat{p}}; B) \right].
		\end{align*}
	Recalling \eqref{eq:sigmaK-P} and the definition of $\sigma''_K$,
	we conclude that $\sigma_K(\hat{p}) \leq \sigma''_K(\hat{p})$.

	We now focus on the inequality $\sigma''_K\leq \sigma'_K$.
	We pick a family $\Set{E_\epsilon}$ that converges to $H_{\hat{p}}$ in $L^1(B)$,
	and we assume, without loss of generality,
	that $J_{\epsilon}^1(E_{\epsilon};B)$ is finite.   
	We exploit Lemma~\ref{stm:gluing}
	taking as data, for any $\epsilon$, $E_\epsilon$ and $H_{\hat{p}}$.
	We obtain a family $\Set{F_\epsilon}$
	that converges to $H_{\hat{p}}$ in $L^1(B)$
	and that fulfils $F_\eps \symdif H_{\hat{p}} \subset B_{1-\delta}$.
	Moreover, for any $\eta\in\big( 0, (1-\delta)/2 \big)$
	and for $\delta_1 = \delta$ and $\delta_2 = \delta+\eta$,
	formula \eqref{eq:gluing} yields
		\[\begin{split}
			\frac{1}{\eps}J_\eps^1(F_\eps;B)
				& \leq \frac{1}{\eps} J^1_\eps(H_{\hat{p}} ; B\cap \co{B}_{1-\delta-2\eta} )
					+ \frac{1}{\eps} J_\eps^1(E_\eps;B) \\
				& \quad	+ \frac{1}{\eta} \int_{\Rd} K(z)\va{z} \de z\,
										\Ld\big( (H_{\hat{p}}  \symdif E_\eps) \cap B \big) \\
				& \quad + \frac{1}{\eta} \int_{B_{1-\delta-2\eta}}
									\int_{\Rd} K(z)\va{z}\chi_{B \cap \co{B}_{1-\delta-\eta}}(x+\eps z)\de z\de x.					
		\end{split}\]
	By utilizing \eqref{eq:pointlim} we find
		\[
			\lim_{\epsilon\to0}\frac{1}{\eps}J_\eps^1(H_{\hat{p}} ; B\cap \co{B}_{1-\delta-2\eta})
			= J_0(H_{\hat{p}} ; B\cap \co{B}_{1-\delta-2\eta}),\]
	and thus, in the limit $\epsilon\to0^+$, we infer
		\[\begin{split}
			\sigma''_K(\hat{p}) & \leq
						\liminf_{\epsilon\to0^+}\frac{1}{\eps\omega_{d-1}}J_\eps^1(F_\eps;B) \\
						& \leq \frac{1}{\omega_{d-1}}J_0(H_{\hat{p}} ; B\cap \co{B}_{1-\delta-2\eta})
							+\liminf_{\epsilon\to0^+}\frac{1}{\eps\omega_{d-1}}J_\eps^1(E_\eps;B),
		\end{split}\]
	In view of the arbitrariness of the family $\Set{E_\eps}$,
	we achieve the conclusion by letting $\eta$ and $\delta$ vanish.
\end{proof}
	\section{$\Gamma$-lower limit}\label{sec:PerK-Gliminf}
		We conclude the proof of Theorem \ref{stm:PerK-Gconv}
by validating the $\Gamma$-lower limit inequality.
We remind that the method we apply was introduced in \cite{FM}
and that it was applied in \cite{ADM,AB2}
to establish $\Gamma$-convergence results that are close to ours.	

Our task is showing that,
for all measurable $E$ and for all families $\Set{E_\eps}$
such $E_\epsilon \to E$ in $L^1_\loc(\Rd)$ as $\epsilon\to0^+$,
we have
	\begin{equation}\label{eq:PerK-Gliminf-bis}
	J_0(E;\Omega) \leq \liminf_{\epsilon\to 0^+} \frac{1}{\eps} J_\epsilon^1(E_\epsilon;\Omega).
	\end{equation}

Firstly, we notice that \eqref{eq:PerK-Gliminf-bis} holds trivially
when the right-hand side is not finite,
thus we may suppose that
	\[
		\liminf_{\epsilon\to 0^+} \frac{1}{\eps} J_\epsilon^1(E_\epsilon;\Omega) <+\infty.
	\]
Besides, up to extraction of a subsequence,
we may assume that the the lower limit is a limit,
and hence the ratios $\Set{\eps^{-1}J_\eps^1(E_\eps;\Omega)}$ are uniformly bounded in $\epsilon$.
In view of the compactness criterion on page \pageref{stm:JK1-cpt},
the bound yields regularity for the limit set $E$,
which turns out to be a finite perimeter set in $\Omega$.

We introduce the densities
\[
f_\eps(x)  \coloneqq
\begin{cases}
\displaystyle{\frac{1}{2\epsilon}\int_{{\co{E}_\eps}\cap\Omega}K_\eps(y-x)\de y}
& 	\text{if } x\in \Omega\cap E_\eps, \\
\\
\displaystyle{\frac{1}{2\epsilon}\int_{E_\eps\cap\Omega}K_\eps(y-x)\de y}
& \text{if } x\in \Omega\cap\co{E}_\eps,
\end{cases}
\]
and the measures
\[
\mu_\eps \coloneqq  f_\eps\,\call{L}^d \llcorner \Omega.
\]

For any $\epsilon>0$,
the total variation  of $\mu_\epsilon$ in $\Omega$ equals $\eps^{-1} J_\eps^1(E_\eps;\Omega)$,
thus $\Set{\mu_\eps}$ is a family of positive measures
with uniformly bounded total variation in $\Omega$.
It follows from Theorem \ref{stm:riesz} and Banach-Alaoglu's Theorem
that there exists a finite positive measure $\mu$ on $\Omega$
such that (up to subsequences)
$\mu_\eps \weakstar \mu$ as $\epsilon \to 0^+$, and,
by the lower semicontinuity of the total variation,
\[
\norm{\mu} \leq \liminf_{\epsilon\to0} \frac{1}{\eps}J_\epsilon^1(E_\eps;\Omega).
\]
We now see that \eqref{eq:PerK-Gliminf-bis} holds true if
\begin{equation}\label{eq:tvineq}
\norm{\mu}\geq J_0(E;\Omega).
\end{equation}

In order to establish the previous inequality,
we recall that we know that $E$ is a finite perimeter set in $\Omega$,
so for a point $x\in\partial^\ast E \cap \Omega$
we can consider the Radon-Nikodym derivative
\[
\der{\mu}{\nu} (x) \coloneqq  \lim_{r\to 0} \frac{\mu(B(x,r))}{\omega_{d-1}r^{d-1}},
\]
where $\nu \coloneqq \call{H}^{d-1} \llcorner \partial^\ast E$.
Since the sequence $\mu_\epsilon$ weakly-$\ast$ converges to $\mu$,
by Proposition \ref{stm:foliations},
for all $r>0$ but at most a countable set $N$,
we have $\nu(B(x,r))=\lim_{\epsilon\to0}\nu_\eps(B(x,r))$.
Thus,
\begin{align*}
\der{\mu}{\nu}(x)
& = \lim_{r\to 0, r\notin N}\left[\lim_{\epsilon\to0}\frac{\mu_\eps(B(x,r))}{\omega_{d-1}r^{d-1}}\right] \\
& = \lim_{\ell\to+\infty}\frac{\mu_{\epsilon_\ell}(B(x,r_\ell))}{\omega_{d-1}r^{d-1}_\ell}
\end{align*}
for suitable subsequences $\Set{\eps_\ell}$ and $\Set{r_\ell}$ satisfying
\[
\lim_{\ell\to+\infty}r_\ell = \lim_{\ell\to+\infty}\frac{\eps_\ell}{r_\ell}=0.
\]
Rewriting the last equality explicitly, we have
\[\begin{split}
\der{\nu}{\mu}(x) = \limseq{\ell}\frac{1}{2 \omega_{d-1} \eps_\ell r^{d-1}_\ell}
&	\left[\int_{E_{\epsilon_\ell} \cap B(x,r_\ell) \cap \Omega}
\int_{\co{E_{\epsilon_\ell}}\cap\Omega}K_{\eps_\ell}(y-x)\de y\de x \right.\\ 
&	\left. +\int_{\co{E_{\epsilon_\ell}} \cap B(x,r_\ell)\cap \Omega}
\int_{E_{\eps_\ell}\cap\Omega}K_{\eps_\ell}(y-x)\de y \de x\right],
\end{split}\]
and hence
\[\begin{split}
\der{\mu}{\nu}(x)
\geq & \limsup_{\ell\to+\infty}\frac{1}{\omega_{d-1} \eps_\ell r^{d-1}_\ell}
J^1_{\epsilon_\ell}(E_{\epsilon_\ell};B(x,r_\ell)\cap \Omega) \\
= & \limsup_{\ell\to+\infty}\frac{1}{\omega_{d-1} \eps_\ell r^{d-1}_\ell}
J^1_{\epsilon_\ell}(E_{\epsilon_\ell};B(x,r_\ell)),
\end{split}\]
because $B(x,r_\ell)\subset \Omega$ when $\ell$ is large enough.
A change of variables yields
\[
\der{\mu}{\nu}(x)
\geq \limsup_{\ell\to+\infty}\frac{r_\ell}{\omega_{d-1} \eps_\ell}
J^1_{\frac{\epsilon_\ell}{r_\ell}}\left( \frac{E_{\epsilon_\ell}-x}{r_\ell}; B \right),
\]

In view of position \eqref{eq:sigma'K}, it is clear that
\[
\der{\mu}{\nu}(x) \geq \sigma'_K(\hat{n}(x)),
\]
which gets
\[
\norm{\mu} \geq 
\int_{\partial^\ast E \cap \Omega} \sigma'_K (\hat{n}(x)) \de \call{H}^{d-1}(x).
\]
The characterisation of $\sigma_K$ provided by Lemma \ref{stm:sigmaK} yields the conclusion.

\chapter{Convergence of evolutions by~rescaled~nonlocal~curvatures}\label{ch:nlc}
		In Chapter \ref{ch:judith} we proved that,
by a suitable rescaling, we can recover a local perimeter functional
as the $\Gamma$-limit of nonlocal perimeters.
We also saw in Section \ref{sec:nlc} that
the functional
	\begin{equation*}
	H_K(E,x) = -\lim_{r\to 0^+}\int_{\co{B(x,r)}}K(y-x)\left(\chi_E(y)-\chi_{\co{E}}(y)\right)\de y
	\end{equation*}
is the geometric first variation of the nonlocal perimeter $\PerK$.
Relying on the paper \cite{CP},
in the current chapter we show that
we can produce a localisation effect for the nonlocal curvature
by means of the same procedure that we applied to the nonlocal perimeter:
for any $\epsilon>0$ we set
	\begin{equation*}
		K_\epsilon(x) \coloneqq \frac{1}{\eps^d}K\left(\frac{x}{\eps}\right),
	\end{equation*}
we define
	\begin{equation}\label{eq:resccurv}
		H_\epsilon(E,x) \coloneqq H_{K_\eps}(E,x)
	\end{equation}
for a set $E\subset\Rd$ and $x\in\partial E$,
and then we let $\eps$ tend to $0$.
As we discuss in Section \ref{sec:K},
the limit of the family $\Set{\epsilon^{-1} H_\eps}$ agrees with the analysis in Chapter \ref{ch:judith},
in the sense that the limit curvature, which is local,
is the first variation of the surface energy
	\[
	J_0(E) = \int_{\partial^\ast E} \sigma_K(\hat{n}(x)) \de \Hdmu(x)
	\]
obtained as the $\Gamma$-limit of the rescaled nonlocal perimeters.

The study of the asymptotics of rescaled nonlocal curvatures is expounded
in Section \ref{sec:convcurv}.
Then, we tackle a second issue:
is the relationship between curvatures carried over to the solutions of the related geometric motions?
More precisely, we investigate whether the solutions of
	\[
		\partial_t x(t) \cdot \hat{n} = - \frac{1}{\eps}H_\epsilon(E(t), x(t)),
	\]
converge, as $\epsilon$ vanishes, to the solution of
	\[
		\partial_t x(t) \cdot \hat{n} = - H_0(E(t), x(t)),
	\]
where $t\mapsto E(t)$ is an evolution of sets in $\Rd$,
$\hat{n}$ is the outer unit normal
to $\partial E(t)$ at the point $x(t)$,
and $H_0$ is the first variation of $J_0$.

To answer, we firstly explain in Section \ref{sec:geomevol}
what kind of solutions we take into consideration,
and we recall the well-posedness results for local and nonlocal curvature flows
that are available in the framework of level set formulations.
By this method, the geometric evolutions are cast into degenerate parabolic equations,
to which viscosity theory provides an adequate definition of solution.
Then, we prove that solutions in this class to the motions by rescaled nonlocal curvatures
locally uniformly converge to the viscosity solution of the limit curvature flow.
This is the main result of the chapter,
which we establish in Section \ref{sec:convflows}
by resorting to the set-theoretic approach of geometric barriers,
formerly introduced by De Giorgi \cite{D2}.
We use barriers to transfer the geometric information
given by the convergence result of Section \ref{sec:convcurv}
to viscosity solutions.
In this respect, comparison results between barriers and level set flows \cites{BNo,CDNV} are crucial.
	\section{Preliminaries}\label{sec:K}
		Since we want to mimic the approach of Chapter \ref{ch:judith},
as an initial step we compute the candidate limit curvature,
that is, the first variation of the functional \eqref{eq:J0} on page \pageref{eq:J0}.
We premise the assumptions on the kernel $K$
that we use throughout the chapter.

As before, $K\colon \Rd \to [0,+\infty)$ is a measurable function
such that
	\begin{equation}\label{eq:K-even}
		K(y) = K(-y) \quad\text{for a.e. } y
	\end{equation}
and
	\begin{equation}\label{eq:W11}
		K\in W^{1,1}(\co{B(0,r)}) \quad\text{for all } r>0.
	\end{equation}
This allows both $K$ and $\nabla K$ to be singular in $0$ 
and, at the same time, implies convergence of their integrals at infinity.
Moreover, we assume  that 		
	\begin{equation}\label{eq:imbert2}
		K(y),\ \va{y} \va{\nabla K(y)} 
		\in L^1(Q_\lambda(e))\quad \text{for all } e \in \Sdmu \text{ and } \lambda>0,
	\end{equation}
where, for any  $e \in \Sdmu$ and $\lambda>0$,
	\[
		Q_\lambda(e) \coloneqq
			\Set{y\in \Rd : \va{y\cdot e}\leq \frac{\lambda}{2} \va{\pi_{e^\perp}(y)}^2 }.
	\]
(see again Figure \ref{fig:quadriche} on page \pageref{fig:quadriche}).
These requirements are in line with the ones in Section \ref{sec:nlc},
and, in particular, they ensure that
sets with $C^{1,1}$ boundary have finite nonlocal curvature.
 
To the purpose of proving a nonlocal-to-local converge result
that is uniform w.r.t. the point in which the curvatures are evaluated,
we need to describe the behaviour of $K$ in quantitative terms.
So, we suppose that
	\begin{equation}\label{eq:sing-orig}
	\lim_{r\to 0^+} r\int_{\co{B(0,r)}} K(y)\de y = 0.
	\end{equation}
Besides, we want our analysis to encompass anisotropic kernels.
Still, we have to admit some control
on the mass of $K$ in $Q_\lambda(e)$, uniformly in $e$.
We therefore assume that for all $\lambda>0$ there exists $a_\lambda>0$
such that for all $e \in \Sdmu$
	\begin{equation}\label{eq:massa-parabole}
	\int_{Q_\lambda(e)} K(y)\de y \leq a_\lambda.
	\end{equation}
In addition, we require that 
there exist $a_0,b_0>0$ such that for all $e \in \Sdmu$
	\begin{eqnarray}
	\limsup_{\lambda\to 0^+}
		\frac{1}{\lambda}\int_{Q_\lambda(e)} K(y)\de y \leq a_0, \label{eq:imbert3} \\
	\limsup_{\lambda\to 0^+}
		\frac{1}{\lambda}\int_{Q_\lambda(e)} \va{\nabla K(y)}\va{y}\de y \leq b_0 \label{eq:imbert4}.
\end{eqnarray}
We assume as well that for all $e \in \Sdmu$
	\begin{equation}\label{eq:imbert5}
		\lim_{\lambda\to+\infty} \frac{1}{\lambda}\int_{Q_\lambda(e)} K(y)\de y = 0.
	\end{equation}	

Lastly, we let $K$ be bounded above by a fractional  kernel away from the origin,
that is, 
	\begin{equation}\label{eq:frac-decay}
		K(y)\leq \frac{m}{\va{y}^{d+1+s}} \quad \text{if } y\in \co{B(0,1)}
	\end{equation}
for some $m>0$ and $s\in(0,1)$.
This hypothesis entails that, for all $\alpha < s$,
	\begin{equation}\label{eq:imbert1}
	\lim_{r\to +\infty} r^{1+\alpha}\int_{\co{B(0,r)}} K(y)\de y = 0.
	\end{equation}
Actually, most of the results in the sequel hold
even if the weaker assumption \eqref{eq:imbert1} replaces  \eqref{eq:frac-decay}.

Let us describe two classes of singular kernels
that satisfy \eqref{eq:K-even} -- \eqref{eq:frac-decay}.

\begin{exa}
If $K\colon \Rd \to [0,+\infty)$ is even and
there exist $m,\mu>0$ and $s,\sigma\in(0,1)$ such that 
	\[K(y), \va{y}\!\va{\nabla K(y)} \leq \frac{\mu}{\va{y}^{d+\sigma}}
		\quad \text{for all } y\in B(0,1)
	\]
and that
	\[K(y), \va{y}\!\va{\nabla K(y)} \leq \frac{m}{\va{y}^{d+1+s}}
		\quad \text{for all } y\in \co{B(0,1)},\]
then, $K$ fulfils all the previous requirements. 
 
Fractional kernels with exponential decay at infinity, i.e.
even functions $K\colon \Rd \to [0,+\infty)$  
for which there exist $m, \mu>0$ and $s\in(0,1)$ such that 
	\[
		K(y),  \va{y}\!\va{\nabla K(y)} \leq \frac{\mu e^{-m \va{y}}}{\va{y}^{d+s}}\quad  \forall y\in\Rd,
	\]
also fit in our setting.
\end{exa}

We now derive the candidate limit curvature starting
from the anisotropic surface energy $J_0$.
In this case, we suppose that $K$ fits in the framework
of the $\Gamma$-convergence statement
concerning perimeters, i.e. Theorem \ref{stm:JK-Gconv}.
As above, $p^\perp$ is the hyperplane of vectors  orthogonal to $p\in\Rdmz$
and $H_{\hat{p}} \coloneqq \Set{ x\in\Rd : x\cdot \hat{p} > 0 }$.

	\begin{lemma}[Derivatives of the anisotropy]\label{stm:deriv-sigmaK}
		Let $K\colon \Rd \to [0,+\infty)$ be a function
		such the assumptions \eqref{eq:K-even} -- \eqref{eq:imbert3} hold,
		and suppose that
			\[
				c_K \coloneqq \frac{1}{2} \int_{\Rd} K(z)\va{z} \de y <+\infty.
			\]
		 Let $\sigma_K$ be the norm in \eqref{eq:sigmaK}, i.e.
			\[
				\sigma_K(p) \coloneqq \frac{1}{2}\int_{\Rd} K(z)\va{z\cdot p}\de z,
				\qquad\text{for } p\in\Rd.
			\]
		Then, $\sigma_K\in C^2(\Rdmz)$ and
			\[
				\nabla \sigma_K (p) = \int_{H_{\hat{p}}} K(z) z \de z,
				\qquad
				\nabla^2 \sigma_K(p) = \frac{1}{\va{p}} \int_{\hat{p}^\perp} K(z) z \otimes z \de \Hdmu(z).
			\]
	\end{lemma}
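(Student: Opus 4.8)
The plan is to compute the first and second derivatives of the function
$p\mapsto \sigma_K(p)=\tfrac12\int_{\Rd}K(z)\va{z\cdot p}\,\de z$
by differentiating under the integral sign, the only subtlety being that the integrand
$\va{z\cdot p}$ fails to be differentiable on the hyperplane $z^\perp$.
First I would fix $p\in\Rdmz$ and, for $z$ with $z\cdot p\neq 0$, note that
$\nabla_p\va{z\cdot p}=\sign(z\cdot p)\,z$ and that this is bounded by $\va{z}$.
Since the set $\Set{z:z\cdot p=0}$ is Lebesgue-null and $\int_{\Rd}K(z)\va{z}\,\de z<+\infty$
by the hypothesis $c_K<+\infty$, a standard dominated-convergence argument for
differentiation under the integral (applied along each coordinate direction) gives
	\[
		\nabla\sigma_K(p)=\frac12\int_{\Rd}K(z)\,\sign(z\cdot p)\,z\,\de z.
	\]
Splitting the integral over $H_{\hat p}=\Set{z:z\cdot p>0}$ and its complement,
and using that $K$ is even so that $\int_{\co{H_{\hat p}}}K(z)(-z)\,\de z=\int_{H_{\hat p}}K(z)z\,\de z$
after the change $z\mapsto -z$, we get
$\nabla\sigma_K(p)=\int_{H_{\hat p}}K(z)\,z\,\de z$.
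This integral is finite because $H_{\hat p}\subset\co{B(0,1)}\cup B(0,1)$ and on $B(0,1)$
we only integrate the finite quantity $K(z)\va{z}$, while on $\co{B(0,1)}$ we use \eqref{eq:W11}
(or directly $c_K<+\infty$).

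For the second derivative the difficulty concentrates: $\nabla\sigma_K(p)=\int_{H_{\hat p}}K(z)z\,\de z$
depends on $p$ only through the domain $H_{\hat p}$, so differentiating produces a boundary
term supported on $\hat p^\perp$, and one must justify that the resulting singular (surface) integral
converges. The clean way is to write, for $q\in\Rdmz$ close to $p$,
	\[
		\nabla\sigma_K(q)-\nabla\sigma_K(p)
			=\int_{H_{\hat q}\setminus H_{\hat p}}K(z)z\,\de z
				-\int_{H_{\hat p}\setminus H_{\hat q}}K(z)z\,\de z,
	\]
parametrise the thin wedge $H_{\hat q}\triangle H_{\hat p}$ near $\hat p^\perp$ by
$z=w+t\hat p$ with $w\in\hat p^\perp$ and $\va{t}\le \theta(q)\va{w}$, where $\theta(q)\to 0$ is the
angle between $\hat q$ and $\hat p$, and apply the coarea formula in the $t$-variable.
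The contribution of the region $\va w<r$ is controlled by
$\int_{Q_\lambda(\hat p)}K(z)\,\de z$ with $\lambda$ comparable to $\theta(q)/r\to 0$, which tends to
$0$ by \eqref{eq:imbert3} (this is exactly the role of assumptions \eqref{eq:imbert2}--\eqref{eq:imbert3},
paralleling Proposition \ref{stm:imbert}); the contribution of $\va w\ge r$ is handled by
dominated convergence since there $K$ is integrable with weight $\va z$. Passing to the limit
$q\to p$ along a fixed direction and using that, to first order in $\theta$, the measure of the wedge
at height $w$ is $\theta\,\va{w}\,(\text{unit vector})$, one obtains
	\[
		\nabla^2\sigma_K(p)=\frac{1}{\va p}\int_{\hat p^\perp}K(z)\,z\otimes z\,\de\Hdmu(z),
	\]
the factor $1/\va p$ coming from $\nabla_p\hat p=\tfrac1{\va p}\pi_{\hat p^\perp}$.
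The finiteness of this surface integral is again guaranteed by \eqref{eq:imbert2}: on
$\hat p^\perp\cap B(0,1)$ one integrates $K(z)\va z^2\le K(z)\va z$ (for $\va z\le 1$), which over the
parabolic neighbourhood is finite by assumption, and on $\hat p^\perp\setminus B(0,1)$ the decay of $K$
makes $K(z)\va z^2$ integrable over the $(d-1)$-plane.

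Finally I would record the continuity statements. Both formulas exhibit $\nabla\sigma_K$ and
$\nabla^2\sigma_K$ as integrals depending continuously on $\hat p$ (the domains $H_{\hat p}$ and the
surfaces $\hat p^\perp$ vary continuously, and the uniform bounds \eqref{eq:massa-parabole},
\eqref{eq:imbert3} provide the equi-integrability needed to pass limits through the integral), so
$\sigma_K\in C^2(\Rdmz)$. I expect the main obstacle to be the rigorous justification of the
differentiation step for $\nabla^2\sigma_K$, i.e. showing that the wedge integral converges to the
claimed surface integral and that the singular part vanishes; this is precisely where the parabolic
integrability hypotheses on $K$ enter, and the estimate mirrors the one used in the proof of
Proposition \ref{stm:imbert}.
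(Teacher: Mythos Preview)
Your computation of $\nabla\sigma_K$ is the same as the paper's, and your overall strategy for $\nabla^2\sigma_K$---writing $\nabla\sigma_K(q)-\nabla\sigma_K(p)$ as an integral over the wedge $H_{\hat q}\symdif H_{\hat p}$ and parametrising by $z=w+t\hat p$---is also exactly what the paper does. The difference is in how the limit is extracted, and your execution has a real gap.

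The wedge you describe is $\{\,|t|\lesssim\theta\,|w|\,\}$, which is \emph{conical} in $|w|$, whereas $Q_\lambda(\hat p)=\{|t|\le \tfrac\lambda2|w|^2\}$ is parabolic. For the wedge to sit inside $Q_\lambda$ you would need $\lambda\ge 2\theta/|w|$, so on the region $|w|<r$ no finite $\lambda$ works, and in particular your bound ``$\lambda\sim\theta/r$'' cannot control the contribution near the origin. Even if one reads your splitting the other way round (put the far part $|w|\ge r$ into $Q_{2\theta/r}$), after dividing by $\theta$ the bound from \eqref{eq:imbert3} gives a constant $\sim a_0/r$, which does not vanish. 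A second related issue: you claim finiteness of the surface integral $\int_{\hat p^\perp}K(z)\,z\otimes z\,\de\Hdmu$ directly from \eqref{eq:imbert2}, but \eqref{eq:imbert2} is a \emph{volume} bound on $K$ over the parabolic region, and it does not immediately give a bound on the trace of $K$ on the hyperplane.

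What the paper actually does is use the $W^{1,1}$ hypothesis \eqref{eq:W11}, which you never invoke. Via the absolute-continuity-on-lines characterisation of Sobolev functions (Theorem~\ref{stm:acl}), for $\Hdmu$-a.e.\ $w\in\hat p^\perp$ the map $t\mapsto K(w+t\hat p)$ is continuous, so the Mean Value Theorem applied to the inner integral
\[
\int_{0}^{h w_j/(|p|+h\hat p_j)} K(w+t\hat p)(w_i+t\hat p_i)\,\de t
\]
replaces it by $\dfrac{h w_j}{|p|+h\hat p_j}\,K(w+t_h\hat p)(w_i+t_h\hat p_i)$. Dividing by $h$ and passing to the limit then requires only dominated convergence on the hyperplane, with dominating function $K(w)|w|^2$; the finiteness of $\int_{\hat p^\perp}K(w)|w|^2\,\de\Hdmu$ is established separately in Lemma~\ref{stm:MK} (again via ACL and \eqref{eq:imbert3}). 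Continuity of the Hessian is then handled by rotating the hyperplane, much as you sketched. So your plan would become correct if you replaced the parabolic-region estimate by the MVT step enabled by \eqref{eq:W11}, and appealed to Lemma~\ref{stm:MK} for the surface bound.
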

	
	The calculation of the gradient is immediate;
	as for the Hessian, we preliminarily recall a result about absolute continuity on lines
	of Sobolev functions.
	We refer to the monograph \cite{HKS} for the details.
	
	\begin{thm}\label{stm:acl}
		Let $\Omega\subset\Rd$ be an open set.
		For any $p\in[1,+\infty)$,
		$u: \Omega\to \R$ belongs to the Sobolev space $W^{1,p}(\Omega)$
		if and only if it belongs to $L^p(\Omega)$
		and the following property holds:
		for any $e\in\Sdmu$
		there exists $N_e\subset e^\perp$
		such that $\Hdmu(N_e)=0$
		and for all $z\in e^\perp \cap \co{N_e}$
		the function $I\ni t\mapsto u(z+te)$ is absolutely continuous
		on any compact interval $I$
		such that $z+te\in \Omega$ when $t\in I$.
		
		Further, for any $e\in\Sdmu$
		and a.e. $y\in\Omega$,
		the classical directional derivative $\partial_e u$
		exists and it coincides with $\nabla u (y)\cdot e$.
	\end{thm}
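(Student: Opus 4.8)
The statement is the classical ACL (absolutely continuous on lines) characterisation of Sobolev functions, and the plan is to prove the two implications of the equivalence separately and then deduce the pointwise identification of the directional derivative. In what follows it should be understood, as in \cite{HKS}, that the $L^p$-integrability of the a.e.-defined partial derivatives is part of the characterisation on the ``if'' side; this is the substantive point that upgrades $W^{1,p}_\loc$ to $W^{1,p}$.

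For the implication $u\in W^{1,p}(\Omega)\Rightarrow$ ACL property, the plan is a mollification-plus-Fubini argument. Fix $e\in\Sdmu$; after an orthogonal change of variables we may assume $e=e_d$, so we work with lines parallel to the last coordinate axis and identify $e^\perp$ with $\R^{d-1}$. Let $\rho_\delta$ be a standard mollifier and set $u_\delta:=u*\rho_\delta$, which is smooth on $\Omega_\delta:=\{x\in\Omega:\dist(x,\co{\Omega})>\delta\}$ with $u_\delta\to u$ and $\nabla u_\delta\to\nabla u$ in $L^p_\loc(\Omega)$. Choosing a subsequence $\delta_k\to0^+$ and applying Fubini's theorem to $|u_{\delta_k}-u|^p+|\partial_d u_{\delta_k}-\partial_d u|^p$, one gets a set $N_e\subset e^\perp$ with $\Hdmu(N_e)=0$ such that for every $z\in e^\perp\cap\co{N_e}$ the one-variable functions $t\mapsto u_{\delta_k}(z+te)$ converge in $W^{1,p}(I)$ to $t\mapsto u(z+te)$ on each compact interval $I$ with $z+te\in\Omega$ for $t\in I$. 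Since one-dimensional $W^{1,p}$ functions have an absolutely continuous representative (fundamental theorem of calculus), the limit is AC on $I$; this yields the claim, once $u$ is replaced by the measurable representative obtained as the a.e.-pointwise limit of the $u_{\delta_k}$, which agrees with $u$ a.e. on $\Omega$.

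For the converse, suppose $u\in L^p(\Omega)$ has the ACL property in the $d$ coordinate directions (a special case of the hypothesis) and that its classical partials $\partial_i u$, which then exist a.e., lie in $L^p(\Omega)$. For $\phi\in\Cuc(\Omega)$ and each $i$, write $\int_\Omega u\,\partial_i\phi$ as an iterated integral over lines parallel to $e_i$; on each such line both $u$ and $\phi$ are absolutely continuous on the compact interval supporting $\phi$, so one-dimensional integration by parts applies with vanishing boundary terms, giving $\int_\Omega u\,\partial_i\phi=-\int_\Omega\partial_i u\,\phi$. Hence the classical gradient $\nabla u=(\partial_1 u,\dots,\partial_d u)\in L^p(\Omega;\Rd)$ is the distributional gradient $\D u$, i.e.\ $u\in W^{1,p}(\Omega)$. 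For the final assertion, once $u\in W^{1,p}(\Omega)$ the ACL property is available for every $e\in\Sdmu$ (by rotation); thus for $\Hdmu$-a.e.\ slice $t\mapsto u(z+te)$ is AC, hence differentiable for a.e.\ $t$, so by Fubini $\partial_e u$ exists for a.e.\ $y\in\Omega$, is locally integrable, and — by the same integration-by-parts identity applied along direction $e$ — is the distributional directional derivative of $u$ along $e$; since $\D u=\nabla u\,\Ld$ for Sobolev functions, uniqueness of distributional derivatives forces $\partial_e u(y)=\nabla u(y)\cdot e$ a.e.

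The delicate step is the forward implication: one must ensure that the slice-wise absolutely continuous functions produced by the mollified sequence assemble into a single measurable function on $\Omega$ equal to $u$ almost everywhere, and that the exceptional set $N_e$ is genuinely $\Hdmu$-null. This is precisely what the mollification-plus-Fubini scheme is built to deliver, but it requires choosing the subsequence $\delta_k$ uniformly and invoking Fubini with care; the remaining steps are routine. Full details may be found in \cite{HKS}.
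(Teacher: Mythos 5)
Your proposal is correct, and it is the standard ACL argument: mollification plus Fubini for the forward implication, line-by-line integration by parts for the converse, and identification of $\partial_e u$ with $\nabla u\cdot e$ via uniqueness of distributional derivatives. Note that the thesis does not prove this theorem at all: it is quoted as a known result with a pointer to \cite{HKS}, so there is no in-paper argument to compare against; your sketch is essentially the proof one finds in that reference. One point you handle well, and which is genuinely needed, is the reading of the ``if'' direction: as literally stated the equivalence would fail without requiring the a.e.\ classical partials to lie in $L^p$ (e.g.\ $t\mapsto t\sin(1/t^2)$ on $(0,1)$ is bounded and absolutely continuous on every compact subinterval, yet not in $W^{1,1}$), so your explicit insertion of that hypothesis is the right interpretation rather than an optional remark. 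The remaining gaps you leave (choice of a fast subsequence $\delta_k$ and an exhaustion of $\Omega$ so that a single $\Hdmu$-null set $N_e$ works for all compact intervals, passing to the representative $\bar u=\lim_k u_{\delta_k}$, and the finitely-many-components bookkeeping when integrating by parts along a line whose intersection with $\Omega$ is disconnected) are routine and correctly flagged as such.
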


In the light of the previous characterisation,
when \eqref{eq:W11} holds, the kernel $K$ is absolutely continuous on lines
in $\co{B(0,r)}$ for all $r>0$. 
We use this property to establish the following: 

\begin{lemma}\label{stm:MK}
	For all $e\in\Sdmu$,
		\begin{equation}\label{eq:Kz2}
			\int_{e^\perp}K(z)\va{z}^2\de\Hdmu(z)\leq a_0,
		\end{equation}
	$a_0$ being as in \eqref{eq:imbert3},
	and there holds
		\begin{equation}\label{eq:decad-Kz2}
			\lim_{r\to+\infty} r^{\beta}\int_{e^\perp \cap \co{B(0,r)}} K(z)\va{z}^2\de\Hdmu(z)=0
			\quad \text{for all } \beta<s.
		\end{equation}
	\end{lemma}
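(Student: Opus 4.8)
\textbf{Proof plan for Lemma \ref{stm:MK}.}
The plan is to reduce the surface integral over the hyperplane $e^\perp$ to a solid integral over a parabolic neighbourhood $Q_\lambda(e)$ of $e^\perp$, where we control the mass of $K$ by assumption \eqref{eq:imbert3}. The heuristic is that $Q_\lambda(e)$ is, to leading order in $\lambda$, a slab of thickness proportional to $\lambda\va{z}^2$ around each point $z\in e^\perp$, so integrating $K$ over $Q_\lambda(e)$ and dividing by $\lambda$ should recover $\int_{e^\perp}K(z)\va{z}^2\,\de\Hdmu(z)$ in the limit $\lambda\to 0^+$. The hypothesis \eqref{eq:W11}, via the absolute-continuity-on-lines characterisation in Theorem \ref{stm:acl}, is what makes this limiting procedure rigorous: it lets us write, for a.e. line parallel to $e$, the value $K(z-te)$ in terms of $K(z)$ plus an error controlled by $\int \va{\nabla K}$ along that line, so that the slab average converges to the integrand on $e^\perp$.

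Concretely, first I would fix $e\in\Sdmu$ and $\lambda>0$ and slice $Q_\lambda(e)$ by the lines $t\mapsto z-te$ with $z$ ranging over $e^\perp$: by Fubini and the definition of $Q_\lambda(e)$,
\[
	\int_{Q_\lambda(e)} K(y)\,\de y
		= \int_{e^\perp}\int_{-\frac{\lambda}{2}\va{z}^2}^{\frac{\lambda}{2}\va{z}^2}
			K(z-te)\,\de t\,\de\Hdmu(z).
\]
Using Theorem \ref{stm:acl}, for $\Hdmu$-a.e. $z\in e^\perp$ (with $z\neq 0$) the map $t\mapsto K(z-te)$ is absolutely continuous near $t=0$, so $K(z-te)\ge K(z)-\int_{-\frac{\lambda}{2}\va{z}^2}^{\frac{\lambda}{2}\va{z}^2}\va{\nabla K(z-se)}\,\de s$ for $\va{t}\le \tfrac{\lambda}{2}\va{z}^2$. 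Multiplying by the length $\lambda\va{z}^2$ of the $t$-interval and integrating over $e^\perp$ gives
\[
	\int_{Q_\lambda(e)} K(y)\,\de y
		\ge \lambda\int_{e^\perp} K(z)\va{z}^2\,\de\Hdmu(z)
			- \int_{e^\perp}\lambda\va{z}^2
				\int_{-\frac{\lambda}{2}\va{z}^2}^{\frac{\lambda}{2}\va{z}^2}\va{\nabla K(z-se)}\,\de s\,\de\Hdmu(z).
\]
The subtracted term is bounded by $\lambda\int_{Q_\lambda(e)}\va{\nabla K(y)}\,\va{y}\,\de y$ (up to a harmless dimensional constant coming from $\va{z}\le\va{y}$ on the relevant region and reversing the slicing), which by \eqref{eq:imbert4} is $o(\lambda)$ as $\lambda\to0^+$; meanwhile the left-hand side divided by $\lambda$ is $\le a_0+o(1)$ by \eqref{eq:imbert3}. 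Dividing through by $\lambda$ and letting $\lambda\to 0^+$ yields \eqref{eq:Kz2}. (One should first run the argument with $e^\perp$ replaced by $e^\perp\cap\{\,r_1\le\va{z}\le r_2\,\}$ to stay away from the singularity at the origin and from infinity, then let $r_1\to0^+$, $r_2\to+\infty$ by monotone convergence; the finiteness of the limiting integral is exactly what \eqref{eq:Kz2} asserts.)

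For the decay statement \eqref{eq:decad-Kz2}, I would localise the same slicing identity to $Q_\lambda(e)\cap\co{B(0,r)}$. Fixing $\beta<s$ and using \eqref{eq:frac-decay} to dominate $K(y)\le m\va{y}^{-d-1-s}$ on $\co{B(0,1)}$, the region $Q_\lambda(e)\cap\co{B(0,r)}$ contributes, after the slicing-and-averaging argument above, a bound on $r^\beta\int_{e^\perp\cap\co{B(0,r)}}K(z)\va{z}^2\,\de\Hdmu(z)$ of the same shape as $r^\beta$ times $\tfrac1\lambda\int_{Q_\lambda(e)\cap\co{B(0,r)}}K(y)\,\de y$ plus the gradient error term. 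Both pieces are controlled: by \eqref{eq:frac-decay} one gets $\int_{e^\perp\cap\co{B(0,r)}}\va{z}^{-d-1-s}\va{z}^2\,\de\Hdmu(z)\asymp r^{-s}$ once $r\ge 1$, so $r^\beta$ times this tends to $0$ since $\beta<s$; the $\va{\nabla K}$-part is handled identically using the $\va{y}\,\va{\nabla K(y)}\le m\va{y}^{-d-1-s}$ bound, which is part of the standing hypotheses in the examples but in general follows from \eqref{eq:frac-decay} together with \eqref{eq:imbert4}'s integrability. I expect the main technical obstacle to be bookkeeping the parabolic geometry carefully enough that the slab-volume factor $\lambda\va{z}^2$ and the comparison $\va{z}\le\va{y}\le\va{z}\sqrt{1+\lambda^2\va{z}^2/4}$ on $Q_\lambda(e)$ are matched with the right constants so that the $\lambda\to0^+$ limit is clean; everything else is an application of Theorem \ref{stm:acl} and the hypotheses \eqref{eq:imbert3}, \eqref{eq:imbert4}, \eqref{eq:frac-decay}.
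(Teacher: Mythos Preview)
Your approach is correct (with the annular truncation you mention, which is indeed needed to make the error term finite), but it is more elaborate than the paper's, and in particular it invokes an extra hypothesis that the paper does not use for \eqref{eq:Kz2}.

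The paper's argument for \eqref{eq:Kz2} runs as follows. From Theorem \ref{stm:acl} one only extracts that for $\Hdmu$-a.e.\ $z\in e^\perp$ the map $t\mapsto K(z+te)$ is continuous at $t=0$; this alone gives the pointwise convergence
\[
	\lim_{\lambda\to 0^+}\frac{1}{\lambda}\int_{-\frac{\lambda}{2}\va{z}^2}^{\frac{\lambda}{2}\va{z}^2} K(z+te)\,\de t
		= K(z)\va{z}^2.
\]
Since the integrands are nonnegative, Fatou's lemma and your slicing identity yield
\[
	\int_{e^\perp} K(z)\va{z}^2\,\de\Hdmu(z)
		\leq \liminf_{\lambda\to 0^+}\frac{1}{\lambda}\int_{Q_\lambda(e)} K(y)\,\de y
		\leq a_0,
\]
the last inequality being exactly \eqref{eq:imbert3}. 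No quantitative error estimate is needed, and \eqref{eq:imbert4} plays no role here. Your route, by contrast, buys an explicit bound on the defect between the slab average and the limiting integrand, at the price of importing the gradient hypothesis \eqref{eq:imbert4} and the truncation bookkeeping; this is sound but unnecessary for the statement at hand.

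For \eqref{eq:decad-Kz2} the paper dispenses with slicing altogether: once $r\geq 1$, the pointwise bound \eqref{eq:frac-decay} gives directly
\[
	\int_{e^\perp\cap\co{B(0,r)}} K(z)\va{z}^2\,\de\Hdmu(z)
		\leq m\int_{e^\perp\cap\co{B(0,r)}} \va{z}^{1-d-s}\,\de\Hdmu(z)
		= \frac{m\,\call{H}^{d-2}(\mathbb{S}^{d-2})}{s}\,r^{-s},
\]
and multiplying by $r^\beta$ with $\beta<s$ finishes. Your localised-slicing version would work too, but the detour through $Q_\lambda(e)\cap\co{B(0,r)}$ and the gradient error is superfluous once \eqref{eq:Kz2} is known and \eqref{eq:frac-decay} is available pointwise.
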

	\begin{proof}
	By Theorem \ref{stm:acl},	
	for any $e\in\Sdmu$ and any $\ell\in\N$,
	there exists a $\Hdmu$-negligible
	$N_\ell \subset \Set{z\in e^\perp : \ell \va{z}\geq1}$
	such that, for all $z\in e^\perp\cap \co{N_\ell}$ with $\ell\va{z}\geq1$,
	the function $t\mapsto K(z+te)$ is absolutely continuous
	when $t$  belongs to closed, bounded intervals. 
	By the arbitrariness of $\ell\in\N$,
	we conclude that for $\Hdmu$-a.e. $z\in e^\perp$,
	$[a,b] \ni t\mapsto K(z+te)$ is absolutely continuous for any $a,b\in\R$.
	
	Hence, by the Mean Value Theorem,
	for $\Hdmu$-almost every $z\in e^\perp$
	we find
		\begin{equation}\label{eq:mean-val}
			\lim_{\lambda\to0^+} \frac{1}{\lambda}
				\int_{-\frac{\lambda}{2}\va{z}^2}^{\frac{\lambda}{2}\va{z}^2}
					K(z+te)\de t = K(z)\va{z}^2.
		\end{equation}
	By definition of $Q_\lambda(e)$,
		\[
		\frac{1}{\lambda}\int_{e^\perp}
			\int_{-\frac{\lambda}{2}\va{z}^2}^{\frac{\lambda}{2}\va{z}^2}
				K(z+te)\de t\de\Hdmu(z) =
			\frac{1}{\lambda}\int_{Q_\lambda(e)} K(y) \de y,			
		\]
	and the right hand side is finite for any $\lambda>0$,
	thanks to \eqref{eq:imbert2}.
	By appealing to \eqref{eq:imbert3} and \eqref{eq:mean-val} ,
	we can take the limit $\lambda\to 0^+$
	on both sides of the equality, and this yields \eqref{eq:Kz2}.
	
	Estimate \eqref{eq:decad-Kz2} is an easy consequence of assumption \eqref{eq:frac-decay}.
	\end{proof}
	
Now we are ready to prove Lemma \ref{stm:deriv-sigmaK}.
	\begin{proof}
		By Lebesgue's Theorem, we see that $\sigma_K \in C^1(\Rdmz)$, with
			\[
				\nabla \sigma_K (p) = \frac{1}{2} \int_{\Rd} K(z) z \frac{z\cdot p}{\va{z\cdot p}} \de z
												= \int_{H_{\hat{p}}} K(z) z \de z.
			\]
		
		Let now $i,j\in \Set{1,\dots, d}$ and fix $p\in\Rdmz$.
		We have
			\[
			 \partial_{i,j}^2 \sigma_K(p) \coloneqq
			 		\lim_{h\to 0^+} \frac{1}{h}
			 			\left[ \int_{H_{\widehat{p+h e_j}}} K(z)z_i \de z - \int_{H_{\hat{p}}} K(z)z_i \de z\right],			
			\]
		where $z_i \coloneqq z \cdot e_i$ for all $z\in \Rd$ and all $i$.
		We rewrite the term between square brackets as
			\[
			 	\int_{\hat{p}^\perp} \int_{\frac{h z_j}{\va{p} + h \hat{p}_j}}^{0}
			 		K(z + t\hat{p})(z_i + t \hat{p}_i) \de t \de \Hdmu(z),
			\]
		and, reasoning as in the proof of the previous Lemma,
		we apply the Mean Value Theorem to get
			\[
				\partial_{i,j}^2 \sigma_K(p) \coloneqq
				\lim_{h\to 0^+} \frac{1}{\va{p} + h \hat{p}_j} \int_{\hat{p}^\perp} 
						K(z + t_h\hat{p})(z_i + t_h \hat{p}_i) z_j \de \Hdmu(z),
			\]
		with $t_h$ between $0$ and $h z_j/(\va{p} + h \hat{p}_j)$.
		Thanks to \eqref{eq:Kz2},
		we can take the limit $h \to 0^+$ and
		we conclude the computation of the Hessian.
		Next, we show that the latter is continuous.
		
		For all $e\in\Sdmu$, set 
			\[
				M_K(e) \coloneqq \int_{e^\perp} K(z) z \otimes z \de \Hdmu(z).
			\]
		Fix a unit vector $e$ and consider a sequence of rotations $\Set{R_\ell}$
		such that $R_\ell\to \id$. We have
			\[\begin{split}
				\va{M_K(R_\ell e)-M_K(e)} & =
					\va{\int_{e^\perp} K(R_\ell z)\, R_\ell z\otimes R_\ell z\de\Hdmu
						- \int_{e^\perp} K(z) z\otimes z\de\Hdmu} \\
				& \leq \va{\int_{e^\perp} K(R_\ell z)\, [R_\ell z\otimes R_\ell z - z\otimes z]\de\Hdmu} \\
				& \quad +\va{\int_{e^\perp} [K(R_\ell z)-K(z)]z\otimes z\de\Hdmu}.	
			\end{split}\]
		Since $K\in L^1(\co{B(0,r)})$ for all $r>0$, it holds
				\[
				\limseq{\ell}\norm{K\circ R_\ell - K}_{L^1(\co{B(0,r)})}=0,
				\]
		whence $K(R_\ell z)\to K(z)$ for $\Hdmu$-a.e. $z\in e^\perp$.
		This, together with \eqref{eq:Kz2},
		gets that the upper bound on $\va{M_K(R_\ell e)-M_K(e)}$
		vanishes as $\ell \to +\infty$.
		Therefore, $\nabla^2 \sigma_K(p) = \va{p}^{-1} M_K(\hat{p})$ is continuous on $\Rdmz$.
	\end{proof}

Let $\Sigma$ a $C^2$ hypersurface in $\Rd$.
For $x\in \Rd$,
we define the following \emph{anisotropic mean curvature}\index{curvature!anisotropic mean --}
functional:
	\[
		H_0(\Sigma,x) \coloneqq - \div \big(\nabla \sigma_K (\hat{n}(x))\big),
	\]
with $\sigma_K$ as above and
$\hat{n}(x)$ is the outer unit normal to $\Sigma$ at $x$.
Note that this is the geometric first variation of the surface energy $J_0$ in \eqref{eq:J0}.
Since $\Sigma$ is smooth,
we can find an open neighbourhood of $x$ and $\phi\in C^2(\Rd)$ such that
$\Sigma \cap U = \Set{y\in U :\phi(y) = 0}$,
$\nabla\phi(x)\neq 0$.
Let us admit that $\nabla\phi(x) / \va{\nabla\phi(x)}$ is the outer normal at $x$.
Then, we find
	\begin{equation}
		H_0(\Sigma,x) =
			-\frac{1}{\va{\nabla \phi(x)}} \mathrm{tr}\left(M_K (\widehat{\nabla \phi(x)}) \nabla^2 \phi(x)\right),
	\end{equation}
where $\mathrm{tr}$ is the trace operator and
	\begin{equation}
	M_K(e) \coloneqq \int_{e^\perp}K(z)z\otimes z\de \Hdmu(z) \qquad \text{for all } e \in \Sdmu 
	\end{equation}
because, if $p\in \Rdmz$, by Lemma \ref{stm:deriv-sigmaK},
$M_K(\hat{p}) = \nabla^2\sigma_K (\hat{p})$.

A third, useful expression of the local curvature functional is
	\begin{equation}\label{eq:H0}
	H_0(\Sigma,x) 
		=  - \frac{1}{\va{\nabla\phi(x)}}\int_{\nabla\phi(x)^\perp}
							K(z)\nabla^2 \phi(x)z\cdot 	z\de\Hdmu(z). 
	\end{equation}

\begin{rmk}		
	Let $\Sigma$ be a smooth hypersurface 
	whose outer unit normal at a given point $x$ is $\hat n$,
	and consider the map $T(y)\coloneqq Ry + z$,
	where $R$ is an orthogonal matrix and $z\in\Rd$.
	Then, using  \eqref{eq:H0} it is easy to check that it  holds
	\begin{equation}\label{eq:l-cng-var}
	H_0(\Sigma,x)=\tilde H_0(T(\Sigma),T(x)),
	\end{equation}
	where $\tilde H_0$ is the anisotropic mean curvature functional
	associated with the kernel $\tilde K\coloneqq K \circ R^\mathrm{t}$.
	
	To prove our claim, we observe that
	if $\Sigma = \Set{y\in\Rd : \phi(y)=0}$
	for some smooth $\phi\colon \Rd\to\R$,
	then $T(\Sigma)=\Set{y\in\Rd : \psi(y)=0}$
	with $\psi(y)\coloneqq \phi( R^\trasp(y-x))$.
	We have
	\[\nabla\psi(T(y))=R\nabla\phi(y)
	\quad\text{and}\quad
	\nabla^2\psi(T(y))=R\nabla^2\phi(y)R^\trasp,
	\]
	and, therefore,
	\[\begin{split}
	\tilde H_0(T(\Sigma),T(x)) = &
	-\frac{1}{\va{R\nabla\phi(x)}}\int_{R(\hat n^\perp)}
	\tilde K(z)\left(R\nabla^2\phi(x)R^\trasp\right)z\cdot z\de\Hdmu(z) \\
	= & -\frac{1}{\va{\nabla\phi(x)}}\int_{\hat n^\perp}
	K(z)\nabla^2\phi(x)z\cdot z\de\Hdmu(z).
	\end{split}\]
\end{rmk}

\begin{rmk}[Connection with standard mean curvature] \label{remarkradial} 
	When $K$ is  radial, that is $K(x)=\bar K(\va{x})$
	for some $\bar K\colon [0,+\infty)\to [0,+\infty)$, then 
	$H_0$ coincides with a multiple of the standard mean curvature.	
	
	To see this, let $\Sigma$ be a $C^2$ hypersurface
	such that $0\in\Sigma$ and
	$\Sigma\cap U = \Set{y\in U : \phi(y)=0}$
	for some  neighbourhood $U$ of $0$ and
	some smooth function $\phi\colon U\to \R$.
	We suppose also that $\nabla\phi(0)\neq 0$ and
	that the outer unit normal to $\Sigma$ at $0$ is $e_d$.
	We recall the expression
	of the  mean curvature $H$ of $\Sigma$ at $0$:
	\[ H(\Sigma,0) =
	-\frac{1}{\omega_{d-1}\va{\nabla\phi(0)}}
	\int_{\mathbb{S}^{d-2}} \nabla^2\phi(0)e\cdot e\, \de\call{H}^{d-2}(e),
	\]
	with $\omega_{d-1}$ the $(d-1)$-dimensional Lebesgue measure
	of the unit ball in $\R^{d-1}$.
	
	If  $K(x)=\bar K(\va{x})$, formula \eqref{eq:Kz2} gets
		\[ c_K = \int_0^{+\infty} r^d \bar K(r)\de r <+\infty,\]
	and, consequently, we have
		\[
			H_0(\Sigma,0) = -
			\frac{c_K}{\va{\nabla\phi(0)}}
					\int_{\mathbb{S}^{d-2}}\nabla\phi(0)e\cdot e\,\de\call{H}^{d-2}(e) 
				=  \omega_{d-1}\,c_K H(\Sigma,0).
	\]
\end{rmk}
	\section{Convergence of the rescaled nonlocal curvatures}\label{sec:convcurv}
		This section is devoted to the proof
of the uniform convergence of the rescaled nonlocal curvatures in \eqref{eq:resccurv}
to the local functional $H_0$.
This is accomplished in two steps:
first, in Proposition \ref{stm:conv-rate},
we establish pointwise convergence of the curvatures,
with a precise estimate of the error, 
and then, in Theorem \ref{stm:main1},
we show that it is possible to make the error estimate uniform
when the hypersurfaces at stake are compact.

Let us introduce some notation.
For $e\in\Sdmu$, $x\in\Rd$ and $\delta>0$,
we let
	\begin{equation*}\label{cilindro}
		C_e(x,\delta)\coloneqq
			\Set{y\in\Rd : y = x+z+te,\text{ with } z\in e^\perp\cap B(0,\delta),t\in(-\delta,\delta)}.
	\end{equation*}
When $E\subset \Rd$ is a set with boundary $\Sigma \coloneqq \partial E$ of class $C^2$,
then for all $x\in \Sigma$, there exist a open neighbourhood $U$ of $x$
and  $\phi\in C^2(\Rd)$ such that
	\begin{equation*}
	\Sigma\cap U = \{y \in U : \phi(y)=0\},
	\quad
	E \cap U =\{y \in U : \phi(y)>0\},
	\end{equation*}
and $\nabla\phi(y)\neq 0$ for all $y\in\Sigma\cap U$.
We use the symbol $\hat{n}\coloneqq\hat{n}(x)$
for the \emph{outer} unit normal to $\Sigma$ at $x$.
By the Implicit Function Theorem,
there exist $\bar\delta\coloneqq\bar{\delta}(x)>0$
and $f\colon \hat{n}^\perp\cap B(0,\bar\delta) \to (-\bar{\delta},\bar{\delta})$
such that
	\begin{gather}
		\Sigma\cap C_{\hat n}(x,\bar\delta)
		= \Set{y=x+z-f(z)\hat n: z\in \hat n^\perp \cap B(0,\bar\delta)},
			\label{eq:graph}  \\ 
		E\cap C_{\hat n}(x,\bar\delta) 
			= \Set{ y=x+z-t\hat n: z\in \hat n^\perp \cap B(0,\bar\delta), t\in(f(z),\bar{\delta})}, 
			\label{eq:epigraph}
	\end{gather}
and $\inf_{y\in C_{\hat{n}}(x,\bar\delta)}\va{\nabla\phi(y)}>0$.

\begin{prop}[Rate of convergence and pointwise limit]\label{stm:conv-rate}
Let $E\subset \Rd$ be a set whose topological boundary $\Sigma$ is of class  $C^2$.
Let us fix $x\in \Sigma$ and let $\bar{\delta}$ and $f$ be as above.
Let also $s\in (0,1)$ be the exponent in \eqref{eq:frac-decay}.
Then, for all $\alpha,\beta\in(0,s)$, there exist $q>1$ and $\bar{\eps}\in(0,1)$
such that $q\bar{\eps}\leq\bar{\delta}$ and that,
for all $\epsilon\in(0,\bar{\eps})$ and all $\delta \in(q\eps,\bar{\delta})$,
it holds
		\begin{equation}\label{eq:conv-rate} 
			\va{ \frac{1}{\eps}H_\eps(E,x) - H_0(\Sigma,x)} \leq \mathrm{Err}(\epsilon,\delta),
		\end{equation}
where		
		\begin{equation}
		\label{eq:error} 
			\mathrm{Err}(\epsilon,\delta)\coloneqq \,
				\frac{1}{\delta}\left(\frac{\eps}{\delta}\right)^{\alpha}
				+ (b_0+1)\norm{\nabla^2f}^2_{L^\infty(D)}\delta
				+ a_0\, O_f(\delta)
				+ \va{\nabla^2 f(0)} \left(\frac{\eps}{\delta}\right)^\beta,
		\end{equation}
$D \coloneqq \hat n^\perp\cap B(0,\bar\delta)$, and
		\begin{equation}\label{eq:omegaf}
		O_f(\delta)\coloneqq \sup_{z\in B(0,\delta)}\va{\nabla^2 f(z)-\nabla^2 f(0)}.
		\end{equation}
	
Furthermore, for all $\gamma\in \big(0, \alpha/(\alpha+1)\big)$,
it is possible to choose $\delta = q\epsilon^\gamma$ in \eqref{eq:conv-rate},
so that
	\[
		\lim_{\epsilon\to 0^+}\frac{1}{\eps}H_\eps(E,x) = H_0(\Sigma,x).
	\]
\end{prop}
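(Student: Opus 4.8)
The plan is to reduce to a graph computation at the origin, rewrite $\eps^{-1}H_\eps(E,x)$ by scaling as an integral over the blow-up $\eps^{-1}E$, and then expand by Taylor's formula, matching the result term by term against the expression \eqref{eq:H0} for $H_0$. First I would invoke \eqref{eq:nl-cng-var} and \eqref{eq:l-cng-var} to reduce to $x=0$ and $\hat n = e_d$: the assumptions \eqref{eq:K-even}--\eqref{eq:frac-decay} are invariant under $K \mapsto K\circ R^\trasp$, with the same constants $a_0$, $b_0$, $m$, $s$ and the same $a_\lambda$, so no generality is lost. In these coordinates, by \eqref{eq:graph}--\eqref{eq:epigraph}, near $0$ the set $E$ is the subgraph $\{w_d < -f(w')\}$, with $f(0)=0$ and $\nabla f(0)=0$; the integral form of Taylor's remainder gives $f(y) = \int_0^1(1-\theta)\nabla^2 f(\theta y)y\cdot y\,\de\theta$. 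A change of variables shows $H_\eps(E,0) = H_K(\eps^{-1}E,0)$, and $\eps^{-1}E$ has, near $0$, boundary $\{w_d = -f_\eps(w')\}$ with $f_\eps(z) \coloneqq \eps^{-1}f(\eps z) = \eps\int_0^1(1-\theta)\nabla^2 f(\eps\theta z)z\cdot z\,\de\theta$; note $\nabla^2 f_\eps(z) = \eps\,\nabla^2 f(\eps z)$, so the blow-up is flat to order $\eps$.

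Next I would reproduce the cylinder/tail splitting from the proof of Proposition \ref{stm:imbert} for $H_K(\eps^{-1}E,0)$, but cutting at the cylinder $C_{e_d}(0,\delta/\eps)$. Since $B(0,\delta/\eps)\subset C_{e_d}(0,\delta/\eps)$, the contribution of its complement is bounded in modulus by $\int_{\co{B(0,\delta/\eps)}}K(w)\,\de w$; dividing by $\eps$ and using \eqref{eq:imbert1} (a consequence of \eqref{eq:frac-decay}), this is at most $\tfrac1\delta(\eps/\delta)^\alpha$ once $q$ is taken large and $\bar\eps$ small, which is the first term of $\mathrm{Err}$. For the part inside the cylinder, the same even-kernel manipulation as in Proposition \ref{stm:imbert} — using \eqref{eq:imbert2}, which legitimates the principal-value limit since for fixed $\eps$ the relevant quadric region corresponds to the fixed parameter $\lambda \coloneqq \eps\,\norm{\nabla^2 f}_{L^\infty(D)}$ — reduces it to $\eps^{-1}\int_{e_d^\perp\cap B(0,\delta/\eps)}\int_{-f_\eps(z)}^{f_\eps(-z)}K(z+te_d)\,\de t\,\de\Hdmu(z)$.

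It then remains to compare this with $H_0(\Sigma,0) = \int_{e_d^\perp}K(z)\,\nabla^2 f(0)z\cdot z\,\de\Hdmu(z)$, the expression to which \eqref{eq:H0} specialises for the subgraph of $-f$ with outer normal $e_d$. I would proceed in three steps. (i) Extend the $z$-integration to all of $e_d^\perp$: the error, $\int_{e_d^\perp\cap\co{B(0,\delta/\eps)}}K(z)\,\nabla^2 f(0)z\cdot z\,\de\Hdmu(z)$, is controlled via \eqref{eq:decad-Kz2} by $\va{\nabla^2 f(0)}(\eps/\delta)^\beta$, after enlarging $q$ if needed. (ii) Replace $K(z+te_d)$ by $K(z)$ on the short interval $t\in[-f_\eps(z),f_\eps(-z)]$: by absolute continuity of $K$ on lines (Theorem \ref{stm:acl}), $\va{K(z+te_d)-K(z)} \le \int_{-\va t}^{\va t}\va{\nabla K(z+se_d)}\,\de s$, and running the line-integration argument of Lemma \ref{stm:MK} for $\va{\nabla K}$ — together with \eqref{eq:imbert4}, the quadric geometry (whereby $\va z \le \va{z+se_d} \lesssim \va z$ on the region at stake), and \eqref{eq:massa-parabole} — bounds this contribution by $(b_0+1)\norm{\nabla^2 f}_{L^\infty(D)}^2\,\delta$. (iii) Replace $\eps^{-1}\bigl(f_\eps(z)+f_\eps(-z)\bigr)$ by $\nabla^2 f(0)z\cdot z$: by the Taylor formula for $f_\eps$ their difference is at most $\va z^2\,O_f(\eps\va z) \le \va z^2\,O_f(\delta)$ for $\va z \le \delta/\eps$, so integration against $K(z)$ and \eqref{eq:Kz2} yield $a_0\,O_f(\delta)$. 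Summing (i)--(iii) and the tail gives \eqref{eq:conv-rate}.

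For the last assertion I would fix $\alpha,\beta\in(0,s)$ and, given $\gamma\in(0,\alpha/(\alpha+1))$, take $\delta = q\epsilon^\gamma$: then $q\eps < \delta < \bar\delta$ for $\eps<\bar\eps$, and in $\mathrm{Err}(\eps,q\eps^\gamma)$ the first term equals $q^{-1-\alpha}\eps^{\alpha(1-\gamma)-\gamma}$, whose exponent is positive precisely because $\gamma < \alpha/(\alpha+1)$, the second and fourth are positive powers of $\eps$ times fixed constants, and the third vanishes since $O_f(\delta)\to 0$ ($\nabla^2 f$ being continuous at $0$); hence $\eps^{-1}H_\eps(E,x)\to H_0(\Sigma,x)$. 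I expect the main obstacle to be step (ii): converting the crude pointwise bound into an estimate that is integrable over the thin quadric region $Q_\lambda(e_d)$ with $\lambda\sim\eps$ forces one to run the absolute-continuity-on-lines argument of Lemma \ref{stm:MK} simultaneously for $K$ and for $\va{\nabla K}$, bookkeeping the powers of $\va z$ and of $\eps/\delta$ carefully so that the $\limsup$ hypotheses \eqref{eq:imbert3}--\eqref{eq:imbert4} can legitimately be applied.
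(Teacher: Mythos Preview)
Your proposal is correct and follows essentially the same route as the paper: the reduction to $x=0$, $\hat n=e_d$, the cylinder/tail splitting with the tail controlled by \eqref{eq:imbert1}, the even-kernel cancellation yielding the integral $\eps^{-1}\int\int_{-f_\eps(-z)}^{f_\eps(z)}K(z+te_d)\,\de t\,\de\Hdmu(z)$, and your three steps (i)--(iii) are exactly the paper's comparison via the intermediate quantity $I_\eps^1$ (your (ii) is the paper's $\va{I_\eps^0-I_\eps^1}$ bound, your (i) and (iii) together its $\va{I_\eps^1-H_0}$ bound). The only cosmetic differences are that you phrase the scaling as a blow-up $H_\eps(E,0)=H_K(\eps^{-1}E,0)$ rather than working directly with $K_\eps$, and you cite \eqref{eq:massa-parabole} in step (ii), which the paper does not need there (\eqref{eq:imbert4} alone suffices once $\eps$ is small enough).
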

\begin{proof}
We firstly focus on the bound on the rate of convergence.
We observe that to validate \eqref{eq:conv-rate}
it suffices to reason on the case
when $x=0$ and $\hat{n}=e_d$.
Indeed, if $x\neq0$ or $\hat{n}\neq e_d$,
by formulas \eqref{eq:nl-cng-var} and \eqref{eq:l-cng-var},
we have
	\[
		H_\epsilon(E,x) = \tilde H_\eps(T(E),0)
			\quad\text{and}\quad
		H_0(\Sigma,x) = \tilde H_0(T(\Sigma),0),
	\]
where $T(y)\coloneqq R(y - x)$, 
$R$ is the matrix associated with the canonical basis of $\Rd$
that represents a  rotation such that $R\hat{n}=e_d$,
and $\tilde H_\eps$ and $\tilde H_0$ are defined
as in \eqref{eq:dfncurv} and \eqref{eq:H0},
the kernels $K_\epsilon$  and $K$ being replaced respectively
by $\tilde K_\epsilon\coloneqq K_\epsilon\circ R^\trasp$
and $\tilde K\coloneqq K\circ R^\trasp$.
		
We argue as in the proof of estimate \eqref{eq:est-reg-hyp}:
we consider $f\colon D \to (-\bar{\delta},\bar{\delta})$ of class $C^2$
such that \eqref{eq:graph} and \eqref{eq:epigraph} hold.
Moreover, $f(0)=0$, $\nabla f(0)=0$, and
		\begin{gather}
		\partial_i f = \dfrac{\partial_i\phi}{\partial_d \phi} \label{eq:implicit1} \\ 
		\partial^2_{i,j}f= \dfrac{1}{\partial_d \phi}
			\left(\partial^2_{i.j}\phi + \partial_i f\,\partial^2_{j,d}\phi
			+ \partial_j f\,\partial^2_{i,d}\phi + \partial_i f\,\partial_j f\,\partial^2_{d,d}\phi\right) 
			\label{eq:implicit2},
		\end{gather}
where, for $i,j=1,\dots,d-1$,
$\partial_i$ and $\partial^2_{i,j}$ denote respectively
the $i$-th partial derivative and the second order partial derivative w.r.t. the $i$-th and $j$-th variable.

Let us now fix  $\eps$ and $\delta$ such that
$0<\epsilon<\delta<\bar{\delta}$.
We write $\eps^{-1}H_\eps$ as
	\[
		\frac{1}{\eps}H_\eps(E, 0) = I_{\epsilon}^0 + I_\epsilon^\infty,
	\]
where
	\begin{gather*}
		I_\eps^0 \coloneqq -\frac{1}{\eps}\int_{C} K_\eps(y)\tilde\chi_E(y)\de y,
		\qquad
		I_\eps^\infty \coloneqq -\frac{1}{\eps}\int_{C^c} K_\eps(y)\tilde\chi_E(y)\de y,
	\end{gather*}
and $C\coloneqq C_{e_d}(0,\delta)$ for short.
The former of the two integrals takes into account the interactions
with points that are close to $0$, 
and, when $\eps$ is small, we expect it
to approximate the anisotropic mean curvature.
The second terms, instead, encodes the contribution given by a region far away from $0$.
Note that the integral defining $I_\epsilon^0$ has to be understood
in the principal value sense.
As suggested by this heuristic, we write
	\begin{equation}\label{eq:Heps-H0}
		\va{\frac{1}{\eps}H_\epsilon(E,0)-H_0(\Sigma,0)}
				\leq \va{I_\eps^0 - H_0(\Sigma, 0)} + \va{I_\epsilon^\infty}.
	\end{equation}

We may easily bound the second summand on the right-hand side
by means of \eqref{eq:imbert1}:
for all  $\alpha<s$ there exists $q_\infty>1$ such that
\begin{equation}\label{eq:Iepsinfty}
	\va{I_\epsilon^\infty} \leq
			\frac{1}{\eps}\int_{B\left(0,\frac{\delta}{\eps}\right)^c}K(y)\de y
			\leq \frac{1}{\delta}\left(\frac{\eps}{\delta}\right)^{\alpha}
	\quad\text{whenever } q_\infty\eps <\delta.
\end{equation}

To deal with the difference between $I_\epsilon^0$ and $H_0(\Sigma,0)$,
we rewrite both the terms in a more convenient way.
We introduce  the function $f_\eps(z) \coloneqq f(\eps z)/\eps$,
and we notice that
	\begin{equation*}
		\begin{split}
		I^0_\eps & = \frac{1}{\eps} \int_{e_d^\perp\cap B\left(0,\delta\right)}
								\left[
										\int_{-\delta}^{f(z)} K_\eps(z + t e_d)\de t
										- \int_{f(z)}^{\delta} K_\eps(z + t e_d)\de t
								\right] \de\Hdmu(z) \\
						& = \frac{1}{\eps}\int_{e_d^\perp\cap B\left(0,\frac{\delta}{\eps}\right)}
								\int_{-f_\eps(-z)}^{f_\eps(z)} K(z+te_d)\de t\de\Hdmu(z).
		\end{split}	
	\end{equation*}
Thanks to the regularity of $f$,
for all $z\in D$ we have
	\[
	f_\eps(z)=\frac{\eps}{2}\nabla^2 f(\eps \theta z)z\cdot z
	\quad \text{for some  $\theta\in(0,1)$ },
	\]
so that, when $t$ ranges between $-f_\eps(-z)$ and $f_\eps(z)$,
it holds
	\begin{equation}\label{eq:stima-quadr}
	\va{t} \leq \frac{\eps}{2}\norm{\nabla^2 f}_{L^\infty(D)}\va{z}^2.
	\end{equation}

Further, we remark that
	\[
		H_0(\Sigma,0) =  \int_{e_d^\perp} K(z)\nabla^2 f(0)z\cdot z\de\Hdmu(z).
	\]	
Indeed,	$H_0(\Sigma, 0)$ may be cast as in \eqref{eq:H0},
and by \eqref{eq:implicit1}, \eqref{eq:implicit2} we have
		\[
			-\frac{1}{\va{\partial_d\phi(0)}}\nabla^2 \phi(0)z\cdot z
			=	\nabla^2 f(0)z\cdot z
			\quad\text{when } z\in e_d^\perp.
		\]

We introduce the quantity
		\[ 
			I_\eps^1 \coloneqq  \frac{1}{\eps}\int_{e_d^\perp\cap B\left(0,\frac{\delta}{\eps}\right)}
						K(z) \left[f_\eps(z)+f_\eps(-z)\right]\de\Hdmu(z)
		\]
to help us quantify the difference between $I_\epsilon^0$ and $H_0(\Sigma,0)$.
By the triangular inequality,
	\begin{equation}\label{eq:Ieps0-H0}
		\va{I_\epsilon^0 - H_0(\Sigma,0)}
			\leq \va{I_\epsilon^0 - I_\epsilon^1} + \va{I_\epsilon^1 - H_0(\Sigma,0)},
	\end{equation}
and we estimate the summands in the right-hand side separately. 

We observe that 
		\begin{equation}\label{eq:Ieps0-Ieps1}
		\va{I^0_\eps-I_\eps^1} \leq
			\frac{1}{\eps}\int_{e_d^\perp\cap B\left(0,\frac{\delta}{\eps}\right)}
				\va{\int_{-f_\eps(-z)}^{f_\eps(z)} \left[K(z+te_d)-K(z)\right]\de t}\de\Hdmu(z).	
		\end{equation}
Since $K\in W^{1,1}(\co{B(0,r)})$ for all $r>0$,
by Theorem \ref{stm:acl},
	for $\Hdmu$-a.e. $z\in e_d^\perp$ it holds
		\[K(z+te_d)-K(z)=\int_0^t \partial_d K(z+se_d) \de s,\]
	and this, combined with \eqref{eq:stima-quadr}, implies that 		\begin{equation*}
			\va{K(z+te_d)-K(z)} \leq
				\int_{-\frac{\eps}{2}\norm{\nabla^2f}_{L^\infty(D)}\va{z}^2}^{\frac{\eps}{2}\norm{\nabla^2f}_{L^\infty(D)}\va{z}^2}
					\va{\nabla K(z+se_d)}\de s.
		\end{equation*}
By plugging this inequality in \eqref{eq:Ieps0-Ieps1}, we infer
		\begin{equation*}
		\begin{split}
		| I^0_\eps - & I_\eps^1 | \\
		\qquad  & \leq  \norm{\nabla^2f}_{L^\infty(D)}
				\int_{e_d^\perp\cap B\left(0,\frac{\delta}{\eps}\right)} \va{z}^2
				\int_{-\frac{\eps}{2}\norm{\nabla^2f}_{L^\infty(D)}\va{z}^2}^{\frac{\eps}{2}\norm{\nabla^2f}_{L^\infty(D)}\va{z}^2}
			\va{\nabla K(z+se_d)}\de s \de\Hdmu(z)\\
		\qquad & \leq \frac{\delta}{\eps} \norm{\nabla^2f}_{L^\infty(D)}
				\int_{Q(\eps)}\va{y}\va{\nabla K(y)}\de y,
		\end{split}
		\end{equation*}
	where  $Q(\eps)\coloneqq Q_{\eps\norm{\nabla^2f}_{L^\infty(D)}}(e_d)$. 		
	In view of \eqref{eq:imbert4}, there exists $\eta\in(0,\bar{\delta})$ such that  
		\begin{equation}\label{eq:Ieps0Jeps2}
		\va{I^0_\eps-I_\eps^1}\leq (b_0+1)\norm{\nabla^2f}^2_{L^\infty(D)}\delta
		\quad\text{whenever } \eps <\eta.
		\end{equation}
	
Then, we have		
		\begin{equation*}
			\begin{split}
			\va{I_\eps^1-H_0(\Sigma, 0)} & \leq 
					O_f(\delta) \int_{e_d^\perp\cap B\left(0,\frac{\delta}{\eps}\right)}
										K(z)\va{z}^2\de\Hdmu(z) \\
					&\quad + \va{\nabla^2 f(0)} \int_{e_d^\perp\cap \co{B\left(0,\frac{\delta}{\eps}\right)}}
								K(z)\va{z}^2\de\Hdmu(z),
			\end{split}
		\end{equation*}
	with $O_f$ defined in \eqref{eq:omegaf}.
	As a consequence of \eqref{eq:decad-Kz2},
	for all $\beta<s$, there exists $q_1>0$
	such that, if $q_1\eps <\delta$, then
		\[
			\int_{e_d^\perp\cap \co{B\left(0,\frac{\delta}{\eps}\right)}}
				K(z)\va{z}^2\de\Hdmu(z) \leq \left(\frac{\eps}{\delta}\right)^\beta;
		\]
	Hence, bearing in mind \eqref{eq:Kz2},
	we infer
		\begin{equation}\label{eq:Ieps1-H0}
			\va{I_\eps^1-H_0(\Sigma,0)} \leq a_0\, O_f(\delta)
				+ \va{\nabla^2 f(0)} \left(\frac{\eps}{\delta}\right)^\beta
			\quad\text{whenever } q_1\eps <\delta.
		\end{equation}
	
	Summing up, we can now accomplish the proof of \eqref{eq:conv-rate}
	by choosing the parameters $q$ and $\bar{\eps}$
	in such a way that the estimates above are fulfilled at once.
	Inequalities \eqref{eq:Heps-H0} and  \eqref{eq:Ieps0-H0} get
		\[
			\va{\frac{1}{\eps}H_\epsilon(E,0)-H_0(\Sigma,0)}
				\leq \va{I_\epsilon^0 - I_\epsilon^1} + \va{I_\epsilon^1 - H_0(\Sigma,0)} + \va{I_\epsilon^\infty},
		\]
	therefore, if we set $q\coloneqq \max\{q_\infty,q_1\}>1$
	with $q_\infty$ and $q_1$ as above,
	both \eqref{eq:Iepsinfty} and \eqref{eq:Ieps1-H0}
	hold for all $\epsilon,\delta>0$ such that
	$q\eps < \delta<\bar{\delta}$.
	Besides, if we pick $\bar{\eps}\coloneqq\min\Set{\eta,\bar{\delta}/q}$,
	\eqref{eq:Ieps0Jeps2} is satisfied too
	whenever $\epsilon<\bar{\eps}$.
	
	Once rate estimate \eqref{eq:conv-rate} is on hand,
	the pointwise convergence follows straightforwardly.
	Indeed, for $\gamma\in \big(0, \alpha/(\alpha+1)\big)$,
	$q\epsilon^\gamma > q \eps$ and
	$\mathrm{Err}(\epsilon,q\epsilon^\gamma)$ vanishes as $\epsilon$ tends to $0$. 
\end{proof}

	\begin{rmk}
		Assumptions \eqref{eq:sing-orig} and \eqref{eq:imbert5} are not exploited
		in the proof of Lemma \ref{stm:conv-rate}.
		We shall utilize them to establish Proposition \ref{aprioriest}.
	\end{rmk}
	
When $\Sigma$ is a smooth, compact hypersurface,
uniform convergence of the rescaled nonlocal curvatures stems
from a refinement of the previous error estimate.

\begin{thm}\label{stm:main1}
	Let $K$ satisfy all  the assumptions in Section \ref{sec:K}
	and let $E\subset\Rd$ be a set
	whose topological boundary $\Sigma$ is compact and of class $C^2$.
	Then,
		\[
			\lim_{\epsilon\to 0^+}\frac{1}{\eps}H_\epsilon(E,x)= H_0(\Sigma,x)
					\quad\text{uniformly in $x\in\Sigma$}.
		\]
\end{thm}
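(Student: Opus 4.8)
The plan is to quantify the dependence on the point $x\in\Sigma$ in Proposition \ref{stm:conv-rate} and then use the compactness of $\Sigma$ to absorb it. First I would invoke the standard fact that a compact $C^2$ hypersurface admits a \emph{uniform graph neighbourhood}: there exist $\bar\delta>0$ and $N<+\infty$, both independent of $x$, such that for every $x\in\Sigma$ the set $\Sigma\cap C_{\hat n(x)}(x,\bar\delta)$ becomes, after the rigid motion $T_x(y)\coloneqq R_x(y-x)$ with $R_x\hat n(x)=e_d$, the graph of a function $f_x\colon e_d^\perp\cap B(0,\bar\delta)\to(-\bar\delta,\bar\delta)$ with $f_x(0)=0$, $\nabla f_x(0)=0$ and $\norm{f_x}_{C^2}\le N$. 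Moreover, since $\nabla^2 f_x(z)$ is determined, through \eqref{eq:implicit1}--\eqref{eq:implicit2}, by the second fundamental form of $\Sigma$ at nearby points, the uniform continuity of $x\mapsto\nabla^2 f_x$ on the compact $\Sigma$ yields a single modulus of continuity $\omega\colon[0,+\infty)\to[0,+\infty)$ with $\omega(0^+)=0$ such that $O_{f_x}(\delta)\le\omega(\delta)$ for all $x\in\Sigma$ and all $\delta\in(0,\bar\delta)$, with $O_{f_x}$ as in \eqref{eq:omegaf}.

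Next I would revisit the proof of Proposition \ref{stm:conv-rate} and check that, after $x=0$, $\hat n=e_d$ are attained via $T_x$, all the constants there may be chosen independently of $x$. In the rotated frame the kernel is $\tilde K_\eps=K_\eps\circ R_x^{\trasp}$; the tail term $I_\eps^\infty$ only involves $\int_{\co{B(0,\delta/\eps)}}\tilde K=\int_{\co{B(0,\delta/\eps)}}K$, which is rotation invariant, so the threshold $q_\infty$ in \eqref{eq:Iepsinfty} is a pure function of $K$. The remaining direction-dependent quantities are $\int_{e_d^\perp}\tilde K\va{z}^2\,\de\Hdmu$, its tail $\int_{e_d^\perp\cap\co{B(0,r)}}\tilde K\va{z}^2\,\de\Hdmu$, and $\int_{Q_\lambda(e_d)}\va{y}\va{\nabla\tilde K(y)}\,\de y$; rewriting them in the original frame they become the corresponding integrals over $\hat n(x)^\perp$ and over $Q_\lambda(\hat n(x))$, which are bounded \emph{uniformly in the direction} by \eqref{eq:Kz2}, by \eqref{eq:decad-Kz2} (which follows from the directionally uniform decay \eqref{eq:frac-decay}), and by \eqref{eq:imbert4} together with \eqref{eq:massa-parabole}. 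Hence the numbers $q_1$ of \eqref{eq:Ieps1-H0} and $\eta$ of \eqref{eq:Ieps0Jeps2} can also be chosen to depend only on $K$ and on $N$. Setting $q\coloneqq\max\{q_\infty,q_1\}$ and $\bar\eps\coloneqq\min\{\eta,\bar\delta/q\}$, both now independent of $x$, Proposition \ref{stm:conv-rate} gives, for every $x\in\Sigma$, every $\epsilon\in(0,\bar\eps)$ and every $\delta\in(q\eps,\bar\delta)$,
\[
\va{\frac{1}{\eps}H_\eps(E,x)-H_0(\Sigma,x)}
\le \frac{1}{\delta}\Big(\frac{\eps}{\delta}\Big)^{\alpha}
+(b_0+1)N^2\delta+a_0\,\omega(\delta)+N\Big(\frac{\eps}{\delta}\Big)^{\beta}
=:\overline{\mathrm{Err}}(\epsilon,\delta).
\]

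Finally, I would fix $\gamma\in\big(0,\alpha/(\alpha+1)\big)$ and take $\delta=\delta(\epsilon)\coloneqq q\,\epsilon^{\gamma}$. For $\epsilon$ small one has $q\eps<\delta(\epsilon)<\bar\delta$, and
\[
\overline{\mathrm{Err}}\big(\epsilon,q\epsilon^{\gamma}\big)
\le c\Big(\epsilon^{\alpha(1-\gamma)-\gamma}+\epsilon^{\gamma}+\omega(q\epsilon^{\gamma})+\epsilon^{\beta(1-\gamma)}\Big)
\longrightarrow 0\quad\text{as }\epsilon\to0^+,
\]
since the choice of $\gamma$ forces $\alpha(1-\gamma)-\gamma>0$ and $\omega(0^+)=0$; this bound does not depend on $x$. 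Therefore $\eps^{-1}H_\eps(E,\cdot)\to H_0(\Sigma,\cdot)$ uniformly on $\Sigma$. The main obstacle is precisely the uniformity step: one must certify that the local graph data $f_x$ admit an $x$-independent domain and $C^2$-bound and a common modulus of continuity for $\nabla^2 f_x$, and that the ``for all $e\in\Sdmu$'' hypotheses of Section \ref{sec:K} furnish thresholds and constants that do not degenerate as the normal direction varies — both of which rest on the compactness and $C^2$-regularity of $\Sigma$ and on the pointwise, directionally uniform form of the standing assumptions on $K$.
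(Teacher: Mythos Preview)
Your proposal is correct and follows essentially the same strategy as the paper: both exploit the compactness of $\Sigma$ to make the local graph data in the error estimate of Proposition~\ref{stm:conv-rate} uniform in $x$, and then let $\delta=q\eps^{\gamma}$ to conclude. The only cosmetic difference is that the paper packages the compactness step through a finite cover $\{V_1,\dots,V_L\}$ by level-set charts $\phi_\ell$ and bounds $\nabla^2 f_x$ and $O_{f_x}$ via the implicit-function identities \eqref{eq:implicit1}--\eqref{eq:implicit2} in terms of $\phi_\ell$, whereas you invoke a uniform graph neighbourhood directly; your version is also a bit more explicit than the paper in checking that the thresholds $q_\infty,q_1,\eta$ coming from the kernel are independent of the direction $\hat n(x)$ thanks to the ``for all $e\in\Sdmu$'' form of the hypotheses in Section~\ref{sec:K}.
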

\begin{proof}
By definition of $C^2$ hypersurface,
for any $x\in\Sigma$ there exist an open set $U_x \subset \Rd$ and
a $C^2$ function $\phi_x\colon U_x \to \R$ such that
	\[
		\Sigma \cap U_x = \Set{ y \in U_x : \phi_x(y) = 0}.
	\]
For all $x\in \Sigma$, we also choose another open set $V_x$,
in such a way that it is compactly contained in $U_x$
and that $\inf_{y\in V_x}\va{\nabla\phi(y)}>0$.
Observe that, by compactness,
$\Sigma$ admits a finite cover $\Set{V_1,\dots,V_L}$
extracted from the collection $\Set{V_x}_{x\in\Sigma}$.

If $x\in \Sigma$ belongs to $V_\ell$ for some $\ell$,
we choose $\bar\delta_x$ so small that
the closure of the cylinder $C_{\hat{n}_x}(x,\bar\delta_x)$ is contained in $V_\ell$,
where $\hat{n}_x$ is the outer unit normal to $\Sigma$ at $x$.
We now apply Proposition \ref{stm:conv-rate} at the point $x$
selecting $\bar \delta = \bar \delta_x$.
We get, for all $\alpha,\beta\in(0,s)$,
the existence of $q>1$ and $\bar{\eps}\in(0,1)$
such that $q \bar{\eps} \leq \bar{\delta}_x$ and that
for all $\gamma \in  \big(0, \alpha/(\alpha+1)\big)$ and all $\epsilon\in(0,\bar{\eps})$ it holds
	\begin{multline*}
			\va{\frac{1}{\eps}H_\eps(E,x) - H_0(\Sigma,x)}
				\leq \mathrm{Err}(\epsilon,q\epsilon^\gamma) \\
				 \leq	c\Bigl(\eps^{\alpha-\gamma(1+\alpha)}
				+ \norm{\nabla^2f_x}_{L^\infty(\hat{n}_x^\perp\cap B(0,\bar{\delta}_x))}\eps^\gamma 
				+O_{f_x}(q\eps^\gamma)
				+ \va{\nabla^2 f_x(0)} \eps^{(1-\gamma)\beta}\Bigr),
	\end{multline*}
where $c\coloneqq c(\alpha,\beta,a_0,b_0)>0$ and
$f_x$ is the implicit function such that
\eqref{eq:graph} holds in $C_{\hat{n}_x}(x,\bar\delta_x)$.

Recall that $x\in V_\ell$ and
let us denote by $\phi_\ell$ a $C^2$ function $\phi$
such that $\Sigma \cap V_\ell = \Set{ y \in V_\ell : \phi(y) = 0}$.
By formulas \eqref{eq:implicit1} and \eqref{eq:implicit2},
one may show that $\va{\nabla^2 f_x(0)}$ and
$\norm{\nabla^2 f_x}_{L^\infty(\hat{n}_x^\perp\cap B(0,\bar{\delta}_x))}$
are bounded above by quantities that
depend only on the infimum of $\va{\nabla\phi}$ on $V_\ell$,
which is strictly positive,
and on the values of $\nabla \phi_\ell$ and $\nabla^2 \phi_\ell$ in $V_\ell$.
In turn, the latter are bounded in $V_\ell$, because $\phi_\ell$ is $C^2$.
Similarly, $O_{f_x}(\delta)$ is smaller than some $O_\ell(\delta)$
that is defined in terms of $\phi_\ell$ and of its derivatives,
and that vanishes as $\delta\to 0^+$.

In conclusion, since there is a finite number of $\phi_\ell$,
it suffices to maximise w.r.t. the parameter $\ell$
to obtain a bound on $\va{\eps^{-1}H_\eps(E,x) - H_0(\Sigma,x)}$
that is uniform in $x \in \Sigma$ and that vanishes when $\epsilon\to 0^+$.
\end{proof}


	\section{Level set formulations of motions by curvature}\label{sec:geomevol}
		We now revise the approach to mean curvature motions
{\it via } level set formulations.
We include the existence and uniqueness results for the related viscosity solutions
and recall their connections with the notion of geometric barriers. 

We consider the geometric evolutions
driven by the curvatures $\epsilon^{-1}H_\epsilon$ and $H_0$
defined respectively in \eqref{eq:resccurv} and \eqref{eq:H0}:
if $ t \mapsto E(t)$ is an evolution of sets,
we formally prescribe that
	\begin{equation}\label{gf}
		\partial_t x(t) \cdot \hat{n} = - \frac{1}{\eps}H_\epsilon(E(t), x(t)),
		\qquad
		\partial_t x(t) \cdot \hat{n} = - H_0(E(t), x(t)),
	\end{equation}
where $\hat{n}$ is the outer unit normal 
to $\partial E(t)$ at the point $x(t)$.
We accompany these equations with an initial datum $E_0$,
which we assume to be a bounded set. 	

To cast the geometric laws \eqref{gf} in the level set form,
we interpret the initial datum $E_0$
as the $0$ superlevel set of a certain function $u_0\colon \Rd\to\R$,
that is, $E_0=\Set{x : u_0(x)\geq 0}$ and $\partial E_0=\Set{x : u_0(x)=0}$.
Hereafter, we assume that 
	\begin{equation}\label{initialdatum} 
		u_0\colon \Rd \to\R
		\text{ is Lipschitz continuous and constant outside a compact set $C\subset\Rd$.} 
	\end{equation}
Then, we consider the Cauchy's problems: 
	\begin{eqnarray}\label{eq:eps_pb}
		\begin{cases}
		\partial_t  u(t,x) + \frac{\va{\nabla u(t,x)}}{\eps}H_\epsilon(\Set{y:u(t,y)\geq u (t,x)},x) = 0
						& (t,x)\in[0,+\infty)\times\Rd \\
		u(0,x) = u_0(x) 	&	x\in\Rd
	\end{cases}.
	\\  \label{eq:limit_pb}
	\begin{cases}
		\partial_t u(t,x)
		- \mathrm{tr}\left(M_K\left(\widehat{\nabla u(t,x)}\right)\nabla^2 u(t,x)\right)
		= 0
		& (t,x)\in[0,\infty)\times\Rd \\
		u(0,x) = u_0(x) 	& x\in\Rd
	\end{cases}.
\end{eqnarray}

Observe that
	\[
		\va{\nabla u(x)}H_0(\Set{y:u(y)= u (x)},x)
			=	-\mathrm{tr}\left(M_K\left(\widehat{\nabla u(x)}\right)\nabla^2 u(x)\right).
	\] 
Parabolic equations defined by operators of this form are well studied.
We shall consider solutions in the following viscosity sense
(see \cite{DFM} and references therein):

\begin{dfn}[Solution to the limit problem]
	A function $u\colon [0,\infty)\times \Rd\to\R$ is
	a \emph{viscosity subsolution}(resp. \emph{superso\-lu\-tion})
	to Cauchy's problem \eqref{eq:limit_pb} if
	it is locally bounded, upper semicontinuous (resp. lower semicon\-tin\-u\-ous func\-tion),
	and satisfies the following:
	\begin{enumerate}
		\item $u(0,x)\leq u_0(x)$ for all $x\in\Rd$, (resp. $u(0,x)\geq u_0(x)$);
		\item for all $(t,x)\in(0,+\infty)\times \Rd$
		and for all $\phi\in C^2([0,+\infty)\times \Rd)$ such that
		$u -\phi$ has a maximum at $(t,x)$ (resp. a minimum at $(t,x)$)
		it holds
		\[\partial_t\phi(t,x)\leq 0\quad(\text{resp. }\partial_t\phi(t,x)\geq 0)
		\qquad \text{when } \nabla\phi(t,x)=0 \text{ and } \nabla^2\phi(t,x)=0
		\]
		or 
		\[\partial_t \phi(t,x)
		- \mathrm{tr}\left(M_K\left(\widehat{\nabla \phi(t,x)}\right)\nabla^2 \phi(t,x)\right) \leq 0 \   (\text{resp. }\geq 0)
		\quad\text{otherwise.}
		\]
	\end{enumerate}
	A continuous function $u\colon [0,+\infty)\times \Rd \to\R$
	is a \emph{viscosity solution} \index{viscosity solution!to the anisotropic mean curvature motion}
	to \eqref{eq:limit_pb}
	if it is both a viscosity sub- and supersolution.
\end{dfn}
As for existence, note that the function
	\begin{equation*}
		\begin{matrix}
			F_0\colon & \Rdmz\times \mathrm{Sym}(d)&  \longrightarrow & \R \\
							& (p,X) & \longmapsto &
				\displaystyle{-\mathrm{tr}\left(M_K\left(\hat{p}\right)X\right)}
		\end{matrix}	
	\end{equation*}
that defines the problem \eqref{eq:limit_pb} satisfies the following:
\begin{enumerate}
	\item it is continuous;
	\item it is geometric, that is
			for all $\lambda >0$, $\sigma\in\R$, $p\in\Rd\setminus\Set{0}$
			and $X\in\mathrm{Sym}(d)$ it holds
			$F_0(\lambda p, \lambda X + \sigma p\otimes p)=\lambda F_0(p,X)$.
	\item it is degenerate elliptic, that is
			$F_0(p,X)\geq F_0(p,Y)$
			for all $p\in\Rdmz$
			and $X,Y\in\mathrm{Sym}(d)$ such that $X\leq Y$.
\end{enumerate}
Since the classic by Y. Chen, Y. Giga, and S. Goto \cite{CGG},
it is well known that
that these three conditions grant
existence and uniqueness of a viscosity solution:
\begin{thm}\label{esistenza0}
	Assume that \eqref{initialdatum} holds.
	Then, Cauchy's problem \eqref{eq:limit_pb} admits
	a unique bounded Lipschitz viscosity solution in $[0,+\infty)\times \Rd$,
	which is constant in $\Rd\setminus C$, for some compact set $C\subset \Rd$.
\end{thm}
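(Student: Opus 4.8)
The statement is a direct application of the by now standard viscosity theory for geometric parabolic equations, so the proof amounts to checking that the operator $F_0$ fits into the abstract framework of \cite{CGG} (see also \cite{ES} and \cite{CMP}). First I would record the relevant properties of the matrix-valued map $M_K$: by Lemma \ref{stm:MK}, estimate \eqref{eq:Kz2} gives $\va{M_K(e)}\leq a_0$ for all $e\in\Sdmu$, and the argument at the end of the proof of Lemma \ref{stm:deriv-sigmaK} shows that $e\mapsto M_K(e)$ is continuous on $\Sdmu$ and that $M_K(e)$ is positive semidefinite (indeed positive definite on $e^\perp$, since $K>0$). Consequently $F_0(p,X)=-\mathrm{tr}(M_K(\hat p)X)$ is continuous on $\Rdmz\times\mathrm{Sym}(d)$, it is geometric and degenerate elliptic as already remarked before the statement, and — crucially for the comparison argument — its singularity at $p=0$ is of the mild type that the structure conditions of \cite{CGG} allow: the uniform bound on $M_K$ yields the modulus-of-continuity estimate on $F_0$ needed in the doubling-of-variables argument.

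Given this, uniqueness follows from the comparison principle for \eqref{eq:limit_pb}, and existence from Perron's method, once a global subsolution and a global supersolution attaining the datum $u_0$ are exhibited; both steps are carried out in \cite{CGG} for operators with these structural properties. For the barriers I would use that constants are stationary solutions, whence $\min u_0\leq u\leq\max u_0$ by comparison, together with the fact that large spheres shrink under the flow, with inward normal velocity controlled by $1/R$ since $M_K$ is bounded on $\Sdmu$ and positive definite on each hyperplane. Combined with the hypothesis \eqref{initialdatum} that $u_0$ is constant outside a compact set $C$, the sphere barriers confine the nonconstant part of each level set of the solution to a fixed compact set for all times, so $u(t,\cdot)$ is constant on $\Rd\setminus C$ for every $t\geq 0$.

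It remains to establish the Lipschitz regularity. Equation \eqref{eq:limit_pb} is invariant under spatial translations, so for every $h\in\Rd$ the function $(t,x)\mapsto u(t,x+h)$ is the viscosity solution with datum $u_0(\cdot+h)$; the comparison principle then gives $\norm{u(t,\cdot+h)-u(t,\cdot)}_\infty\leq\norm{u_0(\cdot+h)-u_0}_\infty\leq (\mathrm{Lip}\,u_0)\va{h}$, i.e. $u(t,\cdot)$ is Lipschitz in space with the same constant as $u_0$. Lipschitz continuity in time is then obtained in the usual way, for instance by comparing $u$ with the spatially Lipschitz, time-shifted sub- and supersolutions $u(\tau,\cdot)\mp C(t-\tau)$ for a constant $C$ controlled by $\mathrm{Lip}\,u_0$ and $\sup_{\Sdmu}\va{M_K}$, or by invoking the space-time Lipschitz estimate of \cite{CGG} directly.

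I expect the only delicate point — rather than a genuine obstacle — to be the careful treatment of the singularity of $F_0$ at $p=0$ in the comparison principle; this is exactly where the geometricity of the equation, together with the uniform bound and the continuity of $M_K$ from Lemmas \ref{stm:MK} and \ref{stm:deriv-sigmaK}, enters, and it is precisely the situation covered by \cite{CGG}, so that no argument beyond the cited theory is needed.
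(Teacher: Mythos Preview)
Your proposal is correct and follows the same approach as the paper, which in fact does not give a proof at all: it simply verifies, in the lines immediately preceding the statement, that $F_0$ is continuous, geometric, and degenerate elliptic, and then invokes \cite{CGG}. You supply more detail than the paper does (notably on Lipschitz regularity and on constancy outside a compact set), but the underlying route is identical.
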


In second place,
we recall the notion of viscosity solution for the nonlocal problems.
which goes back to the work \cite{S},
see also \cites{I,DFM,CMP}.

\begin{dfn}[Solution to the rescaled problems]
	A locally bounded, upper semicontinuous function  (resp. lower semicontinuous) 
	$u_\eps\colon [0,+\infty)\times \Rd \to\R$  is a \emph{viscosity subsolution} (resp. \emph{supersolution})
	to the problem \eqref{eq:eps_pb} if
	\begin{enumerate}
		\item $u_\eps(0,x)\leq u_0(x)$ for all $x\in\Rd$ (resp. $u_\eps(0,x)\geq u_0(x)$);
		\item for all $(t,x)\in(0,+\infty)\times \Rd$
		and for all $\phi\in C^2([0,+\infty)\times \Rd)$ such that
		$u_\eps -\phi$ has a maximum at $(t,x)$ (resp. has a minimum at $(t,x)$), 
		it holds
		\[\partial_t\phi(t,x)\leq 0 \quad  (\text{resp. }\partial_t\phi(t,x)\geq 0)\qquad
		\text{when } \nabla\phi(t,x)=0, 
		\]
		or 
		\begin{gather*}
			\partial_t \phi(t,x)
				+  \frac{\va{\phi(t,x)}}{\eps}H_\epsilon(\Set{y:\phi(t,y)\geq \phi (t,x)},x) \leq 0 \\
			\big(
				\text{resp. } \partial_t \phi(t,x)
				+ \frac{\va{\phi(t,x)}}{\eps} H_\epsilon(\Set{y:\phi(t,y) >\phi (t,x)},x) \geq 0
				\big)
		\quad\text{otherwise.}
		\end{gather*}
	\end{enumerate}
	
	A continuous function $u_\eps\colon [0,+\infty)\times \Rd \to\R$
	is a \emph{viscosity solution}\index{viscosity solution!to the nonlocal curvature motion}
	to \eqref{eq:eps_pb}
	if it is both a viscosity sub- and supersolution. 
\end{dfn}

Using this concept of solution,
comparison results and well posedness of \eqref{eq:eps_pb} are available,
as proved in a very general setting
by A. Chambolle, M. Morini, and M. Ponsiglione in \cite{CMP}
(see also the paper by C. Imbert \cite{I}).

\begin{thm}[Comparison principle and existence of solutions to the nonlocal problem]\label{esistenzaeps}
	If the kernel $K$ is as in Section \ref{sec:K} and
	\eqref{initialdatum} holds, then,
	for all $\epsilon>0$,
	if $v_\eps, w_\eps\colon [0,+\infty)\times\Rd \to \R$
	are respectively a sub- and a supersolution to \eqref{eq:eps_pb},
	then $v_\eps(t,x)\leq w_\eps(t,x)$ for all $(t,x)\in [0,+\infty)\times\Rd$.
	
	Moreover, \eqref{eq:eps_pb} admits a unique bounded, Lipschitz viscosity solution
	in $[0,+\infty)\times\Rd$, which is constant in $\Rd\setminus C$,
	for some compact set $C\subset \Rd$.  
\end{thm}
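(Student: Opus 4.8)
The plan is to obtain the statement as a particular case of the abstract well-posedness theory for level-set flows driven by generalised nonlocal curvatures developed by Chambolle, Morini and Ponsiglione in \cite{CMP} (see also \cite{I}). The only thing that genuinely requires an argument is the verification that, for each fixed $\epsilon>0$, the rescaled curvature $\epsilon^{-1}H_\epsilon=\epsilon^{-1}H_{K_\epsilon}$ belongs to the class of curvature functionals to which that theory applies; once this is in place, the comparison principle, the existence and uniqueness of a bounded Lipschitz viscosity solution, and its being constant outside a compact set are precisely the conclusions of the relevant theorems in \cite{CMP}, read with $u_0$ as in \eqref{initialdatum}.

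The first step is thus to check that $K_\epsilon(x)=\epsilon^{-d}K(x/\epsilon)$ inherits from $K$ all the structural features listed in Section \ref{sec:K}. This is elementary: the evenness \eqref{eq:K-even}, the membership \eqref{eq:W11} in $W^{1,1}(\co{B(0,r)})$ for every $r>0$, the quadric integrability \eqref{eq:imbert2}, the uniform mass bounds \eqref{eq:massa-parabole} and \eqref{eq:imbert3}--\eqref{eq:imbert5}, the condition \eqref{eq:sing-orig} at the origin, and the fractional decay \eqref{eq:frac-decay} are all preserved, because the change of variables $y\mapsto\epsilon y$ in the integrals defining these quantities produces only harmless $\epsilon$-dependent constants, the regions $Q_\lambda(e)$ transforming compatibly under dilations. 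Consequently Lemma \ref{stm:basicsHK} and Proposition \ref{stm:imbert} apply verbatim to $K_\epsilon$, so $H_{K_\epsilon}(E,\cdot)$ is translation invariant, monotone under inclusion, continuous along sequences of sets converging in $C^2$ with converging base points, and finite on sets with $C^{1,1}$ boundary. Since multiplication by the positive constant $\epsilon^{-1}$ preserves these properties, $\epsilon^{-1}H_\epsilon$ is a generalised curvature in the sense of Remark \ref{stm:gencurv}, and the quantitative hypotheses of Section \ref{sec:K} supply the a priori estimates and the continuity of the associated nonlocal operator that \cite{CMP} needs for the comparison principle.

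The second step is then a direct invocation of \cite{CMP}: the comparison theorem there yields, for the level-set Cauchy problem \eqref{eq:eps_pb}, that every viscosity subsolution $v_\epsilon$ lies below every viscosity supersolution $w_\epsilon$ on $[0,+\infty)\times\Rd$; the companion existence result (obtained via Perron's method, using as barriers the sub- and supersolutions furnished by comparison) then produces a unique bounded Lipschitz viscosity solution $u_\epsilon$, whose Lipschitz constant is controlled by that of $u_0$. Finally, the property that $u_\epsilon$ is constant in $\Rd\setminus C$ for some compact $C$ follows by comparing $u_\epsilon$ with the spatially constant sub- and supersolutions equal to the two values that $u_0$ takes outside its compact core, again exploiting the monotonicity and translation invariance of the curvature; this too is part of the statement of \cite{CMP} once the datum satisfies \eqref{initialdatum}.

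The main obstacle, such as it is, is not the abstract citation but the bookkeeping of the first step: one must make sure that the singular-kernel hypotheses of Section \ref{sec:K}, and in particular the limiting conditions \eqref{eq:sing-orig} and \eqref{eq:imbert5} at the origin and at infinity, really do pass to $K_\epsilon$ with the constants in the form required by \cite{CMP}, and that the scaling factor $\epsilon^{-1}$ in front of $H_\epsilon$ in \eqref{eq:eps_pb} does not spoil any of the structural conditions. I expect this to reduce to routine changes of variables combined with Lemma \ref{stm:basicsHK} and Proposition \ref{stm:imbert}, but it is the point at which care is needed before treating the theorem as a black-box consequence of \cite{CMP}.
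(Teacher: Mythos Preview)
Your proposal is correct and matches the paper's treatment: the paper does not give a proof of this theorem but simply cites \cite{CMP} (and \cite{I}) for it, having already observed in Remark~\ref{stm:gencurv} that Lemma~\ref{stm:basicsHK} and Proposition~\ref{stm:imbert} place $H_K$ (hence each $\epsilon^{-1}H_\epsilon$) in the framework of generalised curvatures to which the well-posedness results of \cite{CMP} apply. Your explicit verification that the structural assumptions pass to $K_\epsilon$ is a reasonable elaboration of what the paper leaves implicit.
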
  

Summing up, we know that,
for every initial datum $u_0$ fulfilling \eqref{initialdatum},
there exists a unique viscosity solution $u_\eps$ to \eqref{eq:limit_pb}
and a unique viscosity solution $u$ to \eqref{eq:limit_pb}. 
We define the related \emph{level set flows}\index{level set flow} as follows:
for every $\lambda\in \R$, we put
	\begin{gather}\label{levelseteps}
		E_{\eps, \lambda}^>(t)=\Set{x\in\Rd : u_\eps(t,x)>\lambda},
		\quad
		E_{\eps, \lambda}^\geq(t) = \Set{x\in\Rd : u_\eps(t,x)\geq \lambda}, \\
		E^{>}_{\lambda}(t)=  \Set{x\in\Rd : u(t,x)> \lambda},
		\quad		
		E^{\geq}_{\lambda}(t) = \Set{x\in\Rd : u(t,x)\geq \lambda}.
	\label{levelset0}  
	\end{gather}
It is well known that, as long as they are smooth, 
these families are solutions to the geometric flows \eqref{gf} with initial data
respectively $\Set{u_0 > \lambda}$ and $\Set{u_0 \geq \lambda}$. 

Geometric evolutions can be formulated
as PDEs involving the distance function from the moving front,
see for instance the survey \cite{A} by L. Ambrosio.
In the following definitions,
we use it to express a regularity property
both in time and space for sets (see \ref{stm:ii} below) 
evolving according to a generic geometric law.

\begin{dfn}\label{defistrict}
	Let $0\leq t_0<t_1<+\infty$. 
	We say that the evolution of sets
	$[t_0,t_1]\ni t\mapsto D(t)$ is a
	\emph{geometric subsolution (resp. supersolution)}\index{geometric sub- and supersolutions}
	to the flow associated with the curvature functional $H$ if
	\begin{enumerate}
		\item\label{stm:i} $D(t)$ is closed and  $\partial D(t)$ is compact
		for all $t\in[t_0,t_1]$;
		\item\label{stm:ii} there exists an open set $U\subset \Rd$
		such that the distance function $(t,x)\mapsto \de_{D(t)}(x)$
		is of class $C^\infty$ in $[t_0,t_1]\times U$
		and $\partial D(t)\subset U$ for all $t\in[t_0,t_1]$;
		\item for all $t\in(t_0,t_1)$ and $x(t)\in\partial D(t)$,
		it holds
		\begin{equation}\label{eq:smooth-sub}
			\partial_t x(t)\cdot \hat{n} \leq -H( D(t),x(t))\qquad(\text{resp. }\partial_t x(t)\cdot \hat{n} \geq -H( D(t),x(t)),
		\end{equation}
		where $\hat{n} $ is the outer unit normal to $D(t)$ at $x$.
	\end{enumerate}
	
	When strict inequalities hold,
	$D(t)$  is called \emph{strict geometric subsolution (resp. supersolution)}.
\end{dfn}

\begin{rmk}
	For any $p\in \Rd\setminus\Set{0}$ and
	$X\in\mathrm{Sym}(d)$, by \eqref{eq:Kz2} we have 
	\[\va{\mathrm{tr}(M_K(\hat{p})X)} = 
	\frac{1}{2}\va{\int_{\hat{p}^\perp}
		K(z)\, Xz\cdot z\de\Hdmu(z)}
	\leq \frac{a_0}{2}\va{X},
	\]
	This ensures that
	geometric sub- and supersolution for the flow driven by $H_0$ exist.
\end{rmk}

Next, we remind the notion of geometric barriers
w.r.t. these smooth evolutions:

\begin{dfn}\label{defbarrier}\index{barrier}
	Let $T>0$ and  $\mathcal{F}^-$ and $\mathcal{F}^+$
	be, respectively, the families of
	strict geometric sub- and supersolution 
	to the flow  associated with some curvature functional $H$,
	as in Definition \ref{defistrict}. 
	\begin{enumerate}
		\item We say that the evolution of sets
		$[0,T]\ni t\mapsto E(t)$ is an \emph{outer barrier}
		w.r.t. $\mathcal{F}^-$ (resp. $\mathcal{F}^+$)
		if whenever $[t_0,t_1]\subset[0,T]$
		and $[t_0,t_1]\ni t\mapsto D(t)$ is a
		smooth strict subsolution
		(resp. $F(t)$ is a smooth strict  supersolution) such that
		$D(t_0)\subset E(t_0)$,
		then  we get  $D(t_1)\subset E(t_1)$ (resp. such that $F(t_0)\subset E(t_0)$,
		then  we get  $F(t_1)\subset E(t_1)$).
		\item Analogously,
		$[0,T]\ni t\mapsto E(t)$ is an \emph{inner barrier}
		w.r.t. the family $\mathcal{F}^-$ (resp. $\mathcal{F}^+$)
		if whenever $[t_0,t_1]\subset[0,T]$
		and $[t_0,t_1]\ni t\mapsto D(t)$ is a
		smooth strict subsolution			
		(resp. supersolution) such that
		$E(t_0) \subset \mathrm{int}(D(t_0))$,
		then $E(t_1) \subset \mathrm{int}(D(t_1))$ (resp. $E(t_0) \subset \mathrm{int}(F(t_0))$,
		then $E(t_1) \subset \mathrm{int}(F(t_1))$).			
	\end{enumerate}
\end{dfn}

We are interested in barriers is motivated by the next results,
which show that they are comparable with level sets flows.
The case of the local curvatures was studied in \cite{BNo}*{Theorem 3.2}
by G. Bellettini and M. Novaga.
For further reading about barriers for more general local motions,
we refer also to \cite{BNo2}.
In the nonlocal setting, recently
an analogous comparison principle has been established
in \cite{CDNV}*{Proposition A.10} by A. Cesaroni, S. Dipierro, M. Novaga, and E. Valdinoci.

\begin{thm}\label{stm:BN}
	Let  $u$ be the unique solution to \eqref{eq:limit_pb} with initial datum $u_0$ as in \eqref{initialdatum}.
	Let $E^{>}_{\lambda}$, $E^{\geq}_{\lambda}$ the sets defined in \eqref{levelset0}.  
	\begin{enumerate}
		\item The map 
		$[0,T]\ni t\mapsto E^>_\lambda(t)$
		is the minimal outer barrier for
		the family of strict geometric subsolutions
		associated with $H_0$,
		that is $E^>_\lambda(t)$ is an outer barrier
		and $E^>_\lambda(t)\subset E(t)$
		for any other outer barrier $E(t)$.
		\item The map $[0,T]\ni t\mapsto E^\geq_\lambda(t)$ 
		is the maximal inner barrier for
		the family of  geometric  strict supersolutions
		associated with $H_0$,
		that is $E^\geq_\lambda(t)$ is an inner barrier
		and $E(t) \subset E^\geq_\lambda(t)$
		for any other inner barrier $E(t)$.
	\end{enumerate}
\end{thm}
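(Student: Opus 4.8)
The plan is to prove Theorem \ref{stm:BN} by a classical \emph{avoidance} argument, exploiting the monotonicity and continuity of $H_0$ guaranteed by Lemma \ref{stm:basicsHK} and Lemma \ref{stm:deriv-sigmaK}, together with the fact that $u$ is the unique viscosity solution to \eqref{eq:limit_pb}. The two statements are dual to each other, so I would carry out the argument for the outer barrier and the family $\call{F}^-$ of strict geometric subsolutions, the inner-barrier case following by passing to complements and reversing inequalities. There are two things to establish: first, that $t\mapsto E^>_\lambda(t)$ \emph{is} an outer barrier for $\call{F}^-$; second, its minimality among outer barriers.

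First I would show that $E^>_\lambda(t)$ is an outer barrier. Let $[t_0,t_1]\subset[0,T]$ and let $[t_0,t_1]\ni t\mapsto D(t)$ be a smooth strict geometric subsolution to the flow driven by $H_0$, with $D(t_0)\subset E^>_\lambda(t_0)$. Since $\partial D(t)$ is compact and $(t,x)\mapsto d_{D(t)}(x)$ is smooth near $\partial D(t)$, the signed distance $\phi_D(t,x)$ to $\partial D(t)$ (positive inside $D(t)$) is a classical strict supersolution to the level-set PDE \eqref{eq:limit_pb} in a space-time neighbourhood of $\partial D(t)$: this is exactly the content of inequality \eqref{eq:smooth-sub} with the strict sign, rewritten via the geometricity of $F_0$. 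On the other hand, $u(t_0,\cdot)>\lambda$ on $D(t_0)$; because both $u$ and $\phi_D$ are continuous and $\partial D(t_0)$ is compact, there is $\delta>0$ with $u(t_0,\cdot)\geq \lambda$ on $\Set{\phi_D(t_0,\cdot)\geq -\delta}$. A standard viscosity comparison on the (relatively compact) region swept out by $\Set{-\delta\leq \phi_D(t,\cdot)\leq \delta}$ for $t\in[t_0,t_1]$ — using that $u$ is a viscosity solution, $\phi_D$ a strict classical supersolution, and that the inequality holds on the parabolic boundary of that region — yields $u(t,x)\geq \lambda$ whenever $\phi_D(t,x)\geq 0$, i.e. on $D(t)$, for all $t\in[t_0,t_1]$. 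In particular $D(t_1)\subset \Set{u(t_1,\cdot)\geq \lambda}$; a further small perturbation of $D$ (shrinking it slightly, which keeps it a strict subsolution by continuity of $H_0$ under $C^2$-convergence) upgrades this to $D(t_1)\subset E^>_\lambda(t_1)$. This is the step I expect to be the main technical obstacle, because one must handle the degeneracy of the operator at $\nabla\phi=0$ and localise the comparison to a tubular neighbourhood where $\phi_D$ is smooth and $|\nabla\phi_D|$ is bounded away from $0$; the careful device is to run the comparison on the set where $-\delta\le\phi_D\le\delta$ and to control what happens on its lateral boundary via the initial strict inclusion.

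Next I would prove minimality. Suppose $[0,T]\ni t\mapsto E(t)$ is any outer barrier for $\call{F}^-$. I want $E^>_\lambda(t)\subset E(t)$ for all $t$. Fix $\bar t\in[0,T]$ and $\bar x\in E^>_\lambda(\bar t)$, so $u(\bar t,\bar x)>\lambda$. Using the characterisation of $E^>_\lambda$ as the \emph{minimal} outer barrier would be circular, so instead I argue directly: I approximate the (possibly irregular) super-level set $\Set{u(0,\cdot)\geq\mu}$, for $\mu$ slightly below $\lambda$, from inside by a small smooth strictly subsolving set $D(0)$ contained in the interior of $\Set{u_0>\lambda}$, and let it evolve as a smooth strict geometric subsolution $t\mapsto D(t)$ for a short time; by the comparison principle Theorem \ref{esistenza0} (applied as in the first part, but now with the strict subsolution playing the role of a barrier for $u$ from inside), $D(t)$ stays inside $\Set{u(t,\cdot)>\lambda}=E^>_\lambda(t)$, and by definition of outer barrier $D(t)\subset E(t)$. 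Taking a countable family of such $D$'s that exhausts the open set $\bigcup_t E^>_\lambda(t)$ — possible because, $u$ being continuous, $E^>_\lambda(t)$ is open and can be written as an increasing union of compactly contained smooth sets, and because the flow driven by $H_0$ has the (short-time) existence of smooth strict subsolutions with prescribed smooth initial data, guaranteed by the bound $|\mathrm{tr}(M_K(\hat p)X)|\le \tfrac{a_0}{2}|X|$ from the Remark following Definition \ref{defistrict} — one gets $E^>_\lambda(\bar t)\subset E(\bar t)$. The inner-barrier statement \textit{(ii)} is obtained by the same scheme run on complements: $\co{E^\geq_\lambda(t)}$ is the minimal outer barrier for the strict supersolutions, since $x\mapsto -u(t,x)$ solves the level-set equation for the kernel that produces $-H_0$ (equivalently, one checks directly that $\co{D(t)}$ is a strict subsolution of $-H_0$ iff $D(t)$ is a strict supersolution of $H_0$), so maximality of $E^\geq_\lambda$ among inner barriers is dual to the minimality just proved.

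Throughout, the only nontrivial inputs beyond soft arguments are: (a) the viscosity comparison principle for \eqref{eq:limit_pb}, which is Theorem \ref{esistenza0}; (b) the smoothness and geometricity that let a smooth strict sub/supersolution's signed distance function serve as a classical strict sub/supersolution of the PDE; and (c) the continuity of $H_0$ under $C^2$ convergence of hypersurfaces, from Lemma \ref{stm:basicsHK} (transposed to $H_0$ via Lemma \ref{stm:deriv-sigmaK}), used to perturb barriers while preserving the strict sign. I would present (b) as a short computation using \eqref{eq:H0} and the identity $|\nabla u|H_0(\Set{u=\,\cdot\,},x)=-\mathrm{tr}(M_K(\widehat{\nabla u})\nabla^2 u)$ recorded just before \eqref{eq:eps_pb}, and relegate the bookkeeping of tubular neighbourhoods to a reference to \cite{BNo} and \cite{CDNV}, since the nonlocal analogue of precisely this comparison is carried out there.
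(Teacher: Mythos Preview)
The paper does not give its own proof of Theorem \ref{stm:BN}: it is quoted as a result from the literature, attributed to Bellettini and Novaga \cite{BNo}*{Theorem 3.2} (with \cite{BNo2} for further background). So there is no in-paper argument to compare your sketch against; the thesis simply invokes the theorem as a black box.

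That said, your sketch of the first half (that $t\mapsto E^>_\lambda(t)$ is an outer barrier) follows the standard avoidance/comparison route and is essentially right in spirit; the delicate point you flag --- localising the comparison to a tubular neighbourhood where the signed distance is smooth and nondegenerate, and then passing from $D(t_1)\subset E^\geq_\lambda(t_1)$ to $D(t_1)\subset E^>_\lambda(t_1)$ --- is indeed where the work lies, and your ``shrink slightly'' device is the correct idea (one uses the \emph{strict} inequality in \eqref{eq:smooth-sub} to absorb a small perturbation).

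The minimality direction, however, has a genuine gap as written. You want to show $E^>_\lambda(t)\subset E(t)$ for every outer barrier $E$, and you propose to exhaust $E^>_\lambda(t)$ by smooth strict subsolutions $D(\cdot)$ with $D(0)\subset\{u_0>\lambda\}$. But the existence of enough smooth strict subsolutions to perform this exhaustion is precisely the nontrivial content of the theorem: you are implicitly assuming that every point of $E^>_\lambda(\bar t)$ can be reached at time $\bar t$ by some smooth strict subsolution starting inside $\{u_0>\lambda\}$, which amounts to a regularity/approximation statement for the level-set flow that is not available a priori. The argument in \cite{BNo} proceeds differently: one defines the minimal barrier abstractly as the intersection of all outer barriers, proves independently that this abstract minimal barrier is itself a viscosity solution of the level-set PDE (this is where the real work is, via a careful analysis of test functions and the barrier property), and then invokes uniqueness (Theorem \ref{esistenza0}) to identify it with $E^>_\lambda$. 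Your route would need an additional input --- short-time existence of smooth flows from smooth data, plus a density/exhaustion argument --- that you have not supplied and that is not lighter than the original proof.
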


\begin{prop}\label{stm:comp-CDNV}
	Let $u_\epsilon\colon [0,+\infty)\times \Rd\to \R$
	be the viscosity solution to \eqref{eq:eps_pb} with initial datum $u_0$ as in \eqref{initialdatum}. 
	Let $E_{\epsilon,\lambda}^>$, $E_{\epsilon,\lambda}^\geq$
	be as in \eqref{levelseteps}.
	Then, the evolutions
	$t\mapsto E_{\epsilon,\lambda}^>(t)$
	and $t\mapsto E_{\epsilon,\lambda}^\geq(t)$
	are, respectively,
	an outer barrier w.r.t  geometric  strict subsolutions to \eqref{eq:eps_pb}
	and an inner barrier w.r.t geometric  strict supersolutions to \eqref{eq:eps_pb}.
\end{prop}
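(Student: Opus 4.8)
\textbf{Proof plan for Proposition \ref{stm:comp-CDNV}.}
The statement asserts that the superlevel and sublevel set flows of the viscosity solution $u_\epsilon$ to the rescaled nonlocal problem \eqref{eq:eps_pb} are, respectively, an outer barrier with respect to the family of strict geometric subsolutions, and an inner barrier with respect to the family of strict geometric supersolutions. Since the nonlocal curvature $\eps^{-1}H_\epsilon = H_{K_\epsilon}$ (up to the scaling factor $\eps^{-1}$, which is irrelevant to the structural properties) is a generalised curvature in the sense of Remark \ref{stm:gencurv}---this was ensured by Lemma \ref{stm:basicsHK} and Proposition \ref{stm:imbert}---the problem \eqref{eq:eps_pb} falls within the abstract framework of Chambolle, Morini, and Ponsiglione \cite{CMP}, and the comparison principle Theorem \ref{esistenzaeps} is available. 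Moreover, the nonlocal comparison result between barriers and level set flows of \cite{CDNV}*{Proposition A.10}, quoted in this section, applies to precisely this class of curvature functionals. Hence my plan is essentially to observe that the assertion is a direct instance of that result, verifying the hypotheses.

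First I would fix $\lambda\in\R$ and $\epsilon>0$, and let $[t_0,t_1]\subset[0,+\infty)$ together with a smooth strict geometric subsolution $[t_0,t_1]\ni t\mapsto D(t)$ to \eqref{eq:eps_pb} (in the sense of Definition \ref{defistrict}, with $H = \eps^{-1}H_\epsilon$) be given, with $D(t_0)\subset E_{\epsilon,\lambda}^>(t_0)$. One converts $D(t)$ into a subsolution of the level set PDE: because $(t,x)\mapsto \de_{D(t)}(x)$ is $C^\infty$ near $\partial D(t)$ and satisfies the strict differential inequality \eqref{eq:smooth-sub}, the signed distance (suitably truncated and shifted by a small constant) is a smooth strict subsolution of \eqref{eq:eps_pb} in a space-time neighbourhood of $\bigcup_t \partial D(t)$; this is the classical computation relating geometric evolution laws to level set equations, carried out in \cite{CMP} and recalled via \cite{A}. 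Then I would invoke the comparison principle of Theorem \ref{esistenzaeps} between this smooth strict subsolution and the viscosity solution $u_\epsilon$, combined with the strict inequality, to propagate the inclusion $D(t_0)\subset \{u_\epsilon(t_0,\cdot)>\lambda\}$ forward in time to $D(t_1)\subset \{u_\epsilon(t_1,\cdot)>\lambda\} = E_{\epsilon,\lambda}^>(t_1)$. This is exactly the content of \cite{CDNV}*{Proposition A.10}, which I would cite for the details. The argument for the inner barrier property of $t\mapsto E_{\epsilon,\lambda}^\geq(t)$ against strict geometric supersolutions is symmetric, now using that $\{u_\epsilon \geq \lambda\}$ is the complement (up to the level set) of a superlevel flow and reversing the roles, together with the strictness to absorb the passage between $>$ and $\geq$.

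The only genuinely delicate point is the matching between the two notions of ``solution'': geometric sub/supersolutions are defined through a pointwise differential inequality on the moving boundary (Definition \ref{defistrict}), whereas the barrier property is phrased through viscosity sub/supersolutions of the PDE. The bridge is the fact that a smooth hypersurface evolution satisfying \eqref{eq:smooth-sub} strictly gives rise, via its (signed) distance function, to a smooth---hence viscosity---strict sub/supersolution of the level set equation, at least locally in space near the front; since the comparison principle in Theorem \ref{esistenzaeps} only requires comparison in a neighbourhood of the evolving interface (the functions being constant far away), this localisation causes no trouble. I expect this verification, and the careful bookkeeping of which inequalities are strict and where the strictness is consumed to pass from the PDE comparison to the set inclusion, to be the main (and only) obstacle; everything else is a citation of \cite{CMP} and \cite{CDNV}. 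I would therefore keep the proof short, stating that the result follows from \cite{CDNV}*{Proposition A.10} once one notes that $H_\epsilon = H_{K_\epsilon}$ is a generalised curvature to which Theorems \ref{esistenzaeps} applies, and sketching the distance-function argument in one or two lines.
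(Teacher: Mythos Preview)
Your proposal is correct and matches the paper's treatment: the paper does not give a proof of Proposition~\ref{stm:comp-CDNV} at all, but simply states it as a direct consequence of \cite{CDNV}*{Proposition A.10}, having noted just above that $H_\epsilon$ is a generalised curvature in the sense of \cite{CMP}. Your sketch of the distance-function bridge between geometric and viscosity sub/supersolutions is more detail than the paper itself provides, and is accurate.
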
	
	\section{Convergence of the solutions to the nonlocal flows}\label{sec:convflows}
		Now that we clarified
what class of solutions for geometric motions we address to,
we can formulate our main result:
	
	\begin{thm}\label{stm:main2}\index{convergence of the solutions of the rescaled nonlocal flows}
		Let $K$ satisfy all  the assumptions in Section \ref{sec:K}. 
		Let $u_0\colon \Rd \to \R$ be a Lipschitz continuous function
		that is  constant outside a compact set.
		Let  $u_\eps, u\colon [0,+\infty)\times\Rd \to \R$ be respectively
		the unique continuous viscosity solution to
		\eqref{eq:eps_pb} and \eqref{eq:limit_pb}	with  initial datum $u_0$.
		Then
			\[
				\lim_{\eps\to 0^+} u_\eps(t,x)=u(t,x) \quad\text{locally uniformly in } [0,+\infty)\times\Rd.\] 
	\end{thm}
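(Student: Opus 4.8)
The plan is to combine the pointwise-to-uniform convergence of the rescaled nonlocal curvatures (Theorem \ref{stm:main1}) with the set-theoretic machinery of De Giorgi's barriers, exploiting the fact that both the rescaled nonlocal level-set flows and the limit anisotropic mean curvature flow are characterised by their barriers (Theorem \ref{stm:BN} and Proposition \ref{stm:comp-CDNV}). The heuristic is that, since the approximating curvature functionals $\epsilon^{-1}H_\epsilon$ converge uniformly to $H_0$ \emph{on compact smooth hypersurfaces}, any strict smooth geometric subsolution (resp. supersolution) for the limit flow is, for $\epsilon$ small enough, also a subsolution (resp. supersolution) for the $\epsilon$-flow; then the barrier characterisation lets one sandwich the limit of the $u_\epsilon$ between $u$ from above and below.

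First I would reduce to the level-set picture. By Theorems \ref{esistenza0} and \ref{esistenzaeps}, $u_\epsilon$ and $u$ are the unique bounded Lipschitz viscosity solutions; it suffices to prove convergence of the level-set flows. Introduce the relaxed semilimits
	\[
		\bar u(t,x) \coloneqq \limsup_{\epsilon\to 0^+,\,(s,y)\to(t,x)} u_\epsilon(s,y),
		\qquad
		\underline u(t,x) \coloneqq \liminf_{\epsilon\to 0^+,\,(s,y)\to(t,x)} u_\epsilon(s,y);
	\]
these are well defined because of the uniform bounds and the uniform Lipschitz estimates coming from the comparison principle (the $u_\epsilon$ are uniformly bounded and constant outside a fixed compact set, and a barrier argument gives a uniform modulus of continuity in space and time — this is where an a priori estimate in the spirit of Proposition \ref{aprioriest} enters, to prevent the fronts from moving too fast as $\epsilon\to0^+$; assumptions \eqref{eq:sing-orig} and \eqref{eq:imbert5} are used precisely here). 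The goal becomes $\bar u \leq u \leq \underline u$, since the reverse is trivial and forces $\bar u=\underline u=u$, i.e.\ locally uniform convergence.

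Next I would argue the two inequalities via barriers. Fix $\lambda$ and consider $E^>_\lambda(t)$, the minimal outer barrier for strict geometric subsolutions of the $H_0$-flow (Theorem \ref{stm:BN}). Take any smooth strict geometric subsolution $[t_0,t_1]\ni t\mapsto D(t)$ for the $H_0$-flow, with compact $C^2$ boundary, such that $D(t_0)$ is contained in the $\epsilon$-level set at time $t_0$. By Theorem \ref{stm:main1}, on the (compact, $C^2$) boundaries $\partial D(t)$ we have $\epsilon^{-1}H_\epsilon(D(t),\cdot)\to H_0(\partial D(t),\cdot)$ uniformly, so the strict inequality \eqref{eq:smooth-sub} defining a subsolution for $H_0$ upgrades, for all $\epsilon$ below some $\epsilon_0$ depending only on $D$, to the (non-strict, hence strict after a further perturbation) subsolution inequality for $\epsilon^{-1}H_\epsilon$. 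Then Proposition \ref{stm:comp-CDNV} — which says $t\mapsto E^>_{\epsilon,\lambda}(t)$ is an outer barrier for strict subsolutions of \eqref{eq:eps_pb} — forces $D(t_1)\subset E^>_{\epsilon,\lambda}(t_1)$. Passing to the limit $\epsilon\to0^+$ and using the definition of $\bar u$, one obtains $D(t_1)\subset \{\bar u(t_1,\cdot)\geq\lambda\}$ (up to the usual care with open/closed sublevels and with $\lambda$ outside a countable set where fattening could occur); taking the supremum over all such $D$ and invoking the \emph{minimality} of $E^>_\lambda$ among outer barriers yields $E^>_\lambda(t)\subset\{\bar u(t,\cdot)\geq\lambda\}$ for a.e.\ $\lambda$, hence $\bar u\geq u$ wait — more precisely this gives $\{u(t,\cdot)>\lambda\}\subset\{\bar u(t,\cdot)\geq\lambda\}$, i.e.\ $u\leq\bar u$; the symmetric argument with inner barriers, maximality of $E^\geq_\lambda$, and $\underline u$ gives $\underline u\leq u$. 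Combined with the trivial $\underline u\leq\bar u$ one concludes $\bar u=\underline u=u$.

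The main obstacle I expect is the passage from convergence of curvatures \emph{on smooth compact fronts} to a statement about the viscosity solutions themselves, which cannot be done by a naive test-function (perturbed test function) argument because the nonlocal operator $H_\epsilon$ tests the whole sublevel set, not just a local jet; this is exactly why the barrier route is chosen. Within it, the delicate points are: (i) producing a rich enough supply of smooth strict geometric sub/supersolutions that exhausts the barrier from inside — a point solved by standard regularisation of the datum plus adding/subtracting a small time-linear term to make inequalities strict and to get the required uniformity of $\epsilon_0$ over a compact time interval; and (ii) the a priori speed bound ensuring the semilimits $\bar u,\underline u$ are finite, continuous, and have the correct initial trace $u_0$ — here the smallness of $K$ near the origin \eqref{eq:sing-orig} and its decay \eqref{eq:imbert5} are what keep $\epsilon^{-1}H_\epsilon$ of the barrier spheres bounded uniformly in $\epsilon$, so that explicit shrinking balls serve as uniform outer barriers for the initial layer. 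Once these ingredients are in place the argument closes cleanly.
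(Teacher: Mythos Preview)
Your overall strategy is the paper's: compactness of $\{u_\eps\}$ (via the a priori estimates of Proposition~\ref{aprioriest}), uniform convergence of curvatures on smooth compact fronts (Theorem~\ref{stm:main1}) to turn strict $H_0$-sub/supersolutions into strict $\eps^{-1}H_\eps$-sub/supersolutions, and then the barrier comparison results (Theorem~\ref{stm:BN}, Proposition~\ref{stm:comp-CDNV}). So the architecture is right.

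However, the semilimit packaging contains a genuine logical gap. You correctly state that the goal is $\bar u\le u\le\underline u$, but the barrier argument you describe produces exactly the \emph{trivial} inequalities. Indeed, showing that $\{\bar u(t,\cdot)\ge\lambda\}$ is an outer barrier and invoking minimality of $E^>_\lambda=\{u>\lambda\}$ gives $\{u>\lambda\}\subset\{\bar u\ge\lambda\}$, i.e.\ $u\le\bar u$; the symmetric inner-barrier argument gives $\underline u\le u$. Together these yield only $\underline u\le u\le\bar u$, which always holds and does not force equality. Your final sentence ``Combined with the trivial $\underline u\le\bar u$ one concludes $\bar u=\underline u=u$'' is therefore incorrect. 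There is also a secondary issue: to check that $\{\bar u\ge\lambda\}$ is an outer barrier you would need $D(t_0)\subset\{\bar u(t_0,\cdot)\ge\lambda\}$ to imply $D(t_0)\subset E^>_{\eps,\lambda}(t_0)$ for all small $\eps$, but $\bar u$ is a limsup, so membership only guarantees this along \emph{some} subsequence of $\eps$'s.

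The paper avoids both problems by first using compactness to extract a locally uniformly convergent subsequence $u_{\eps_\ell}\to v$, and then introducing the \emph{set-theoretic upper limits} of the level sets,
\[
\tilde E^>_\lambda(t)=\bigcap_{\eps<1}\bigcup_{\eta<\eps}E^>_{\eta,\lambda}(t),
\qquad
\tilde E^\geq_\lambda(t)=\bigcap_{\eps<1}\bigcup_{\eta<\eps}E^\geq_{\eta,\lambda}(t),
\]
rather than semilimits of the functions. Pointwise convergence immediately gives $\{v>\lambda\}\subset\tilde E^>_\lambda\subset\tilde E^\geq_\lambda\subset\{v\ge\lambda\}$. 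The barrier argument you outlined (strict $H_0$-subsolution becomes strict $\eps$-subsolution for small $\eps$, then Proposition~\ref{stm:comp-CDNV}) shows $\tilde E^>_\lambda$ is an outer barrier and $\tilde E^\geq_\lambda$ an inner barrier for the $H_0$-flow, whence by Theorem~\ref{stm:BN} also $\{u>\lambda\}\subset\tilde E^>_\lambda$ and $\tilde E^\geq_\lambda\subset\{u\ge\lambda\}$. Now the \emph{same} intermediate sets $\tilde E$ are sandwiched by the level sets of \emph{both} $u$ and $v$, which gives $\{u>\lambda\}\subset\{v\ge\lambda\}$ and $\{v>\lambda\}\subset\{u\ge\lambda\}$ simultaneously; a short lemma (Lemma~\ref{stm:incl-suplev}) then yields $u=v$. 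Since every cluster point equals $u$, the full family converges. The key difference from your attempt is that the intermediate object $\tilde E$ lets you cross between $u$ and $v$ in both directions, which the semilimits $\bar u,\underline u$ do not.
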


An analogous result was proved by F. Da Lio, N. Forcadel, and R. Monneau \cite{DFM}*{Theorem 1.4}.
Though the authors were concerned with a problem which resembles much ours,
their assumptions on the interaction kernel were dictated
by a model of dislocation dynamics and are different from the ones we consider.
Indeed, they assumed $K$ to be bounded in a neighbourhood of $0$, hence nonsingular,
and to decay as $\va{x}^{-d-1}$ at infinity. 
Perforce, their scaling of the curvature,
	\[\frac{1}{\eps\log \eps} H_{K_\eps},\] 
does not coincide with ours.
However, the proofs are partly similar.
Indeed, as \cite{DFM}, we firstly prove a compactness result for  
the family of solutions to Cauchy's problems \eqref{eq:eps_pb},
but, in contrast with the work of Da Lio, Forcadel, and Monneau,
we show that the cluster point must be unique
and coincide with the viscosity solution to \eqref{eq:limit_pb}
by means of De Giorgi's barriers.

Let us first prove the compactness result,
which is not completely novel.
We include the proof because it has not been stated explicitly yet for our setting. 

	\begin{prop}\label{aprioriest}\index{compactness!for the solutions of the rescaled nonlocal flows}
		Assume that $u_0\colon \Rd \to\R$ is as in \eqref{initialdatum} 
		and let $u_\eps$ be the unique continuous viscosity solution to \eqref{eq:eps_pb}. 
		Then, 
			\begin{equation}\label{equilip}
				\va{u_\eps(t,x)-u_\eps(t,y)}\leq
					\norm{\nabla u_0}_{L^\infty(\Rd)} \va{x-y}
				\quad \text{for all } t\in [0,T]
					\text{ and } x,y\in \Rd,
			\end{equation}
		and there exists a constant $c>0$ independent of $\eps$ such that
			\begin{equation}\label{equihold}
				\va{u_\eps(t,x)-u_\eps(s,x)}
					\leq \norm{\nabla u_0}_{L^\infty(\Rd)}\sqrt{c\va{t-s}}
				\quad \text{for all } t,s\in [0,T]
					\text{ and } x\in \Rd.
		\end{equation}
	\end{prop}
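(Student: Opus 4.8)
The plan is to establish the two estimates by the standard machinery for viscosity solutions of degenerate parabolic equations, exploiting that the problems \eqref{eq:eps_pb} admit comparison principles (Theorem \ref{esistenzaeps}). First I would prove the spatial Lipschitz bound \eqref{equilip}. Fix $z \in \Rd$ and consider $v_\eps(t,x) \coloneqq u_\eps(t, x+z) + \norm{\nabla u_0}_{L^\infty(\Rd)}\va{z}$. Since the curvature functional $H_\eps$ is translation invariant (Lemma \ref{stm:basicsHK}) and the operator in \eqref{eq:eps_pb} is geometric, $v_\eps$ is again a viscosity solution of the same equation, with initial datum $u_0(\cdot + z) + \norm{\nabla u_0}_{L^\infty(\Rd)}\va{z} \geq u_0$. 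By the comparison principle, $u_\eps(t,x) \leq v_\eps(t,x) = u_\eps(t,x+z) + \norm{\nabla u_0}_{L^\infty(\Rd)}\va{z}$ for all $t,x$; swapping the roles of $x$ and $x+z$ (equivalently, replacing $z$ by $-z$) gives the reverse inequality, hence \eqref{equilip} upon setting $y = x+z$.

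Next I would prove the $1/2$-Hölder-in-time bound \eqref{equihold}, which is the more delicate of the two. The idea is to build explicit barriers of the form $\psi^\pm(t,x) \coloneqq u_0(x_0) \pm \big( L\va{x-x_0}^2/\eta + C_\eta t \big)$, or rather a translated/regularised version thereof, and use them as super- and subsolutions near a fixed point $x_0$. Concretely, fix $x_0\in\Rd$ and $s=0$ (the general case follows by the semigroup property), and choose $\psi^+(t,x)$ to dominate $u_0$ at time $0$ on a ball $B(x_0,R)$ and to stay above $u_\eps$ on the lateral boundary by using the already-established spatial Lipschitz bound together with the bound $\va{u_0}\le \norm{u_0}_{L^\infty}$. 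The key point is that for such a smooth test function one has $\nabla\psi^+ \ne 0$ away from $x_0$, and the nonlocal curvature term $\eps^{-1}\va{\nabla\psi^+}H_\eps(\{\psi^+\ge\psi^+(x_0)\},x)$ can be controlled: here assumptions \eqref{eq:sing-orig} and \eqref{eq:imbert5} enter, since the superlevel sets of a paraboloid are balls, and for balls of radius comparable to $\eta/\eps$ one estimates $\eps^{-1}H_\eps(B_r,x) = \eps^{-1}H_{K}(B_{r/\eps},x) \cdot(\ldots)$ using that $r\mapsto r\int_{\co{B(0,r)}}K$ vanishes at $0$ and $\lambda^{-1}\int_{Q_\lambda(e)}K \to 0$ at infinity. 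This yields a bound on the curvature contribution that is uniform in $\eps$, so that choosing $C_\eta$ large enough (of order $L/\eta$ plus a uniform constant) makes $\psi^+$ a supersolution on $(0,t^*)\times B(x_0,R)$. Comparison then gives $u_\eps(t,x_0) \le u_0(x_0) + C_\eta t$; optimising $\eta$ in terms of $t$ (balancing $L R^2/\eta$ against $C_\eta t$, with $R$ chosen $\sim \sqrt{\eta t}$ via the Lipschitz bound) produces the $\sqrt{c\va{t}}$ rate with $c$ independent of $\eps$. The symmetric construction with $\psi^-$ gives the lower bound, and translating in time yields \eqref{equihold} for general $t,s$.

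The main obstacle is the second step: one must verify carefully that the chosen paraboloid-type barriers are genuine viscosity super-/subsolutions of the \emph{nonlocal} equation uniformly in $\eps$, which requires a quantitative estimate on $\eps^{-1}H_\eps$ evaluated on large balls (the superlevel sets of the barrier), and it is precisely here that hypotheses \eqref{eq:sing-orig}, \eqref{eq:massa-parabole}, and \eqref{eq:imbert5} are used rather than the finer $C^2$-rate estimate of Proposition \ref{stm:conv-rate}, as noted in the remark following that proposition. Once \eqref{equilip} and \eqref{equihold} are in hand, together with the uniform $L^\infty$ bound $\norm{u_\eps(t,\cdot)}_{L^\infty} \le \norm{u_0}_{L^\infty}$ (immediate from comparison with constant sub/supersolutions), the Ascoli–Arzelà theorem gives the desired compactness of $\Set{u_\eps}$ in $C_{\loc}([0,+\infty)\times\Rd)$, which is what subsequent arguments in Section \ref{sec:convflows} exploit.
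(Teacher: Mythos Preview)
Your argument for the spatial Lipschitz bound \eqref{equilip} is correct and is exactly what the paper does (translation invariance plus the comparison principle of Theorem \ref{esistenzaeps}).

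For the time regularity \eqref{equihold} you have the right overall plan --- construct explicit barriers whose superlevel sets are complements of balls, bound the nonlocal curvature of balls uniformly in $\eps$ via the hypotheses on $K$, apply comparison, then optimise a free parameter to produce the $\sqrt{t}$ rate --- but the execution has a genuine gap. Your paraboloid $\psi^+(t,y)=u_0(x_0)+L\va{y-x_0}^2/\eta+C_\eta t$ does \emph{not} dominate $u_0$ at $t=0$ globally: by Lipschitz continuity you only get $\psi^+(0,y)\geq u_0(y)$ when $\va{y-x_0}\geq A\eta/L$ (with $A=\norm{\nabla u_0}_{L^\infty}$), and it fails in a small ball around $x_0$. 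You try to patch this by restricting to a ball and using ``lateral boundary'' comparison, but that is a local-equation device: for the nonlocal problems \eqref{eq:eps_pb} the operator sees the whole of $\Rd$, and the comparison principle you have at hand (Theorem \ref{esistenzaeps}) is stated for global sub/supersolutions. A localized comparison would require prescribing $u_\eps$ on the full exterior of the ball for all times, which is exactly what you are trying to estimate.

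The paper avoids this by choosing a different barrier shape: the regularised cone
\[
\phi(t,y)=Lt+A\sqrt{\va{y-x}^2+\eta^2}+u_0(x),
\]
with $A=\norm{\nabla u_0}_{L^\infty}$. Because $A\sqrt{\va{y-x}^2+\eta^2}\geq A\va{y-x}\geq u_0(y)-u_0(x)$, this dominates $u_0$ \emph{everywhere} at $t=0$, so the global comparison of Theorem \ref{esistenzaeps} applies directly. The superlevel sets are still complements of balls, $\Set{z:\phi(t,z)\geq\phi(t,y)}=\co{B(x,\va{y-x})}$, and the supersolution condition reduces to a uniform-in-$\eps$ bound on
\[
\frac{r}{\eps\sqrt{r^2+\eta^2}}\,H_\eps\big(B(-re,r),0\big)
\]
over all $r>0$ and $e\in\Sdmu$. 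This is where the paper uses the estimate \eqref{eq:est-reg-hyp} (with $\bar\delta=r/2$, $\lambda=\lambda_0/r$) and splits into three regimes of $r/\eps$: small $r/\eps$ needs \eqref{eq:sing-orig} and \eqref{eq:imbert5}, large $r/\eps$ needs \eqref{eq:imbert3} and \eqref{eq:imbert1}, and the intermediate range needs \eqref{eq:massa-parabole}. The outcome is $L=cA/\eta$ uniformly in $\eps$, hence $\va{u_\eps(t,x)-u_0(x)}\leq A(c t/\eta+\eta)$, and choosing $\eta=\sqrt{ct}$ gives \eqref{equihold}. Your paraboloid can in fact be salvaged by adding a vertical shift $+A\eta$ (then the quadratic $Lr^2/\eta - Ar + A\eta\geq 0$ for $L\geq A/4$, so global domination is restored) and carrying out the same three-regime curvature analysis; but as written, with the lateral-boundary argument, the proof does not go through.
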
		
	\begin{proof}
	By standard arguments \cite{DFM,CDNV}, one can see that
	the Lipschitz continuity of the datum and the comparison principle yield
	the equi-Lipschitz estimate \eqref{equilip}.
	
	As for the equi-H\"older continuity,
	we adapt the strategy of Section 5 in \cite{DFM}.
	Some care is needed because of the possible singularity of our interaction kernel.
	
	For $\eta>0$ and $x\in\Rd$ fixed, we consider
		\begin{equation}\label{test}
			\phi(t,y)= Lt + A\sqrt{\va{y-x}^2+\eta^2}+u_0(x),
		\end{equation}
	where we set $A\coloneqq \norm{\nabla u_0}_{L^\infty(\Rd)}$. 
	We claim that, for $L>0$ sufficiently large,
	$\phi$ is a supersolution to \eqref{eq:eps_pb} for any $\epsilon\in(0,1)$.
	
	We firstly note that $\phi(0,y)\geq u_0(y)$
	thanks to the Lipschitz continuity of $u_0$.
	Also, we observe that, for any $y\in \Rd$,
		\[\Set{z\in\Rd : \phi(t,z)\geq \phi(t,y)}
			= \co{B(x,\va{y-x})},\]
	whence, to show that $\phi$ is a supersolution,
	it suffices to find $L$ so large that 
		\[
			\frac{L}{A} \geq
			\frac{ \va{y-x}}{\eps \sqrt{\va{y-x}^2+\eta^2}}H_\eps(B(x,\va{y-x}),y)
				\quad\text{for all }y\in\Rd \text{ and } \epsilon\in(0,1).\] 
	Recalling that the nonlocal curvature is invariant under translations,
	if we set $e \coloneqq \widehat{y-x}$ and $r\coloneqq \va{y-x}$,
	we have that the last inequality holds if and only if
		\begin{equation}\label{eq:unif-supersol}
		\frac{L}{A} \geq \frac{r}{\eps \sqrt{r^2+\eta^2}}H_\eps(B(-re,r),0)
		\quad\text{for all }r>0,e\in\Sdmu, \text{ and } \epsilon\in(0,1).
		\end{equation}
	Hence, we want to prove that there exists 
	$L_0\coloneqq L_0(\eta)>0$ such that
		\begin{equation}\label{sup}
			\sup_{ r>0,\,e\in\mathbb{S}^{d-1}}\sup_{\epsilon\in(0,1)}
			\frac{r}{ \eps \sqrt{r^2+\eta^2}}H_\eps(B(-re,r),0)
			\leq L_0.
		\end{equation}
	This clearly gets \eqref{eq:unif-supersol} for $L=AL_0$.
	
	To recover \eqref{sup},
	we use inequality \eqref{eq:est-reg-hyp}.
	Retracing its proof, we see that, when $E=B(-re,r)$ and $x=0$,
	we can choose $\lambda = \lambda_0/r$ for some $\lambda_0 >0$ and $\bar{\delta} = r/2$,
	so that we get
		\[0 \leq H_\eps(B(-re,r),0)
			\leq \int_{Q_{\frac{\lambda_0\eps}{r}}(e)} K(y)\de y
				+ \int_{\co{B\left(0,\frac{r}{2\eps}\right)}}K(y)\de y.\]
	It follows that
		\begin{equation*}
			\frac{r}{\eps \sqrt{r^2+\eta^2}}H_\eps(B(-re,r),0)
				\leq \frac{r}{\eps\eta}
				\left[\int_{Q_{\frac{\lambda_0\eps}{r}}(e)} K(y)\de y
				+ \int_{\co{B\left(0,\frac{r}{2\eps}\right)}}K(y)\de y\right].
		\end{equation*}
	We exploit the hypotheses on $K$:
	by \eqref{eq:sing-orig}, \eqref{eq:imbert5},
	\eqref{eq:imbert3}, and \eqref{eq:imbert1},
	there exist $\lambda,\Lambda>0$ with the following properties:
		\begin{enumerate}
			\item $\lambda<\Lambda$;
			\item if $r<\lambda \epsilon$, then
					\[\frac{r}{\eps}\int_{Q_{\frac{\lambda_0\eps}{r}}(e)} K(y)\de y\leq \frac{1}{2}
					\quad\text{and}\quad
					\frac{r}{\eps}\int_{\co{B\left(0,\frac{r}{2\eps}\right)}}K(y)\de y \leq\frac{1}{2}\]
				and, consequently,
					\begin{equation}\label{eq:regime1}
					\frac{r}{\eps\sqrt{r^2+\eta^2}}H_\eps(B(-re,r),0) \leq \frac{1}{\eta};
					\end{equation}
			\item if $r>\Lambda \epsilon$, then
					\[\frac{r}{\eps}\int_{Q_{\frac{\lambda_0\eps}{r}}(e)} K(y)\de y\leq a_0 + \frac{1}{2}
					\quad\text{and}\quad
					\frac{r}{\eps}\int_{\co{B\left(0,\frac{r}{2\eps}\right)}}K(y)\de y \leq\frac{1}{2}\]
				and, accordingly,
					\begin{equation}\label{eq:regime2}
					\frac{r}{\eps\sqrt{r^2+\eta^2}}H_\eps(B(-re,r),0) \leq \frac{a_0+1}{\eta}.
					\end{equation}
		\end{enumerate}
	Now, only the case
	$\lambda\eps \leq r\leq \Lambda\epsilon$ is left to discuss.
	In this intermediate regime, 
	recalling \eqref{eq:massa-parabole},
	we easily obtain
		\begin{equation}\label{eq:regime3}
		\frac{r}{\eps\sqrt{r^2+\eta^2}}H_\eps(B(-re,r),0)
			\leq \frac{\Lambda}{\eta}
				\left(c + \int_{\co{B\left(0,\frac{\lambda}{2}\right)}}K(y)\de y\right),
		\end{equation}
	with $c>0$ depending only on $\lambda$.
	
	By the whole of	\eqref{eq:regime1}, \eqref{eq:regime2}, and \eqref{eq:regime3},
	we infer that there exists a constant
	$c\coloneqq c(a_0,\lambda,\Lambda)>0$ such that
		\[\sup_{ r>0,\,e\in\mathbb{S}^{d-1}}\sup_{\epsilon\in(0,1)}
		\frac{r}{\eps\sqrt{r^2+\eta^2}}H_\eps(B(-re,r),0)
		\leq \frac{c}{\eta}\]
	and, thus, \eqref{eq:unif-supersol} holds
	if we pick $ L=cA/\eta$.
	
Summing up, we proved  that, for any fixed $x\in\Rd$,
	the function
		\[\phi(t,y)=A\left(\frac{c}{\eta}t + \sqrt{\va{y-x}^2+\eta^2} \right) + u_0(x)\]
	is a supersolution to \eqref{eq:eps_pb} for any $\epsilon>0$.
	A similar reasoning shows that there exists $c=c(a_0,\lambda,\Lambda)$
	such that the function
		\[
			\psi(t,y) \coloneqq -A\left(\frac{c}{\eta}t + \sqrt{\va{y-x}^2+\eta^2} \right) + u_0(x),
		\]
	is a subsolution to \eqref{eq:eps_pb} for all $x\in\Rd$ and $\epsilon>0$.		 
	
	We apply the comparison principle in Theorem \ref{esistenzaeps}
	to $u_\epsilon$, $\phi$, and $\psi$.
	This yields	that, for all $(t,x)\in[0,T]\times\Rd$ and all $\eta>0$, 
		\[
			\va{u_\eps(t,x)-u_0(x)}
			\leq \norm{\nabla u_0}_{L^\infty(\Rd)}\left(\frac{c}{\eta}t + \eta\right).
		\]
	Being $\eta$ arbitrary, we may set $\eta=\sqrt{ct}$ and obtain
		\begin{equation*}\label{eq:q-equihold}
		\va{u_\eps(t,x)-u_0(x)}\leq  2\norm{\nabla u_0}_{L^\infty(\Rd)}\sqrt{ct}.
		\end{equation*}
	In turn, this entails \eqref{equihold},
	because the problem \eqref{eq:eps_pb} is invariant w.r.t. translations in time,
	it admits a unique solution, and
	$\norm{\nabla u_\eps(t,\,\cdot\,)}_{L^\infty(\Rd)}\leq \norm{\nabla u_0}_{L^\infty(\Rd)}$
	for all $t\in[0,T]$.
\end{proof}

Now, we deal with the proof of  Theorem \ref{stm:main2}.
Thanks to the previous proposition,
we know that the family $\Set{u_\eps}$ of the solutions to \eqref{eq:eps_pb}
is precompact in $C([0,T]\times \Rd)$.
It follows that, for some sequence $\Set{\epsilon_\ell}$ and some $v\in C([0,T]\times \Rd)$,
$u_{\eps_\ell}$ converges to $v$ locally uniformly as $\ell\to +\infty$
The desired conclusion is achieved
as soon as we prove $v=u$, $u$ being the solution to \eqref{eq:limit_pb}.
In view of the following lemma,
we see that the equality may be inferred
by certain inclusions of the level sets of $u$ and $v$:
	\begin{lemma}\label{stm:incl-suplev}
		Let $f,g\colon\Rd \to \R$ be  two continuous  functions such that  for all $\lambda \in \R$ there  hold 
		\begin{gather*}
		\Set{x\in\Rd : f(x)>\lambda}\subseteq \Set{x\in\Rd : g(x)\geq \lambda}, \\
		\Set{x\in\Rd : g(x)>\lambda}\subseteq \Set{x\in\Rd : f(x)\geq \lambda}.
		\end{gather*}
		Then, $f(x)=g(x)$ for all $x\in\Rd$.	
	\end{lemma}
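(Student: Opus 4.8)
The statement is an elementary consequence of a density/ordering argument on real numbers, and in fact continuity plays no essential role (it is stated here only because $f$ and $g$ arise as values of continuous viscosity solutions). The plan is to argue by contradiction, splitting into the two symmetric cases $f(x)>g(x)$ and $g(x)>f(x)$ and interposing a rational (or arbitrary real) threshold between the two values.

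First I would suppose, for some $x\in\Rd$, that $f(x)\neq g(x)$; without loss of generality $f(x)>g(x)$. I then pick $\lambda\in\R$ with $g(x)<\lambda<f(x)$, which is possible by the density of $\R$ in itself. By the choice of $\lambda$ we have $x\in\Set{y\in\Rd : f(y)>\lambda}$, so the first hypothesised inclusion forces $x\in\Set{y\in\Rd : g(y)\geq\lambda}$, i.e. $g(x)\geq\lambda$. This contradicts $\lambda>g(x)$.

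The case $g(x)>f(x)$ is handled identically, exchanging the roles of $f$ and $g$ and invoking the second inclusion: one picks $\lambda$ with $f(x)<\lambda<g(x)$, deduces $x\in\Set{y:g(y)>\lambda}\subseteq\Set{y:f(y)\geq\lambda}$, and reaches the contradiction $f(x)\geq\lambda>f(x)$. Hence $f(x)=g(x)$ for every $x\in\Rd$.

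There is essentially no obstacle here: the only point to be careful about is to use \emph{strict} inequality on the left of each inclusion and \emph{non-strict} on the right, which is exactly what allows the chosen intermediate value $\lambda$ to produce a genuine contradiction. I would present the argument in a few lines, noting in passing that continuity of $f$ and $g$ is not used, so that the lemma will apply verbatim to the pair $(u(t,\cdot),v(t,\cdot))$ once the required inclusions of super- and sublevel sets have been established via the barrier comparison of Section \ref{sec:convflows}.
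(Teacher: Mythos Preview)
Your proof is correct and is essentially the same elementary level-set argument as the paper's: both exploit the inclusions at a threshold strictly between $f(x)$ and $g(x)$ to force a contradiction. The paper organises it slightly differently (fixing $\lambda=g(\bar x)$ and perturbing by $\mu$), but the content is identical; your observation that continuity is not actually used is also correct.
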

	\begin{proof}
		Let $\bar x\in\Rd$ and
		assume that $g(\bar x)=\lambda$. Then, for all $\mu>0$,
		we have
			\[
			\bar x\in \Set{x : g(x)>\lambda-\mu}\subseteq  \Set{x :f(x)\geq \lambda-\mu},
			\]
		which yields $f(\bar x)\geq \lambda$.
		If $f(\bar x)>\lambda$, then
			\[
			\bar x\in \Set{x : f(x)>\lambda+\mu_0}\subseteq  \Set{x : g(x)\geq \lambda+\mu_0>\lambda}
			\]
		some $\mu_0>0$, which contradicts the equality  $g(\bar x)=\lambda$. 
		Thus, it necessarily holds $f(\bar x)=\lambda$.
	\end{proof}	

So, to show that any cluster point $v$ of the family $\Set{u_\epsilon}$ coincides with $u$,
we compare its superlevel sets with the ones of $u$.
To this purpose, we introduce the \emph{set-theoretic upper limits}\index{set-theoretic upper limit}
of the level set flows $t\mapsto E^{>}_{\eps,\lambda}(t)$, $t\mapsto E^{\geq}_{\eps,\lambda}(t)$
associated with $u_\epsilon$ (recall \eqref{levelseteps}).
We define
	\begin{equation}\label{eq:limsup}
		\tilde E^>_\lambda(t) \coloneqq \bigcap_{\eps<1} \bigcup_{\eta<\eps} E^{>}_{\eta, \lambda}(t)
		\quad\text{and}\quad
		\tilde E^\geq_\lambda(t) \coloneqq \bigcap_{\eps<1} \bigcup_{\eta<\eps} E^{\geq}_{\eta, \lambda}(t). 	
	\end{equation} 

	\begin{rmk}\label{ind}
	It is readily shown that, for any $\bar\eps<1$, 
	\begin{equation*}
		\tilde E^>_\lambda(t) = \bigcap_{\eps<\bar\eps}
												\bigcup_{\eta<\eps} E^>_{\eta, \lambda}(t)
	\quad\text{and}\quad
		\tilde E^\geq_\lambda(t) = \bigcap_{\eps<\bar\eps}
													\bigcup_{\eta<\eps} E^\geq_{\eta, \lambda}(t). 	
	\end{equation*}
	\end{rmk}

The proof of the main result of this chapter follows.
In our argument,
the uniform convergence of curvatures established in Theorem \ref{stm:main1} plays a major role.

\begin{proof}[Proof of Theorem \ref{stm:main2}]
Let $v\in C([0,T]\times \Rd)$ be the limit point
 w.r.t. the locally uniform convergence of some subsequence of $\Set{u_\epsilon}$,
which we do not relabel.
Such $v$ exists as a consequence of Proposition \ref{aprioriest}.

We now show that for every $\lambda\in\R$, 
	\begin{equation}\label{inclusion1}
	\Set{x\in\Rd : v(t,x)>\lambda} \subseteq \tilde E^>_\lambda(t)
			\subseteq \tilde E^\geq_\lambda(t)
			\subseteq \Set{x\in\Rd : v(t,x)\geq \lambda},
	\end{equation}
with $\tilde E^>_\lambda$ and $\tilde E^\geq_\lambda$
as in \eqref{eq:limsup}.
Without loss of generality, we may discuss just the case $\lambda = 0$.
We only utilise the pointwise convergence of $u_\eps$ to $v$:
if $\bar x\in \Rd$ is such that $v(t,\bar x)=\mu$ for some $\mu>0$,
then there exists $\bar\eps>0$ such that
	\[u_\eps(t,\bar x)\geq \frac{\mu}{2}>0 \quad \text{for all $\eps<\bar\eps$},\]
and, hence, $\bar x\in  \tilde E^>_0(t)$.
Thus, we deduce $\Set{x\in\Rd : v(t,x)>0 }\subseteq \tilde E^>_0(t)$.

On the other hand, if $\bar x\in \tilde E^\geq_0(t)$,
then for all $\eps<1$ there exists $\eta_\eps<\eps$
such that $u_{\eta_\eps}(t,\bar x)\geq 0$. 
Taking the limit $\epsilon\to 0^+$, we get
			$$v(t, \bar x) = \lim_{\eps\to 0^+} u_{\eta_\eps}(t,\bar x) \geq 0.$$
		
Secondly, we prove that, for all $\lambda\in \R$, 
	\begin{equation}\label{inclusion2}
		\Set{x\in\Rd : u(t,x)>\lambda}\subseteq
			\tilde E^>_\lambda(t)\subseteq
			\tilde E^\geq_\lambda(t)\subseteq 
			\Set{x\in\Rd : u(t,x)\geq \lambda},
		\end{equation}
where $u$ is the viscosity solution to \eqref{eq:limit_pb}. 

We begin by showing that
$ \tilde E_{\lambda}^>$ and $ \tilde E_{\lambda}^\geq$ are
an outer and an inner barrier, respectively,
for the family of strict geometric subsolutions and supersolutions, respectively,
associated with the flow driven by $H_0$. 
If this assertion hold, then Theorem \ref{stm:BN} immediately gets \eqref{inclusion2},
because it states that $\Set{x\in\Rd : u(t,x)>\lambda}$ is the minimal outer  barrier
for the family of strict geometric  subsolutions,
and $\Set{x\in\Rd : u(t,x)\geq \lambda}$ is the maximal inner barrier
for the family of strict geometric supersolutions. 

We consider only the case of $ \tilde E_{0}^>(t)$,
since the proofs for $\lambda\neq 0$ and for  $ \tilde E_{0}^\geq$ are the same. 	

For some $0\leq t_0<t_1\leq T$, 
let $t\mapsto D(t)$  be an evolution of sets 	
which is a strict geometric subsolution to the anisotropic mean curvature motion
when $t\in [t_0, t_1]$.
Explicitly, we assume that there exists $\gamma>0$ such that  
			\begin{equation}\label{eq:strict-geom}
			\partial_t x(t)\cdot \hat{n}_D(t,x(t)) \leq -H_0(\partial D(t),x(t)) -\gamma
			\quad \text{for all } t\in (t_0,t_1]\text{ and }x(t)\in \partial D(t),
			\end{equation}
where $\hat{n}_D$ is the outer unit normal to $D(t)$.
We suppose as well that 
			\begin{equation}\label{eq:incl-tempo0}
			D(t_0) \subset  	\tilde E_{0}^>(t_0).	\end{equation}
Our goal is verifying that it holds $D(t_1) \subset \tilde E_{0}^>(t_1)$. 

Bearing in mind definition \eqref{eq:limsup},
we get from \eqref{eq:incl-tempo0}
that for all $\eps<1$ there exists $\eta_\eps\leq \eps$ such that
	\begin{equation}\label{ic}
		D(t_0)\subseteq E_{\eta_\eps, 0}^-(t_0).
	\end{equation} 
Since the second fundamental forms of $\partial D(t)$ are uniformly bounded for $t\in [t_0, t_1]$,
we may appeal to Theorem \ref{stm:main1}, which yields 
			\[
				\lim_{\epsilon\to 0^+} \frac{1}{\eps}H_\epsilon(D(t),x)=H_0(D(t),x)
					\quad \text{uniformly in } t\in[t_0,t_1]\text{ and }x\in\partial D(t).
			\]
Accordingly,
there exists $\bar\eps\coloneqq \bar\epsilon(\gamma)$ such that,
for all  $\eps<\bar\eps$,
			\begin{equation*}
			\partial_t x(t)\cdot \hat{n}_D(t,x(t)) \leq -\frac{1}{\eps}H_\eps(D(t),x(t)) -\frac{\gamma}{2}
			\quad \text{for all } t\in (t_0,t_1]\text{ and }x(t)\in \partial D(t),
			\end{equation*}
		or, in other words,  
		$t\mapsto D(t)$
		is a strict geometric subsolution to
		all the rescaled problems of parameter $\epsilon\in(0,\bar\epsilon)$.
		By \eqref{ic} and  Proposition \ref{stm:comp-CDNV},
		we infer that for all $\eps<\bar\eps$ there exists $\eta_\eps\leq \eps$ such that 
			\[
				D(t)\subset E_{\eta_\epsilon,0}^-(t)\quad \text{for all } t\in[t_0, t_1].
			\]
 		In view of Remark \ref{ind}, it holds as well that
 			\[
 				D(t)\subseteq \tilde E_{0}^>(t)\quad \text{for all } t\in[t_0,t_1],
			\]		
		and we get that $D(t_1)\subseteq \tilde E_{0}^>(t_1)$, as desired. 			
	 
		We can finally draw the conclusion.
		Indeed, by 	\eqref{inclusion1} and \eqref{inclusion2},
		for every $\lambda\in\R$ and $t\in [0, T]$, we have
			\[\begin{split}
			\Set{x\in\Rd : v(t,x)>\lambda} \subseteq \Set{x\in\Rd : u(t,x)\geq\lambda}, \\
			\Set{x\in\Rd : u(t,x)>\lambda} \subseteq \Set{x\in\Rd : v(t,x)\geq\lambda},
			\end{split}\]
		and, from this, Lemma \ref{stm:incl-suplev} yields $u=v$.
		\end{proof} 

\backmatter
\cleardoublepage
	\addcontentsline{toc}{chapter}{\bibname}
	\begin{bibdiv}
	\begin{biblist}
		\bib{AV}{article}{ 
			author={Abatangelo, N.},
			author={Valdinoci, E.},
			title={A notion of nonlocal curvature},
			journal={Numerical Functional Analysis and Optimization},
			volume={35},
			pages={793--815},
			year={2014}
		}
	
		\bib{AB1}{article}{
				author={Alberti, G.},
				author={Bellettini, G.},
				title={A nonlocal anisotropic model for phase transitions. Part I: the optimal profile problem},
				journal={Mathematische Annalen},
				year={1998},
				volume={310},
				number={3},
				pages={527--560},
		}
	
		\bib{AB2}{article}{
			author={Alberti, G.},
			author={Bellettini, G.},
			title={A non-local anisotropic model for phase transitions. Part II: asymptotic behaviour of rescaled energies},
			year={1998},
			journal={European Journal of Applied Mathematics},
			volume={9},
			number={3},
			pages={261--284}
		}
	
	\bib{ACM}{article}{
		Author={Alvarez, O.},
		Author={Cardaliaguet, P.},
		Author={Monneau, R.},
		Title = {Existence and uniqueness for dislocation dynamics with nonnegative velocity},
		Journal = {Interfaces Free Bound.},
		Volume = {7},
		number={4},
		Pages = {415--434},
		Year = {2005}
	}
	
	\bib{AHLM}{article}{
		Author = {Alvarez, O.},
		Author = {Hoch, P.},
		Author={Le Bouar, Y.},
		Author={Monneau, R.},
		Title = {Dislocation dynamics: Short-time existence and uniqueness of the solution},
		Journal = {Arch. Ration. Mech. Anal.},
		Volume = {181},
		number={3},
		Pages = {449--504},
		Year = {2006}
	}
	
	\bib{A}{article}{
		author={Ambrosio, L.},
		editor={Buttazzo, G. and Marino, A. and Murthy, M. K. V.},
		title={Geometric evolution problems, distance function and viscosity solutions},
		booktitle={Calculus of Variations and Partial Differential Equations: Topics on Geometrical Evolution Problems and Degree Theory},
		year={2000},
		publisher={Springer Berlin Heidelberg},
		address={Berlin, Heidelberg},
		pages={5--93}
	}
		
		\bib{ADM}{article}{
			author = {Ambrosio, L.},
			author = {De Philippis, G.},
			author = {Martinazzi, L.},
			title={Gamma-convergence of nonlocal perimeter functionals},
			year={2011},
			journal={Manuscripta Mathematica},
			volume={134},
			number={3},
			pages={377--403}
		}
		
		\bib{AFP}{book}{
			author = {Ambrosio, L.},
			author = {Fusco, N.},
			author = {Pallara, D.},
			title={Functions of Bounded Variation and Free Discontinuity Problems},
			series={Oxford Science Publications},
			year={2000},
			publisher={Clarendon Press}
		}
	
	\bib{AnB}{article}{
	author={Anzellotti, G.},
	author={Baldo, S.},
	title={Asymptotic development by $\Gamma$-convergence},
	journal={Appl. Math. Optim.},
	volume={27},
	number={2},
	pages={105--123},
	year={1993}	
	}
	
	\bib{AK}{article}{
	author={Aubert, G.},
	author={Kornprobst, P.},
	title={Can the nonlocal characterization of Sobolev spaces by Bourgain et al. be useful for solving variational problems?},
	journal={SIAM J. Numer. Anal.},
	volume={47},
	pages={844–-860},
	year={2009}
	}

	\bib{BG}{article}{
		author ={Barles, G.},
		author={Georgelin, C.},
		title = {A Simple Proof of Convergence for an Approximation Scheme for Computing Motions by Mean Curvature},
		journal = {SIAM J. Numer. Anal.},
		volume = {32},
		number={2},
		year = {1995},
		pages = {484--500},
		publisher = {SIAM} 
	}
	
	\bib{BL}{article}{
		Author ={Barles, G.},
		Author={Ley, O.},
		Title = {Nonlocal first-order {H}amilton-{J}acobi equations modelling dislocations dynamics},
		Journal = {Commun. Partial Differ. Equations},
		Volume = {31},
		number={8},
		Pages = {1191--1208},
		Year = {2006},
		Publisher = {Taylor \& Francis, Philadelphia, PA}
	}

	\bib{BaP}{article}{
		author={Barles, G.},
		author={Perthame, B.},
		Title = {Exit time problems in optimal control and vanishing viscosity method},
		journal = {SIAM J. Control Optim.},
		Volume = {26},
		number={5},
		Pages = {1133--1148},
		Year = {1988}
	}

	\bib{Be}{article}{
		author = {Bellettini, G.},
		year = {1997},
		pages = {485-512},
		title = {Alcuni risultati sulle minime barriere per movimenti geometrici di insiemi},
		volume = {7},
		journal = {Bollettino UMI}
	}
	
	\bib{BBP}{article}{
		author={Bellettini, G.},
		author={Butt{\`a}, P.},
		author={Presutti, E.},
		title={Sharp Interface Limits for Non-Local Anisotropic Interactions},
		journal={Archive for Rational Mechanics and Analysis},
		year={2001},
		volume={159},
		number={2},
		pages={109--135}
	}
	
	\bib{BNo}{article}{
		author={Bellettini, G.},
		author={Novaga, M.},
		title = {Comparison Results between Minimal Barriers and Viscosity Solutions for Geometric Evolutions},
		year = {1998},
		journal = {Ann. Scuola Norm. Sup. Pisa Cl. Sci.},
		volume = {26},
		number={1},
		pages = {97--131}
	}
	
	\bib{BNo2}{article}{
		author={Bellettini, G.},
		author={Novaga, M.},
		editor={Buttazzo, G. and Marino, A. and Murthy, M. K. V.},
		title={Some aspects of De Giorgi's barriers for geometric evolutions},
		bookTitle={Calculus of Variations and Partial Differential Equations: Topics on Geometrical Evolution Problems and Degree Theory},
		year={2000},
		publisher={Springer Berlin Heidelberg},
		address={Berlin, Heidelberg},
		pages={115--151}
	}
	
	\bib{BP2}{article}{
		author={Bellettini G.},
		author={Paolini, M.},
		title={Some results on minimal barriers in the sense of De Giorgi applied to driven motion by mean curvature},
		journal={ Rend. Accad. Naz. Sci. XL Mem. Mat. Appl. (5)},
		number={19},
		pages={43--67},
		year={1995}
	}
	
	\bib{BMO}{article}{
		author={Bence, J.},
		author={Merriman, B.},
		author={Osher, S.},
		title={Diffusion generated motion by mean curvature},
		editor={Taylor, J.},
		bookTitle={Computational Crystal Growers Workshop},
		publisher={Amer. Math. Soc.},
		address={Providence, RI},
		pages={73--83},
		year={1992}
	}
	
\bib{BP}{article}{
		author = {Berendsen, J.},
		author = {Pagliari, V.},
		title = {On the asymptotic behaviour of nonlocal perimeters},
		journal={ESAIM: COCV},
		volume={25},
		year={2019}
	}
		
	\bib{BBM}{book}{
			author = {Bourgain, J.},
			author = {Brezis, H.},
			author = {Mironescu, P.},
			title={Another look at Sobolev spaces},
			year={2001},
			journal={Optimal Control and Partial Differential Equations},
			volume={},
			publisher={IOS Press},
			editor={Menaldi, J. L.},
			editor={Rofman, E.},
			editor={Sulem, A.},
			pages={439--455}
		}
		
		\bib{B}{book}{
			title={$\Gamma$-convergence for Beginners},
			author={Braides, A.},
			series={Oxford Lecture Series in Mathematics},
			year={2002},
			publisher={Oxford University Press}
		}
				
		\bib{BN1}{article}{
			author={Brezis, H.},
			author={Nguyen, H.},
			title={Non-local Functionals Related to the Total Variation and Connections with Image Processing},
			journal={Annals of PDE},
			year={2018},
			volume={4},
			number={1},
			pages={9},
		}		
		
		\bib{BN2}{article}{
			title={Two subtle convex nonlocal approximations of the BV-norm},
			author={Brezis, H.},
			author={Nguyen, H.},
			journal={Nonlinear Analysis},
			volume={137},
			pages={222--245},
			year={2016}
		}
		
		\bib{C}{article}{
			author = {Cabr\'e, X.},
			title={Calibrations and null-Lagrangians for nonlocal perimeters and an application to the viscosity theory},
			journal={Annali di Matematica Pura ed Applicata (1923-)},
			year={2020}
		}
				
		\bib{CRS}{article}{
			author = {Caffarelli, L.},
			author= {Roquejoffre, J.},
			author={Savin, O.},
			title = {Nonlocal minimal surfaces},
			journal = { Comm. Pure Appl. Math.},
			volume = {63},
			number={9},
			pages = {1111--1144},
			year = {2010}
		}
	
		\bib{CaS}{article}{
			author={Caffarelli, L.},
			author={Souganidis, P.},
			title={Convergence of Nonlocal Threshold Dynamics Approximations to Front Propagation},
			journal={Arch. Ration. Mech. Anal.},
			year={2010},
			volume={195},
			number={1},
			pages={1--23}
		}		
		
		\bib{CV}{article}{
			author={Caffarelli, L.},
			author={Valdinoci, E.},
			title={Uniform estimates and limiting arguments for nonlocal minimal surfaces},
			journal={Calculus of Variations and Partial Differential Equations},
			year={2011},
			volume={41},
			number={1},
			pages={203--240}
		}

		\bib{CH}{article}{
			author={Cahn, J.},
			author={Hilliard, J.},
			title={Free energy of a nonuniform system I: interfacial free energy},
			journal={J. Chem. Phys.},
			volume={28},
			pages={258--267},
			year={1958}
		}
	
		\bib{CDNV}{article}{ 
			author = {Cesaroni, A.},
			author = {Dipierro, S.},
			author = {Novaga, M.}, 
			author = {Valdinoci, E.},
			title={Fattening and nonfattening phenomena for planar nonlocal curvature flows},
			journal={Mathematische Annalen},
			volume={375},
			number={1-2},
			pages={687--736},
			year={2019},
		}
		
		\bib{CN}{article}{
			author = {Cesaroni, A.},
			author = {Novaga, M.},
			title = {The isoperimetric problem for nonlocal perimeters},
			journal = {Discrete Contin. Dyn. Syst., Ser. S},
			Volume = {11},
			Number = {3},
			Pages = {425--440},
			Year = {2018}
		}
		
		\bib{CN2}{article}{
			author = {Cesaroni, A.},
			author = {Novaga, M.},
			title={Symmetric self-shrinkers for the fractional mean curvature flow},
			journal={The Journal of Geometric Analysis},
			pages={1--18},
			year={2019}
		}		
		
		\bib{CP}{article}{
			author = {Cesaroni, A.},
			author = {Pagliari, V.},
			title = {Convergence of nonlocal geoemetric flows to anisotropic mean curvature motions},
			journal = {preprint: \href{https://arxiv.org/pdf/1811.01732}{arXiv:1811.01732}},
			Year = {2018}
		}
		
		\bib{CS}{article}{
			author={Chaker, J.},
			author={Silvestre, L.},
			title={Coercivity estimates for integro-differential operators},
			journal={preprint: \href{https://arxiv.org/abs/1904.13014}{arXiv:1904.13014}},
			year={2019}
		}
		
		\bib{CCCNP}{article}{
			author={Chambolle, A.},
			author={Caselles, V.},
			author={Cremers, D.},
			author={Novaga, M.},
			author={Pock, T.},
			title={An introduction to Total Variation for Image Analysis},
			pages={263--340},
			booktitle={Theoretical Foundations and Numerical Methods for Sparse Recovery},
			series={Radon Series Comp. Appl. Math.},
			volume={9},
			year={2010},
			publisher={De Gruyter}
			}
		
	\bib{CCP}{article}{
			title={A convex approach to minimal partitions},
			author={Chambolle, A.},
			author={Cremers, D.},
			author={Pock, T.},
			journal={SIAM Journal on Imaging Sciences},
			volume={5},
			number={4},
			pages={1113--1158},
			year={2012}
		}
		
		\bib{CGL}{article}{
			author={Chambolle, A.},
			author={Giacomini, A.},
			author={Lussardi, L.},
			title = {Continuous limits of discrete perimeters},
			journal = {ESAIM: M2AN},
			year = {2010},
			volume = {44},
			number = {2},
			pages = {207--230},
		}
		
		\bib{CMP}{article}{
			author={Chambolle, A.},
			author={Morini, M.},
			author={Ponsiglione, M.},
			title={Nonlocal curvature flows},
			journal={Arch. Ration. Mech. Anal.},
			year={2015},
			volume={218},
			number={3},
			pages={1263--1329}
		}
	
	\bib{ChN}{article}{
		title={Convergence of an algorithm for the anisotropic and crystalline mean curvature flow},
		author={Chambolle, A.},
		author={Novaga, M.},
		journal={SIAM J. Math. Anal.},
		volume={37},
		number={6},
		pages={1978--1987},
		year={2006},
		publisher={SIAM}
	}

		\bib{CNP}{article}{
			author = {Chambolle, A.},
			author = {Novaga, M.},
			author = {Pagliari, V.},
			title = {On the convergence rate of some nonlocal energies},
			journal = {preprint: \href{http://arxiv.org/abs/1907.06030}{arXiv:1907.06030}},
			Year = {2019}
		}
	
		\bib{CNR}{article}{
		AUTHOR = {Chambolle, A.},
		AUTHOR = {Novaga, M.},
		AUTHOR= {Ruffini, B.},
		TITLE = {Some results on anisotropic fractional mean curvature flows},
		JOURNAL = {Interfaces Free Bound.},
		VOLUME = {19},	
		number={3},
		YEAR = {2017},
		PAGES = {393--415}
	}	
	
		\bib{CGG}{article}{
		author = {Chen, Y.},
		author={Giga, Y.},
		author={Goto, S.},
		journal = {J. Differential Geom.},
		pages = {749--786},
		title = {Uniqueness and Existence of Viscosity Solutions of Generalized Mean Curvature Flow Equations},
		volume = {33},
		year = {1991}
	}
		
		\bib{CSV}{article}{			
			author={Cinti, E.},
			author={Serra, J.},
			author={Valdinoci, E.},
			title={Quantitative flatness results and $ BV $-estimates for stable nonlocal minimal surfaces},
			journal={Journal of Differential Geometry},
			volume={112},
			number={3},
			pages={447--504},
			year={2019}
		}
	
		\bib{CSV2}{article}{
			author={Cinti, E.},
			author={Sinestrari, C.},
			author={Valdinoci, E.},
			title={Neckpinch singularities in fractional mean curvature flows},
			journal={Proc. Amer. Math. Soc.},
			volume={146},
			number={6},  
			date={2018},
			pages={2637--2646}
		}	
		
		\bib{CL}{article}{
			author = {Crandall, M.},
			author={Lions, P.},
			title = {Viscosity Solutions of Hamilton-Jacobi Equations},
			journal = {Trans. Amer. Math. Soc.},
			year = {1983},
			volume = {277},
			pages = {1-42}
		}
	
		\bib{DFM}{article}{
			author = {Da Lio, F.},
			author = {Forcadel, N.},
			author = {Monneau, R.},
			title ={Convergence of a non-local eikonal equation to anisotropic mean curvature motion. Application to dislocations dynamics},
			year={2008},
			journal={ J. Eur. Math. Soc. (JEMS)},
			volume={10},
			number={4},
			pages={1105--1119}
		}
			
		\bib{dM}{book}{
			title={An Introduction to $\Gamma$-convergence},
			author={Dal Maso, G.},
			series={Progress in nonlinear differential equations and their applications},
			year={1993},
			publisher={Birkh{\"a}user}
		}
	
		\bib{Da}{article}{
			author={D{\'a}vila, J.},
			title={On an open question about functions of bounded variation},
			journal={Calculus of Variations and Partial Differential Equations},
			year={2002},
			volume={15},
			number={4},
			pages={519--527}
		}
				
		\bib{D2}{misc}{
			author={De Giorgi, E.},
			title={Barriers, boundaries, motion of manifolds},
			year={1994},
			note={Conference held at Dipartimento di Matematica, Univ. of Pavia}
		}
	
		\bib{D}{article}{
			author={De Giorgi, E.},
			title={Nuovi teoremi relativi alle misure $(r-1)$-dimensionali in uno spazio a $r$ dimensioni},
			journal={Ricerche Mat.},
			number={4},
			year={1955},
			pages={95-113}
		}
		
		\bib{DF}{article}{
			Author = {De Giorgi, E.},
			Author = {Franzoni, T.},
			Title = {Su un tipo di convergenza variazionale},
			Journal = {Atti Accad. Naz. Lincei, VIII. Ser., Rend., Cl. Sci. Fis. Mat. Nat.},
			Volume = {58},
			Pages = {842--850},
			Year = {1975}
		}
		
		\bib{E}{article}{
			Author = {Evans, L.},
			Title = {Convergence of an algorithm for mean curvature motion},
			Journal = {Indiana Univ. Math. J.},
			Volume = {42},
			Number = {2},
			Pages = {533--557},
			Year = {1993}
		}	
		
		\bib{ES}{article}{
			author = {Evans, L.},
			author = {Spruck, J.},
			title={Motion of level sets by mean curvature. II},
			journal={Transactions of the american mathematical society},
			volume={330},
			number={1},
			pages={321--332},
			year={1992}
		}
	
		\bib{F}{article}{
			title={A note on the Gauss-Green theorem},
			author={Federer, H.},
			journal={Proc. Amer. Math. Soc.},
			volume={9},
			year={1959},
			pages={447--451}
			}		
				
		\bib{FM}{article}{
			title={Relaxation of quasiconvex functionals in $BV(\Omega;\R^p)$ for integrands $f(x,u,\nabla u)$},
			author={Fonseca, I.},
			author={M\"uller, S.},
			journal={Arch. Ration. Mech. Anal.},
			volume={123},
			number={1},
			pages={1--49},
			year={1993}
		}
	
		\bib{FIM}{article}{
			title={Homogenization of some particle systems with two-body interactions and of the dislocation dynamics},
			author={N. Forcadel, C. Imbert, and R.  Monneau},
			journal={DCDS-A},
			volume={23},
			number={3},
			pages={785--826},
			year={2008}
		}
		
		\bib{GO2}{article}{
			author={Gilboa, G.},
			author={Osher, S.},
			title={Nonlocal linear image regularization and supervised segmentation},
			journal={Multiscale Model. Simul.},
			number={6},
			pages={595–630},
			year={2007}
		}
		
		\bib{GO}{article}{
			title={Nonlocal operators with applications to image processing},
			author={Gilboa, G.},
			author={Osher, S.},
			journal={Multiscale Model. Simul.},
			volume={7},
			number={3},
			pages={1005--1028},
			year={2008},
		}
		
		\bib{G}{article}{
			author={Gobbino, M.},
			title={Finite difference approximation of the Mumford-Shah functional},
			journal={Comm. Pure Appl. Math.},
			volume={51},
			date={1998},
			number={2},
			pages={197--228},
			issn={0010-3640}
		}
		
		\bib{GM}{article}{
			title={Finite-difference approximation of free-discontinuity problems},
			author={Gobbino, M.},
			author={Mora, M.},
			journal={Proceedings of the Royal Society of Edinburgh Section A: Mathematics},
			volume={131},
			number={3},
			pages={567--595},
			year={2001},
		}
		
		\bib{Gr}{misc}{
			title={A coarea formula for anisotropic total variation regularisation},
			author={Grasmair, M.},
			note={unpublished},
			year={2009}
		}	
	
		\bib{HL}{article}{
			AUTHOR = {Harvey, R.},
			AUTHOR={Lawson, H., Jr.},
			TITLE = {Calibrated geometries},
			JOURNAL = {Acta Math.},
			VOLUME = {148},
			YEAR = {1982},
			PAGES = {47--157},
		}
		
		\bib{HKS}{article}{
			title={Sobolev Spaces on Metric Measure Spaces},
			author={Heinonen, J.},
			author={Koskela, P.},
			author={Shanmugalingam, N.},
			author={Tyson, J. T.},
			series={New Mathematical Monographs},
			year={2015},
			publisher={Cambridge University Press}
		}	
		
		\bib{I}{article}{
			title={Level set approach for fractional mean curvature flows},
			author={Imbert, C.},
			journal={Interfaces Free Bound.},
			volume={11},
			number={1},
			pages={153--176},
			year={2009}
		}
		
		\bib{IMR}{article}{
			title={Homogenization of first order equations with $u/\eps$-periodic Hamiltonians. Part II: application to dislocations dynamics},
			author={Imbert, C.},
			author={Monneau, R.},
			author={Rouy-Mironescu, E.},
			journal={Comm. in PDEs},
			volume={33},
			number={1},
			pages={479--516},
			year={2008}
		}
		
		\bib{Is}{article}{
			Author = {Ishii, H.},
			Title = {A generalization of the Bence, Merriman and Osher algorithm for motion by mean curvature},
			journal = {{Proceedings of the international conference on curvature flows and related topics held in Levico, Italy, June 27-July 2nd, 1994}},
			Pages = {111--127},
			Year = {1995},
			Publisher = {Gakk\=otosho, Tokyo}
		}
		
		\bib{IPS}{article}{
			author = {Ishii, H. },
			author={Pires, G.},
			author={Souganidis, P.},
			journal = {J. Math. Soc. Japan},
			number = {2},
			pages = {267--308},
			title = {Threshold dynamics type approximation schemes for propagating fronts},
			volume = {51},
			year = {1999}
		}
		
		\bib{LV}{article}{
			author={La Manna, D.},
			author = {Vesa, J.},
			title = {Short time existence of the classical solution to the fractional Mean curvature flow},
			journal = {preprint: \href{https://arxiv.org/pdf/1906.10990}{arXiv:1906.10990}},
			year = {2019}
		}
		
		\bib{LM}{article}{
			title={Paired calibrations applied to soap films, immiscible fluids, and surfaces or networks minimizing other norms},
			author={Lawlor, G.},
			author={Morgan, F.},
			journal={Pacific journal of Mathematics},
			volume={166},
			number={1},
			pages={55--83},
			year={1994}
		}
		
		\bib{LS}{article}{
			title={Characterization of Sobolev and BV spaces},
			author={Leoni, G.},
			author={Spector, D.},
			journal={J. Funct. Anal.},
			number={261},
			pages={2926 -- 2958},
			year={2011}
		}
	
		\bib{L}{article}{
			title={Anisotropic fractional perimeters},
			author={Ludwig, M.},
			journal={J. Diff. Geom.},
			volume={96},
			number={1},
			pages={77--93},
			year={2014}
		}		
	
	\bib{Ma}{book}{
		title={Sets of Finite Perimeter and Geometric Variational Problems: An Introduction to Geometric Measure Theory},
		author={Maggi, F.},
		series={Cambridge Studies in Advanced Mathematics},
		year={2012},
		publisher={Cambridge University Press}
	}
	
	\bib{MRT}{article}{
		title={Nonlocal perimeter, curvature and minimal surfaces for measurable sets},
		author={Maz\'on, J. M.},
		author={Rossi, J. D.},
		author={Toledo, J.},
		journal={Journal d'Analyse Math{\'e}matique},
		year={2019},
	}
	
	\bib{MS}{article}{
		author={Maz'ya, V.},
		author={Shaposhnikova, T.},
		title={On the Bourgain, Brezis, and Mironescu theorem concerning limiting embeddings of fractional Sobolev spaces},
		journal={J. Funct. Anal.},
		volume={195},
		year={2002},
		pages={230--238}
	}
	
		\bib{MM}{article}{
			author={Modica, L.},
			author={Mortola, S.},
			title={Un esempio di $\Gamma^-$-convergenza},
			journal={Boll. Un. Mat. Ital.},
			volume={5},
			pages={285 -- 299},
			year={1977}
		}
	
		\bib{Mo1}{book}{
			title={Geometric Measure Theory: A Beginner's Guide},
			author={Morgan, F.},
			year={2008},
			publisher={Elsevier Science}
		}
		
		\bib{Mo2}{article}{
			title={Calibrations and new singularities in area-minimizing surfaces: a survey},
			author={Morgan, F.},
			booktitle={Variational methods},
			pages={329--342},
			year={1990},
			publisher={Springer}
		}
	
		\bib{MuS}{article}{
		author={Muratov, C.},
		author={Simon, T.},
		title={A nonlocal isoperimetric problem with dipolar repulsion},
		journal={Communications in Mathematical Physics},
		volume={372},
		number={3},
		pages={1059--1115},
		year={2019},
		}
		
		\bib{ROF}{article}{
			Author = {Osher, S.},
			Author = {Rudin, L.},			
			Author = {Fatemi, E.},
			Title = {Nonlinear total variation based noise removal algorithms},
			Journal = {Physica D},
			Volume = {60},
			Number = {1-4},
			Pages = {259--268},
			Year = {1992}
		}		
		
		\bib{Pa}{article}{
			author={Pagliari, V.},
			title={Halfspaces minimise nonlocal perimeter: a proof \emph{via} calibrations},
			journal={Annali di Matematica Pura ed Applicata (1923-)},
			pages={1--12},
			year={2019}
		}
		
		\bib{PPR}{article}{,
			AUTHOR = {Peletier, M.},
			AUTHOR = {Planqu\'{e}, R.},
			AUTHOR = {R\"{o}ger, M.},
			TITLE = {Sobolev regularity via the convergence rate of convolutions
				and {J}ensen's inequality},
			JOURNAL = {Ann. Sc. Norm. Super. Pisa Cl. Sci. (5)},
			VOLUME = {6},
			YEAR = {2007},
			NUMBER = {4},
			PAGES = {499--510}
		}
	
		\bib{P}{article}{,
			author={Ponce, A.},
			title={A new approach to Sobolev spaces and connections to $\Gamma$-convergence},
			journal={Calculus of Variations and Partial Differential Equations},
			year={2004},
			volume={19},
			number={3},
			pages={229--255}
		}
	
	\bib{SV}{article} {
		author={Savin, O.},
		author={Valdinoci, E.},
		title={$\Gamma$-convergence for nonlocal phase transitions},
		journal={Ann. Inst. H. Poincaré Anal. Non Linéaire},
		volume={29},
		number={4},
		year={2012},
		pages={479–500}
		}
		
	\bib{S}{article}{
			title={Approximation schemes for propagation of fronts with nonlocal velocities and Neumann boundary conditions},
			author={Slep\v{c}ev, D.},
			journal={Nonlinear Anal.},
			volume={52},
			pages={79--115},
			year={2003}
		}
		
		\bib{V}{article}{
			title={A fractional framework for perimeters and phase transitions},
			author={Valdinoci, E.},
			journal={Milan J. Math.},
			volume={81},
			number={1},
			pages={1--23},
			year={2013}
		}
		
		\bib{Vi}{article}{
			title={Generalized coarea formula and fractal sets},
			author={Visintin, A.},
			journal={Japan J. Indust. Appl. Math.},
			volume={81},
			number={1},
			pages={175-201},
			year={1991}
		}
		\bib{Vi2}{article}{
		title={Nonconvex functionals related to multiphase systems},
		author={Visintin, A.},
		journal={SIAM journal on mathematical analysis},
		volume={21},
		number={5},
		pages={1281--1304},
		year={1990}
		
		}
	\end{biblist}
\end{bibdiv}
	\cleardoublepage
	\printindex
\end{document}